\numberwithin{equation}{section}
\newcommand{\localmu}{\mu}
\newcommand{\E}{{\bf E}}
\newcommand{\R}{{\bf R}}
\newcommand{\localconstant}{C}
\newcommand{\error}{E}
\newcommand{\xkball}{\ball{p}{\delta_p}}
\newcommand{\ykball}{\ball{p}{\epsilon_p/2}}
\newcommand{\cU}{\mathcal{U}}
\newcommand{\norm}[1] {\left \| #1 \right \|}
\newcommand{\inclu}[0] {\ar@{^{(}->}}
\newcommand{\gph}{{\rm gph}\,}
\newcommand{\dist}{{\rm dist}}
\newcommand{\EE}{\mathbb{E}}
\newcommand{\RR}{\mathbb{R}}
\newcommand{\cB}{\mathcal{B}}
\newcommand{\cX}{\mathcal{X}}
\newcommand{\cL}{\mathcal{L}}
\newcommand{\cY}{\mathcal{Y}}
\newcommand{\cM}{\mathcal{M}}
\newcommand{\epi}{\mathrm{epi}\,}
\newcommand{\range}{\mathrm{range}}
\newcommand{\cl}{\mathrm{cl}\,}
\newcommand{\cK}{\mathcal{K}}
\newcommand{\conv}{\mathrm{conv}}
\newcommand{\abs}[1]{\left| #1 \right|}
\newcommand{\proj}{P}
\newcommand{\dom}{\mathrm{dom}\,}
\newcommand{\argmin}{\operatornamewithlimits{argmin}}
\newcommand{\Diag}{{\rm Diag}}
\newcommand{\NN}{\mathbb{N}}
\newcommand{\expect}[1]{\mathbb{E}\left[#1\right]}
\newcommand{\dotp}[1]{\left\langle #1\right\rangle}
\newtheorem{thm}{Theorem}[section]
\newtheorem{lemma}{Lemma}[section]
\newtheorem{definition}[thm]{Definition}
\newtheorem{proposition}[thm]{Proposition}
\newtheorem{lem}[thm]{Lemma}
\newtheorem{cor}[thm]{Corollary}
\newtheorem{assumption}{Assumption}
\newcommand{\ub}{\texttt{ub}}
\newcommand{\cF}{\mathcal{F}}
\newtheorem{example}{Example}[section]
\newcommand{\paren}[1]{ \left( #1 \right) }
\theoremstyle{remark}
\newtheorem{claim}{Claim}
\newcommand{\sphere}{\mathbb{S}}
\newcommand{\unif}{\text{Unif}}
\newcommand{\discrete}{k}
\newcommand{\perturb}{\nu}
\newcommand{\stepsize}{\alpha}
\newcommand{\subgrad}{v}
\newcommand{\tangent}[2]{T_{#1}({#2})}
\newcommand{\tangentM}[1]{\tangent{\cM}{#1}}
\newcommand{\ball}[2]{B_{#2}(#1)}
\numberwithin{equation}{section}
\newcommand{\Nul}{{\rm Null}\,}
\newcommand{\Lin}{{\rm Lin}}
\newcommand{\M}{{\mathcal M}\,}
\newcommand{\ri}{{\rm ri}\,}
\newcommand{\cS}{{\bf S}}
\newcommand{\Y}{{\bf Y}}
\title{Active manifolds, stratifications, and convergence to local minima in nonsmooth optimization}
	\author{Damek Davis\thanks{School of ORIE, Cornell University,
Ithaca, NY 14850, USA;
\texttt{people.orie.cornell.edu/dsd95/}. Research of Davis supported by an Alfred P. Sloan research fellowship and NSF DMS award 2047637.}  \qquad Dmitriy Drusvyatskiy\thanks{Department of Mathematics, U. Washington,
Seattle, WA 98195; URL: \texttt{www.math.washington.edu/{\raise.17ex\hbox{$\scriptstyle\sim$}}ddrusv}, email: \texttt{ddrusv@uw.edu}. Research of Drusvyatskiy was supported by NSF DMS-1651851 and CCF-2023166 awards.}\qquad Liwei Jiang\thanks{School of ORIE, Cornell University. Ithaca, NY 14850, USA;
	\texttt{orie.cornell.edu/research/grad-students/liwei-jiang}}}
\date{}
\begin{document}
\maketitle

\begin{abstract}
	We show that the subgradient method converges only to local minimizers when applied to generic Lipschitz continuous and  subdifferentially regular functions that are definable in an o-minimal structure. At a high level, the argument we present is appealingly transparent: we  interpret the nonsmooth dynamics as an approximate Riemannian gradient method on a certain distinguished submanifold that captures the nonsmooth activity of the function. In the process, we develop new regularity conditions in nonsmooth analysis that parallel the stratification  conditions of Whitney, Kuo, and Verdier and extend stochastic processes techniques of Pemantle.
\end{abstract}
\bigskip

\noindent{\bf AMS Subject Classification.} Primary 49J52, 90C30; Secondary 60G07, 32B20

\section{Introduction}
The subgradient method is the workhorse procedure for finding minimizers of Lipschitz continuous functions $f$ on $\R^n$. One common variant, and the one we focus on here, proceeds using the update
\begin{equation}\label{eqn:SGD_intro}
x_{k+1}=x_k-\alpha_k \nabla f(x_k)+\alpha_k\nu_k,
\end{equation}
for some sequence $\alpha_k>0$ and a mean zero noise vector $\nu_k$ chosen by the user. 
As long as $\nu_k$ is absolutely continuous with respect to the Lebesgue measure, the algorithm will only encounter points at which $f$ is differentiable and therefore the recursion \eqref{eqn:SGD_intro} is well defined. 
The typical choice of $\alpha_k$, and one that is well-grounded in theory, is proportional to $k^{-\gamma}$ for  $\gamma\in (1/2,1)$. The subgradient method is core to a wide array of tasks in computational mathematics and applied sciences, such as in statistics, machine learning, control, and signal processing. Despite its ubiquity and the striking simplicity of the evolution equation \eqref{eqn:SGD_intro}, the following question remains  open.

\begin{quote}
	\centering
	Is there a broad class of nonsmooth and nonconvex functions for which the subgradient dynamics \eqref{eqn:SGD_intro} are sure to converge only to local minimizers?
\end{quote}

In order to better situate the question, let us look at the analogous question for smooth functions, where the answer is entirely classical. Indeed, the seminal work of Pemantle \cite{pemantle1990nonconvergence} shows that the subgradient method applied to a Morse function either diverges or converges to a local minimizer. Conceptually, the nondegeneracy of the Hessian stipulated by the Morse assumption ensures that around every extraneous critical point, the function admits a direction of negative curvature. Such directions ensure that the stochastic process \eqref{eqn:SGD_intro} locally escapes any neighborhood of the extraneous critical point.
Aside from being generic, the Morse assumption or rather the slightly  weaker strict saddle property is known to hold for a wealth of concrete statistical estimation and learning problems, as shown for example in \cite{ge2016matrix,sun2015nonconvex,bhojanapalli2016global,ge2017no,sun2018geometric}.
Going beyond smooth functions requires new tools. In particular, a positive answer is impossible for general Lipschitz functions, since generic (in  Baire sense) Lipschitz  functions may have highly oscillatory derivatives \cite{borwein2000lipschitz,rockafellar1981favorable}. 
Therefore one must isolate some well-behaved function class to make progress. In this work, we focus on Lipschitz functions that are semi-algebraic, or more generally definable in an o-minimal structure \cite{MR1404337}. The class of definable functions is virtually exhaustive in contemporary applications of optimization, and has been the subject of intensive research over the past decade. 
The following is an informal statement of one of our main results. 

\begin{thm}[Informal]\label{thm:inform}
Let $f$ be a function that is Lipschitz continuous, subdifferentially regular, and is definable in some o-minimal structure. Then for a full-measure set of vectors $v\in\R^n$, the subgradient method applied to the perturbed function $f_v(x)=f(x)-\langle v,x\rangle$ either diverges or converges to a local minimizer of $f_v$.
\end{thm}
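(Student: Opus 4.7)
The plan is to reduce the theorem to two ingredients: (i) a deterministic geometric fact that along any convergent subsequence of iterates the dynamics can be recast as an approximate Riemannian gradient step on the active manifold through the limit point, and (ii) a Pemantle-type stochastic nonconvergence argument carried out intrinsically on that manifold. The generic perturbation by $v$ will be used to enforce a strict saddle condition on the active manifold, which is precisely the input required by (ii).

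\textbf{From stratification to Riemannian reduction.} Because $f$ is definable, it admits a stratification of $\R^n$ into finitely many smooth strata on which $f$ is smooth. Given a candidate accumulation point $\bar x$ of the iterates, let $\M$ be the stratum (active manifold) containing $\bar x$; by subdifferential regularity and the new Whitney/Kuo/Verdier-type conditions developed in the paper, as a smooth point $x \to \bar x$ one can decompose $\nabla f(x) = g^{\parallel}(x) + g^{\perp}(x)$, with the tangential part converging to the Riemannian gradient $\nabla_\M f(\bar x)$ and the normal part controlled in terms of $\dist(x,\M)$. Applying this decomposition to \eqref{eqn:SGD_intro} along a subsequence approaching $\bar x$, and tracking the distance of $x_{k+1}$ to $\M$, one extracts a shadow iteration on $\M$ that behaves like a Riemannian stochastic gradient method: a Riemannian gradient step plus vanishing drift and an absolutely continuous noise term with nondegenerate projection onto $\tangent{\bar x}{\M}$.

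\textbf{Stochastic nonconvergence and genericity.} On this shadow iteration I would invoke the Pemantle-type nonconvergence argument, extended as advertised in the abstract from $\R^n$ to a smooth manifold with persistent stochastic perturbations. Assuming $\bar x$ is a strict saddle of $f_v|_\M$, meaning the Riemannian Hessian has a negative eigenvalue, the absolute continuity and nontrivial spread of $\nu_k$ produce enough variation in the unstable direction to escape any small neighborhood of $\bar x$ almost surely, so convergence to $\bar x$ has probability zero. To close the loop, I would apply a definable Sard-type theorem to the parametric family $\{f_v\}_{v \in \R^n}$ and argue that for almost every $v$ every critical point of $f_v$ restricted to each stratum is nondegenerate; consequently any such critical point that is not a local minimizer of $f_v$ is automatically a strict saddle. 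Combined with the previous step and the fact that any subsequential limit of the iterates must be critical for some $f_v|_\M$, this forces all accumulation points to be local minimizers of $f_v$.

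\textbf{Main obstacle.} The hardest step will be the passage from subdifferential regularity and definability to the quantitative Riemannian decomposition of $\nabla f(x)$ near a stratum---that is, formulating the right nonsmooth stratification conditions and extracting from them a usable one-step error bound that makes the shadow iteration on $\M$ a genuine approximate Riemannian gradient method. Once that infrastructure is in place, the Pemantle nonconvergence step and the definable Sard step should be adaptations of existing machinery to the new stratified setting.
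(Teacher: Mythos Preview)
Your overall architecture matches the paper's: generic Sard-type perturbation to force critical points onto active manifolds with a strict-saddle dichotomy, a shadow iteration on the manifold that is approximately Riemannian stochastic gradient descent, and a Pemantle-type escape argument. Two points deserve more care, and the first is a genuine gap rather than mere vagueness.

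\textbf{The normal part does not become small.} You write that the decomposition has ``the normal part controlled in terms of $\dist(x,\M)$.'' This is the wrong picture: near an active manifold the normal component of any subgradient is bounded \emph{away} from zero by the sharpness condition in Definition~\ref{defn:ident_man}, so it cannot be absorbed into a vanishing error. What the paper actually needs (and proves) are two separate facts: the tangential mismatch satisfies $\|P_{T_\M(y)}(\partial_c f(x)-\nabla_\M f(y))\|=O(\|x-y\|)$ (strong $(a)$, your ``tangential part converges''), while the normal component satisfies the directional \emph{aiming} inequality $\langle v, x-P_\M(x)\rangle\geq \mu\,\dist(x,\M)$ (Corollary~\ref{cor:prox-aiming_gen}). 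The aiming inequality is what drives $\dist(x_k,\M)\to 0$ at a summable rate (Proposition~\ref{prop:gettingclosertothemanifold}), and only \emph{after} that rate is established can the shadow error in \eqref{eqn:shadow:eq:iteration} be shown to be $O(\alpha_k^2)$ in expectation. Without separating these two pillars you cannot bound the error term $E_k$ well enough for the Pemantle argument to go through; the paper's Theorem~\ref{thm: nonconvergence to unstable points} requires $\EE[1_{\Omega_0}\sum_{k\ge n}\alpha_k\|E_k\|]=O(\sum_{k\ge n}\alpha_k^2)$, and this is precisely what the aiming-driven distance bound delivers.

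\textbf{From accumulation points to convergence.} Your conclusion ``all accumulation points are local minimizers'' is weaker than the theorem, which asserts convergence. The paper closes this by noting that generically there are only finitely many Clarke critical points, the set of limit points of $\{x_k\}$ is connected (since $\|x_{k+1}-x_k\|\to 0$), and every limit point is critical by prior work; hence the bounded sequence actually converges. Only then does Theorem~\ref{thm: nonconvergence to unstable points} apply to rule out the saddle alternative.
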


Subdifferential regularity is a common assumption in nonsmooth analysis \cite{rockafellar2009variational,clarke2008nonsmooth} and is in particular valid for  weakly convex functions. Weakly convex functions are those for which the assignment $x\mapsto f(x)+\frac{\rho}{2}\|x\|^2$ is convex for some $\rho\in \R$; equivalently, these are exactly the functions whose epigraph has positive reach in the sense of Federer \cite{federer1959curvature}. This function class is broad and includes convex functions, smooth functions with Lipschitz continuous gradient, and any function of the form $f(x)=h(c(x))+r(x)$, where $h$ is a Lipschitz convex function, $r$ is a convex function taking values in $\R\cup\{\infty\}$, and $c(\cdot)$ is a smooth map with Lipschitz Jacobian. Classical literature highlights
the importance of such composite functions in optimization \cite{poliquin1996prox,poliquin1992amenable,rockafellar1981favorable, fletcher1982model,shapiroreducible}, while recent advances in statistical learning and signal processing have further reinvigorated the problem class. For
example, nonlinear least squares, phase retrieval \cite{davis2020nonsmooth,duchi2019solving,eldar2014phase}, robust principal component analysis \cite{candes2011robust, chandrasekaran2011rank}, and adversarial learning \cite{jin2020local,ostrovskii2021efficient} naturally lead to composite/weakly convex problems.
We refer the reader to the recent expository articles \cite{drusvyatskiy2020subgradient,drusvyatskiy2017proximal} for more details on this problem class and its numerous applications.

Though our arguments make heavy use of subdifferential regularity, we conjecture that the conclusion of Theorem~\ref{thm:inform} is valid without this assumption. We note in passing that in the smooth setting, the noiseless gradient method ($\nu_k\equiv 0$) applied to a Morse function is also known to converge only to local minimizers, as long as it is initialized outside of a certain Lebesgue null set \cite{gradient_descent_jason,Lee:2019:FMA:3349830.3349888}. It is unclear how to extend this class of results to the nonsmooth setting, without explicitly incorporating noise injection $\nu_t$ as we do here.

\subsection{Main ingredients of the proof.}
As the starting point, let us recall the baseline guarantee from \cite{davis2021proximal} for the subgradient method when applied to a semi-algebraic function $f$, or more generally one definable in an o-minimal structure. The main result of \cite{davis2020stochastic} shows that for such functions, almost surely,  every limit point $\bar x$ of the subgradient sequence $\{x_k\}$ is Clarke critical. Explicitly, this means that the zero vector lies in the  Clarke subdifferential   
$$\partial_c f(\bar x)=\conv\left\{\lim_{i\to\infty} \nabla f(y_i): y_i\in \dom (\nabla f), y_i\to \bar x \right\}.$$
Therefore, our task reduces to isolating geometric conditions around extraneous Clarke critical points which facilitate local escape of the subgradient sequence. 

The main difficulty in contrast to the smooth setting is that there is no simple analogue of the Morse lemma that can reduce a  nonsmooth function to a common functional form by a diffeomorphism. Instead, a fundamentally different idea is required. Our arguments focus on a certain smooth manifold that captures the ``nonsmooth activity'' of the function near a critical point. Formal models of such manifolds have appeared throughout the optimization literature, notably in \cite{wright1993identifiable,lewis2002active,lemarecha2000,mifflin2005algorithm,shapiroreducible,drusvyatskiy2014optimality}. 
Following \cite{drusvyatskiy2014optimality}, a smooth embedded submanifold $\mathcal{M}$ of $\R^d$ is called {\em active} for $f$ at $\bar x$ if $(i)$ the restriction of $f$ to $\cM$ is smooth near $\bar x$, and $(ii)$ the subgradients $w\in \partial_c f(x)$ are uniformly bounded away from zero at all points $x\in \R^n\setminus\mathcal{M}$ near $\bar x$. For subdifferentially regular functions, such manifolds are geometrically distinctive in that $f$ varies smoothly along $ \mathcal{M}$ and sharply in directions normal to $\mathcal{M}$. 
As an illustration, Figure~\ref{subsec:partlysmooth_man} depicts a nonsmooth function, having the $y$-axis as the active manifold around the the critical point (origin). A critical point $\bar x$ is called an {\em active strict saddle} if $f$ decreases quadratically along some smooth path in the active manifold $\mathcal{M}$ emanating from $\bar x$. Returning to Figure~\ref{fig:activemanifold}, the origin is indeed an active strict saddle since $f$ has negative curvature along the $y$-axis at the origin. 
Our focus on active manifolds and active saddles is justified because these structures are in a sense generic for definable functions. Indeed, the earlier work \cite{davis2021proximal,MR3461323} shows that for a definable function $f$, there exists a full-measure set of perturbations $v\in \mathcal{V}$ such that every critical point $\bar x$ of the tilted function $f_v(x)= f(x)-\langle v,x\rangle$ lies on a unique active manifold and is either a local minimizer or an active strict saddle.

\begin{figure*}[h!]
	\centering
	\begin{subfigure}{0.5\textwidth}
		\centering
		\includegraphics[width=\textwidth]{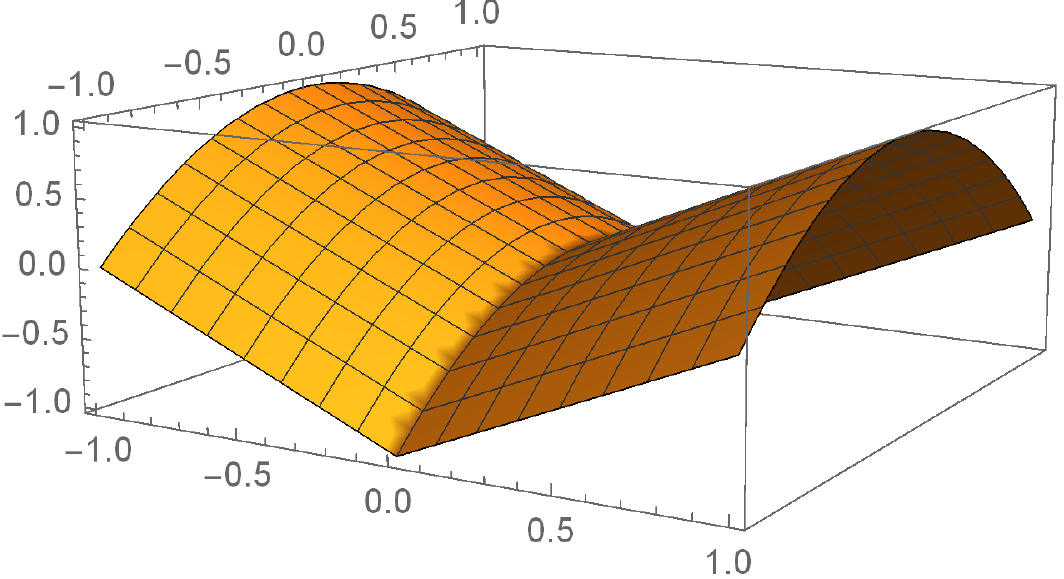}
		\caption{The function $f(x,y)$}\label{subsec:partlysmooth_man}
	\end{subfigure}%
	~ 
	\begin{subfigure}{0.5\textwidth}
		\centering
		\includegraphics[scale=0.5]{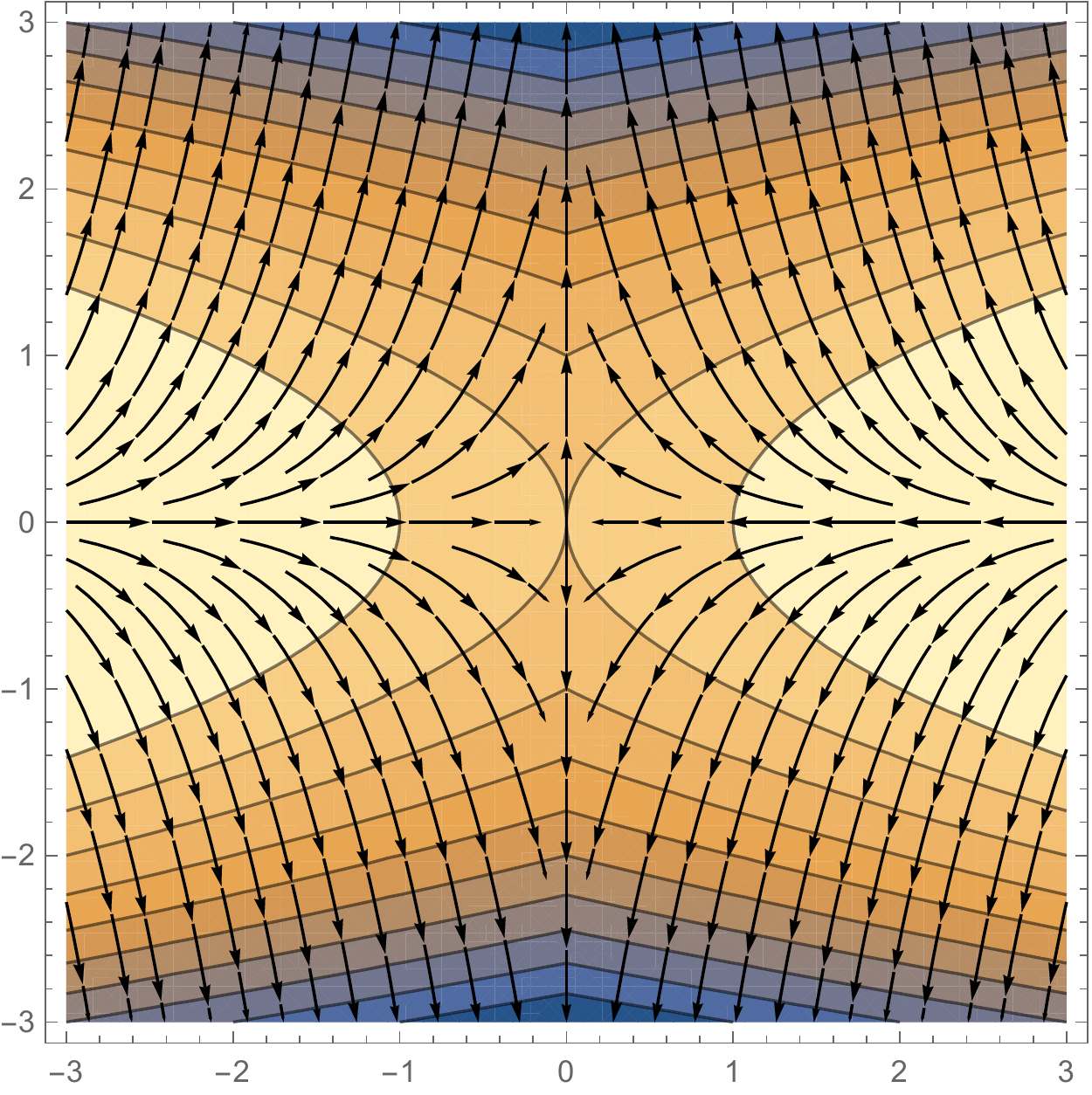}
		\caption{Subgradient flow $\dot{\gamma}\in -\partial_c f (\gamma)$}\label{subsec:partlysmooth_grad}
	\end{subfigure}
	\caption{The $y$-axis is an active manifold for the function $f(x,y)=|x|-y^2$ at the origin.}\label{fig:activemanifold}
\end{figure*}

The importance of the active manifold for subgradient dynamics is best illustrated in continuous time by looking at the trajectories of the differential inclusion $\dot{\gamma}\in -\partial_c f(\gamma)$. Returning to the running example, Figure~\ref{subsec:partlysmooth_grad} shows that the set of initial conditions that are attracted to the critical point by subgradient flow ($x$-axis) has zero measure.
It appears therefore that although the subgradient method never reaches the active manifold, it nonetheless inherits desirable properties from the function along the manifold, e.g., saddle point avoidance. In this work, we rigorously verify this general phenomenon.

Our central observation is that under two mild regularity conditions on $f$, which we will describe shortly, the subgradient dynamics can be understood as an inexact Riemannian gradient method on the restriction of $f$ to $\mathcal{M}$. Explicitly, we will find that the ``shadow sequence''
$
y_k = P_{\cM}(x_k),
$
 satisfies the recursion 
\begin{align}\label{eq:shadowintro}
y_{k+1} = y_k - \alpha_k \nabla_{\cM} f(y_k) + O(\alpha_k^2),
\end{align}
near $\bar x$, where $P_{\cM}(\cdot)$ is the nearest-point projection onto $\mathcal{M}$ and $\nabla_{\cM} f$ denotes the \emph{covariant gradient} of $f$ along $\cM$.\footnote{The covariant gradient $\nabla_{\cM} f(y)$ is the projection onto $T_{\cM}(y)$ of $\nabla \hat f(y)$ where $\hat f$ is any $C^1$ smooth function defined on a neighborhood $U$ of $\bar x$ and that agrees with $f$ on $U\cap \cM$.}
Notice that the error term in \eqref{eq:shadowintro} scales quadratically in the stepsize $\alpha_k$, and this will be crucially used in our arguments. 
The dynamic equation \eqref{eq:shadowintro} will allow us to prove that the subgradient iterates $x_k$ eventually escape from any small neighborhood around an active strict saddle $\bar x$ of $f$.

The validity of \eqref{eq:shadowintro} relies on two regularity properties of $f$ that we now describe. Reassuringly, we will see that both properties are generic in the sense that they hold along the active manifolds of almost every tilt perturbation of a definable function. \paragraph{Regularity property I: aiming towards the manifold.}{
The first condition we require is simply that near the critical point, subgradients are well aligned with directions pointing towards the nearest point on the manifold. Formally, we model this condition with the \emph{proximal aiming} inequality: 
\begin{equation}\label{eqn:aiming_intro}
\langle v,x-P_{\cM}(x)\rangle\geq c\cdot \dist(x,\cM) \qquad \text{for all $x$ near $\bar x$ and $v  \in \partial f_c(x).$}
\end{equation}
for some constant $c>0$.
It is not hard to see that if $f$ is subdifferentially regular and $\mathcal{M}$ is its active manifold, then proximal aiming \eqref{eqn:aiming_intro} is implied by the regularity condition:
\begin{equation}\label{eqn:b_reg_intro}
f(y) \geq f(x) +\langle v,y - x\rangle+o(\|y-x\|)\qquad \textrm{as }x\to \bar x,~ y\stackrel{\cM}{\to} \bar x, \textrm{ with }v\in \partial_c f(x).
\end{equation}
We refer to \eqref{eqn:b_reg_intro} as {\em (b)-regularity of $f$ along $\cM$ at $\bar x$}, for reasons that will be clear shortly.
This estimate stipulates that subgradients $v\in \partial_c f(x)$  yield affine minorants of $f$ up to first-order near $\bar x$, but only when comparing points $x$ and $y\in\cM$. This condition is automatically true for weakly convex functions, and holds in much broader settings as we will see.
 }

\paragraph{Regularity property II: subgradients on and off the manifold.}{
The second regularity property posits that subgradients on and off the manifold are aligned in tangent directions up to a linear error, that is, there exists $C> 0$ satisfying
\begin{equation}\label{eqn:subgrad_connect}
\|P_{T_{\cM}(y)}(\partial_c f(x)- \nabla_{\cM} f(y))\| \leq C\cdot\|x-y\| \qquad \text{for all $x\in {\bf R}^d$ and $y\in \cM$ near $\bar x$}.
\end{equation}
Whenever \eqref{eqn:subgrad_connect} holds, we say that $f$ is {\em strongly (a)-regular along $\cM$}, for reasons that will become apparent shortly.
}

\bigskip
\bigskip

The analytic conditions $(b)$ and strong $(a)$ play a central role in our work. Upon interpreting these conditions geometrically in terms of normals to the epigraph of $f$, a striking resemblance emerges to the classical regularity conditions in stratification theory due to Whitney \cite{whitney1992elementary,whitney1992local,whitney1992tangents}, Kuo \cite{kuo1972characterizations}, and Verdier \cite{verdier1976stratifications}. There is an important distinction, however, that is worth emphasizing. Regularity conditions in stratification theory deal with compatibility between two smooth manifolds. In contrast, we will be concerned with compatibility between a {\em specific nonsmooth set}---the epigraph of $f$---and the {\em specific manifold}---the graph of the restriction of $f$ to the active manifold $\cM$. 
Consequently, a significant part of the paper develops conditions $(b)$ and strong $(a)$ in this more general setting. Some highlights include a thorough calculus, genericity results under linear perturbations, and a proof that strong $(a)$  implies $(b)$ for definable functions. We moreover argue that the two conditions are common in eigenvalue problems because they satisfy the so-called transfer principle. Namely, any orthogonally invariant function of symmetric matrices will satisfy the regularity condition, as long as its restriction to diagonal matrices satisfies the analogous property.  Summarizing, typical functions, whether built from concrete structured examples or from unstructured linear perturbations, admit an active manifold around each critical point along which the objective function is both $(b)$ and strongly $(a)$ regular.

In the final stages of completing this manuscript, we became aware of the concurrent and
independent work \cite{bianchi2021stochastic}. The two papers, share similar core ideas, rooted in strong (a) regularity and proximal aiming. However, the proof of the main result in \cite{bianchi2021stochastic}---avoidance of saddle points---fundamentally relies on a claimed equivalence in \cite[Theorem 4.1]{benaim2005stochastic}, which is known to be false. The most recent draft on arxiv takes a different approach that does not rely on \cite[Theorem 4.1]{benaim2005stochastic}. The same equivalence was used in the follow up preprint \cite{schechtman2021stochastic} by a subset of the authors; this paper has subsequently been withdrawn from arxiv.

\subsection{Outline of the paper.}

The remainder of the paper is organized as follows. Section~\ref{sec:notation} introduces all the necessary preliminaries that will be used in the paper: smooth manifolds~\S\ref{sec:manifolds}, normal cones~\S\ref{sec:normalcones}, subdifferentials~\S\ref{sec:subdifferentials}, and active manifolds~\S\ref{sec:activemanifolds}. Section~\ref{sec:fund_reg_cond} introduces regularity properties of (nonsmooth) sets and functions generalizing the ``compatibility" conditions used in stratification theory. The section closes with a theorem asserting that conditions $(b)$ and strong $(a)$ hold along the active manifold around any limiting critical of generic semialgebraic problems. Section~\ref{sec:algos_main} introduces the algorithms that we study in the paper and the relevant assumptions. Section~\ref{sec:twopillarsmainresults} discusses the two pillars of our algorithmic development (aiming and strong (a)-regularity) and the dynamics of the shadow iteration. Section~\ref{sec:avoidingsaddlepointschapter3} presents the main results of the paper on saddle-point avoidance.
Most of the technical proofs from Section~\ref{sec:twopillarsmainresults} and \ref{sec:avoidingsaddlepointschapter3} appear as Sections~\ref{sec:proofoftwopillars} and \ref{sec:proofsofstochastic}, respectively.

\section{Notation and basic constructions}\label{sec:notation}
We follow standard terminology and notation of nonsmooth and variational analysis, following mostly closely the monograph of Rockafellar-Wets \cite{rockafellar2009variational}. Other influential treatments of the subject include \cite{mordukhovich2006variational,penot2012calculus,clarke2008nonsmooth,borwein2010convex}. Throughout, we let $\E$ and $\bf Y$ denote Euclidean spaces with 
inner products denoted by $\langle\cdot,\cdot \rangle$ and the induced norm $\|x\|=\sqrt{\langle x,x\rangle}$. The symbol ${\bf B}$ will stand for the closed unit ball in $\E$, while $B_r(x)$ will denote the closed ball of radius $r$ around a point $x$. The closure of any set $Q\subset\E$ will be denoted by $\cl Q$, while its convex hull will be denoted by $\conv\, Q$. The relative interior of a convex set $Q$ will be written as $\ri Q$. The lineality space of any convex cone is the linear subspace $\Lin(Q):=Q\cap -Q$.

For any function $f\colon\E\to\R\cup\{+\infty\}$,  the {\em domain}, {\em graph}, and {\em epigraph} are defined as
\begin{align*}
\dom\, f&:=\{x\in \E: f(x)<\infty\},\\
\gph f&:=\{(x,f(x))\in \E\times \R: x\in\dom\, f\},\\
\epi\, f&:=\{(x,r)\in \E\times \R: r\geq f(x)\},
\end{align*}
respectively. We say that $f$ is closed if $\epi f$ is a closed set, or equivalently if $f$ is lower-semicontinuous at every point in its domain.
 If $\cM$ is some subset of $\E$, the symbol $f\big|_{\cM}$ denotes the restriction of $f$ to $\cM$ and we set $\gph f\big|_{\cM}:=(\gph f)\cap (\cM\times \R)$. We say that $f$ is {\em sublinear} if its epigraph is a convex cone, and we then define the {\em lineality space} of $h$  to be $\Lin(h):=\{x: h(x)=-h(-x)\}$. The graph of $h$ restricted to $\Lin(h)$ is precisely the lineality space of $\epi h$.

 The {\em distance} and the {\em projection} of a point $x\in\E$ onto a set $Q\subset\E$ are 
\begin{align*}
d(x,Q):=\inf_{y\in Q}\|y-x\|\qquad\textrm{and}\qquad P_Q(x):=\argmin_{y\in Q}\|y-x\|,
\end{align*}
respectively. The indicator function of a set $Q$, denoted by $\delta_Q\colon\E\to\R\cup\{\infty\}$, is defined to be zero on $Q$ and $+\infty$ off it. 
The {\em gap} between any two closed  cones $U, V\subset\E$ is defined as
$$\Delta(U,V):=\sup\{\dist(u,V): u\in U,\,\|u\|=1\}.$$

\subsection{Manifolds}\label{sec:manifolds}
We next set forth some basic notation when dealing with smooth embedded submanifolds of $\E$. Throughout the paper, all smooth manifolds $\mathcal{M}$ are assumed to be embedded in $\E$ and we consider the tangent and normal spaces to $\mathcal{M}$ as subspaces of $\E$. Thus, a set $\M\subset\E$ is a {\em $C^p$ manifold} (with $p\geq 1$)  if around any point $x\in \M$ there exists an open neighborhood $U\subset\E$ and a $C^p$-smooth map $F$ from $U$ to some Euclidean space $\bf{Y}$ such that the Jacobian $\nabla F(x)$ is surjective and equality $\M\cap U=F^{-1}(0)$ holds. Then the tangent and normal spaces to $\M$ at $x$ are simply $T_{\M}(x):=\Nul(\nabla F(x))$ and $N_{\M}(x):=(T_{\M}(x))^{\perp}$, respectively.  Note that for $C^p$ manifolds $\M$ with $p\geq 1$, the projection $P_{\M}$ is $C^{p-1}$-smooth on a neighborhood of each point $x$ in $\mathcal{M}$, and is $C^p$ smooth on the tangent space $T_{\M}(x)$ \cite{miller2005newton}. Moreover, the inclusion $\range(\nabla P_{\cM}(x)) \subseteq \tangentM{x}$ holds for all $x$ near $\cM$ and the equality $\nabla P_{\cM}(x) = P_{\tangentM{x}}$ holds for all $x \in \cM$.

Let $\M\subset\E$ be a $C^p$-manifold for some $p\geq 1$. Then a function $f\colon \M\to\R$ is called $C^p$-smooth around a point $x\in \M$ if there exists a $C^p$ function $\hat f\colon U\to \R$ defined on an open neighborhood $U$ of $x$ and that agrees with $f$ on $U\cap \M$. Then the {\em covariant gradient of $f$ at $x$} is  defined to be the vector 
$\nabla_{\M} f(x):=P_{T_{\M}(x)}(\nabla \hat f(x)).$
When $f$ and $\M$ are $C^2$-smooth, the {\em covariant Hessian of $f$ at $x$} is  defined to be the unique self-adjoint 
bilinear form $\nabla_{\M}^2 f(x)\colon T_{\M}(x)\times T_{\M}(x)\to \R$ satisfying
$$\langle \nabla_{\M}^2 f(x)u, u\rangle=\frac{d^2}{dt^2}f(P_{\M}(x+tu))\mid_{t=0}\qquad \textrm{for all }u\in T_{\M}(x).$$
If $\cM$ is $C^3$-smooth, then we can identify $\nabla_{\M}^2 f(x)$ with the matrix $P_{T_{\cM}( x)}\nabla^2 \hat f(x)P_{T_{\cM}( x)}$.

\subsection{Normal cones and prox-regularity}\label{sec:normalcones}
The symbol ``$o(h)$ as $h\to 0$'' stands for any univariate function $o(\cdot)$ satisfying $o(h)/h\to 0$ as $h\searrow 0$. The {\em Fr\'{e}chet normal cone} to a set  $Q\subset\E$ at a point $x\in \E$, denoted $\hat N_Q(x)$, consists of all vectors $v\in \E$ satisfying 
\begin{equation}\label{eqn:frechet}
\langle v,y-x\rangle\leq o(\|y-x\|)\quad\textrm{as}\quad y\to x\textrm{ in }Q.
\end{equation}
The {\em limiting normal cone} to $Q$ at $ x\in Q$, denoted by $N_Q(x)$, consists of all vectors $v\in\E$ for which there exist sequences $x_i\in Q$ and $v_i\in \hat N_Q(x_i)$ satisfying $(x_i,v_i)\to (x,v)$. The {\em Clarke normal cone} is the closed convex hull $N^c_Q(x)=\cl\conv\, N_Q(x)$. Thus the inclusions
\begin{equation}\label{eqn:includ_normal_vec}
\hat N_Q(x)\subset N_Q(x)\subset N_Q^c(x),
\end{equation}
hold for all  $x\in Q$. The set $Q$ is called {\em Clarke regular} at $\bar x\in Q$ if $Q$ is locally closed around $\bar x$ and equality $N^c_Q(\bar x)=\hat N_Q(\bar x)$ holds. In this case, all  inclusions in \eqref{eqn:includ_normal_vec} hold as equalities.

A particularly large class of Clarke regular sets consists of those called prox-regular. Following \cite{poliquin2000local,clarke1995proximal}, a locally closed set $Q\subset \E$ is called {\em prox-regular at $\bar x\in Q$} if the projection $P_Q(x)$ is a singleton set for all points $x$ near  $\bar x$. Equivalently \cite[Theorem 1.3]{poliquin2000local}, a locally closed set $Q$ is prox-regular 
at $\bar x\in Q$ if and only if there exist constants $\epsilon,\rho>0$ satisfying 
$$\langle v,y-x\rangle\leq \frac{\rho}{2}\|y-x\|^2,$$
for all $y,x\in Q\cap B_{\epsilon}(\bar x)$ and all normal vectors $v\in N_Q(x)\cap \epsilon\bf{B}$.
If $Q$ is prox-regular at $\bar x$, then the projection $P_Q(\cdot)$ is automatically locally Lipschitz continuous around $\bar x$ \cite[Theorem 1.3]{poliquin2000local}. Common examples of prox-regular sets are convex sets and $C^2$ manifolds, as well as sets cut out by finitely many $C^2$ inequalities under transversality conditions \cite{poliquin2010calculus}. Prox-regular sets are closely related to proximally smooth sets \cite{clarke1995proximal} and sets with positive reach  \cite{federer1959curvature}.

\subsection{Subdifferentials and weak-convexity}\label{sec:subdifferentials}
Generalized gradients of functions can be defined through the normal cones to epigraphs. 
Namely, consider a function $f\colon\E\to\R\cup\{\infty\}$ and a point $x\in \dom\, f$. The  {\em Fr\'{e}chet}, {\em limiting}, and {\em Clarke} subdifferentials of $f$ at $x$ are defined, respectively, as
\begin{equation}\label{eqn:allsubdif}
\begin{aligned}
\hat \partial f(x)&:=\{v\in \E: (v,-1)\in \hat N_{\epi f}(x,f(x))\},\\
 \partial f(x)&:=\{v\in \E: (v,-1)\in N_{\epi f}(x,f(x))\},\\
\partial_c f(x)&:=\{v\in \E: (v,-1)\in N^c_{\epi f}(x,f(x))\}.
\end{aligned}
\end{equation}
Explicitly, the inclusion $v\in \hat \partial f(x)$ amounts to requiring the lower-approximation property:
$$f(y)\geq f(x)+\langle v,y-x\rangle+o(\|y-x\|)\quad \textrm{as}\quad y\to x.$$
Moreover, a vector $v$ lies in $\partial f(x)$ if and only if there exist sequences $x_i\in \E$ and Fr\'{e}chet subgradients $v_i\in \hat \partial f(x_i)$ satisfying $(x_i,f(x_i),v_i)\to (x,f(x),v)$ as $i\to\infty$. If $f$ is locally Lipschitz continuous around $x$, then equality $\partial_c f(x)=\conv\, \partial f(x)$ holds. A point $\bar x$ satisfying $0\in\partial f(x)$ is called {\em critical} for $f$, while 
a point satisfying $0\in\partial_c f(x)$ is called {\em Clarke critical}. The distinction disappears for subdifferentially regular functions. We say that $f$ is {\em subdifferentially regular} at $ x\in \dom f$ if the epigraph of $f$ is Clarke regular at $(x,f(x))$. 

 The three subdifferentials defined in \eqref{eqn:allsubdif} fail to capture the horizontal normals to the epigraph---meaning those of the form $(v,0)$. Such horizontal normals play an important role in variational analysis, in particular for developing subdifferential  calculus rules.
Consequently, we define the {\em limiting} and {\em Clarke horizon subdifferentials}, respectively, by:
\begin{equation}\label{eqn:horiz_subdiff}
\begin{aligned}
\partial^{\infty} f(x)&:=\{v\in \E: (v,0)\in N_{\epi f}(x,f(x))\},\\
\partial^{\infty}_c f(x)&:=\{v\in \E: (v,0)\in N^c_{\epi f}(x,f(x))\}.
\end{aligned}
\end{equation}

A function $f\colon\E\to\R\cup\{\infty\}$ is called {\em $\rho$-weakly convex} if the quadratically perturbed function $x\mapsto f(x)+\frac{\rho}{2}\|x\|^2$ is convex. Weakly convex functions are subdifferentially regular. Indeed, the subgradients of a $\rho$-weakly convex function yield quadratic minorants, meaning
$$f(y)\geq f(x)+\langle v,y-x\rangle-\frac{\rho}{2}\|y-x\|^2$$
all points $x,y\in \dom\, f$ and all subgradients $v\in \partial f(x)$. 
The epigraph of any weakly convex function is a prox-regular set at each of its points.
A primary example of weakly convex functions consists of compositions of Lipschitz convex functions with smooth maps \cite{drusvyatskiy2017proximal,davis2019stochastic}.

\subsection{Active manifolds and active strict saddles}\label{sec:activemanifolds}
Critical points of typical nonsmooth functions lie on a certain manifold that captures the activity of the problem in the sense that critical points of slight linear tilts of the function do not leave the manifold. Such active manifolds have been modeled in a variety of ways, including identifiable surfaces \cite{wright1993identifiable}, partly smooth manifolds \cite{lewis2002active}, $\mathcal{UV}$-structures \cite{lemarecha2000,mifflin2005algorithm}, $g\circ F$ decomposable functions \cite{shapiroreducible}, and minimal identifiable sets \cite{drusvyatskiy2014optimality}.

In this work, we adopt the following formal model of activity,  explicitly used in\cite{drusvyatskiy2014optimality}, where the only difference is that we focus on the Clarke subdifferential instead of the limiting one.

\begin{definition}[Active manifold]\label{defn:ident_man}{\rm 
Consider a function $f\colon\R^d\to\R\cup\{\infty\}$ 
	and 
fix a set $\mathcal{M} \subseteq \RR^d$ 
	containing 
	a point $\bar x$ satisfying $0\in \partial_c f(\bar x)$. 
Then $\mathcal{M}$ 
	is called an
{\em  active} $C^p${\em-manifold around} $\bar x$  
	if 
		there exists 
	a constant $\epsilon>0$ satisfying the following.
\begin{itemize}
\item {\bf (smoothness)}  
The set $\mathcal{M}$ 
	is 
a $C^p$-smooth manifold near $\bar x$ and  the restriction 
	of 	$f$ 
	to $\mathcal{M}$ 
	is $C^p$-smooth near $\bar x$.
\item {\bf (sharpness)} 
The lower bound holds:
$$\inf \{\|v\|: v\in \partial_c f(x),~x\in U\setminus \cM\}>0,$$
where we set $U=\{x\in B_{\epsilon}(\bar x):|f(x)-f(\bar x)|<\epsilon\}$.
\end{itemize}}
\end{definition}

The sharpness condition simply means that the subgradients of $f$ must be uniformly bounded away from zero at points off the manifold that are sufficiently close to $\bar x$ in distance and in function value. The localization in function value can be omitted for example if $f$ is weakly convex or if $f$ is continuous on its domain; see \cite{drusvyatskiy2014optimality} for details.

Intuitively, the active manifold has the distinctive feature that the the function grows linearly in normal directions to the manifold; see Figure~\ref{subsec:partlysmooth_man} for an illustration. This is summarized by the following theorem from  \cite[Theorem D.2]{davis2019stochastic_geo}. 
\begin{proposition}[Identification implies sharpness]\label{prop: sharpness}
	Suppose that a closed function $f\colon\E\to\R\cup\{\infty\}$ admits an active manifold $\cM$ at  a point $\bar x$ satisfying $0\in \hat \partial f(\bar x)$. 
	Then there exist constants $c, \epsilon > 0 $ such that 
	\begin{equation}\label{eqn:growth_cond}
		f(x) - f(\proj_{\cM}(x)) \ge  c\cdot \dist(x, \cM), \qquad \forall x \in B_{\epsilon}(\bar x).
	\end{equation}
\end{proposition}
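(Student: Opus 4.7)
The plan is to prove the growth estimate by a perturbation-minimization argument combining the sharpness property of the active manifold with the first-order information $0\in\hat{\partial}f(\bar x)$. I would first reduce to a function vanishing on $\cM$. Since $\cM$ is a $C^p$-manifold and $f|_{\cM}$ is $C^p$-smooth near $\bar x$, a standard construction (composing $f|_\cM$ with a smooth retraction onto $\cM$, or using a local tubular parameterization) yields a $C^1$ extension $F$ of $f|_{\cM}$ to an ambient neighborhood of $\bar x$. Testing the Fr\'echet inequality $f(y)\geq f(\bar x)+o(\|y-\bar x\|)$ along smooth curves in $\cM$ forces $\nabla_{\cM}f(\bar x)=0$, and the extension can be arranged so that $\nabla F(\bar x)=0$ as well. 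Setting $h:=f-F$, one obtains $h|_{\cM}\equiv 0$ near $\bar x$, $h(\bar x)=0$, and $0\in\hat{\partial}h(\bar x)$. Moreover, since $\partial_c h(x)=\partial_c f(x)-\nabla F(x)$ and $\nabla F$ is continuous with $\nabla F(\bar x)=0$, the sharpness hypothesis passes to $h$: on a possibly smaller neighborhood $U'$ of $\bar x$, every $v\in\partial_c h(x)$ with $x\in U'\setminus\cM$ satisfies $\|v\|\geq c_0/2$, where $c_0>0$ denotes the sharpness constant of $f$.

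With these preparations I would establish the claim with explicit constant $c:=c_0/10$ by contradiction. If the inequality fails on every ball $B_{\epsilon}(\bar x)$, there exist $x_n\to\bar x$ with $x_n\notin\cM$ and $h(x_n)<c\,r_n$, where $r_n:=\dist(x_n,\cM)>0$. Fix $\epsilon_0\in(0,c/2)$ small enough that $h(z)\geq -\epsilon_0\|z-\bar x\|$ on a neighborhood of $\bar x$ (available from $0\in\hat{\partial}h(\bar x)$). Introduce the penalized functional
\[
   \phi_n(z)\;:=\;h(z)+c\,\|z-x_n\|,
\]
and let $z_n^{\star}$ minimize $\phi_n$ over the closed ball $\bar{B}_{R_n}(\bar x)$ with $R_n:=5\|x_n-\bar x\|$; existence follows from lower semicontinuity of $\phi_n$ together with compactness. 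Two direct calculations rule out escape to the manifold or to the boundary. If $z_n^{\star}\in\cM$ then $h(z_n^{\star})=0$ and $\|z_n^{\star}-x_n\|\geq r_n$, so $\phi_n(z_n^{\star})\geq c\,r_n$, contradicting $\phi_n(z_n^{\star})\leq\phi_n(x_n)<c\,r_n$. If $\|z_n^{\star}-\bar x\|=R_n$, then $\|z_n^{\star}-x_n\|\geq 4\|x_n-\bar x\|$ and the lower bound on $h$ gives $\phi_n(z_n^{\star})\geq(4c-5\epsilon_0)\|x_n-\bar x\|>c\|x_n-\bar x\|\geq c\,r_n$, again violating the upper bound.

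Consequently $z_n^{\star}$ is an interior local minimizer off $\cM$, so $0\in\hat{\partial}\phi_n(z_n^{\star})$. The Fr\'echet sum rule, applied with the convex term $c\|\cdot-x_n\|$ whose Fr\'echet subdifferential is a unit singleton at $z_n^{\star}\neq x_n$ and the closed unit ball at $z_n^{\star}=x_n$, produces a vector $v\in\hat{\partial}h(z_n^{\star})\subseteq\partial_c h(z_n^{\star})$ with $\|v\|\leq c$. Since $h(z_n^{\star})\to 0$, one also has $f(z_n^{\star})\to f(\bar x)$, and so for large $n$ the point $z_n^{\star}$ lies in $U'\setminus\cM$; the transferred sharpness then forces $\|v\|\geq c_0/2=5c$, an impossibility. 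The main subtlety is the choice of the enlarged radius $R_n\propto\|x_n-\bar x\|$ rather than $r_n$: the bad sequence may approach $\cM$ much faster than it approaches $\bar x$ (so that $r_n\ll\|x_n-\bar x\|$), and a ball of radius comparable to $r_n$ would leave no room to exploit the Fr\'echet bound $0\in\hat{\partial}h(\bar x)$ in ruling out boundary minima.
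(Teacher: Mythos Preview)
The paper does not supply its own proof of this proposition; it simply cites \cite[Theorem D.2]{davis2019stochastic_geo}. Your argument is a correct, self-contained proof, and the Ekeland-type variational strategy you employ is essentially the standard route to results of this kind.

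Two small points are worth tightening. First, what you actually establish is $h(x)\ge c\,\dist(x,\cM)$ for the shifted function $h=f-F$; translating back to $f$ picks up the term $F(x)-F(P_{\cM}(x))$, which is $o(\dist(x,\cM))$ because $\nabla F(\bar x)=0$. So the final constant for $f$ is slightly smaller than $c_0/10$, which is of course harmless. Second, in the sum-rule step your claim that one obtains $v\in\hat\partial h(z_n^{\star})$ is immediate only when $z_n^{\star}\neq x_n$, since then the penalty $c\|\cdot-x_n\|$ is differentiable at $z_n^{\star}$. In the case $z_n^{\star}=x_n$ the Fr\'echet sum rule runs the wrong direction; instead invoke the limiting sum rule (valid because the penalty is globally Lipschitz) to obtain $0\in\partial h(x_n)+c\mathbf{B}$, hence some $v\in\partial h(x_n)\subseteq\partial_c h(x_n)$ with $\|v\|\le c$. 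That is all the sharpness contradiction requires. You also correctly handle the function-value localization in the sharpness neighborhood by checking $h(z_n^{\star})\to 0$, which is the step most often overlooked in this argument.
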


Notice that there is a nontrivial assumption $0\in \hat \partial f(\bar x)$ at play in Proposition~\ref{prop: sharpness}. Indeed, under the weaker inclusion $0\in  \partial_c f(\bar x)$ the growth condition \eqref{eqn:growth_cond} may easily fail, as the univariate example $f(x)=-|x|$ shows. It is worthwhile to note that under the assumption $0\in \hat \partial f(\bar x)$, the active manifold is locally unique around $\bar x$ \cite[Proposition~8.2]{drusvyatskiy2014optimality}.

Active manifolds are useful  because they allow to reduce many questions about nonsmooth functions to a smooth setting. In particular, the notion of a strict saddle point of smooth functions naturally extends to a nonsmooth setting. The following definition is taken from~\cite{davis2019active}. See Figure~\ref{fig:activemanifold} for an illustration.

\begin{definition}[Active strict saddle]
{\rm
Fix an integer $p\geq 2$ and consider a closed function $f\colon\E\to\R\cup\{\infty\}$ and a point $\bar x$ satisfying $0\in \partial_c f(\bar x)$. We say that $\bar x$ is a {\em $C^p$ strict active saddle point of $f$} if $f$ admits a $C^p$ active manifold $\cM$ at $\bar x$ such that the inequality $\langle \nabla^2_{\cM} f(\bar x)u,u\rangle<0$ holds for some  $u\in T_{\cM}(\bar x)$.}
\end{definition}

It is often convenient to think about active manifolds of slightly tilted functions. Therefore, we say that $\cM$ is an {\em active $C^p$  manifold of $f$ at $\bar x$ for $v\in \partial_c f(\bar x)$} if  $\cM$ is an active $C^p$  manifold for the tilted function $x\mapsto f(x)-\langle v,x\rangle$ at $\bar x$. Active manifolds for sets are defined through their indicator functions. Namely a set $\cM\subset Q$ is {\em  an active $C^p$ manifold of $Q$ at $\bar x\in Q$ for  $v\in N^c_Q (\bar x)$} if it is an active $C^p$ manifold of the indicator function $\delta_Q$ at $\bar x$ for $v$.

\section{The four fundamental regularity conditions}\label{sec:fund_reg_cond}
This section introduces compatibility conditions between two sets, motivated by the works of Whitney \cite{whitney1992elementary,whitney1992local,whitney1992tangents}, Kuo \cite{kuo1972characterizations}, and Verdier \cite{verdier1976stratifications}. Our discussion builds on the recent survey of Trotman \cite{trotman2020stratification}. We illustrate the definitions with examples and prove basic relations between them. It is important to note that these classical works focused on compatibility conditions between smooth manifolds, wherein primal (tangent) and dual (normal) based characterizations are equivalent. In contrast, it will be more expedient for us to base definitions on normal vectors instead of tangents.  The reason is that when applied to epigraphs, such conditions naturally imply some regularity properties for the subgradients, which in turn underpin all algorithmic consequences in the paper.

Throughout this section, we fix two  sets $\cX$ and $\cY$ and a point $\bar x\in \cY$. The reader should keep in mind the most important setting when $\cY$ is a smooth manifold contained in the closure of $\cX$. The phenomena we study are naturally one-sided, and therefore we 
will deal with variational conditions that differ only in the choice of the orientation of the inequalities. With this in mind, in order to simplify notation,  we let $\diamond$ stand for any of the symbols in $\{\leq,=,\geq\}$.
 We begin with the extensions of the two classical conditions of Whitney \cite{whitney1992local,whitney1992tangents}.

\begin{definition}[Whitney conditions]{\rm 
Fix two sets $\cX,\cY\subset\E$. 
\begin{enumerate}
\item We say that $\cX$ is {\em $(a)$-regular along} $\cY$ if for any sequence $x_i\in \cX$ converging to a point $y\in \cY$ and any sequence of normals  $v_i\in N_{\cX}(x_i)$, every limit point of $v_i$  lies in $N_{\cY}(y)$.
\item  We say that $\cX$ is  {\em $(b_{\leq})$-regular along} $\cY$ if the estimate
\begin{equation}\label{eqn:eqn_b}
\langle v,y-x\rangle\leq o(\|y-x\|)
\end{equation}
holds for all $x\in\cX$, $y\in \cY$, and all $v\in N_{\cX}(x)\cap \bf{B}$.
Properties $(b_{\geq})$ and $(b_{=})$ are defined analogously with the inequality in \eqref{eqn:eqn_b} replaced by $\geq$ and $=$, respectively.
\end{enumerate}
More generally, we say that {\em $\cX$ is regular along $\cY$ near a point $\bar x\in \cY$},  in any of the above senses, if there exists a neighborhood $U$ of $\bar x$ such that $\cX\cap U$ is regular along $\cY\cap U$.}
\end{definition}

Both conditions $(a)$ and $(b_\diamond)$ are geometrically transparent. Condition $(a)$ simply asserts that ``limits of normals to $\cX$ are normal to $\cY$''---clearly a desirable property. Figure~\ref{fig:cartan} illustrates how condition $(a)$ may fail using the classical example of the Cartan umbrella $\cX=\{(x,y,z): z(x^2+y^2)=x^3\}$, which is not $(a)$-regular along the $z$-axis near the origin.
Explicitly, condition $(b_{\leq})$ means that for any sequences $x_i\in \cX$ and $y_i\in \cY$ converging to the same point, the condition  
$$\limsup_{i\to\infty}~ \left\langle v_i,\frac{y_i-x_i}{\|y_i-x_i\|}\right\rangle\leq 0,$$ 
holds,  where $v_i\in  N_{\cX}(x_i)$ are arbitrary unit normal vectors.
That is, the angle between the rays spanned by $x_i-y_i$ and any normal vector $v_i\in N_{\cX}(x_i)$ becomes obtuse  in the limit as $x_i\in \cX$ and $y_i\in \cY$ tend to the same point. Conditions $(b_{=})$ and $(b_{\geq})$ have analogous interpretations, with the word obtuse replaced by acute and ninety degrees, respectively. Note that when $\cX$ is a smooth manifold, the normal cone $N_{\cX}(x)$ is a linear subspace, and therefore all three versions of property $(b_{\diamond})$ are equivalent. On the other hand, a prox-regular set $\cX$ is $(b_{\leq})$-regular along any subset $\mathcal{Y}$. Moreover, semismooth sets $\cX$ in the sense of \cite{gfrerer2021semismooth,mifflin1977semismooth} are $(b_{=})$-regular along any singleton set $\cY:=\{\bar x\}$ contained in $\cX$.

\begin{figure}[t!]
	\centering
	\begin{subfigure}{0.3\textwidth}
		\centering
		\includegraphics[width=1\linewidth]{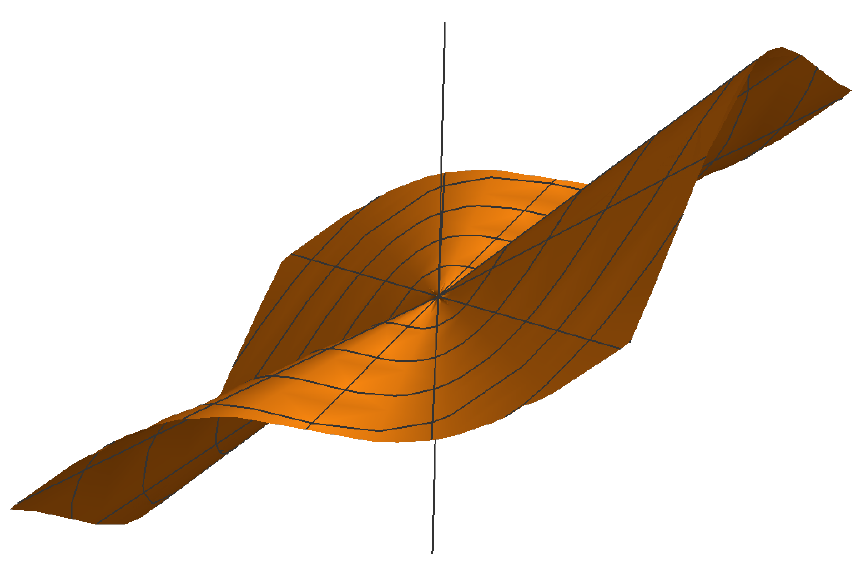}
		\caption{ $x^3=z(x^2+y^2)$\label{fig:cartan}}
\end{subfigure}\qquad\qquad
	\begin{subfigure}{0.3\textwidth}
		\centering
		\includegraphics[width=0.85\linewidth]{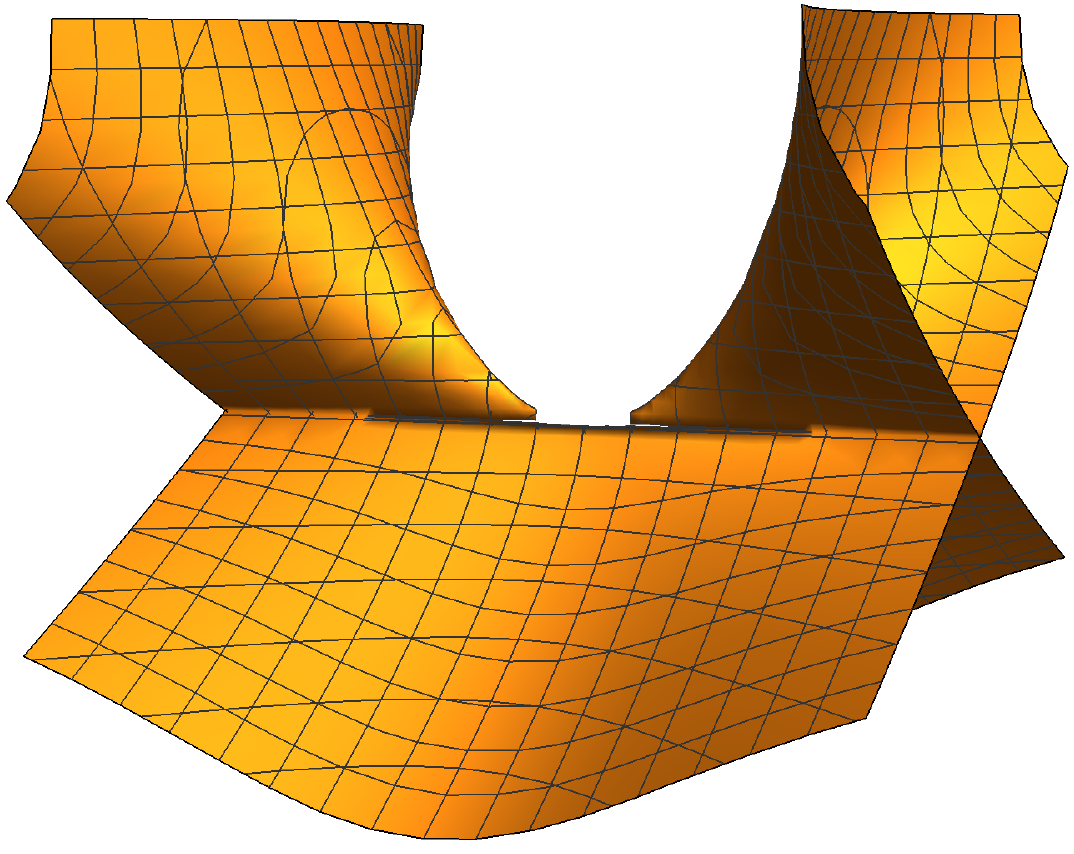}
		\caption{ $y^2=x^2z^2-z^3$\label{fig:hat}}
	\end{subfigure}%
	\caption{Illustrations of conditions $(a)$ and $(b)$.}
\end{figure}

We will use the following simple lemma frequently. It states that whenever $\cY$ is contained in $\cX$, condition $(a)$ simply amounts to the inclusion of normal cones, $N_{\cX}(\bar x)\subseteq N_{\cY}(\bar x)$.

\begin{lemma}[Inclusion of normal cones]\label{lemma:inclusion_normal}
Consider two sets $\cY\subseteq\cX\subseteq\E$. Then $\cX$ is $(a)$-regular along $\cY$ at $\bar x$ if and only if the inclusion $N_{\cX}(y)\subseteq N_{\cY}(y)$ holds for all $y\in \cY$. 
\end{lemma}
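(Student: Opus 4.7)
The plan is to prove the equivalence by translating both sides to statements about the limiting normal cone $N_{\cX}(\cdot)$ at points of $\cY$, and using two standard facts: first, the closure/robustness property of the limiting normal cone mapping (limits of limiting normals are limiting normals), and second, that since $\cY\subseteq\cX$, every $y\in\cY$ is automatically a point of $\cX$. The lemma should be understood as referring to points $y\in\cY$ in a suitable neighborhood of $\bar x$, consistent with the local nature of regularity near $\bar x$ in the definition.

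For the forward direction, I would fix $y\in\cY$ close to $\bar x$ and $v\in N_{\cX}(y)$. By definition of the limiting normal cone, there exist sequences $x_i\in\cX$ with $x_i\to y$ and $\hat v_i\in \hat N_{\cX}(x_i)$ with $\hat v_i\to v$. Since $\hat N_{\cX}(x_i)\subseteq N_{\cX}(x_i)$ (see~\eqref{eqn:includ_normal_vec}), the sequence $\hat v_i$ consists of limiting normals along $x_i\to y\in\cY$. Then $(a)$-regularity at $\bar x$ applied to this sequence yields that the limit $v$ lies in $N_{\cY}(y)$, which is what we wanted.

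For the reverse direction, suppose the inclusion $N_{\cX}(y)\subseteq N_{\cY}(y)$ holds throughout a neighborhood of $\bar x$ in $\cY$. Take any sequence $x_i\in\cX$ with $x_i\to y\in\cY$ and any limiting normals $v_i\in N_{\cX}(x_i)$ with $v_i\to v$. The key point is the standard robustness property of the limiting normal cone: limits of limiting normals are themselves limiting normals, so $v\in N_{\cX}(y)$. This can be seen by a diagonal extraction against the Fr\'echet sequences defining each $v_i$. Invoking the assumed inclusion then gives $v\in N_{\cY}(y)$, establishing $(a)$-regularity.

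The main (though mild) obstacle is being careful about the distinction between Fr\'echet and limiting normals in the $(a)$-regularity definition and in the inclusion $N_{\cX}(y)\subseteq N_{\cY}(y)$; the argument hinges on the fact that Fr\'echet normals are limiting normals and that the limiting normal cone is closed under the outer-limit operation used in its own construction. No quantitative estimates or geometric constructions are required; this is purely a definitional unwinding once these two facts are in hand.
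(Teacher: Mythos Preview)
Your proposal is correct and follows essentially the same approach as the paper. Both directions match: for $(a)$-regularity implying the inclusion, you unwind the definition of $N_{\cX}(y)$ via Fr\'echet normal sequences and apply condition $(a)$; for the converse, you use the outer-semicontinuity (robustness) of the limiting normal cone to conclude $v\in N_{\cX}(y)$ and then invoke the assumed inclusion---the paper does the same, only stating the robustness step more tersely.
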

\begin{proof}
Suppose first that the inclusion $N_{\cX}(y)\subseteq N_{\cY}(y)$ holds for all $y\in \cY$. Consider a sequence $x_i\stackrel{\cX}{\to} y$
and vectors $v_i\in N_{\cX}(x_i)$ converging to some vector $v$. Then we deduce $v\in N_{\cX}(y)\subseteq N_{\cY}(y)$, as claimed. Conversely, suppose that $\cX$ is $(a)$-regular along $\cY$. Note that the inclusion $\hat N_{\cX}(y)\subset \hat N_{\cY}(y)$ holds trivially for any $y\in \cY$. For any vector $v\in N_{\cX}(y)$, by definition, there exists a sequence  $x_i\stackrel{\cX}{\to}y$ and vectors $v_i\in \hat N_{\cX}(x_i)$ converging to $\bar v$. Condition (a) therefore guarantees $\bar v\in N_{\cY}(y)$, as claimed.
\end{proof}

The following  lemma shows that condition $(b_{\leq})$ implies condition $(a)$ for any sets $\cX$ and $\cY$. Moreover, it is classically known that there exist  smooth manifolds $\cX$ and $\cY$ that satisfy condition $(b_{\leq})$ but not $(a)$; see e.g. \cite{trotman2020stratification}. Therefore $(b_{\leq})$ is strictly stronger than $(a)$.

\begin{lemma}\label{lem:bima}
The   implication $(b_{\leq})~\Rightarrow~(a)$ holds for any sets $\cX$ and $\cY$. Moreover, the implication $(b_{\geq})~\Rightarrow~(a)$ holds if $\hat{N}_{\cY}(\bar x)$ is a linear subspace.
\end{lemma}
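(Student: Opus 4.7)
In both implications the goal is the same: show that any accumulation point $v$ of a sequence of normals $v_i \in N_{\cX}(x_i)$, along a sequence $x_i \in \cX$ converging to some $y \in \cY$, in fact lies in the \emph{Fr\'echet} normal cone $\hat N_{\cY}(y)$. The standard inclusion $\hat N_{\cY}(y) \subseteq N_{\cY}(y)$ from \eqref{eqn:includ_normal_vec} then delivers condition $(a)$. Since $N_{\cX}(x_i)$ is a cone, we may rescale and assume $\|v_i\| \leq 1$; the case $v = 0$ is trivial, so we treat $v \neq 0$. The workhorse identity behind both parts is the decomposition
\[
\langle v_i,\, y' - y\rangle \;=\; \langle v_i,\, y' - x_i\rangle \;+\; \langle v_i,\, x_i - y\rangle
\]
for arbitrary test points $y' \in \cY$ near $y$. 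The second summand is bounded in absolute value by $\|x_i - y\|$ and vanishes as $i \to \infty$, while the first is exactly the quantity controlled by the relevant $(b_{\diamond})$ estimate.

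\textbf{Case $(b_{\leq}) \Rightarrow (a)$.} Fix $\epsilon > 0$ and let $\delta > 0$ be the tolerance supplied by $(b_{\leq})$, so that $\langle w, y' - x\rangle \leq \epsilon \|y' - x\|$ whenever $x \in \cX$, $y' \in \cY$, $w \in N_{\cX}(x) \cap \mathbf{B}$, and $\|y' - x\| < \delta$. For any $y' \in \cY$ with $\|y' - y\| < \delta/2$, pick $i$ large enough that $\|x_i - y\| < \delta/2$; then $\|y' - x_i\| < \delta$, and $(b_{\leq})$ applies to the pair $(x_i, y')$. Substituting into the decomposition and sending $i \to \infty$ gives $\langle v, y' - y\rangle \leq \epsilon \|y' - y\|$. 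Since $\epsilon > 0$ was arbitrary, we obtain $\langle v, y' - y\rangle \leq o(\|y' - y\|)$ as $y' \to y$ in $\cY$, which is precisely the definition of $v \in \hat N_{\cY}(y)$.

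\textbf{Case $(b_{\geq}) \Rightarrow (a)$ under the subspace hypothesis.} The identical calculation with reversed inequalities produces, for $y' \in \cY$ sufficiently close to $\bar x$,
\[
\langle -v,\, y' - \bar x\rangle \;\leq\; o(\|y' - \bar x\|),
\]
i.e., $-v \in \hat N_{\cY}(\bar x)$. The assumption that $\hat N_{\cY}(\bar x)$ is a linear subspace immediately upgrades this to $v \in \hat N_{\cY}(\bar x) \subseteq N_{\cY}(\bar x)$, as required.

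\textbf{Main subtlety.} There is no deep obstacle here; the only care lies in the order in which the two independent perturbations $y' \to y$ and $x_i \to y$ are taken. One must first pick $y'$ within distance $\delta/2$ of $y$, and only then take $i$ large enough that $x_i$ lies in the same ball, so that the $(b_{\diamond})$ inequality applies legitimately to the pair $(x_i, y')$. The decomposition above is what encodes this bookkeeping cleanly and isolates the $(b_{\diamond})$-controlled term from the $O(\|x_i - y\|)$ slack produced by $\|v_i\| \leq 1$.
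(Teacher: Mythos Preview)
Your proof is correct and follows essentially the same approach as the paper's: both arguments decompose $\langle v, y'-y\rangle$ by inserting the point $x_i$, control one piece via the $(b_{\diamond})$ estimate, and let the remaining $\langle v_i, x_i - y\rangle$ term vanish. The paper organizes this via a diagonal subsequence (choosing $i_j$ with $\|x_{i_j}-y\|\leq \|y_j-y\|/j$), whereas you fix $y'$ first and send $i\to\infty$ inside an $\epsilon$--$\delta$ argument; these are two packagings of the same idea, and yours is arguably cleaner.
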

	\begin{proof}
 		Suppose  that $\cX$ is $(b_{\leq})$-regular along $\cY$. Consider a sequence $x_i\in\cX$ converging to a point $y\in\cY$ 
 		and vectors $v_i\in N_{\cX}(x_i)$ converging to some vector $v$. It suffices to argue that the inclusion 
 		$v \in \hat N_{\cY}(y)$ holds. To this end, consider an arbitrary sequence $y_j\in \cY\setminus\{y\}$ converging to $y$. Passing to a subsequence, we may suppose that the unit vectors $\frac{y_j-y}{\|y_j-y\|}$ converge.	
		For each $j$ we may choose an index $i_j$ satisfying 
		$\|x_{i_j}-y\|\leq \frac{\|y_j-y\|}{j}$. Straightforward algebraic manipulations directly  imply
		$$\lim_{j\to \infty }\frac{\langle v, y_{j}-y\rangle}{\|y_j-y\|}\leq \limsup_{j\to \infty }\frac{\langle v_{i_j}, y_{j}-x_{i_j}\rangle}{\|y_{j}-x_{i_j}\|}\leq 0,$$
where the last inequality follows from $(b_{\leq})$-regularity. Thus,  $v$  lies in $\hat N_{\cY}(\bar x)$, as claimed. The proof of the implication $(b_{\geq})~\Rightarrow~(a)$
when  $\hat{N}_{\cY}(\bar x)$ is a linear subspace is  analogues.
			\end{proof}

Notice that condition $(a)$ does not specify the rate at which the gap $\Delta(N_{\cX}(x_i),N_{\cY}(y))$ tends to zero as $x_i\in \cX$ tends to $y$. A natural strengthening of the condition, introduced by Verdier \cite{verdier1976stratifications} in the smooth category, requires the gap to be linearly bounded by  $\|x_i-y\|$, with a coefficient that is uniform over all $y\in \cY$.\footnote{What we call strong $(a)$ is often called condition $(w)$, the Verdier condition, or the Kuo-Verdier $(kw)$ condition in the stratification literature.} Condition $(b)$ can be similarly strengthened. The following definition records the resulting two properties.

\begin{definition}[Strong $(a)$ and strong $(b)$]
{\rm
Consider two sets $\cX,\cY$ in $\E$.
\begin{enumerate}
\item 	We say that $\cX$ is {\em strongly (a)-regular along }$\cY$  if there exists a constant $C>0$ satisfying 
\begin{equation}\label{eqn:strong_a}
\Delta(N_{\cX}(x),N_{\cY}(y))\leq C\cdot\|x-y\|,
\end{equation}
		for all $x\in \cX$ and $y\in \cY$.
		\item We say that $\cX$ is {\em strongly $(b_{\leq})$-regular along} $\cY$ if there exists a constant $C>0$ satisfying 
		\begin{equation}\label{eqn:strong_b}
		\langle v,y-x\rangle\leq C \|x-y\|^2,
		\end{equation}
		for all $x\in \cX$, $y\in \cY$, and all vectors $v\in N_{\cX}(x)\cap {\bf B}$. Properties strong $(b_{\geq})$ and strong $(b_{=})$ are defined analogously with the inequality in \eqref{eqn:strong_b} replaced by $\geq$ and $=$, respectively. 
\end{enumerate}
More generally, we say that {\em $\cX$ is regular along $\cY$ near a point $\bar x\in \cY$},  in any of the above senses, if there exists a neighborhood $U$ of $\bar x$ such that $\cX\cap U$ is regular along $\cY\cap U$.}
\end{definition}
 
Summarizing, we have defined four fundamental regularity conditions quantifying the compatibility of two sets $\cX$ and $\cY$. The most important situation for our purposes is when $\cY$ is a smooth manifold contained in $\cX$. The algorithmic importance of these conditions becomes clear when we interpret what they mean for epigraphs of functions. With this in mind, for the rest of the section, we fix a closed function $f\colon\E\to\R\cup\{\infty\}$, a set $\cM\subset \dom f$, and a point $\bar x \in \cM$.

\begin{definition}[Condition $(b_{\diamond})$ for functions]
{\rm
	 We say that $f$ is {\em $(a)$-regular along $\cM$ near $\bar x$} if the epigraph of $f$ is  $(a)$-regular along $\gph f\big|_{\cM}$ near $(\bar x,f(\bar x))$. Conditions $(b_{\diamond})$, strong $(a)$, and strong $(b_{\diamond})$ are defined similarly.
}
\end{definition}

Our immediate goal is to interpret regularity of a function $f$ along $\cM$ in purely analytic terms. We begin with conditions $(b_{\diamond})$ and  strong $(b_{\diamond})$. To this end, we will need the following simple lemma.

\begin{lemma}[Regularity of the domain]\label{lem:reg_domain(b)}
Suppose that $f$ is locally Lipschitz continuous on its domain.
If  $f$ is $(b_{\diamond})$-regular along $\cM$ near $\bar x$, then the domain of $f$ is $(b_{\diamond})$-regular along $\cM$ near $\bar x$. Analogous statements hold for strong $(b_{\diamond})$.
\end{lemma}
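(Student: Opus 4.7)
The strategy is to lift any normal vector to $\dom f$ at a point $x$ to a horizontal normal of $\epi f$ at $(x,f(x))$, then invoke the assumed $(b_\diamond)$-regularity of $\epi f$ along $\gph f\big|_\cM$ to the lifted vector, and finally translate the resulting estimate back to an estimate on $\dom f$ using the local Lipschitz bound on $f$.

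The first step is the lifting: if $v\in \hat N_{\dom f}(x)$, then $(v,0)\in \hat N_{\epi f}(x,f(x))$. Indeed, any $(z,s)\in\epi f$ near $(x,f(x))$ has $z\in\dom f$, hence
\begin{equation*}
\langle (v,0),(z,s)-(x,f(x))\rangle=\langle v,z-x\rangle\leq o(\|z-x\|)\leq o(\|(z,s)-(x,f(x))\|).
\end{equation*}
Passing to limits in the defining sequences of the limiting normal cones immediately yields the implication $v\in N_{\dom f}(x)\Rightarrow (v,0)\in N_{\epi f}(x,f(x))$. Note that $\|(v,0)\|=\|v\|$, so unit-length normals lift to unit-length normals.

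Now fix any $x\in\dom f$ and $y\in\cM$ near $\bar x$, and any $v\in N_{\dom f}(x)\cap \mathbf{B}$. The lifted pair $(v,0)$ lies in $N_{\epi f}(x,f(x))\cap\mathbf{B}$, and the point $(y,f(y))$ lies in $\gph f\big|_\cM$. Invoking $(b_{\leq})$-regularity of $\epi f$ along $\gph f\big|_\cM$ near $(\bar x,f(\bar x))$ gives
\begin{equation*}
\langle v,y-x\rangle=\langle (v,0),(y,f(y))-(x,f(x))\rangle\leq o\bigl(\|(y,f(y))-(x,f(x))\|\bigr).
\end{equation*}
Local Lipschitz continuity of $f$ on $\dom f$ near $\bar x$ (with constant $L$, say) ensures that $\|(y,f(y))-(x,f(x))\|\leq \sqrt{1+L^2}\,\|y-x\|$, so the right-hand side is $o(\|y-x\|)$, yielding $(b_{\leq})$-regularity of $\dom f$ along $\cM$. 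The arguments for $(b_{\geq})$ and $(b_{=})$ are identical since the lift $v\mapsto (v,0)$ is linear and preserves the sign of $\langle \cdot,y-x\rangle$. For the strong variants, one replaces the $o(\cdot)$ term by $C\|(y,f(y))-(x,f(x))\|^2\leq C(1+L^2)\|y-x\|^2$, so strong $(b_\diamond)$-regularity of $\epi f$ along $\gph f\big|_\cM$ transfers to strong $(b_\diamond)$-regularity of $\dom f$ along $\cM$ with constant $C(1+L^2)$.

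The only nontrivial step is the lifting identity, which I expect to be the sole ``technical'' point; once that is in hand, the rest is pure bookkeeping. A minor subtlety is to verify the lift at the limiting (not merely Fr\'echet) level, but this follows by taking $x_i\to x$ in $\dom f$ and $\hat v_i\to v$ with $\hat v_i\in\hat N_{\dom f}(x_i)$, then observing that $(x_i,f(x_i))\to(x,f(x))$ in $\epi f$ by continuity of $f$ on its domain, and $(\hat v_i,0)\to(v,0)$ by construction.
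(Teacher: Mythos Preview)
Your proof is correct and follows essentially the same route as the paper's: lift $v\in N_{\dom f}(x)$ to $(v,0)\in N_{\epi f}(x,f(x))$, apply the assumed $(b_\diamond)$-regularity of $\epi f$ along $\gph f\big|_\cM$, and use the local Lipschitz bound $\|(y,f(y))-(x,f(x))\|\leq\sqrt{1+L^2}\,\|y-x\|$ to transfer the estimate back. The paper organizes the same computation via the identity $\langle v,\tfrac{y-x}{\|y-x\|}\rangle=\langle V,\tfrac{Y-X}{\|Y-X\|}\rangle\cdot\tfrac{\|Y-X\|}{\|y-x\|}$ and simply asserts the lifting inclusion, whereas you spell out the Fr\'echet-then-limiting argument for it; both are the same proof.
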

\begin{proof}
	Suppose that $f$ is $(b_{\diamond})$-regular along $\cM$ near $\bar x$. For any $x\in \dom f$ and $y\in \cM$ set $X=(x,f(x))$ and $Y=(y,f(y))$. Then for any unit vector $v\in N_{\dom f}(x)$, the vector $V=(v,0)$ satisfies the inclusion $V\in N_{\epi f}(X)$ and therefore we may write
	$$\left\langle v, \frac{y-x}{\|y-x\|}\right\rangle=\left\langle V, \frac{Y- X}{\|Y-X\|}\right\rangle\cdot \frac{\|Y-X\|}{\|y-x\|}.$$
	Using $(b_{\diamond})$-regularity of $\epi f$ along $\cY$ and local Lipschitz continuity of $f$ on its domain immediately guarantees that $\dom f$ is $(b_{\diamond})$-regular along $\cM$ near $\bar x$. The analogous statement for strong $(b_{\diamond})$ follows from the same argument.
\end{proof}

The following result interprets $(b_{\diamond})$-regularity of a function in purely analytic terms.

\begin{thm}[From geometry to analysis]\label{thm:reinterp0}
	Suppose that $f$ is locally Lipschitz continuous on its domain. Then the following are true.
	\begin{enumerate}	
		\item~ {\bf (condition $(b)$)}  $f$ is $(b_{\leq})$-regular along $\cM$ near $\bar x$ if and only if there exists $\epsilon>0$ such that the estimates 
		\begin{align}
			\frac{f(x)+\langle v, y-x\rangle-f(y)}{\sqrt{1+\|v\|^2}}&\leq o( \|y-x\|),\label{eqn:bd1}\\
			\left\langle \frac{w}{\|w\|},y-x\right\rangle &\leq  o(\|y-x\|) ,\label{eqn:bd2}
		\end{align}
		hold for all $x\in \dom f\cap B_{\epsilon}(\bar x)$, $y\in \cM\cap B_{\epsilon}(\bar x)$,  $v\in \partial f(x)$, and $w\in \partial^{\infty} f(x)$.
		
		\item~ {\bf (strong $(b)$)}  $f$ is strongly $(b_{\leq})$-regular along $\cM$ near $\bar x$ if and only if there exists a constant $\epsilon0$ such that the estimate holds: 
		\begin{align}
			\frac{f(x)+\langle v, y-x\rangle-f(y)}{\sqrt{1+\|v\|^2}}&\leq O( \|y-x\|^2),\label{eqn:bd12}\\
				\left\langle \frac{w}{\|w\|},y-x\right\rangle &\leq O(\|y-x\|^2) ,\label{eqn:bd22}
		\end{align}
		hold for all $x\in \dom f\cap B_{\epsilon}(\bar x)$, $y\in \cM\cap B_{\epsilon}(\bar x)$, $v\in \partial f(x)$, and $w\in \partial^{\infty} f(x)$.
	\end{enumerate}
Analogous equivalences hold for $(b_{=})$ and $(b_{\geq})$, along with their strong variants, by replacing the inequalities in  \eqref{eqn:bd1}-\eqref{eqn:bd22} by $=$ and $\leq$, respectively.
\end{thm}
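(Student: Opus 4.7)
The plan is to prove the equivalence by unpacking the well-known parametrization of the normal cone to an epigraph, and then passing between $\|Y-X\|$ and $\|y-x\|$ via local Lipschitz continuity of $f$. Fix a neighborhood $U$ of $\bar x$ on which $f$ is $L$-Lipschitz on $\dom f\cap U$. Every unit normal vector $V\in N_{\epi f}(X)$ at a graph point $X=(x,f(x))$ has exactly one of two forms: either a ``slant'' normal $V=(v,-1)/\sqrt{1+\|v\|^2}$ with $v\in\partial f(x)$, or a ``horizontal'' normal $V=(w,0)/\|w\|$ with $w\in\partial^{\infty} f(x)\setminus\{0\}$. At an off-graph point $X=(x,s)$ with $s>f(x)$, only horizontal normals survive, and for $f$ locally Lipschitz on $\dom f$ these are again of the same form since $\partial^{\infty} f(x)=N_{\dom f}(x)$.

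For the direction ``$\Rightarrow$'' of part~1, assume that $\epi f$ is $(b_{\leq})$-regular along $\gph f\big|_{\cM}$ near $(\bar x,f(\bar x))$. Given $x\in\dom f\cap U$ and $y\in \cM\cap U$, set $X=(x,f(x))$ and $Y=(y,f(y))$. Substituting the slant form of $V$ into the defining estimate $\langle V,Y-X\rangle\leq o(\|Y-X\|)$ produces
$$\frac{f(x)+\langle v, y-x\rangle-f(y)}{\sqrt{1+\|v\|^2}}\leq o(\|Y-X\|),$$
and the horizontal form produces $\langle w/\|w\|,\, y-x\rangle \leq o(\|Y-X\|)$. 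Since $\|Y-X\|\leq \sqrt{1+L^2}\,\|y-x\|$, the right-hand sides can be replaced by $o(\|y-x\|)$, yielding \eqref{eqn:bd1} and \eqref{eqn:bd2}.

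For the converse ``$\Leftarrow$'', assume \eqref{eqn:bd1} and \eqref{eqn:bd2}. Consider any $X=(x,s)\in\epi f$ and $Y=(y,f(y))\in\gph f\big|_{\cM}$ near the base point, together with any unit normal $V\in N_{\epi f}(X)$. If $V$ is slant, then $s=f(x)$, and \eqref{eqn:bd1} combined with the trivial inequality $\|y-x\|\leq\|Y-X\|$ gives $\langle V,Y-X\rangle\leq o(\|y-x\|)\leq o(\|Y-X\|)$. If $V$ is horizontal, the inner product $\langle V,Y-X\rangle=\langle w/\|w\|,y-x\rangle$ is independent of $s$; invoking \eqref{eqn:bd2} (which applies whether $s=f(x)$ or $s>f(x)$ thanks to the identification $\partial^{\infty} f(x)=N_{\dom f}(x)$) and once more the bound $\|y-x\|\leq\|Y-X\|$ delivers the required estimate. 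This establishes $(b_{\leq})$-regularity.

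Part~2 is a verbatim repetition of the same argument with $o(\cdot)$ replaced by $O(\cdot^{2})$, now exploiting the two-sided bound $\|y-x\|^{2}\leq \|Y-X\|^{2}\leq (1+L^{2})\|y-x\|^{2}$. The variants $(b_{=})$ and $(b_{\geq})$, along with their strong forms, follow by reversing the appropriate inequalities in exactly the same reasoning, since the normal-cone parametrization is orientation-symmetric. I do not anticipate a substantive obstacle: the entire proof reduces to the classical parametrization of $N_{\epi f}$ combined with two elementary inequalities. The one mild subtlety to watch is handling normals at off-graph points $(x,s)$ with $s>f(x)$, which occur only when $x$ lies on $\bdry\dom f$ and are absorbed cleanly by the horizontal-normal case via $\partial^{\infty} f(x)=N_{\dom f}(x)$.
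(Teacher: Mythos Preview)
Your proposal is correct and follows essentially the same route as the paper: both arguments unpack the normal cone to $\epi f$ via the standard parametrization into slant and horizontal vectors, and both pass between $\|Y-X\|$ and $\|y-x\|$ using the local Lipschitz bound on $f$. The only cosmetic difference is in the treatment of off-graph points $(x,s)$ with $s>f(x)$: the paper factors this through Lemma~\ref{lem:reg_domain(b)} (regularity of $f$ implies regularity of $\dom f$), whereas you absorb it directly via the inclusion $N_{\dom f}(x)\subseteq \partial^{\infty} f(x)$, which is valid here because local Lipschitz continuity on the domain forces $f(x_i)\to f(x)$ along any $x_i\to x$ in $\dom f$.
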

\begin{proof}
	Throughout the proof, set $\cX=\epi f$ and $\cY:=\gph f\big|_{\cM}$. We will use capital letters  $X$, $Y$, and $\bar X$ to denote the lifted points $(x,f(x))$, $(y,f(y))$, and $(\bar x,f(\bar x))$, respectively. We will use the relationship for any point $x\in \dom f$
	\cite[Theorem 8.9]{rockafellar2009variational}:
	\begin{equation}\label{eqn:equiv_norm_epi_notused}
		N_{\cX}(X) \textrm{ coincides with the union of}\quad\R_{++}(\partial f(x)\times \{-1\})\quad \textrm{and}\quad \partial^{\infty} f(x)\times\{0\}.
	\end{equation}
	By definition, $f$ is $(b_{\leq})$-regular along $\cM$ near $\bar x$ if and only if  for any the estimate	
	\begin{equation}\label{eq:bbb}
		\langle V,Y-(x,r)\rangle \leq   o(\|(x,r)-Y\|)\cdot\|V\|,
	\end{equation}
	holds for all  $x\in \dom f$ and $y\in \cM$ with $(x,r)$ and $Y$ sufficiently close to $\bar X$, and for all $V\in N_{\cX}(x,r)$.  Let us look at the two cases $r=f(x)$ and $r>f(x)$. In the former case $r=f(x)$, condition \eqref{eq:bbb} is formally equivalent to the two conditions \eqref{eqn:bd1} and \eqref{eqn:bd2}.
 In the latter case $r>f(x)$, the expression \eqref{eq:bbb}  becomes
	$$\langle w,y-x\rangle\leq o(\|(x,r)-Y\|)\cdot\|w\|$$
	for all $w\in N_{\dom f}(x)$.
	Clearly, this is implied by $(b_{\leq})$ regularity of $\dom f$ along $\cM$ near $\bar x$. The claimed equivalence for $(b_{\leq})$-regularity now follows immediately from Lemma~\ref{lem:reg_domain(b)}. The rest of the equivalence follow from an analogous argument.
\end{proof}

The conditions in Theorem~\ref{thm:reinterp0} are particularly transparent when $f$ is Lipschitz continuous near $\bar x$. Then  $\partial^{\infty} f(\bar x)$ consists only of the zero vector and $\partial f(x)$ is nonempty and uniformly bounded near $\bar x$. Therefore, conditions $(b_{\leq})$  and strong  $(b_{\leq})$, respectively, are equivalent to the two properties
\begin{align*}
f(y) &\geq f(x)+\langle v, y-x\rangle+o(\|y-x\|)\\
f(y) &\geq f(x)+\langle v, y-x\rangle+O(\|y-x\|^2)
\end{align*}
as $x$ and $y\in \cM$ tend to $\bar x$ and $v\in \partial f(x)$ is arbitrary.
In words, condition $(b_{\leq})$ ensures a restricted lower Taylor approximation property 
as $x$ and $y\in \cM$ tend to $\bar x$ and $v\in \partial f(x)$ are arbitrary.  Strong $(b)$-regularity, in turn, replaces the little-o term with the squared norm $O(\|x-y\|^2)$. In particular, this holds automatically if $f$ is weakly convex. When $\cM=\{\bar x\}$ is a single point, condition $(b_=)$ reduces to generalized differentiability in the sense of Norkin \cite{norkin1980generalized} and is closely related to the semismoothness property of Mifflin \cite{mifflin1977semismooth}.

Condition $(b_{\leq})$ becomes particularly useful algorithmically when the inclusion $0\in \hat\partial f(\bar x)$ holds and  $\cM$ is a $C^1$ active manifold of $f$ around $\bar{x}$.  Indeed, condition $(b_{\leq})$ along with the sharp growth guarantee of Theorem~\ref{prop: sharpness} then imply that there exists a constant $\mu>0$ such that the estimate 
\begin{equation}\label{eqn:angle}
\langle v,x-P_{\cM}(x)\rangle\geq \mu\cdot \dist(x,\cM),
\end{equation}
holds for all  $x\in \dom f$  near $\bar x$ and for all $v\in \partial f(x)$. In words, this means that negative subgradients of $f$ at $x$ always point towards the active manifold. The angle condition \eqref{eqn:angle} together with strong $(a)$ regularity will form the core of the algorithmic developments. 
For ease of reference, we record a slight generalization of the angle condition \eqref{eqn:angle}  when $f$ is not necessarily locally Lipschitz around $\bar x$ and can even be infinite-valued.

\begin{cor}[Proximal aiming]\label{cor:prox-aiming_gen}
	Consider a closed function $f\colon\E\to\R\cup\{\infty\}$ that admits an active $C^1$-manifold $\cM$ at a point $\bar x$ satisfying $0\in \hat\partial f(\bar x)$. Suppose that $f$ is locally Lipschitz continuous on its domain and that $f$ is $(b_{\leq})$-regular along $\cM$ near $\bar x$. Then, there exists a constant $\mu>0$ such that the estimate
	\begin{equation}\label{eqn:prox_aim_super}
	\langle v,x-P_{\cM}(x)\rangle\geq \mu \cdot\dist(x,\cM) -\sqrt{1+\|v\|^2}\cdot o(\dist(x,\cM)),
	\end{equation}
	holds for all $x \in \dom f$ near $\bar x$ and for all $v\in \partial f(x)$. Moreover,  if $f$ is locally Lipschitz around $\bar x$, the same statement holds with $\partial f(x)$ replaced by $\partial_c f(x)$ and with the negative term omitted in \eqref{eqn:prox_aim_super}.\footnote{The last claim follows immediately from \eqref{eqn:prox_aim_super} by possibly increasing $\mu>0$ and taking convex combinations of limiting subgradients, all of which are uniformly bounded.}
\end{cor}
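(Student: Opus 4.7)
The plan is to combine the sharp growth guarantee of Proposition~\ref{prop: sharpness} with the analytic reformulation of $(b_{\leq})$-regularity from Theorem~\ref{thm:reinterp0}(1), evaluated at the specific pair $(x,y):=(x,P_{\cM}(x))$.

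As a preliminary step, I would note that because $\cM$ is a $C^1$ manifold containing $\bar x$, there is a tubular neighborhood of $\bar x$ on which $P_{\cM}$ is single-valued and continuous; on this neighborhood, the bound $\|P_{\cM}(x)-\bar x\|\leq 2\|x-\bar x\|$ forces $P_{\cM}(x)\to\bar x$ as $x\to\bar x$. Since $0\in\hat\partial f(\bar x)$, Proposition~\ref{prop: sharpness} supplies constants $c,\epsilon_1>0$ such that
$$f(x)-f(P_{\cM}(x))\geq c\cdot\dist(x,\cM)\quad\text{for all } x\in\dom f\cap B_{\epsilon_1}(\bar x).$$
Next, $(b_{\leq})$-regularity together with Theorem~\ref{thm:reinterp0}(1) yields $\epsilon_2>0$ such that
$$f(x)+\langle v,y-x\rangle-f(y)\leq \sqrt{1+\|v\|^2}\cdot o(\|y-x\|)$$
for all $x\in\dom f\cap B_{\epsilon_2}(\bar x)$, $y\in\cM\cap B_{\epsilon_2}(\bar x)$, and $v\in\partial f(x)$. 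Choosing $x$ close enough to $\bar x$ so that $P_{\cM}(x)\in\cM\cap B_{\epsilon_2}(\bar x)$, substituting $y:=P_{\cM}(x)$, using $\|y-x\|=\dist(x,\cM)$, and rearranging gives
$$\langle v, x-P_{\cM}(x)\rangle\geq f(x)-f(P_{\cM}(x))-\sqrt{1+\|v\|^2}\cdot o(\dist(x,\cM)).$$
Combining this with the sharp growth bound yields \eqref{eqn:prox_aim_super} with $\mu:=c$.

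For the second assertion, I would use the local Lipschitz assumption to absorb the little-$o$ term. If $f$ is $L$-Lipschitz on a neighborhood of $\bar x$, then every $v\in\partial f(x)$ satisfies $\|v\|\leq L$, so $\sqrt{1+\|v\|^2}\leq \sqrt{1+L^2}$ is uniformly bounded. Hence for all $x$ sufficiently close to $\bar x$ the term $\sqrt{1+\|v\|^2}\cdot o(\dist(x,\cM))$ is dominated by $(c/2)\cdot\dist(x,\cM)$, giving $\langle v,x-P_{\cM}(x)\rangle\geq (c/2)\cdot\dist(x,\cM)$ for every $v\in\partial f(x)$. Since both sides are linear in $v$ and $\partial_c f(x)=\conv\,\partial f(x)$, this inequality propagates by convex combination to every $v\in\partial_c f(x)$, establishing the claim with $\mu:=c/2$. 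There is no conceptual obstacle here: the only care needed is the consistent shrinking of neighborhoods so that the tubular neighborhood properties of $P_{\cM}$, the sharpness bound, and the $(b_{\leq})$ estimate all apply simultaneously, together with the observation that in the Clarke case the boundedness of $\|v\|$ is exactly what allows the little-$o$ error to be swept into a smaller linear constant.
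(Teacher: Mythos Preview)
Your proposal is correct and follows essentially the same approach as the paper: the paper's discussion preceding the corollary (together with the footnote) indicates exactly this combination of Proposition~\ref{prop: sharpness} with the analytic form of $(b_{\leq})$-regularity from Theorem~\ref{thm:reinterp0}, specialized to $y=P_{\cM}(x)$, and then absorption of the little-$o$ term via the uniform bound on $\|v\|$ in the locally Lipschitz case before passing to convex hulls.
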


Next, we move on to interpreting conditions $(a)$ and strong $(a)$ in analytic terms. We will focus on the most interesting setting when $\cM$ is a smooth manifold and the restriction of $f$ to $\cM$ is smooth near $\bar x$.  In particular, we will make use of the following observation in our arguments:  the tangent space to $\cY:=\gph f\big|_{\cM}$ at $Y:=(y,f(y))$ is:
\begin{equation}\label{eqn:simple_tangent}
	T_{\cY}(Y)=\{(u,\langle \nabla_{\cM} f(y),u\rangle):u\in T_{\cM}(y)\}.
\end{equation}

\begin{lemma}[Regularity of the domain]\label{lem:reg_domain(a)}
Suppose that $f$ is locally Lipschitz continuous on its domain, $\cM$ is a $C^1$ manifold around $\bar x$, and the restriction of $f$ to $\cM$ is $C^1$-smooth near $\bar x$.
If  $f$ is $(a)$-regular along $\cM$ near $\bar x$, then the domain of $f$ is $(a)$-regular along $\cM$ near $\bar x$. Analogous statement holds for strong $(a)$-regularity.
\end{lemma}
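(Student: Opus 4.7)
The plan is to parallel the argument of Lemma~\ref{lem:reg_domain(b)}, lifting points of $\dom f$ to $\epi f$ via the graphing map $x\mapsto(x,f(x))$ and pushing the hypothesized regularity of $\epi f$ along $\gph f\big|_{\cM}$ back down to $\dom f$ and $\cM$. The key linear-algebra input is the formula \eqref{eqn:simple_tangent}, which makes the normal space to $\cY:=\gph f\big|_{\cM}$ at $Y=(y,f(y))$ explicit:
\[
N_{\cY}(Y) = \{(w,s)\in\E\times\R : w + s\,\nabla_{\cM} f(y)\in N_{\cM}(y)\}.
\]
In particular, any horizontal element $(v,0)\in N_{\cY}(Y)$ automatically satisfies $v\in N_{\cM}(y)$; this is the algebraic bridge that converts the conclusion from $\cY$ back to $\cM$.

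Next I would establish the lift $N_{\dom f}(x)\times\{0\}\subseteq N_{\epi f}(x,f(x))$ for all $x\in\dom f$ near $\bar x$. For Fr\'echet normals this is a one-line check: if $w\in\hat N_{\dom f}(x)$ and $(y,r)\in\epi f$ lies near $(x,f(x))$, then $\langle(w,0),(y,r)-(x,f(x))\rangle=\langle w,y-x\rangle\leq o(\|y-x\|)\leq o(\|(y,r)-(x,f(x))\|)$. The limiting version then follows from local Lipschitz continuity of $f$ on its domain, which ensures $f(x_i)\to f(x)$ along any sequence $x_i\stackrel{\dom f}{\to}x$, so approximating pairs for $w\in N_{\dom f}(x)$ lift to approximating pairs for $(w,0)$ in the epigraph.

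With these two ingredients the $(a)$ case is immediate. Given $x_i\in\dom f$ with $x_i\to y\in\cM$ and $v_i\in N_{\dom f}(x_i)$ converging to some $v$, the lift produces $(v_i,0)\in N_{\epi f}(x_i,f(x_i))$ while local Lipschitz continuity yields $(x_i,f(x_i))\to(y,f(y))$. Condition $(a)$ on $\epi f$ along $\cY$ then forces $(v,0)\in N_{\cY}(y,f(y))$, and the formula above forces $v\in N_{\cM}(y)$, as required.

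The strong $(a)$ case is the one place where I would need to track constants. Fix a unit $v\in N_{\dom f}(x)$; then $(v,0)$ is a unit normal to $\epi f$ at $X:=(x,f(x))$, and strong $(a)$ of $\epi f$ along $\cY$ furnishes $(w,s)\in N_{\cY}(Y)$ with $\|v-w\|^2+s^2\leq C^2\|X-Y\|^2$. Setting $\tilde w:=w+s\,\nabla_{\cM} f(y)\in N_{\cM}(y)$ and using the continuity of $\nabla_{\cM} f$ near $\bar x$ (so $\|\nabla_{\cM} f\|\leq L$ uniformly nearby) together with local Lipschitz continuity of $f$ on $\dom f$ (so $\|X-Y\|\leq\sqrt{1+\Lip^2}\,\|x-y\|$), the triangle and Cauchy--Schwarz inequalities yield $\|v-\tilde w\|\leq\sqrt{1+L^2}\,C\,\|X-Y\|\leq C'\|x-y\|$, as required. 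The main obstacle is really just this bookkeeping; uniform existence of $L$ and $\Lip$ in a small neighborhood of $\bar x$ is guaranteed by the standing $C^1$ and local Lipschitz hypotheses.
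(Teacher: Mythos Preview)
Your proposal is correct and follows essentially the same route as the paper: lift $N_{\dom f}(x)\times\{0\}$ into $N_{\epi f}(x,f(x))$, apply the assumed regularity of $\epi f$ along $\cY=\gph f\big|_{\cM}$, and then descend using the normal-space description induced by \eqref{eqn:simple_tangent}. The only cosmetic difference is that in the $(a)$ case the paper invokes Lemma~\ref{lemma:inclusion_normal} to reduce to checking $N_{\dom f}(y)\subseteq N_{\cM}(y)$ at points $y\in\cM$, whereas you verify the sequential definition directly; the strong $(a)$ bookkeeping is identical.
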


\begin{proof}
	Throughout the proof, set $\cY=\gph f\big|_{\cM}$. Suppose first that $f$ is $(a)$-regular along $\cM$ near $\bar x$. Note the inclusion $N_{\dom f}(y)\times \{0\}\subseteq N_{\epi f}(y, f(y))$ for all $y$ near $\bar x$. 
	Using Lemma~\ref{lemma:inclusion_normal}, we therefore conclude $N_{\dom f}(y)\times \{0\}\subseteq N_{\cY}(y,f(y))$. The desired inclusion $N_{\dom f}(y)\subset N_{\cM}(y)$ now follows immediately from \eqref{eqn:simple_tangent}.

	Finally, suppose that $f$ is strongly $(a)$-regular along $\cM$ near $\bar x$. Fix points $x\in \dom f$ and $y\in \cM$ near $\bar x$ and as before define $X=(x,f(x))$ and $Y=(y,f(y))$.	Then condition (a) implies that there exists a constant $C>0$ such that for any $v\in N_{\dom f}(\bar x)$ there is a vector $(w_1,w_2)\in N_{\cY}(Y)$ satisfying $\|(v,0)-(w_1,w_2)\|\leq C\|X-Y\|$.
	It follows easily from the description  \eqref{eqn:simple_tangent} that the inclusion $w_1+w_2\nabla_{\cM} f(y)\in N_{\cM}(y)$ holds, and therefore 
$$\dist(v, N_{\cM}(y))\leq \|v-w_1-w_2\nabla_{\cM} f(y)\|\leq C(1+\|\nabla_{\cM} f(y)\|)\|X-Y\|.$$ 
Since $f$ is locally Lipschitz continuous on its domain, there exists $C'>0$ satisfying   $\|\nabla_{\cM} f(y)\|\leq C'$ and $\|X-Y\|\leq C'\|x-y\|$ for all $x\in \dom f$ and $y\in \cM$ near $\bar x$. Thus $\dom f$ is strongly $(a)$-regular along $\cM$ at $\bar x$, as claimed.
\end{proof}

The following theorem reinterprets conditions conditions $(a)$ and strong $(a)$ in entirely analytic terms.

\begin{thm}[From geometry to analysis]\label{thm:reinterp}
Suppose that $f$ is locally Lipschitz continuous on its domain, $\cM$ is a $C^1$ manifold around $\bar x$, and the restriction of $f$ to $\cM$ is $C^1$-smooth near $\bar x$.
The following claims are true.
	\begin{enumerate}	
		\item~ {\bf (condition $(a)$)} $f$ is $(a)$-regular along $\cM$ near $\bar x$ if and only if the inclusions hold:
		\begin{equation}\label{eqn:a_getit}
		P_{T_{\cM}(x)}(\partial f(x))\subseteq \{\nabla_{ \cM} f(x)\}\qquad \textrm{and}\qquad \partial^{\infty} f(x)\subseteq N_{\cM}(x).
		\end{equation}
	for all $x\in \cM$ near $\bar x$.
		
		\item~ {\bf (strong $(a)$)} $f$ is strongly $(a)$-regular along $\cM$ near $\bar x$ if and only if there exist constants $C,\epsilon>0$ satisfying: 
		\begin{align}
		\|P_{T_{\cM}(y)}(v-\nabla_{\cM} f(y))\|&\leq C\sqrt{1+\|v\|^2} \|x-y\|,\label{eqn:str_a1}\\
		\|P_{T_{\cM}(y)}(w)\|&\leq C \|w\|\cdot\|x-y\|,\label{eqn:str_a2}
		\end{align}
		 for all $x\in \dom f\cap B_{\epsilon}(\bar x)$ and $y\in \cM\cap B_{\epsilon}(\bar x)$, $v\in \partial f(x)$, and $w\in \partial^{\infty} f(x)$.
	\end{enumerate}
\end{thm}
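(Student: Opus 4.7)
My plan is to unwind both claims by explicit normal-cone computations. From the tangent-space description \eqref{eqn:simple_tangent}, the normal space to $\cY := \gph f\big|_{\cM}$ at $Y = (y, f(y))$ admits the orthogonal decomposition
\[
N_{\cY}(Y) \;=\; \bigl(N_{\cM}(y)\times\{0\}\bigr) \;\oplus\; \R\cdot(\nabla_{\cM} f(y), -1),
\]
the orthogonality holding because $\nabla_{\cM} f(y) \in T_{\cM}(y) \perp N_{\cM}(y)$. Combined with the description \eqref{eqn:equiv_norm_epi_notused} of $N_{\epi f}(X)$ as the union of $\R_{++}(\partial f(x) \times \{-1\})$ and $\partial^{\infty} f(x) \times \{0\}$, both parts of the theorem reduce to an algebraic comparison of these two normal cones.

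For Part 1, Lemma~\ref{lemma:inclusion_normal} says that $(a)$-regularity of $\epi f$ along $\cY$ near $\bar X := (\bar x, f(\bar x))$ is equivalent to the inclusion $N_{\epi f}(Y) \subseteq N_{\cY}(Y)$ for all $Y \in \cY$ near $\bar X$. Testing the inclusion separately on each piece of $N_{\epi f}(Y)$: the condition $(v,-1) \in N_{\cY}(Y)$ for every $v \in \partial f(y)$ is equivalent, by the formula above, to $v - \nabla_{\cM} f(y) \in N_{\cM}(y)$, i.e.\ $P_{T_{\cM}(y)}(v) = \nabla_{\cM} f(y)$; and $(w,0) \in N_{\cY}(Y)$ for every $w \in \partial^{\infty} f(y)$ is equivalent to $w \in N_{\cM}(y)$. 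This is exactly \eqref{eqn:a_getit}.

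For Part 2, the main step is to compute the distance from a unit normal of $N_{\epi f}(X)$ to the subspace $N_{\cY}(Y)$. For $v \in \partial f(x)$, writing $\tau := P_{T_{\cM}(y)}(v - \nabla_{\cM} f(y))$ and projecting sequentially out the summands $N_{\cM}(y) \times \{0\}$ and $\R(\nabla_{\cM} f(y),-1)$ yields, after elementary algebra,
\[
\dist\!\Bigl(\tfrac{(v,-1)}{\sqrt{1+\|v\|^2}},\, N_{\cY}(Y)\Bigr)^{2} \;=\; \frac{1}{1+\|v\|^2}\Bigl(\|\tau\|^2 - \tfrac{\langle \tau,\,\nabla_{\cM} f(y)\rangle^2}{1+\|\nabla_{\cM} f(y)\|^2}\Bigr).
\]
Cauchy-Schwarz pinches the right-hand side between $\tfrac{\|\tau\|^2}{(1+\|v\|^2)(1+\|\nabla_{\cM} f(y)\|^2)}$ and $\tfrac{\|\tau\|^2}{1+\|v\|^2}$. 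Since $f$ is locally Lipschitz on $\dom f$ with constant $L$, $\|\nabla_{\cM} f(y)\|$ is uniformly bounded near $\bar x$, so both bounds are comparable up to an absolute constant; together with $\|x-y\|\leq \|X-Y\|\leq (1+L)\|x-y\|$, the two-sided equivalence of the gap estimate on these unit normals with \eqref{eqn:str_a1} follows. A parallel calculation for the unit normal $\tfrac{(w,0)}{\|w\|}$ with $w \in \partial^{\infty} f(x)$, taking $\alpha = 0$ in the parametrization of $N_{\cY}(Y)$, yields the equivalence with \eqref{eqn:str_a2}.

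The only remaining point is handling normals at points $X = (x,r)\in \epi f$ with $r > f(x)$, where by Lipschitz continuity of $f$ on its domain one has $N_{\epi f}(X) = N_{\dom f}(x) \times \{0\}$. The inclusion $N_{\dom f}(x) \subseteq \partial^{\infty} f(x)$---obtained by approximating $(x, f(x))$ by $(x, r_k)$ with $r_k \searrow f(x)$ and passing to the limit on Fr\'echet normals---shows that \eqref{eqn:str_a2} already governs these normals in the converse direction; for the forward direction, Lemma~\ref{lem:reg_domain(a)} shows that strong $(a)$ descends to $\dom f$, and the gap estimate at such $X$ is immediate. The main obstacle, such as it is, will be case-analysis bookkeeping and the Lipschitz-type conversion between $\|X-Y\|$ and $\|x-y\|$.
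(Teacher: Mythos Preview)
Your proposal is correct and follows the same architecture as the paper: Lemma~\ref{lemma:inclusion_normal} for Part~1, the decomposition of $N_{\epi f}(X)$ into subgradient and horizon pieces, the case split $r=f(x)$ versus $r>f(x)$ with Lemma~\ref{lem:reg_domain(a)} handling the latter, and the Lipschitz conversion between $\|X-Y\|$ and $\|x-y\|$. The only stylistic difference is in Part~2: you compute $\dist(U,N_{\cY}(Y))$ explicitly via the orthogonal decomposition $N_{\cY}(Y)=(N_{\cM}(y)\times\{0\})\oplus\R(\nabla_{\cM}f(y),-1)$ and then pinch with Cauchy--Schwarz, whereas the paper works dually, testing $\langle U,V\rangle$ against tangent vectors $V=(u,\langle\nabla_{\cM}f(y),u\rangle)\in T_{\cY}(Y)$ from~\eqref{eqn:simple_tangent}; both routes yield the same estimate since $\dist(U,N_{\cY}(Y))=\|P_{T_{\cY}(Y)}U\|$.
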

\begin{proof}
	The proof is similar to that of Theorem~\ref{thm:reinterp0}. Throughout, set $\cX=\epi f$ and $\cY:=\gph f\big|_{\cM}$. We will use capital letters  $X$, $Y$, and $\bar X$ to denote the lifted points $(x,f(x))$, $(y,f(y))$, and $(\bar x,f(\bar x))$, respectively. We also recall the relationship for any point $x\in \dom f$
 \cite[Theorem 8.9]{rockafellar2009variational}:
	\begin{equation}\label{eqn:equiv_norm_epi}
 N_{\cX}(X) \textrm{ coincides with the union of}\quad\R_{++}(\partial f(x)\times \{-1\})\quad \textrm{and}\quad \partial^{\infty} f(x)\times\{0\}.
 \end{equation}
 	Lemma~\ref{lemma:inclusion_normal} implies that $f$ is $(a)$-regular along $\cM$ near $\bar x$ if and only if the inclusion $N_{\cX}(X)\subset N_{\cY}(X)$ holds for all $x$ near $\bar x$, or equivalently  $\langle N_{\cX}(X), V\rangle =\{0\}$ for all $V\in T_{\cY}(X)$.
	 In light of \eqref{eqn:simple_tangent} and \eqref{eqn:equiv_norm_epi}, this happens if and only if 
	$$\langle \partial f(x)-\nabla_\cM f(x),u\rangle\subseteq\{0\}\qquad \textrm{and}\qquad \langle \partial^{\infty} f(x),u \rangle=\{0\}\qquad \forall u\in T_{\cM}(x),
	$$
	which is clearly  equivalent to \eqref{eqn:a_getit}.

		Next, by definition  $f$ is strongly $(a)$-regular along $\cM$ near $\bar x$ if and only if 
		there exists a constant $C$ such that
		\begin{equation}\label{eqn:key_inner_prod}
		\langle U,V \rangle\leq C\|U\|\cdot\|V\|\cdot\|(x,r)-Y\|
		\end{equation}
		for all  $(x,r)\in\cX$ and $Y\in \cY$ sufficiently close to $\bar X$, and for all  $U\in N_{\cX}((x,r))$ and $V\in T_{\cY}( Y)$.
		Let us interpret \eqref{eqn:key_inner_prod} in two cases,  $r=f(x)$ and $r> f(x)$. In the former case $r=f(x)$, in light of \eqref{eqn:equiv_norm_epi} and local Lipschitz continuity of $f$ on its domain,  condition \eqref{eqn:key_inner_prod} simplifies to
		\begin{align}
		\langle v-\nabla_{\cM} f(y),u \rangle&\leq C'  \sqrt{\|v\|^2+1}\cdot \|u\|\cdot \|x-y\|,\\
		\langle w,u\rangle&\leq C'\|w\|\cdot  \|u\| \cdot \|x-y\|.\label{eqn:sec_needit}
		\end{align}
		holding for some constant $C'$, for all $x\in \dom f$ and $y\in \cM$ sufficiently close to $\bar x$, and for all $u\in T_{\cM}(y)$, $v\in \partial f(x)$, and $w\in \partial^{\infty} f(x)$. In the case $r>f(x)$, taking into account the equality $N_{\cX}(x,r)= N_{\dom f}\times\{0\}$, we see that \eqref{eqn:key_inner_prod} reduces to 
		$$\langle w,u\rangle\leq C'\|w\|\cdot  \|u\| \cdot \sqrt{\|x-y\|^2+(r-f(y))^2}
		$$		
		holding for all $w\in N_{\dom f}(x)$.
		Clearly, this is implied by $\dom f$ being strongly $(a)$-regular along $\cM$ at $\bar x$. In particular, taking into account Lemma~\ref{lem:reg_domain(a)} we see that this condition holds automatically if  $f$ is strongly $(a)$ regular along $\cM$ at $\bar x$. The claimed equivalence for strong (a) regularity follows immediately.  	
\end{proof}

Again the conditions in Theorem~\ref{thm:reinterp} become particularly transparent when $f$ is Lipschitz continuous near $\bar x$. Then conditions $(a)$ and strong $(a)$, respectively, are equivalent to
\begin{align*}
P_{T_{\cM}(y)}(\partial f(y))&= \{\nabla_{ \cM} f(y)\}\\
\|P_{T_{\cM}(y)}(\partial f(x)-\nabla_{\cM} f(y))\|&=O(\|x-y\|)
\end{align*}
holding as $x\to\bar x$ and $y\in \cM$ tend to $\bar x$.
In words, condition $(a)$ is equivalent to the projection  $P_{T_{\cM}(y)}(\partial f(y))$ reducing to a a single point---the covariant gradient $\nabla_{ \cM} f(y)$. This type of property is called the projection formula in \cite{doi:10.1137/060670080}. Strong $(a)$ provides a ``stable improvement'' over the projection formula wherein the 
deviation $\partial f(x)-\nabla_{ \cM} f(y)$ in tangent directions $T_{\cM}(y)$ is linearly bounded by $\|x-y\|$, for points  $x\in\E$ and $y\in \cM$ near $\bar x$.

The rest of the chapter is devoted to exploring the relationship between the four basic regularity conditions, presenting examples, proving calculus rules, and justifying that these conditions hold ``generically'' along active manifolds. Section~\ref{sec:algos_main} will in turn use these conditions to analyze subgradient type algorithms.

\subsection{Relation between the four conditions}
The goal of this section is to explore the relationship between the four regularity conditions. Recall that Lemma~\ref{lem:bima} already established the implication $(b_{\leq})\Rightarrow (a)$. More generally, the goal of this section is to show in reasonable settings the string of implications:
\begin{equation}\label{eqn:main_implications}
\boxed{(a)\quad \Leftarrow \quad (b_=) \quad \Leftarrow\quad {\rm strong }\,(a) \quad \Leftarrow \quad {\rm strong }\, (b_=)}.
\end{equation}
Before passing to formal statements, we require some preparation. Namely, the task of verifying conditions $(b_{\diamond})$, strong $(a)$, and strong $(b_{\diamond})$ requires considering arbitrary points $x \in \cX$ and $y\in \cY$, which are a priori unrelated. We now show that it essentially suffices to set $y$ to be the projection of $x$ onto $\cY$, or more generally a retraction of $x$ onto $\cY$. In this way, we may remove one degree of flexibility for the question of verification. We begin by defining the projected variants of conditions $(b_{\diamond})$, strong $(a)$, and strong $(b_{\diamond})$.

We begin by defining retractions onto a set $\cY$, with the nearest point projection being the primary example. The added flexibility will be useful once we pass to functions.

\begin{definition}[Retractions]
{\rm 
A map $\pi\colon\E\to\E$ is a {\em retraction} onto a set $\cY\subset \E$ near a point $\bar x\in \cY$ if 
\begin{enumerate}
\item  the inclusion $\pi(x)\in \cX$ holds for all $x$ near $\bar x$,
\item there exists a constant $C\geq 0$ such that the inequality $\|x-\pi(x)\|\leq C\cdot \dist(x,\cY)$ holds for all $x$ near $\bar x$.
\end{enumerate}
If $\pi$ is $C^p$-smooth near $\bar x$, we call $\pi$ a {\em $C^p$-smooth retraction}. }
\end{definition}

Next, we define the projected conditions.

\begin{definition}[Projected conditions]\label{defn:project_condt}
{\rm
Fix two sets  $\cX,\cY\subset\E$, a point $\bar x\in \cY$, and a retraction $\pi$ onto $\cY$. We say that $\cX$ is {\em $(b^{\pi}_{\diamond})$-regular} along $\cY$ at $\bar x$ if it satisfies condition $(b_{\diamond})$ in the restricted setting $y_i=\pi(x_i)$. 
 Conditions  {\em strong} $(a^{\pi})$ and {\em strong $(b^{\pi}_{\diamond})$} are defined analogously.
		}
\end{definition}

The following theorem allows one to reduce the question of verifying regularity conditions to the setting $y\in \pi(x)$. 

\begin{thm}\label{thm:atob}
Fix two sets $\cX,\cY\subset\E$, a point $\bar x\in \cY$, and a $C^1$-smooth retraction $\pi$ onto $\cY$. Suppose moreover that $\cY$ is a $C^1$-smooth manifold near $\bar x$. Then the equivalences hold:
\begin{enumerate}
\item strong $(a)~\Leftrightarrow~\textrm{strong }(a^{\pi})$
\item  $(a)$ and $(b^{\pi}_{\diamond})$$~\Leftrightarrow~$ $(b_{\diamond})$
\end{enumerate} 
Moreover, if $\pi$ is $C^2$-smooth, then the implication holds:
 $$\textrm{strong }(a^{\pi}) \textrm{ and strong }(b^{\pi}_{\diamond})~\quad\Rightarrow\quad~ \textrm{strong }(b_{\diamond}).$$
\end{thm}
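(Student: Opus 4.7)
The plan is to decompose the vector $y-x$ via the retraction, control each piece by one of the two hypotheses, and exploit $C^2$ smoothness to pass from $\pi(x)$ to $y$ with a quadratic error. Throughout, fix $x\in \cX$ and $y\in \cY$ near $\bar x$, together with a vector $v\in N_{\cX}(x)\cap\mathbf{B}$, and split
\[
\langle v,\, y - x\rangle \;=\; \langle v,\, \pi(x) - x\rangle \;+\; \langle v,\, y - \pi(x)\rangle.
\]
The first summand is handled directly by strong $(b^{\pi}_{\diamond})$; the retraction bound $\|x - \pi(x)\| \le C_\pi\,\dist(x,\cY) \le C_\pi\|x-y\|$ immediately upgrades it to $\langle v,\, \pi(x)-x\rangle\,\diamond\, O(\|x-y\|^2)$.

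For the second summand I would exploit that $y=\pi(y)$ near $\bar x$ and Taylor-expand the $C^2$ map $\pi$ about $\pi(x)\in\cY$:
\[
y - \pi(x) \;=\; \pi(y)-\pi(x) \;=\; \nabla\pi(\pi(x))\,(y-\pi(x)) + R,\qquad \|R\|\le C_2\|y-\pi(x)\|^2.
\]
Since $\pi$ maps a neighborhood of $\bar x$ into $\cY$ and satisfies $\pi\equiv\mathrm{id}$ on $\cY$ near $\bar x$, differentiating shows that the range of $\nabla\pi(\pi(x))$ is contained in $T_{\cY}(\pi(x))$. Thus the leading vector $w := \nabla\pi(\pi(x))(y-\pi(x))$ is tangent to $\cY$ at $\pi(x)$, and the remainder contributes $|\langle v, R\rangle| \le \|R\| \le O(\|x-y\|^2)$ after bounding $\|y-\pi(x)\|\le(1+C_\pi)\|y-x\|$.

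The crux is bounding $\langle v, w\rangle$ using strong $(a^{\pi})$. Decompose $v = v_T + v_N$ orthogonally with $v_T\in T_{\cY}(\pi(x))$ and $v_N\in N_{\cY}(\pi(x))$; since $w\in T_{\cY}(\pi(x))$, one has $\langle v, w\rangle = \langle v_T, w\rangle$. Because $\cY$ is a smooth manifold, $N_{\cY}(\pi(x))$ is a linear subspace, so $\dist(v/\|v\|,\,N_{\cY}(\pi(x))) = \|v_T\|/\|v\|$ whenever $v\neq 0$; strong $(a^{\pi})$ together with the fact that $N_{\cX}(x)$ is a cone then delivers $\|v_T\|\le C_3\,\|x-\pi(x)\|\le C_3 C_\pi\|x-y\|$. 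Combined with the operator-norm bound $\|w\|\le \|\nabla\pi(\pi(x))\|\cdot\|y-\pi(x)\| = O(\|x-y\|)$, this gives $|\langle v_T, w\rangle| = O(\|x-y\|^2)$. Adding the three contributions yields $\langle v, y-x\rangle\,\diamond\, O(\|x-y\|^2)$, which is strong $(b_{\diamond})$. The three variants $\diamond\in\{\le,=,\ge\}$ are handled uniformly: the tangential and Taylor-remainder estimates are absolute-value bounds, so the sign of the inequality inherited from strong $(b^{\pi}_{\diamond})$ is preserved.

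The most delicate step I expect is the passage from strong $(a^{\pi})$ to the tangential bound $\|v_T\|\le O(\|x-\pi(x)\|)$. This hinges on the smooth manifold structure of $\cY$, which makes $N_{\cY}(\pi(x))$ a linear subspace and hence identifies the gap from a normal vector to this subspace with the length of its tangential component (modulo scaling to the unit sphere). Everything else is careful bookkeeping: $C^2$ smoothness of $\pi$ appears only to supply the quadratic Taylor remainder, while the retraction inequality converts bounds phrased in $\|x-\pi(x)\|$ and $\|y-\pi(x)\|$ into bounds in $\|x-y\|$.
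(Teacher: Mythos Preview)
Your proposal addresses only the final implication (the ``Moreover'' part under the $C^2$ assumption); items 1 and 2 of the theorem are not touched. You should at least sketch why strong $(a^{\pi})\Rightarrow$ strong $(a)$ and why $(a)+(b^{\pi}_{\diamond})\Rightarrow(b_{\diamond})$; the paper handles both by the same decomposition idea you use, together with Lipschitz variation of $N_{\cY}(\cdot)$ along the $C^1$ manifold $\cY$ for item~1.

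For the implication you do treat, your argument is correct and very close to the paper's. Both proofs split $\langle v,y-x\rangle$ through $\pi(x)$, control the $\langle v,\pi(x)-x\rangle$ piece by strong $(b^{\pi}_{\diamond})$ together with the retraction bound $\|x-\pi(x)\|\le C_\pi\|x-y\|$, and control the $\langle v,y-\pi(x)\rangle$ piece by a second-order Taylor expansion of $\pi$ combined with a strong-$(a)$ type estimate. The only genuine difference is the choice of base point: the paper expands $\pi$ at $y$, obtaining a tangent vector in $T_{\cY}(y)$ and then invokes \emph{strong $(a)$} at $y$ (which it has already shown in item~1 to follow from strong $(a^{\pi})$); you expand at $\pi(x)$, obtaining a tangent vector in $T_{\cY}(\pi(x))$ and then invoke \emph{strong $(a^{\pi})$} directly. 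Your route is marginally more self-contained for part~3, since it avoids appealing to item~1; the paper's route keeps the ``sign-inheritance'' bookkeeping for the first summand slightly cleaner by working with sequences and ratios. Both are equally valid.

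Two small points worth making explicit in your write-up: (i) the identity $\pi\!\mid_{\cY}=\mathrm{id}$ near $\bar x$ that you use is not assumed verbatim but follows from the retraction inequality $\|x-\pi(x)\|\le C_\pi\dist(x,\cY)$ evaluated at $x\in\cY$; (ii) the key step ``$\dist(v,N_{\cY}(\pi(x)))=\|v_T\|$'' indeed requires that $N_{\cY}(\pi(x))$ be a linear subspace, which is exactly where the smooth-manifold hypothesis on $\cY$ enters.
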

\begin{proof}
		
	Suppose that $\cX$ is strongly $(a^\pi)$-regular regular along $\cY$ near $\bar x$. Thus there exists a constant $C_1>0$ such that
\begin{equation}\label{eqn:1del}
 		\Delta \left(N_{\cX}(x), N_{\cY}(\pi(x))\right) \le  C_1 \|x-\pi(x)\|,
		\end{equation}
 		for all $x\in \cX$ sufficiently close to $\bar x$. On the other hand, since $\pi$ is a retraction onto $\cY$, there exists some constant $C'>0$ satisfying $\|x-\pi(x)\|\leq C'\cdot \|x-y\|$ for all $\in\cX$ and $y\in\cY$ near $\bar x$. Moreover, since $\cY$ is a $C^1$-smooth manifold, there exists a constant $C_2>0$ such that
		\begin{equation}\label{eqn:2del}
 		\begin{aligned}
 		\Delta\left(N_{\cY}(\pi(x)), N_{\cY}(y)\right) &\le C_2 \|\pi(x) - y\|\\
 		&\le C_2\left(\|\pi(x) - x\| + \|x-y\|\right)\\
 		&\le (1+C') C_2 \|x-y\|.
 		\end{aligned}
		\end{equation}
 	Combining \eqref{eqn:1del} and \eqref{eqn:2del}, and using the triangle inequality, we conclude
 	$\Delta(N_{\cX}(x), N_{\cY}(y))\le 
 	 (C_1C' +(1+C')C_2)\norm{x-y}$,
  for all $x \in \cX, y \in \cY$ sufficiently close to $\bar x$. Thus $\cX$ is strongly $(a)$-regular along $\cY$ at $\bar x$ as claimed.

 		Next, suppose that $\cX$ is  both $(a)$ and $(b^\pi_{\diamond})$ regular along $\cY$ near $\bar x$. Let $x_i \in \cX$ and $y_i \in \cY$ be sequences converging to some point $y$ near $\bar x$ and let $v_i\in N_{\cX}(x_i)$ be arbitrary. Let us write 		 \begin{equation}\label{eqn:decomp_er}
 			\dotp{v_i, \frac{y_i-x_i}{\|y_i-x_i\|}} =  \dotp{v_i, \frac{\pi(x_i)-x_i}{\|y_i-x_i\|}} + \dotp{v_i, \frac{y_i-\pi(x_i)}{\|y_i-x_i\|}}.
 		\end{equation}
		We analyze each term on the right side separately. To this end, observe
		 $$\left\langle v_i, \frac{\pi(x_i)-x_i}{\|y_i-x_i\|}\right\rangle=\left\langle v_i, \frac{\pi(x_i)-x_i}{\|\pi(x_i)-x_i\|}\right\rangle\cdot\frac{\|\pi(x_i)-x_i\|}{\|y_i-x_i\|}.$$
		Therefore, the accumulation points of $\left\langle v_i, \frac{\pi(x_i)-x_i}{\|y_i-x_i\|}\right\rangle$ inherit the sign of the accumulation points of $\left\langle v_i, \frac{\pi(x_i)-x_i}{\|\pi(x_i)-x_i\|}\right\rangle$.

	Next, moving on since the retraction $\pi$ is $C^1$-smooth near $\bar x$, we deduce 
\begin{equation}\label{eqn:tempo}
\limsup_{i\to\infty}	\left|\dotp{v_i, \frac{y_i-\pi(x_i)}{\|y_i-x_i\|}}\right|\leq \limsup_{i\to\infty}	\left|\dotp{v_i, \frac{\nabla \pi(x_i)(y_i-x_i)}{\|y_i-x_i\|}}\right|.
\end{equation}
Passing to a subsequence, we may assume $\frac{y_i-x_i}{\|y_i-x_i\|}$ tends to some vector $w\in \E$ and that $v_i$ converge to some vector $v$. Observe that since $\pi$ maps points into $\cY$, the range of $\nabla \pi(y)$ is contained in the tangent space $T_{\cY}(y)$. Noting  that condition $(a)$ guarantees $v\in N_{\cY}(y)$, we deduce that the right-side of \eqref{eqn:tempo} is zero. Thus condition $(b_{\diamond})$ holds.

Next, suppose that $\pi$ is $C^2$-smooth and that $\cX$ is both strongly $(a^{\pi})$-regular and strongly $(b^\pi_{\diamond})$-regular along $\cY$ near $\bar x$. Note that we already proved that strong $(a^{\pi})$ implies strong $(a)$.
We return to the decomposition:
 \begin{equation}
 			\dotp{v_i, \frac{y_i-x_i}{\|y_i-x_i\|^2}} =  \dotp{v_i, \frac{\pi(x_i)-x_i}{\|y_i-x_i\|^2}} + \dotp{v_i, \frac{y_i-\pi(x_i)}{\|y_i-x_i\|^2}}.
 		\end{equation}
and analyze each term separately. To this end, we may write
$$\dotp{v_i, \frac{\pi(x_i)-x_i}{\|y_i-x_i\|^2}}=\dotp{v_i, \frac{\pi(x_i)-x_i}{\|\pi(x_i)-x_i\|^2}}\frac{\|\pi(x_i)-x_i\|^2}{\|y_i-x_i\|^2}.$$
Therefore, the accumulation points of $\dotp{v_i, \frac{\pi(x_i)-x_i}{\|y_i-x_i\|^2}}$ inherit the sign of the accumulation points of $\dotp{v_i, \frac{\pi(x_i)-x_i}{\|\pi(x_i)-x_i\|^2}}$.
Next, since $\pi$ is $C^2$ smooth, we compute
\begin{equation}\label{eqn:tempo-randon}
\limsup_{i\to\infty}	\left|\dotp{v_i, \frac{y_i-\pi(x_i)}{\|y_i-x_i\|^2}}\right|\leq \limsup_{i\to\infty}	\frac{1}{\|y_i-x_i\|}\cdot\left|\dotp{v_i, \frac{\nabla \pi(y_i)(y_i-x_i)}{\|y_i-x_i\|}}\right|.
\end{equation}
Since $w_i:=\frac{\nabla \pi(y_i)(y_i-x_i)}{\|y_i-x_i\|}$ is tangent to $\cY$ at $y_i$, strong $(a)$ regularity implies that the right side of \eqref{eqn:tempo-randon} is finite.
 We thus conclude that $\cX$ is strongly $(b_{\diamond})$ regular along $\cY$ near $\bar x$, as claimed.
  \end{proof}

With Theorem~\ref{thm:atob} at hand, we may now establish the remaining implications in \eqref{eqn:main_implications}, beginning with  strong $(b_{\geq})$ implies strong $(a)$.

\begin{proposition}[Strong $(b_{\geq})$ implies strong $(a)$]\label{pro:getirdone}
Consider a $C^3$ manifold $\cY$ that is contained in a set $\cX\subset \E$. Suppose that $\cX$ is prox-regular at  a point $\bar x\in \cY$.
Then the following implication holds:
	$$\textrm{\rm strong } (b_{\geq})~~\Rightarrow ~~\textrm{\rm strong } (a).$$
\end{proposition}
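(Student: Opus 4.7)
The plan is to reduce the statement to a projected version via Theorem~\ref{thm:atob}, combine strong $(b_\geq)$ with prox-regularity to obtain a two-sided estimate, and then exploit first-order optimality of the nearest-point projection.

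Since $\cY$ is a $C^3$-manifold around $\bar x$, its nearest-point projection $P_\cY$ is $C^2$-smooth on a neighborhood of $\bar x$ and is obviously a retraction onto $\cY$. By Theorem~\ref{thm:atob}, strong $(a)$ is equivalent to strong $(a^{P_\cY})$, so it suffices to produce a constant $C>0$ with
\[
\Delta\bigl(N_{\cX}(x),\, N_{\cY}(P_\cY(x))\bigr)\;\le\;C\,\|x-P_\cY(x)\|\qquad\text{for all $x\in\cX$ near $\bar x$.}
\]
Because $\cY\subseteq\cX$ and $\cX$ is prox-regular at $\bar x$, the defining inequality of prox-regularity gives strong $(b_{\le})$ of $\cX$ along $\cY$ at $\bar x$. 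Combining this with the hypothesis of strong $(b_\ge)$ yields a bilateral estimate: there exist $\epsilon,C_0>0$ such that
\begin{equation}\label{eqn:plan-bilateral}
|\langle v,\,y'-x\rangle|\;\le\;C_0\,\|y'-x\|^2
\end{equation}
for all $x\in\cX\cap B_\epsilon(\bar x)$, $y'\in\cY\cap B_\epsilon(\bar x)$, and $v\in N_{\cX}(x)\cap\mathbf{B}$.

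Now fix such a pair $(x,v)$, set $y:=P_\cY(x)$, and let $u\in T_{\cY}(y)$ be an arbitrary unit tangent vector. Because $\cY$ is $C^3$, we may produce a $C^2$ curve $\gamma\colon(-\delta,\delta)\to\cY$ with $\gamma(0)=y$, $\gamma'(0)=u$, and second derivative $\gamma''(0)$ bounded uniformly in $y$ and $u$ on a neighborhood of $\bar x$. The first-order optimality condition for the projection gives $x-y\in N_{\cY}(y)$, hence $\langle x-y,u\rangle=0$. Expanding to second order,
\[
\|\gamma(t)-x\|^2\;=\;\|y-x\|^2+t^2+O(t^2\,\|y-x\|)+o(t^2),
\]
and
\[
\langle v,\gamma(t)-x\rangle\;=\;\langle v,y-x\rangle+t\langle v,u\rangle+O(t^2).
\]
Applying \eqref{eqn:plan-bilateral} to $y'=\gamma(t)$ and to $y'=y$, then subtracting, one obtains
\[
|t\langle v,u\rangle|\;\le\;2C_0\,\|y-x\|^2+C_1 t^2+o(t^2).
\]
Setting $t=\|y-x\|$ (which is admissible once $\|y-x\|$ is small enough) and dividing by $t$ yields $|\langle v,u\rangle|\le C'\|x-y\|$ for a constant $C'$ independent of $x$, $v$, and $u$.

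Since $u\in T_{\cY}(y)$ was an arbitrary unit tangent vector and $N_{\cY}(y)$ is the orthogonal complement of $T_{\cY}(y)$, this bound exactly says $\|P_{T_{\cY}(y)}(v)\|\le C'\|x-y\|$, i.e.\ $\dist(v,N_{\cY}(y))\le C'\|x-P_\cY(x)\|$. Taking the supremum over unit normals $v\in N_{\cX}(x)$ establishes strong $(a^{P_\cY})$, and Theorem~\ref{thm:atob} then delivers strong $(a)$. The main technical obstacle is controlling the second-order terms in the expansions of $\|\gamma(t)-x\|^2$ and $\langle v,\gamma(t)-x\rangle$ uniformly in $(x,y,u)$; this is precisely where the $C^3$ hypothesis on $\cY$ (which gives $C^2$ local parametrizations with uniformly bounded second derivatives) together with the first-order orthogonality $\langle x-y,u\rangle=0$ play the crucial role.
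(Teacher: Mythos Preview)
Your argument is correct and shares the same core mechanism as the paper's proof: move from $y$ into $\cY$ along a tangent direction by an amount $t\sim\|x-y\|$, then use second-order control from the $(b)$-type estimates to bound $|\langle v,u\rangle|$. The organization differs in two noteworthy ways. First, you package prox-regularity (which furnishes strong $(b_{\le})$) together with the hypothesis into a single bilateral estimate~\eqref{eqn:plan-bilateral} and apply it at both $y$ and $\gamma(t)$; the paper instead keeps the two ingredients separate, invoking strong $(b_{\ge})$ for the term $\langle v,x-y\rangle$ and prox-regularity for the term $\langle v,P_{\cY}(y-\alpha w)-x\rangle$ in a three-term decomposition. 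Second, you genuinely rely on $y=P_{\cY}(x)$ to obtain the orthogonality $\langle x-y,u\rangle=0$ that kills the first-order cross term in $\|\gamma(t)-x\|^2$, and then appeal to Theorem~\ref{thm:atob}; the paper's computation, by contrast, works for arbitrary $y\in\cY$ near $\bar x$ and uses the explicit point $P_{\cY}(y-\alpha w)$ in place of your curve $\gamma(t)$, with the $C^2$-smoothness of $P_{\cY}$ (hence $C^3$ of $\cY$) supplying the $O(\alpha^2)$ retraction error. Your route is slightly more conceptual and makes the role of the projection's first-order optimality transparent; the paper's route is a touch more direct in that it yields the full strong $(a)$ estimate without the reduction step.
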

\begin{proof}
Suppose that $\cX$ is strongly $(b_{\geq})$-regular along $\cY$ near $\bar x$.
In light of Theorem~\ref{thm:atob}, it suffices to prove that the strong $(a^\pi)$ condition holds for $C^2$-smooth retraction. We will use the projection $\pi:=P_{\cY}$, which is indeed a $C^2$-smooth retraction onto $\cY$ since   $\cY$ is a $C^3$ manifold. Thus, there exist constants $\epsilon,L>0$ satisfying
	\begin{equation}\label{eqn:proj_lip_main}
	\|P_{\cY}(y+h)-P_{\cY}(y)-\nabla P_{\cY}(y)h\|\leq L\|h\|^2,
	\end{equation}
	for all $y\in B_{\epsilon}(\bar x)$ and $h\in \epsilon  {\bf B}$. 	Fix now two points $x \in \cX$ and $y \in \cY$ and a unit vector  $v \in N_{\cX}(x)$. Clearly, we may suppose $v\notin N_{\cY}(y)$, since otherwise the claim is trivially true. Define the normalized vector $w := -\frac{P_{\tangent{\cY}{y}}( v)}{\|P_{\tangent{\cY}{y}}( v)\|}$. 
Noting the equality $\nabla P_{\cY}(y)=P_{T_{\cY}(y)}$ and appealing to \eqref{eqn:proj_lip_main}, we deduce the estimate
	\begin{align*}
	\|P_{\cY}(y - \alpha w) -  (y - \alpha w)\| \leq L\| \alpha w\|^2 = L\alpha^2,
	\end{align*}
	for all $y\in B_{\epsilon}(\bar x)$ and $\alpha\in (0,\epsilon)$.
 	Shrinking $\epsilon>0$, prox-regularity yields the estimate
	\begin{align*}
	\dotp{v, P_{\cY}(y - \alpha w)-x} \leq \frac{\rho}{2}\|x-P_{\cY}(y - \alpha w)\|^2,
	\end{align*}
	for some constant $\rho>0$. Therefore, we conclude 
	\begin{align*}
	\alpha\|P_{\tangent{\cY}{y}}v\| = -\alpha\dotp{v, w}  
	&= \dotp{v,  x-y} + \dotp{v, P_{\cY}(y - \alpha w)-x} + \dotp{v,  (y - \alpha w)- P_{\cY}(y - \alpha w)} \\
	&\leq C\|x - y\|^2 + \frac{\rho}{2}\|x-P_{\cY}(x - \alpha w)\|^2 + L\alpha^2,
	\end{align*}
	where the last inequality follows from the strong $(b_{\geq})$ condition.
	Note that the middle term is small: 
	$$
	\| P_{\cY}(y - \alpha w)-x\|^2 \leq 2\|P_{\cY}(y - \alpha w) - (y - \alpha w) \|^2 + 2\|y - \alpha w - x\|^2 \leq 2L^2\alpha^4 + 4\|y - x\|^2 + 4\alpha^2.
	$$
Thus, we have 
	\begin{align*}
	\alpha \|P_{\tangent{\cY}{y}}v\| &\leq  C\|x - y\|^2 + \rho L^2\alpha^4 + 2\rho \|x - y\|^2 + 2\rho\alpha^2 + L\alpha^2.
	\end{align*}
	Dividing both sides by $\alpha$ and setting $\alpha = \|x - y\|$ completes the proof.
\end{proof}

Next we prove the last implication, strong $(a)$ $\Rightarrow$ $(b_{=})$, in the definable category. This result thus generalizes the theorems of Kuo \cite{kuo1971ratio}, Verdier \cite{verdier1976stratifications}, and Ta Le Loi \cite{le1998verdier}. The proof technique we present is different from those in the earlier works on the subject and will be based on an application of the Kurdyka-{\L}ojasiewicz inequality \cite{doi:10.1137/060670080}.

\begin{thm}[Strong $(a)$ implies $(b)$]
Fix two definable sets $\cX,\cY\subset\E$ and a point $\bar x\in \cY$. Suppose in addition that $\cY$ is a $C^2$-smooth manifold around $\bar x$ and that $\cX$ is a locally closed set. Then the following implication holds: 
$$\textrm{\rm strong }(a)~~\Rightarrow~~(b_{=}).$$
\end{thm}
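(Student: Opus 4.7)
The plan is to argue by contradiction, exploiting the curve selection lemma together with the fundamental fact from o-minimal geometry that bounded definable curves have finite length. By Theorem~\ref{thm:atob}, since $\cY$ is a $C^2$-smooth manifold near $\bar x$, the nearest-point projection $\pi := P_\cY$ is a $C^1$-smooth retraction onto $\cY$; since strong $(a)$ trivially implies $(a)$, it suffices to verify the projected variant $(b^\pi_=)$, namely that $\langle v, \pi(x) - x\rangle = o(\|\pi(x)-x\|)$ as $x \to \bar x$ through $\cX$, uniformly over $v \in N_\cX(x) \cap {\bf B}$.

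Suppose $(b^\pi_=)$ fails. Then for some $c>0$ the definable set $\{(x,v): x\in\cX,\ v\in N_\cX(x),\ \|v\|=1,\ |\langle v,\pi(x)-x\rangle|\geq c\|\pi(x)-x\|\}$ accumulates at $(\bar x,v_\infty)$ for some unit vector $v_\infty$. Curve selection in the o-minimal structure yields a $C^1$ definable curve $t\mapsto (x(t),v(t))$ on $(0,\epsilon)$ converging to $(\bar x,v_\infty)$ and lying in this set; write $y(t):=\pi(x(t))$, $\delta(t):=\|y(t)-x(t)\|$, and $\phi(t):=\langle v(t),y(t)-x(t)\rangle$, so that $|\phi(t)|\geq c\delta(t)$, and assume by the monotonicity theorem that $\phi$ has constant sign.

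Two orthogonality identities drive the rest. On one hand, $\cX$ admits a Whitney $(a)$-regular stratification, and by o-minimality $x(t)$ eventually lies in a single stratum $M\subseteq \cX$. The normal form of Whitney's $(a)$-condition then forces the inclusion $N_\cX(x(t))\subseteq N_M(x(t))$, and since $x'(t)\in T_M(x(t))$ we obtain $\langle v(t),x'(t)\rangle = 0$. On the other hand, strong $(a)$-regularity furnishes $w(t)\in N_\cY(y(t))$ with $\|v(t)-w(t)\|\leq C\delta(t)$, and since $y'(t)\in T_\cY(y(t))$ we deduce $|\langle v(t),y'(t)\rangle|=|\langle v(t)-w(t),y'(t)\rangle|\leq C\delta(t)\|y'(t)\|$. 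Taken together,
\[
|\phi'(t)| \leq \delta(t)\bigl(\|v'(t)\|+C\|y'(t)\|\bigr), \qquad \text{whence}\qquad \left|\tfrac{d}{dt}\log|\phi(t)|\right|\leq \tfrac{1}{c}\bigl(\|v'(t)\|+C\|y'(t)\|\bigr).
\]

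Since $v(\cdot)\subset {\bf B}$ and $y(\cdot)$ remain bounded, both definable curves have finite length in the o-minimal structure, so the right-hand side above is integrable on $(0,\epsilon)$. Hence $\log|\phi|$ has bounded total variation as $t\searrow 0$, forcing $|\phi(t)|$ to stay bounded away from zero---contradicting $|\phi(t)|\leq \delta(t)\to 0$. The delicate step is establishing $\langle v(t),x'(t)\rangle=0$ via the normal form of Whitney's $(a)$-condition for definable stratifications; once that is in hand, the finite-length mechanism of o-minimal analysis precisely excludes the pathological spiraling which is the sole obstruction to strong $(a)\Rightarrow (b_=)$ outside the definable setting.
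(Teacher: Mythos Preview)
Your argument is correct and proceeds along a genuinely different route from the paper's proof. Both begin by invoking Theorem~\ref{thm:atob} to reduce to the projected condition $(b^\pi_=)$, but then diverge sharply. The paper applies the Kurdyka--{\L}ojasiewicz inequality to the auxiliary definable function $g(x,v)=|\langle v,x-P_\cY(x)\rangle|+\delta_{\cl\cX}(x)$: a direct computation using strong $(a)$ shows $\dist(0,\partial g(x,v))=O(\dist(x,\cY))$ for unit normals $v\in N_\cX(x)$, and the KL inequality then forces $g(x,v)=o(\dist(x,\cY))$. Your approach instead couples curve selection with the finite-length property of bounded definable curves; the decisive orthogonality $\langle v(t),x'(t)\rangle=0$ is secured by passing through an auxiliary Whitney $(a)$ stratification of $\cX$ (which, via Lemma~\ref{lemma:inclusion_normal}, yields $N_\cX(x(t))\subseteq N_M(x(t))$ along the stratum $M$ eventually containing the curve). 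The paper's KL argument is more economical and stays within the machinery already assembled; your argument is more geometric and makes explicit the mechanism---finite arclength preventing logarithmic spiraling---by which definability rules out the pathologies that can occur in the merely $C^\infty$ setting. Each approach leans on a different cornerstone of o-minimal analysis, and neither can dispense with definability.
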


We note that the theorem may easily fail for general $C^{\infty}$-manifolds $\cX$ and $\cY$, without some extra ``tameness'' assumption such as definability. See the discussion in \cite{le1998verdier} for details. 

\begin{proof}

 		Suppose that $\cX$ is strongly $(a)$-regular along $\cY$  near $\bar x$. In light of Theorem~\ref{thm:atob}, it suffices to show that $\cX$ is $(b^{\pi})$-regular along $\cY$ near $\bar x$. 
To this end, define the function 
		$$
 		g(x, v) = |\dotp{v, x-\proj_{\cY}(x)}|+\delta_{\cl \cX}(x).
 		$$
		Fix a compact neighborhood $U$ of $\{\bar x\}\times \mathbb{B}$. Then the KL-inequality \cite[Theorem 11]{doi:10.1137/060670080} ensures that there exists $\eta>0$ and a continuous function $\psi\colon[0,\eta)\to \R$ satisfying $\psi(0)=0$ and $\psi'(0)=0$ such that 		
 		\begin{align}\label{eq:semismoothwhatIneed}
 			g(x, v) \leq \psi(\dist(0, \partial g(x, v)))).
 		\end{align}
		for any $(x,v)\in U$ with $g(x,v)\leq \eta$. It suffices now to show that $\dist(0,\partial g(x,v))$ is linearly upper bounded by $\dist(x,\cY)$ for all $x\in \cX$ near $\bar x$ and all unit vectors $v\in N_{\cX}(x)$.
	To this end, fix  any point $(x,v)$. Clearly, we may assume  $g(x,v)\neq 0$, since otherwise there is nothing to prove. 
We compute 
		$$\partial g(x,v)=\left\{(I-\nabla \proj_{Y}(x))v+N_{\cl \cX}(x)\right\}\times \{x-P_{\cY}(x)\}.$$
		Therefore as long as $v\in N_{\cX}(x)$ we have
		\begin{equation}\label{eqn:intermediate}
		 \dist(0,\partial g(x,v))\leq \|\nabla P_{\cY}(x)v\|+\dist(x,\cY).
		 \end{equation}
 		Since $\cY$ is a $C^2$-manifold near $\bar x$, there exists a constant $L>0$ such that the inequality $\|\nabla P_{\cY}(x)\|\leq L$ holds for all $x$ near $\bar x$. Further, let $C>0$ be the constant from the defining property \eqref{eqn:strong_a} of strong $(a)$ regularity. Thus, as long as $x\in \cX$ is sufficiently close to $\bar x$, there exists a vector $w\in N_{\cY}(P_{\cY}(x))$ satisfying 
$\|v-w\|\leq C\dist(x,\cY)$. Therefore, continuing with~\eqref{eqn:intermediate} we deduce 
$$\dist(0,\partial g(x,v))\leq \|\nabla P_{\cY}(x)w\|+(1+CL)\dist(x,\cY).$$	
 To complete the proof, note that $\nabla P_{\cY}(x)w=0$ since $\range(\nabla P_{\cY}(x)) \subseteq T_{\cY}(P_{\cY}(x))$. 
  	 	\end{proof}

\subsection{Basic examples}
Having a clear understanding of how the four regularity conditions are related, we now present a few interesting examples of sets that are regular along a distinguished submanifold. More interesting examples can be constructed with the help of calculus rule, discussed at the end of the section.
We begin with the following simple example showing that any convex cone is regular along its lineality space. 
\begin{proposition}[Cones along the lineality space]\label{prop:CONES}
	Let $\cX\subset \E$ be a convex cone and let $\cY=\Lin(X)$ denote its lineality space. Then  $\cX$ is both strongly $(a)$ and strongly $(b_{=})$ regular along $\cY$.
\end{proposition}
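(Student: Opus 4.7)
The proof is essentially trivial once the relevant normal cones are identified, so my plan is to do exactly that and then read off both conditions.

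First, I would observe that since $\cY=\Lin(\cX)$ is a linear subspace of $\E$, we have $N_{\cY}(y)=\cY^{\perp}$ for every $y\in\cY$. The key intermediate claim is that $N_{\cX}(x)\subseteq\cY^{\perp}$ for every $x\in\cX$. To verify this, I would take any $v\in N_{\cX}(x)$, which by convexity satisfies $\langle v,z-x\rangle\leq 0$ for all $z\in\cX$, and any $w\in\cY$. The defining invariance $\cX+\Lin(\cX)=\cX$ guarantees that both $x+w$ and $x-w$ lie in $\cX$, so substituting $z=x\pm w$ into the normal inequality forces $\langle v,w\rangle=0$ and hence $v\in\cY^{\perp}$.

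With these two facts in hand, strong $(a)$ is immediate: the inclusion $N_{\cX}(x)\subseteq\cY^{\perp}=N_{\cY}(y)$ yields $\Delta(N_{\cX}(x),N_{\cY}(y))=0$ for every $x\in\cX$ and $y\in\cY$, so the defining inequality of strong $(a)$ holds with constant $C=0$.

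For strong $(b_{=})$, I must verify $\langle v,y-x\rangle=0$ for every $x\in\cX$, $y\in\cY$, and $v\in N_{\cX}(x)$ (the bound with any constant, and in particular with $C=0$, is what is needed). The term $\langle v,y\rangle$ vanishes because $v\in\cY^{\perp}$ by the claim above and $y\in\cY$. For the term $\langle v,x\rangle$, the convex-cone structure provides two-sided bounds simultaneously: taking $z=0\in\cX$ in $\langle v,z-x\rangle\leq 0$ gives $\langle v,x\rangle\geq 0$, while taking $z=2x\in\cX$ (which lies in $\cX$ because $\cX$ is a cone) gives $\langle v,x\rangle\leq 0$. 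Hence $\langle v,y-x\rangle=0$ and strong $(b_{=})$ holds with constant $C=0$. I do not anticipate any real obstacle; the whole content is that normal vectors to a convex cone annihilate both the base point and the lineality space, each of which is an elementary consequence of the convex-cone axioms.
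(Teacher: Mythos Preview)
Your proof is correct and follows essentially the same approach as the paper's: establish the inclusion $N_{\cX}(x)\subseteq N_{\cY}(y)$ to get strong $(a)$, then verify $\langle v,x-y\rangle=0$ for strong $(b_=)$. The paper merely asserts these two facts as ``straightforward to verify,'' whereas you spell out the elementary convex-cone arguments behind them; the content is the same.
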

\begin{proof}
Strong $(a)$ regularity follows from the inclusion $N_{\cX}(x)\subset N_{\cY}(y)$ holding for all $x\in \cX$ and $y\in \cY$.  Next, fix any points $x\in \cX$ and $y\in \cY$ and a vector $v\in N_{\cX}(x)$. Strong $(b_{=})$ regularity follows from the equality $\langle v,x-y\rangle=0$, which is straightforward to verify. 
\end{proof}

More interesting examples may be constructed as diffeomorphic images of cones around points in the lineality space. Following \cite{shapiroreducible}, a set $\cX\subset\E$ is said to be {\em $C^p$-cone reducible around a point $\bar x\in\cX$} if there exist a closed convex cone $K$ in some Euclidean space $\bf Y$, open neighborhoods $U$ of $\bar x$ and $V$ of the origin in  $\bf Y$,  and a diffeomorphism $\varphi\colon U\to V$ satisfying  $\varphi(\bar x)=0$ and $\cX\cap U=\varphi^{-1}(K\cap V)$. In this case, it follows from \cite[Theorem 4.2]{lewis2002active} that the set $\cM=\varphi^{-1}(\Lin(K)\cap V )$ is an active manifold for $\cX$ at $\bar x$ for any $v\in\ri N_{\cX}(\bar x)$. Common examples of sets that are cone reducible around each of their points are polyhedral sets, the cone of positive semidefinite matrices, the Lorentz cone, and any set cut out by smooth nonlinear inequalities with linearly independent gradients.
It is straightforward to see that conditions $(a)$ and $(b_{\diamond})$ are preserved under $C^1$ diffeomorphisms, while strong $(a)$ and strong  $(b_{\diamond})$ are preserved under $C^2$ diffeomorphisms.
  The following is therefore an immediate consequence of Proposition~\ref{prop:CONES}.

\begin{cor}[Cone reducible sets are regular along the active manifold]\label{cor:cone_red}
Suppose that a set $\cX$ is $C^2$ cone reducible to $K$ by $\varphi\colon U\to V$ around $\bar x$. Then $\cX$ is  strongly $(a)$ and strongly $(b_=)$-regular along $\varphi^{-1}(\Lin(K)\cap V)$ near $\bar x$.
\end{cor}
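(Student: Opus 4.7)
The plan is to reduce to the case of a convex cone via the diffeomorphism $\varphi$, and then invoke Proposition~\ref{prop:CONES} together with the (announced) diffeomorphism invariance of strong $(a)$ and strong $(b_{=})$ regularity. By definition of $C^2$ cone reducibility, on the neighborhood $U$ we have $\varphi(\cX\cap U)=K\cap V$, $\varphi(\bar x)=0$, and $\varphi$ restricts to a $C^2$ diffeomorphism between $\varphi^{-1}(\Lin(K)\cap V)$ and $\Lin(K)\cap V$. Proposition~\ref{prop:CONES} gives strong $(a)$ and strong $(b_{=})$ regularity of $K$ along $\Lin(K)$, and it remains to transfer these two conditions back through $\varphi^{-1}$.

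For strong $(a)$, I would use the change-of-variables formula for limiting normal cones: $N_{\cX}(x) = \nabla\varphi(x)^{*}\,N_{K}(\varphi(x))$ for $x$ near $\bar x$ (and similarly for $\cM=\varphi^{-1}(\Lin(K)\cap V)$ in place of $\cX$). Since $\varphi$ is a $C^2$ diffeomorphism on $U$, both $\nabla\varphi$ and $\nabla\varphi^{-1}$ are Lipschitz and uniformly bounded near $\bar x$, so the gap $\Delta(N_{\cX}(x),N_{\cM}(y))$ is bounded by a constant multiple of $\Delta(N_{K}(\varphi(x)),N_{\Lin(K)}(\varphi(y)))$, which in turn is bounded by $C\|\varphi(x)-\varphi(y)\|\le C'\|x-y\|$ using Lipschitzness of $\varphi$. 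This gives strong $(a)$ for $\cX$ along $\cM$ near $\bar x$.

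For strong $(b_{=})$, I would fix $x\in \cX$, $y\in\cM$ near $\bar x$, and a unit vector $v\in N_{\cX}(x)$. Set $\tilde x=\varphi(x)\in K$, $\tilde y=\varphi(y)\in\Lin(K)$, and $\tilde v=\nabla\varphi(x)^{-*}v\in N_{K}(\tilde x)$, which has norm uniformly controlled by $\|v\|=1$. The second-order Taylor expansion of $\varphi$ at $x$ gives
\begin{equation*}
\tilde y-\tilde x \;=\; \nabla\varphi(x)(y-x)+O(\|y-x\|^{2}),
\end{equation*}
so that
\begin{equation*}
\langle v,y-x\rangle \;=\; \langle \tilde v,\nabla\varphi(x)(y-x)\rangle \;=\; \langle\tilde v,\tilde y-\tilde x\rangle+O(\|y-x\|^{2}).
\end{equation*}
Strong $(b_{=})$ regularity of $K$ along $\Lin(K)$ from Proposition~\ref{prop:CONES} bounds $|\langle\tilde v/\|\tilde v\|,\tilde y-\tilde x\rangle|$ by a constant multiple of $\|\tilde y-\tilde x\|^{2}$, and Lipschitzness of $\varphi$ converts this into an $O(\|y-x\|^{2})$ estimate. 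Combining gives strong $(b_{=})$ regularity of $\cX$ along $\cM$ near $\bar x$.

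There is essentially no obstacle here, since both conditions involve only first-order normal data and an $O(\|x-y\|^{2})$ error; the main bookkeeping is the second-order remainder in the Taylor expansion of $\varphi$, which is exactly why $C^{2}$ smoothness of $\varphi$ is needed for the strong variants (whereas $C^{1}$ would suffice for the unqualified $(a)$ and $(b_{\diamond})$, as the excerpt already noted).
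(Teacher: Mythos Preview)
Your proposal is correct and follows exactly the paper's approach: the corollary is stated as an immediate consequence of Proposition~\ref{prop:CONES} together with the assertion (just before the corollary) that strong $(a)$ and strong $(b_{\diamond})$ are preserved under $C^2$ diffeomorphisms. You simply flesh out that diffeomorphism-invariance step, which the paper leaves as ``straightforward''; your Taylor-expansion computation for strong $(b_{=})$ and normal-cone change-of-variables for strong $(a)$ are exactly the kind of details one would supply.
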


The next proposition shows that any convex set is strongly (a)-regular along any affine space contained in it.
\begin{proposition}[Affine subsets of convex sets]\label{prop:concl_affine_sub}
Consider a convex set $\cX\subset\E$ and a subset $\cY\subset \cX$ that is locally affine around a point $\bar x\in \cY$. Then $\cX$ is strongly $(a)$-regular along $\cY$ near $\bar x$.
\end{proposition}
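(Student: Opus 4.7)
The plan is to unpack what local affineness buys us and then exploit the two-sided inequalities forced by a convex normal cone. Since $\cY$ is locally affine around $\bar x$, there exist a linear subspace $L\subseteq\E$ and an open neighborhood $V$ of $\bar x$ such that $\cY\cap V=(\bar x+L)\cap V$. In particular, $\cY\cap V$ is a $C^\infty$ manifold with constant tangent space $L$, so $N_{\cY}(y)=L^{\perp}$ for every $y\in\cY\cap V$. Consequently, for $v\in N_{\cX}(x)$, the quantity $\dist(v,N_{\cY}(y))$ reduces to $\|P_L(v)\|$, and strong $(a)$-regularity becomes the assertion that $\|P_L(v)\|\leq C\|x-y\|$ uniformly for $x\in\cX$ near $\bar x$, $y\in\cY\cap V$, and $v\in N_{\cX}(x)\cap\bf{B}$.

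Next I would shrink $V$ so that some uniform $\delta>0$ satisfies $y\pm\delta u\in\cY$ for every $y\in\cY\cap V$ and every unit vector $u\in L$. This is possible precisely because $\cY\cap V$ is a piece of a translate of $L$, so the admissible step size along $L$ is bounded below on a smaller neighborhood. Now fix $x\in\cX\cap V$, $y\in\cY\cap V$, a unit $v\in N_{\cX}(x)$, and a unit $u\in L$. Convexity of $\cX$ together with $y\pm\delta u\in\cY\subseteq\cX$ and the characterization $\langle v,z-x\rangle\leq 0$ for all $z\in\cX$ gives the two inequalities
\[
\langle v,y-x\rangle+\delta\langle v,u\rangle\leq 0\qquad\text{and}\qquad \langle v,y-x\rangle-\delta\langle v,u\rangle\leq 0,
\]
which combine to yield
\[
\delta|\langle v,u\rangle|\leq \langle v,x-y\rangle\leq \|x-y\|.
\]

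Finally, taking the supremum over unit $u\in L$ gives $\|P_L(v)\|\leq\delta^{-1}\|x-y\|$, which is exactly the strong $(a)$-regularity estimate with constant $C:=\delta^{-1}$. I do not anticipate any serious obstacle: the only subtlety is securing the uniform $\delta$, and this is immediate from local affineness. If one wanted to phrase the argument in the style of the preceding section, one could alternatively invoke Theorem~\ref{thm:atob} with $\pi=P_{\cY}$ (which is the $C^\infty$ affine projection $y\mapsto \bar x+P_L(y-\bar x)$) and check strong $(a^{\pi})$, but the direct two-sided argument above is shorter.
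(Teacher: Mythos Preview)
Your proof is correct and follows essentially the same route as the paper's: both reduce the problem, via local affineness, to bounding $\|P_L(v)\|$ for $v\in N_{\cX}(x)\cap\mathbf{B}$, and then use convexity by testing against points $y\pm\delta u\in\cY\subseteq\cX$ with $u\in L$ a unit vector. The only cosmetic difference is that the paper directly selects the optimal direction $u=-P_L(v)/\|P_L(v)\|$ rather than taking the supremum over all $u$, so the two-sided inequality with $\pm u$ is not needed there; otherwise the arguments coincide.
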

\begin{proof}
Translating the sets we may suppose $\bar x=0$ and therefore that $\cY$ coincides with a linear subspace near the origin.
Fix now points $x \in \cX$ and $y \in \cY$ and a unit vector  $v \in N_{\cX}(x)$. Clearly, we may suppose $v\notin N_{\cY}(y)$, since otherwise the claim is trivially true. Define the normalized vector $w := -\frac{P_{\cY}( v)}{\|P_{\cY}( v)\|}$. The for all $y\in \cY$ near $\bar x$ and all small $\alpha>0$, using the linearity of the projection $P_{\cY}$ we compute
	\begin{align*}
	\alpha\|P_{\tangent{\cY}{y}}v\|=\alpha\|P_{\cY}v\| =- \alpha\dotp{v, w}  
	&= \dotp{v,  x-y} + \dotp{v, P_{\cY}(y - \alpha w)-x} \leq \|x - y\|,
	\end{align*}
	where the last inequality follows from convexity of $\cX$. This completes the proof.
\end{proof}

Not surprisingly, the conclusion of Theorem~\ref{prop:concl_affine_sub} can easily fail if $\cX$ is prox-regular (instead of convex) or if $\cY$ is a smooth manifold (instead of affine). This is the content of the following example.

\begin{example}[Failure of strong (a)-regularity]
{\rm
Define $\cX$ to be the epigraph of the function  $f(x,y)= \max\{0,y-x^2\}$ and set $\cY$ to be the $x$-axis $Y=\R\times\{0\}\times \{0\}$.  Consider the sequence $y_k=(1/k,0,0))$ in $\cY$ and $x_k=(1/k,1/k^2,0)$ in $\cX$ converging to the origin. Fix the sequence of normal vectors $v_k=(-2/k,1,-1)\in N_X(x_k)$ and note $N_{\cY}(y_k)=\{0\}\times \R\times\R$. A quick computation shows
$$\Delta\left(\frac{v_k}{\|v_k\|},N_{\cY}(y_k)\right)=\frac{2/k}{\sqrt{2+4/k^2}}\geq \frac{2}{k\sqrt{6}}= \frac{2}{\sqrt{6}}\sqrt{\|x_k-y_k\|}.$$ Therefore $\cX$ is not strongly $(a)$-regular along $\cY$ near $\bar x$.
}
\end{example}

Strong $(a)$-regularity fails in the above example ``by a square root factor in the distance to $\cY$.'' The following theorem shows a surprising fact: the estimate \eqref{eqn:strong_a} is guaranteed to hold up to a square root for any prox-regular set along a smooth submanifold. Since we will not use this result and the proof is very similar to that of Proposition~\ref{pro:getirdone}, we have placed the argument in the appendix.

\begin{proposition}[Strong $(a)$ up to square root]\label{prop:square_root}
	Consider a $C^3$ manifold $\cY$ that is contained in a set $\cX\subset \E$. Suppose that $\cX$ is prox-regular around a point $\bar x\in \cY$. Then there exists a constant $C>0$ satisfying
\begin{equation}\label{eqn:sqrt_estimate}
	\Delta(N_{\cX}(x),N_{\cY}(y))\leq C\cdot\sqrt{\|x-y\|},
	\end{equation}
	for all $x\in \cX$ and $y\in \cY$ sufficiently close to $\bar x$. \end{proposition}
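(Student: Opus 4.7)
The plan is to mirror the proof of Proposition~\ref{pro:getirdone} almost verbatim, replacing the single use of strong $(b_{\geq})$---which supplied the quadratic bound $\langle v, x-y\rangle \leq C\|x-y\|^2$---by the trivial Cauchy--Schwarz bound $\langle v, x-y\rangle \leq \|x-y\|$. This weakening costs exactly one square-root factor once the free parameter $\alpha$ is rebalanced.

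Setup: since $\cY$ is a $C^3$ manifold, $P_{\cY}$ is $C^2$ on a neighborhood of $\bar x$, so the estimate $\|P_{\cY}(y+h) - P_{\cY}(y) - \nabla P_{\cY}(y)h\| \leq L\|h\|^2$ from \eqref{eqn:proj_lip_main} holds for $y$ near $\bar x$ and small $h$. Prox-regularity of $\cX$ at $\bar x$ provides $\epsilon, \rho > 0$ with $\langle v, y' - x\rangle \leq (\rho/2)\|y'-x\|^2$ for all $x, y' \in \cX \cap B_\epsilon(\bar x)$ and $v \in N_{\cX}(x) \cap \epsilon \mathbf{B}$. Fix $x \in \cX$, $y \in \cY$ near $\bar x$, and a unit vector $v \in N_{\cX}(x)$; we may assume $v \notin N_{\cY}(y)$, since otherwise there is nothing to prove. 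Set $w := -P_{T_{\cY}(y)}(v)/\|P_{T_{\cY}(y)}(v)\|$. For small $\alpha > 0$ decompose, exactly as before,
$$\alpha\|P_{T_{\cY}(y)}v\| = -\alpha\langle v, w\rangle = \langle v, x-y\rangle + \langle v, P_{\cY}(y-\alpha w) - x\rangle + \langle v, (y-\alpha w) - P_{\cY}(y-\alpha w)\rangle.$$
Bound the first summand by Cauchy--Schwarz, the second by prox-regularity (valid since $P_{\cY}(y-\alpha w) \in \cY \subseteq \cX$), and the third by \eqref{eqn:proj_lip_main} together with the identity $\nabla P_{\cY}(y)w = w$. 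Combined with the same triangle-inequality estimate $\|P_{\cY}(y-\alpha w) - x\|^2 \leq 2L^2\alpha^4 + 4\|y-x\|^2 + 4\alpha^2$ used in Proposition~\ref{pro:getirdone}, this yields
$$\alpha\|P_{T_{\cY}(y)}v\| \leq \|x-y\| + \rho L^2\alpha^4 + 2\rho\|y-x\|^2 + 2\rho\alpha^2 + L\alpha^2.$$

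Finally, divide through by $\alpha$ and choose $\alpha = \sqrt{\|x-y\|}$ (rather than $\alpha = \|x-y\|$, which would be the choice if strong $(b_{\geq})$ were available). The two dominant terms $\|x-y\|/\alpha$ and $(2\rho+L)\alpha$ both reduce to $O(\sqrt{\|x-y\|})$, while every remaining term is of strictly higher order in $\sqrt{\|x-y\|}$. This produces $\|P_{T_{\cY}(y)}v\| \leq C\sqrt{\|x-y\|}$ for a constant $C > 0$. Because $\cY$ is a smooth manifold, $N_{\cY}(y) = T_{\cY}(y)^{\perp}$ is a linear subspace, so $\dist(v, N_{\cY}(y)) = \|P_{T_{\cY}(y)}v\|$; taking the supremum over unit $v \in N_{\cX}(x)$ gives the claimed estimate \eqref{eqn:sqrt_estimate}.

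There is no substantial obstacle: the argument is a direct adaptation of Proposition~\ref{pro:getirdone}. The only conceptual point is that, absent a quadratic-in-$\|x-y\|$ control on $\langle v, x-y\rangle$, the balancing of $\|x-y\|/\alpha$ against $\alpha$ forces the exponent $1/2$, and this is exactly what degrades the linear rate of strong $(a)$ to the square-root rate here.
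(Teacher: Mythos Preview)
Your proof is correct and essentially identical to the paper's own argument: the same decomposition of $\alpha\|P_{T_{\cY}(y)}v\|$, the same three bounds (Cauchy--Schwarz, prox-regularity, and the second-order Taylor estimate for $P_{\cY}$), and the same choice $\alpha=\sqrt{\|x-y\|}$. Your explicit remark that the only change from Proposition~\ref{pro:getirdone} is replacing the quadratic bound on $\langle v,x-y\rangle$ by the linear one---forcing the rebalanced $\alpha$ and hence the square-root loss---is precisely the point the paper alludes to when it says the proof is ``very similar'' to that of Proposition~\ref{pro:getirdone}.
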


The following example connects $(b_=)$-regularity to inner-semicontinuity of the normal cone map. Recall that a set-valued map $F\colon\E\rightrightarrows\Y$ is an assignment of points $x\in \E$ to subsets $F(x)\subset\Y.$ The map $F$ is called {\em inner-semicontinuous} at  $\bar x\in\E$ if for any vector $\bar y\in F(\bar x)$ and any sequence $x_i\to\bar x$, there exists a sequence $y_i\in F(x_i)$ converging to $\bar y$.

\begin{proposition}[Condition $(b)$ and inner semicontinuity]\label{prop:inner_semi_b}
	Consider a set $\cX$ and a subset $\cY\subset \cX$. Suppose that $\cX$ is prox-regular at some point $\bar x\in\cY$ and that that the normal cone map $N_{\cX}$ is inner-semicontinuous on $\cY$ near $\bar x$. Then $\cX$ is $(b_=)$-regular along $\cY$ near $\bar x$.
\end{proposition}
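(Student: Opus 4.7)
The plan is to split the desired $(b_=)$-regularity into its two halves, $(b_{\leq})$ and $(b_{\geq})$. The first half comes for free from prox-regularity: the inequality $\langle v, y-x\rangle \leq \frac{\rho}{2}\|y-x\|^2$, available for $x,y \in \cX$ near $\bar x$ and $v \in N_{\cX}(x)\cap \epsilon \mathbf{B}$, together with the inclusion $\cY \subseteq \cX$, immediately yields the bound $\langle v, y-x\rangle \leq o(\|y-x\|)$ as $x \in \cX$ and $y \in \cY$ approach $\bar x$. Therefore the entire content of the proposition lies in the reverse inequality $\langle v, y-x\rangle \geq o(\|y-x\|)$, and the role of inner semicontinuity is precisely to supply it.

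The strategy for the lower bound is to transfer the normal $v$ at $x$ to a nearby normal at a point of $\cY$, where prox-regularity can then be applied with the base and comparison points swapped. Concretely, I argue by sequences: take arbitrary $x_i \in \cX$ and $y_i \in \cY$ both converging to $\bar x$, and unit vectors $v_i \in N_{\cX}(x_i)\cap \mathbf{B}$. Passing to a subsequence, $v_i \to v$ for some $\|v\|\leq 1$, and the closed-graph property of the limiting normal cone forces $v \in N_{\cX}(\bar x)$. The hypothesis of inner semicontinuity of $N_{\cX}$ at $\bar x$, applied to the sequence $y_i \to \bar x$, now produces vectors $w_i \in N_{\cX}(y_i)$ with $w_i \to v$. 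After a trivial rescaling (using that normal cones are cones, so prox-regularity upgrades to $\langle u, z-y\rangle \leq \rho\|u\|\|z-y\|^2$ for $u \in N_{\cX}(y)$), applying prox-regularity at $y_i$ with normal $w_i$ yields
\[
\langle w_i, x_i - y_i\rangle \leq \rho \|w_i\| \|x_i - y_i\|^2.
\]

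To close, I decompose
\[
\langle v_i, y_i - x_i\rangle = \langle w_i, y_i - x_i\rangle + \langle v_i - w_i, y_i - x_i\rangle,
\]
divide by $\|y_i - x_i\|$, and send $i\to\infty$: the first term is bounded below by $-\rho\|w_i\|\|y_i - x_i\|\to 0$ and the second is bounded in absolute value by $\|v_i - w_i\|\to 0$. Thus $\liminf_i \langle v_i, y_i - x_i\rangle / \|y_i - x_i\| \geq 0$, which is precisely $(b_{\geq})$. The main (and essentially only) delicate point is the invocation of inner semicontinuity to produce the bridging normals $w_i$ at $y_i$; every other step is a routine manipulation with prox-regularity and the closed graph of the limiting normal cone.
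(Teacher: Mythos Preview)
Your proof is correct and follows essentially the same approach as the paper: both arguments pass to a subsequence so that $v_i\to v\in N_{\cX}(\bar x)$, invoke inner semicontinuity to produce bridging normals $w_i\in N_{\cX}(y_i)$ with $w_i\to v$, and then combine the two prox-regularity inequalities (one at $x_i$, one at $y_i$) with the decomposition $\langle v_i,u_i\rangle=\langle w_i,u_i\rangle+\langle v_i-w_i,u_i\rangle$. The only cosmetic differences are that the paper treats sequences converging to an arbitrary point $y\in\cY$ near $\bar x$ rather than to $\bar x$ itself, and handles both inequalities of $(b_=)$ simultaneously in one chain, whereas you split into $(b_{\leq})$ and $(b_{\geq})$ explicitly.
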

\begin{proof}
	Consider sequences $x_i\in \cX$ and $y_i\in \cY$ converging to a point $y\in \cY$ near $\bar x$. Let $v_i\in N_{\cX}(x_i)$ be arbitrary unit normal vectors. Passing to a subsequence we may assume that  $v_i$ converge to some unit normal vector $\bar v\in N_{\cX}(y)$. By inner semicontinuity, there exist unit vectors $w_i\in N_{\cX}(y_i)$ converging to $\bar v$. Define the unit vectors $u_i:=\frac{x_i-y_i}{\|x_i-y_i\|}$. Prox-regularity of $\cX$ therefore guarantees
	$\langle v_i, u_i\rangle\geq -\frac{\rho}{2}\|x_i-y_i\|$ and $\langle w_i, u_i\rangle\leq \frac{\rho}{2}\|x_i-y_i\|.$
	We conclude 
	$$-\frac{\rho}{2}\|x_i-y_i\|\leq \langle v_i, u_i\rangle=\langle w_i, u_i\rangle+\langle v_i-w_i,u_i\rangle\leq \frac{\rho}{2}\|x_i-y_i\|+ \|v_i-w_i\|.$$
	Noting that the left and right sides both tend to zero completes the proof.
\end{proof}

 In particular, any proximally smooth set is $(b_{=})$-regular along any of its partly smooth submanifolds in the sense of Lewis \cite{lewis2002active}.

\subsection{Preservation of regularity under preimages by  transversal maps}

More interesting examples may be constructed through calculus rules. The next theorem shows that the four regularity conditions are preserved by taking preimages of smooth maps under a transversality condition.

\begin{thm}[Smooth preimages]\label{thm:preim}
	Consider a  $C^{1}$-map $F\colon\Y\to\E$ and an arbitrary point $\bar x\in \Y$.   Let $\cX,\cY\subset\E$ be two locally closed sets with $\cY$ Clarke regular and containing $F(\bar x)$. Suppose that the transversality condition holds:
	\begin{equation}\label{eqn:transv_calc}
	N_{\cY}(F(\bar x))\cap \Nul(\nabla F(\bar x)^*)=\{0\}.
	\end{equation}
	Then the following are true.
	\begin{enumerate}
	\item If $\cX$ is $(a)$-regular along $\cY$ at $F(\bar x)$  then $F^{-1}(\cX)$ is $(a)$-regular along $F^{-1}(\cY)$ at $\bar x$.
	\item If $\cX$ is $(a)$-regular and $(b_{\diamond})$-regular along $\cY$ at $F(\bar x)$,  then $F^{-1}(\cX)$ is $(b_{\diamond})$-regular along $F^{-1}(\cY)$ at $\bar x$.
	\end{enumerate}
If in addition $F$ is $C^2$-smooth, then the following are true.
\begin{enumerate}
	\item[3] If $\cX$ is strongly $(a)$-regular along $\cY$, then $F^{-1}(\cX)$ is strongly $(a)$-regular along $F^{-1}(\cY)$ at $\bar x$.
	\item[4]  If $\cX$ is both $(a)$-regular and strongly $(b_{\diamond})$-regular along $\cY$ at $F(\bar x)$,  then $F^{-1}(\cX)$ is strongly $(b_{\diamond})$-regular along $F^{-1}(\cY)$ at $\bar x$.
\end{enumerate}
\end{thm}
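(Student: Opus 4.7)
The strategy is to push regularity through the Mordukhovich chain rule for normal cones. Under the qualification condition $N_{\cX}(F(\bar x))\cap\Nul(\nabla F(\bar x)^*)=\{0\}$, there exist a neighborhood $U$ of $\bar x$ and a constant $K>0$ such that every $v\in N_{F^{-1}(\cX)}(x)$ with $x\in U$ admits a representation $v=\nabla F(x)^*w$ with $w\in N_{\cX}(F(x))$ and $\|w\|\le K\|v\|$. The same representation holds for $\cY$, and since $\cY$ is Clarke regular, the reverse inclusion is automatic, yielding the equality $N_{F^{-1}(\cY)}(x)=\nabla F(x)^*N_{\cY}(F(x))$. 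Although the qualification condition is assumed only for $\cY$, invoking $(a)$-regularity together with Lemma~\ref{lemma:inclusion_normal} gives $N_{\cX}(F(\bar x))\subseteq N_{\cY}(F(\bar x))$ (in the natural setting $\cY\subseteq\cX$), which transfers the qualification to $\cX$; a standard outer-semicontinuity argument extends it to a neighborhood.

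With this machinery in hand, part 1 is immediate. Given $x_i\in F^{-1}(\cX)$ with $x_i\to y\in F^{-1}(\cY)$ and $v_i\in N_{F^{-1}(\cX)}(x_i)$ converging to some $v$, write $v_i=\nabla F(x_i)^*w_i$ with $w_i\in N_{\cX}(F(x_i))$ and $\|w_i\|\le K\|v_i\|$ bounded. Pass to a subsequence along which $w_i\to\bar w\in N_{\cX}(F(y))$ by outer semicontinuity of $N_{\cX}$, apply $(a)$-regularity of $\cX$ along $\cY$ to conclude $\bar w\in N_{\cY}(F(y))$, and take the limit to obtain $v=\nabla F(y)^*\bar w\in N_{F^{-1}(\cY)}(y)$. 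For part 2, fix a unit normal $v=\nabla F(x)^*w\in N_{F^{-1}(\cX)}(x)$ with $\|w\|\le K$ and write
\[
\langle v,y-x\rangle=\langle w,F(y)-F(x)\rangle+\langle w,\nabla F(x)(y-x)-(F(y)-F(x))\rangle.
\]
The remainder term is $\|w\|\cdot o(\|y-x\|)=o(\|y-x\|)$ by $C^{1}$-smoothness of $F$. For the first term, normalize $w$ to unit length, apply $(b_{\diamond})$-regularity of $\cX$ along $\cY$ to the points $F(x)\in\cX$, $F(y)\in\cY$, and rescale by $\|w\|\le K$; the Lipschitz bound $\|F(y)-F(x)\|\le L\|y-x\|$ turns the resulting $o(\|F(y)-F(x)\|)$ into $o(\|y-x\|)$ with the orientation dictated by $\diamond$.

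Parts 3 and 4 follow the same template under $C^{2}$-smoothness. For strong $(a)$, apply strong $(a)$-regularity of $\cX$ along $\cY$ to the unit vector $w/\|w\|$ to obtain $w''\in N_{\cY}(F(y))$ with $\|w-w''\|\le KCL\|x-y\|$; then $v':=\nabla F(y)^*w''\in N_{F^{-1}(\cY)}(y)$, and the triangle inequality gives
\[
\|v-v'\|\le\|(\nabla F(x)-\nabla F(y))^*w\|+\|\nabla F(y)^*(w-w'')\|=O(\|x-y\|),
\]
using that $\nabla F$ is locally Lipschitz. For strong $(b_{\diamond})$, the Taylor remainder in the computation of part 2 improves from $o(\|y-x\|)$ to $O(\|y-x\|^{2})$, and we replace $(b_{\diamond})$ of $\cX$ along $\cY$ by its strong counterpart. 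The main obstacle throughout is the uniform validity of the Mordukhovich representation with a multiplier $w$ satisfying $\|w\|\le K\|v\|$; this rests on transferring the qualification condition from $\cY$ to $\cX$ via $(a)$-regularity and preserving it under small perturbations of $x$, which is the only delicate bookkeeping step in the argument.
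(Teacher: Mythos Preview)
Your proposal is correct and follows essentially the same route as the paper's proof: both hinge on the Mordukhovich chain rule $N_{F^{-1}(\cX)}(x)\subseteq\nabla F(x)^*N_{\cX}(F(x))$ with a uniformly bounded multiplier, both transfer the qualification from $\cY$ to $\cX$ via $(a)$-regularity and outer-semicontinuity, and both handle $(b_\diamond)$ through the Taylor decomposition $\langle v,y-x\rangle=\langle w,F(y)-F(x)\rangle+\langle w,\nabla F(x)(y-x)-(F(y)-F(x))\rangle$, upgrading the remainder from $o(\|y-x\|)$ to $O(\|y-x\|^2)$ under $C^2$-smoothness. One small quibble: your invocation of Lemma~\ref{lemma:inclusion_normal} (which needs $\cY\subseteq\cX$) is not quite on point, since the theorem does not assume this inclusion; the paper instead argues directly by contradiction---if the qualification failed for $\cX$ on every neighborhood, a sequence of unit normals $v_i\in N_{\cX}(F(x_i))$ with $\nabla F(x_i)^*v_i\to 0$ would, by $(a)$-regularity, subconverge to a nonzero vector in $N_{\cY}(F(\bar x))\cap\Nul(\nabla F(\bar x)^*)$.
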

\begin{proof}
Notice that the transversality condition \eqref{eqn:transv_calc} is stable under perturbation of $\bar x$. In particular, it straightforward to see that there exists a constant $\tau>0$ and a neighborhood $U$ of $\bar x$ satisfying 
	$$\|\nabla F(y)^*v\|\geq \tau \|v\|\qquad \textrm{for all }y\in F^{-1}(\cY)\cap U,~v\in N_{\cY}(F(y)).$$
	Moreover, shrinking $U$, we may assume that $F$ is $\ell$-Lipschitz continuous on $U$.
	We prove the theorem in the order: $(1), (3), (2), (4)$.
	
	\bigskip
	\noindent{Claim $1$:}{
	Suppose that $\cX$ is $(a)$-regular along $\cY$ is at $F(\bar x)$. 
	Then, shrinking $\eta,\tau>0$ and $U$, we may ensure: 
	\begin{equation}\label{eqn:unif_trans}
	\|\nabla F(x)^*v\|\geq \tau \|v\|\qquad \textrm{for all }x\in F^{-1}(\cX)\cap U,~v\in N_{\cX}(F(x)).
	\end{equation}
	Transversality and Clarke regularity of $\cY$ imply  \cite[Theorem 10.6]{rockafellar2009variational}
	\begin{equation}\label{eqn:uni_equil_subd}
	N_{F^{-1}(\cY)}(y)=\nabla F(y)^*N_{\cY}(F(y))\qquad \textrm{and}\qquad N_{F^{-1}(\cX)}(x)\subset\nabla F(x)^*N_{\cX}(F(x))
	\end{equation}
	for all $y\in F^{-1}(\cY)$ and $x\in F^{-1}(\cX)$ sufficiently close to $\bar x$.

	Consider now a sequence $x_i\in F^{-1}(\cX)$ converging to a point $y\in F^{-1}(\cY)$ near $\bar x$ and a sequence of unit normal vectors $w_i\in N_{F^{-1}(\cX)}(x_i)$ converging to some vector $ w$. Using \eqref{eqn:uni_equil_subd}, we may write 
	$w_i=\nabla F(x_i)^*v_i$ for some vectors $v_i\in N_{\cX}(F(x_i))$. Note that due to \eqref{eqn:unif_trans}, the sequence $v_i$ is bounded. Indeed, the norm of $v_i$ is upper bounded by a constant that is independent of $x_i$ and $y_i$. Therefore passing to a subsequence we may suppose $v_i$ converges to some vector $v$. Since $\cX$ is $(a)$-regular  along $\cY$  at $F(\bar x)$, the inclusion $v\in N_{\cY}(F(y))$ holds. Therefore using \eqref{eqn:uni_equil_subd} we deduce
	$w=\lim_{i\to\infty} \nabla F(x_i)^*v_i=\nabla F(y)^*v\in N_{F^{-1}(\cY)}(F(y))$. Thus  $F^{-1}(\cX)$ is $(a)$-regular along $F^{-1}(\cY)$  near $\bar x$.}

	\bigskip
	
Before moving on to the next three claims, note that each of them implies condition $(a)$  and therefore we can be sure that the expressions  \eqref{eqn:unif_trans} and \eqref{eqn:uni_equil_subd} hold. Therefore for the rest of the proof, we will fix sequences $x_i$, $v_i$, and $w_i$  as in the proof of condition $(a)$, and we let $y_i\in F^{-1}(\cY)$ be an arbitrary sequence near $\bar x$.
 		
	\bigskip	
	\noindent{Claim $3$:} Suppose that $F$ is $C^2$-smooth and that $\cX$ is strongly $(a)$-regular along $\cY$  at $F(\bar x)$. Let $C>0$ be the corresponding constant in \eqref{eqn:strong_a}. 
	  Shrinking $U$ we may assume $\nabla F$ is $L$-Lipschitz continuous on $U$. 	We successively compute
	\begin{align}
	\dist(w_i,N_{F^{-1}(\cY)}(y_i))&=\dist(\nabla F(x_i)^*v_i, N_{F^{-1}(\cY)}(y_i))\notag\\
	&\leq \|\nabla F(x_i)-\nabla F(y_i)\|_{\rm op}\|v_i\|+\dist(\nabla F(y_i)^*v_i, N_{F^{-1}(\cY)}(y_i))\label{eqn:dumb1}\\
	&=\|\nabla F(x_i)-\nabla F(y_i)\|_{\rm op}\|v_i\|+ \dist(\nabla F(y_i)^*v_i, \nabla F(y_i)^*N_{\cY}(F(y_i)))\label{eqn:dumb2}\\
	&\leq \|\nabla F(x_i)-\nabla F(y_i)\|_{\rm op}\|v_i\|+ \|\nabla F(y_i)\|_{\rm op}\dist(v_i, N_{\cY}(F(y_i)))\\
	&\leq L\|v_i\|\|x_i-y_i\|+C\ell\|v_i\|\|F(x_i)-F(y_i)\|\label{eqn:dumb3}\\
	&\leq (L+C\ell^2)\|v_i\|\|x_i-y_i\|\notag\\
	&\leq (L+C\ell^2)\tau^{-1}\|\nabla F(x_i)^*v_i\|\|x_i-y_i\|\label{eqn:dumb4}\\
	&=(L+C\ell^2)\tau^{-1}\|w_i\|\|x_i-y_i\|,\notag
	\end{align}
	where \eqref{eqn:dumb1} follows from the triangle inequality, \eqref{eqn:dumb2} follows from \eqref{eqn:uni_equil_subd}, the estimate \eqref{eqn:dumb3} follows from strong $(a)$-regularity, and \eqref{eqn:dumb4} follows from \eqref{eqn:unif_trans}. Thus  $F^{-1}(\cX)$ is strongly $(a)$-regular along $F^{-1}(\cY)$  near $\bar x$.

\bigskip

	Setting the stage for the remainder of the proof, we compute
	\begin{align}
	\langle w_i,y_i-x_i\rangle=\langle v_i, F(y_i)-F(x_i)\rangle -\langle v_i, F(y_i)-F(x_i)-\nabla F(x_i)(y_i-x_i)\rangle.\label{eqn:da}
	\end{align}
	
	\noindent{Claim $2$: }{ 
	Suppose that $\cX$ is $(a)$-regular and $(b_{\diamond})$-regular along $\cY$ near $F(\bar x)$. Dividing \eqref{eqn:da} though by $\|x_i-y_i\|$ and taking into account that $F$ is $C^1$-smooth, we deduce that the limit points of $\langle w_i,\frac{y_i-x_i}{\|y_i-x_i\|}\rangle$ inherit the sign from the limit points of $\langle v_i, \frac{F(y_i)-F(x_i)}{\|F(y_i)-F(x_i)\|}\rangle$. Thus  $F^{-1}(\cX)$ is $(b_{\diamond})$-regular along $F^{-1}(\cY)$  near $\bar x$.
	}
	
	\bigskip	
	\noindent{Claim $4$: }{This is completely analogous to the proof of $(b_{\diamond})$-regularity, except we divide \eqref{eqn:da} though by $\|x_i-y_i\|^2$ and pass to the limit.}
\end{proof}

\subsection{Preservation of regularity under spectral lifts}\label{sec:preser_spec_set}
In this section, we study the prevalence of the four regularity conditions in eigenvalue problems. 
We begin with some notation. The symbol $\cS^{n}$ will denote the Euclidean space of symmetric matrices, endowed with the trace inner product $\langle A,B\rangle={\rm tr}(AB)$ and the induced Frobenius norm $\|A\|=\sqrt{{\rm tr}(A^2)}$. 
The symbol $O(n)$ will denote the set of $n\times n$ orthogonal matrices. The eigenvalue map $\lambda\colon\cS^n\to\R^n$ assigns to every matrix $X$ its ordered list of eigenvalues
$$\lambda_1(X)\geq \lambda_2(X)\geq \ldots \geq \lambda_n(X).$$
The following class of sets will be the subject of the study.

\begin{definition}{\rm A set $\cX\subset \R^{n} \rightarrow \overline{\R}$ is called {\em symmetric}  if it satisfies
$$
\pi \cX\subset \cX \qquad\textrm{for all }\pi \in \Pi(n).
$$
}
\end{definition}

\begin{definition}{\rm A set $\mathcal{Q}\subset \cS^{n}$ is called {\em spectral} if it satisfies
$$
U\mathcal{Q} U^T \subset \mathcal{Q} \qquad\textrm{for all }U \in O(n).
$$
}
\end{definition}

Thus a set in $\R^n$ is symmetric if it is invariant under reordering of the coordinates. For example, all $\ell_p$-norm balls, the nonnegative orthant, and the unit simplex are symmetric.
 A set in $\cS^{n}$ is spectral if it is invariant under conjugation of its argument by orthogonal matrices. Spectral sets are precisely those that can be written as $\lambda^{-1}(\cX)$ for some symmetric set $\cX\subset \R^n$.  See figure~\ref{fig:lp_balls_sch} for an illustration.

\begin{figure}[ht]
\centering
\begin{subfigure}{.19\textwidth}
  \centering
  \includegraphics[width=\linewidth]{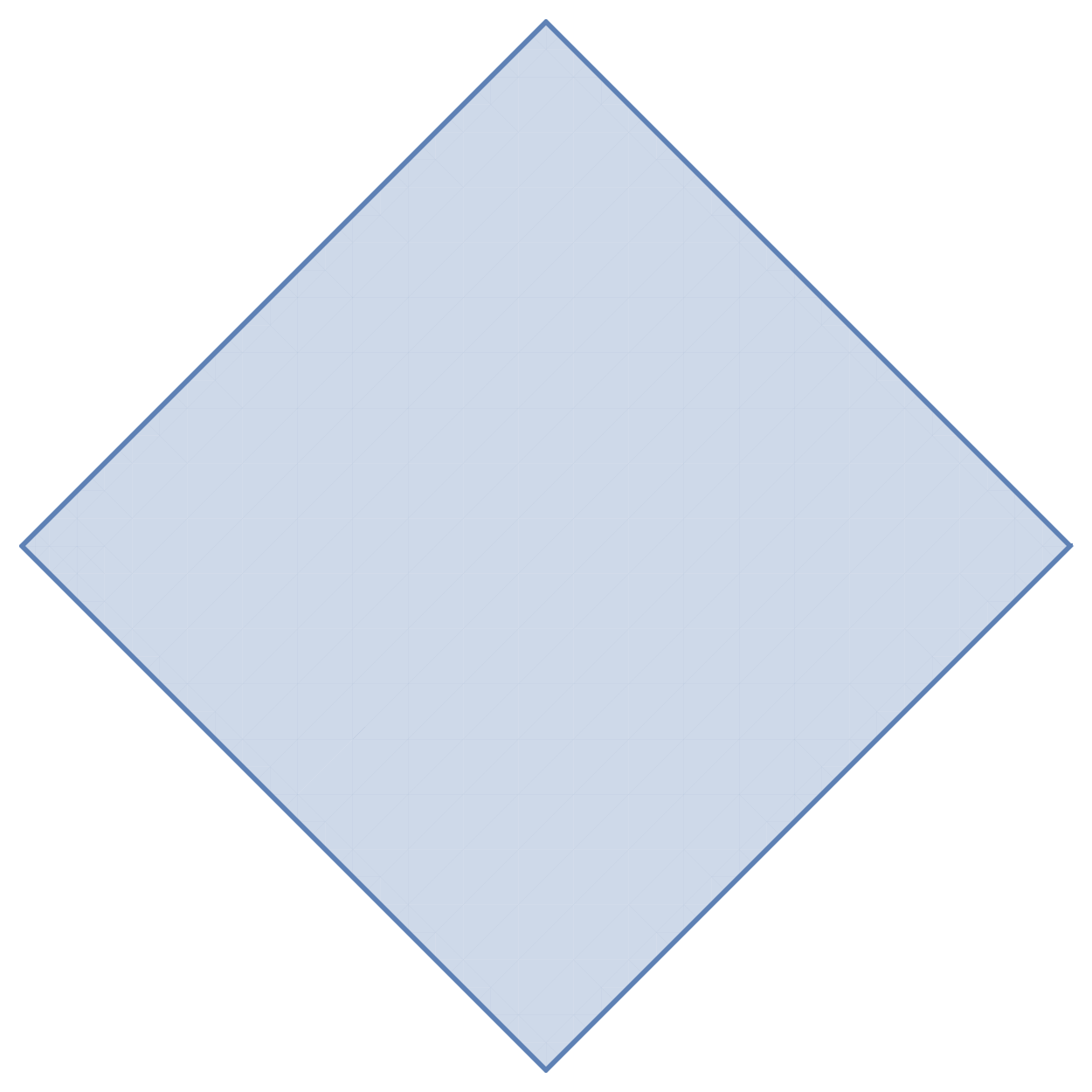}  
  \label{fig:sub-first_first}
\end{subfigure}
\begin{subfigure}{.19\textwidth}
  \centering
  \includegraphics[width=\linewidth]{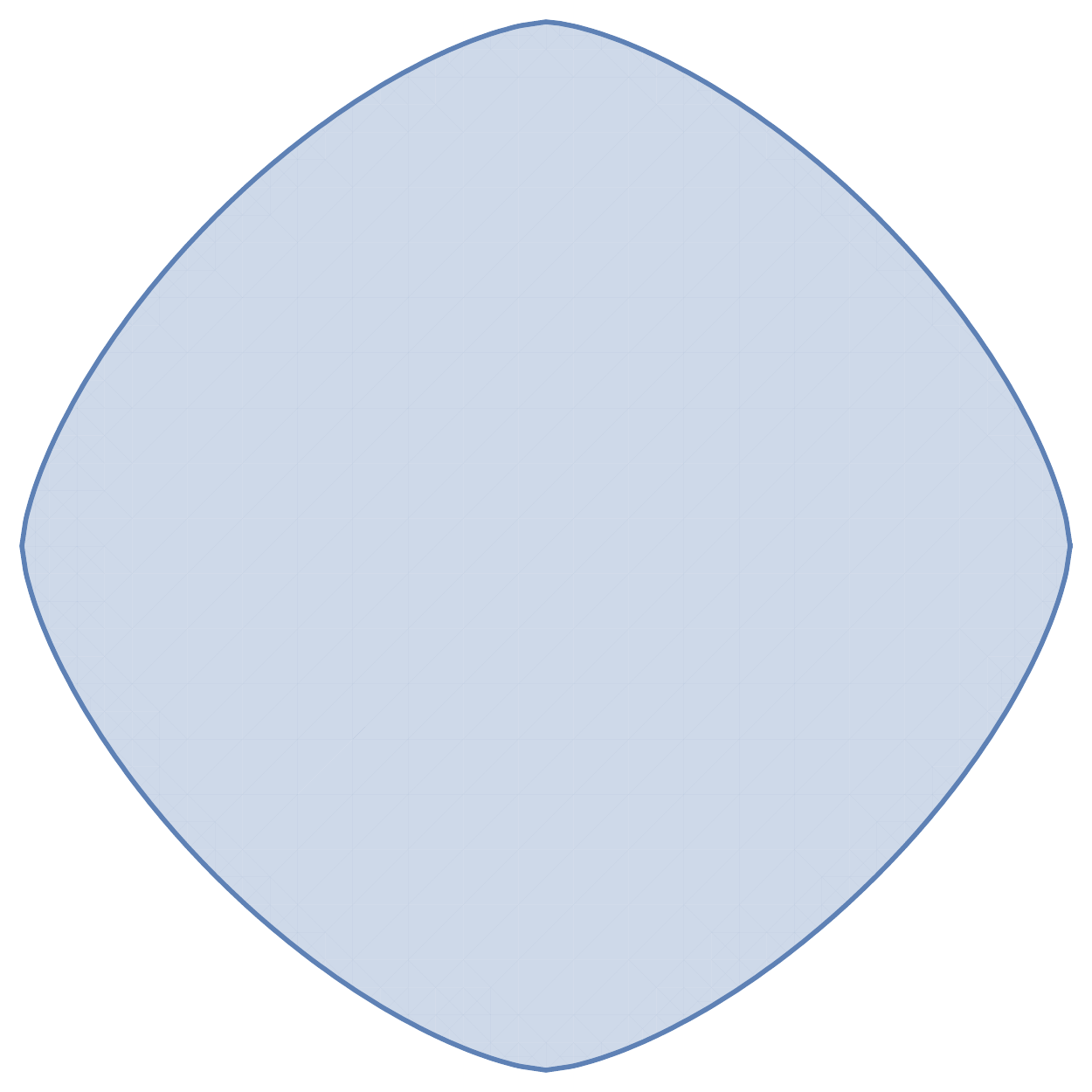}  
\end{subfigure}
\begin{subfigure}{.19\textwidth}
  \centering
  \includegraphics[width=\linewidth]{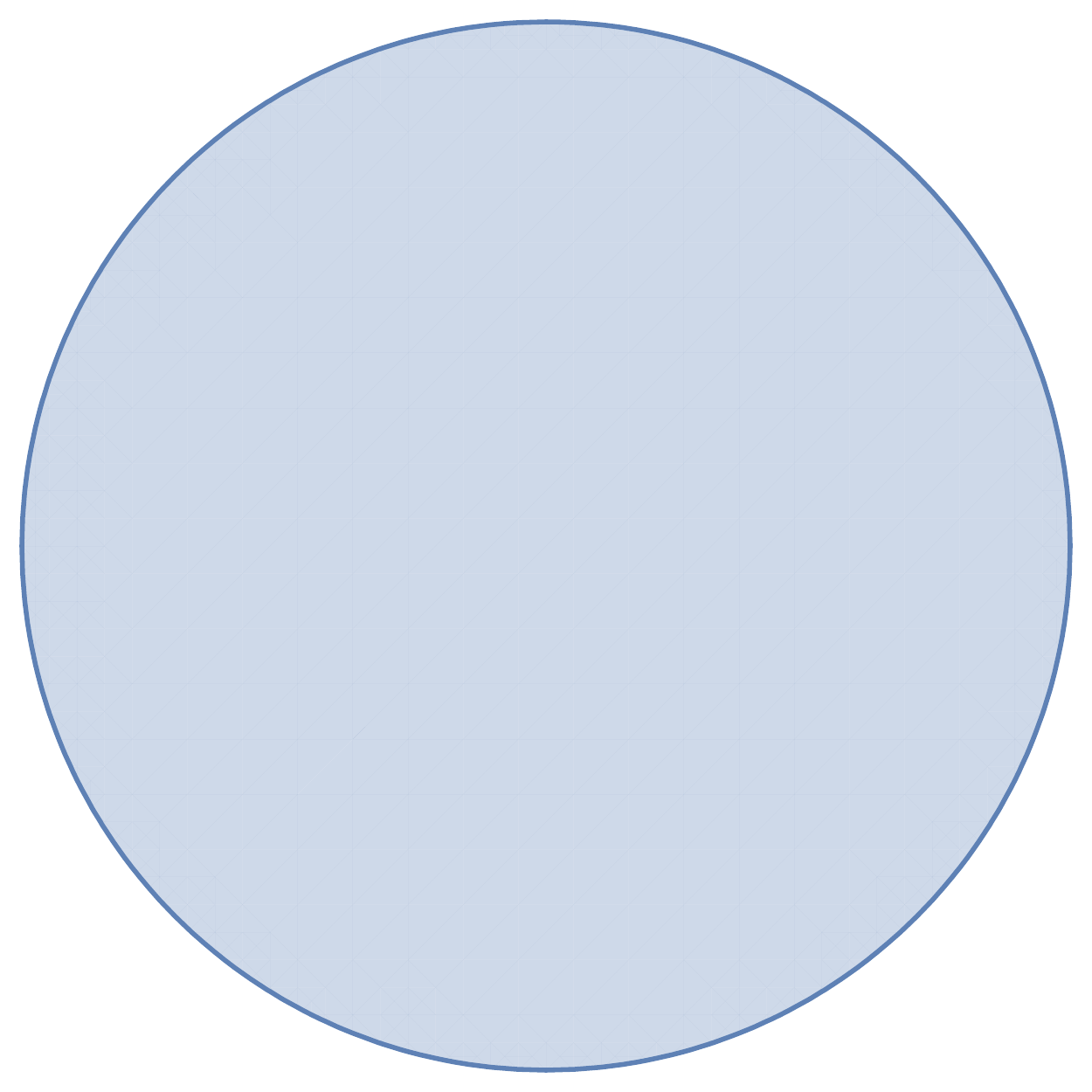}  
\end{subfigure}
\begin{subfigure}{.19\textwidth}
  \centering
  \includegraphics[width=\linewidth]{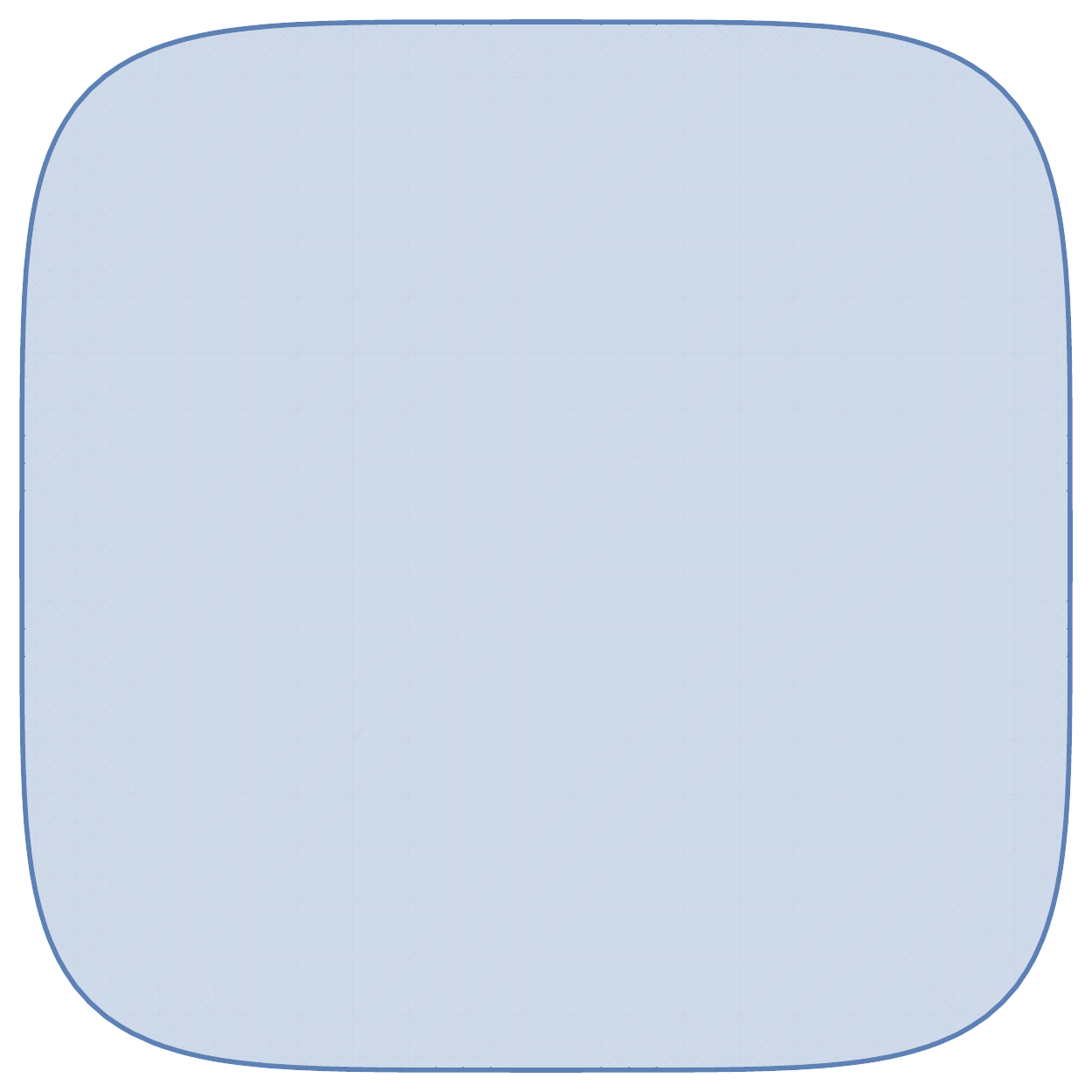}  
\end{subfigure}
\begin{subfigure}{.19\textwidth}
  \centering
  \includegraphics[width=\linewidth]{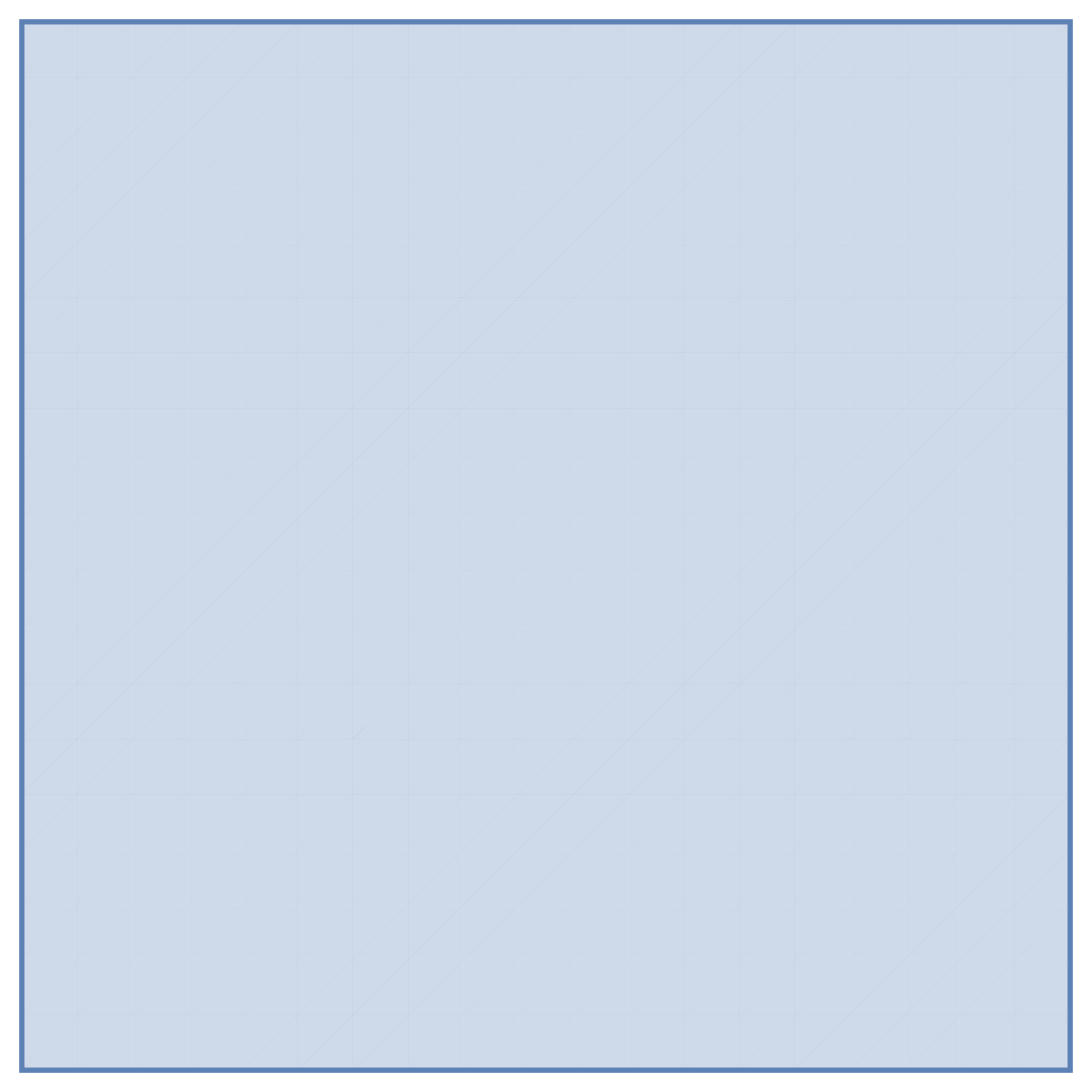}  
\end{subfigure}
\\

\begin{subfigure}{.19\textwidth}
  \centering
  \includegraphics[width=\linewidth]{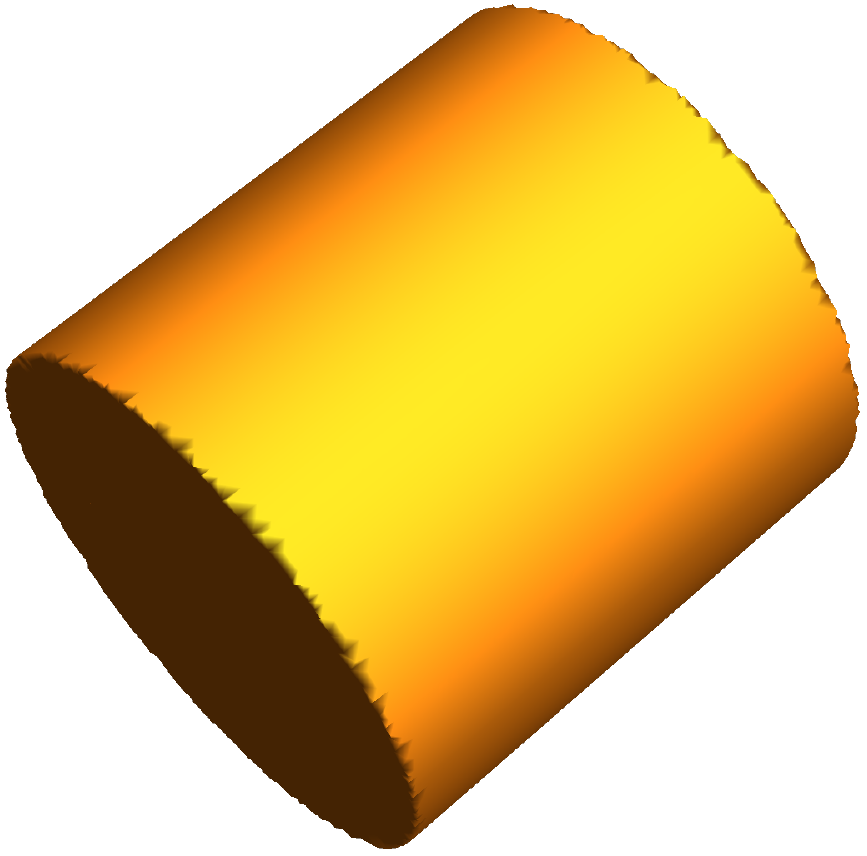}  
  \caption{$p=1$}
  \label{fig:sub-first_sec}
\end{subfigure}
\begin{subfigure}{.19\textwidth}
  \centering
  \includegraphics[width=\linewidth]{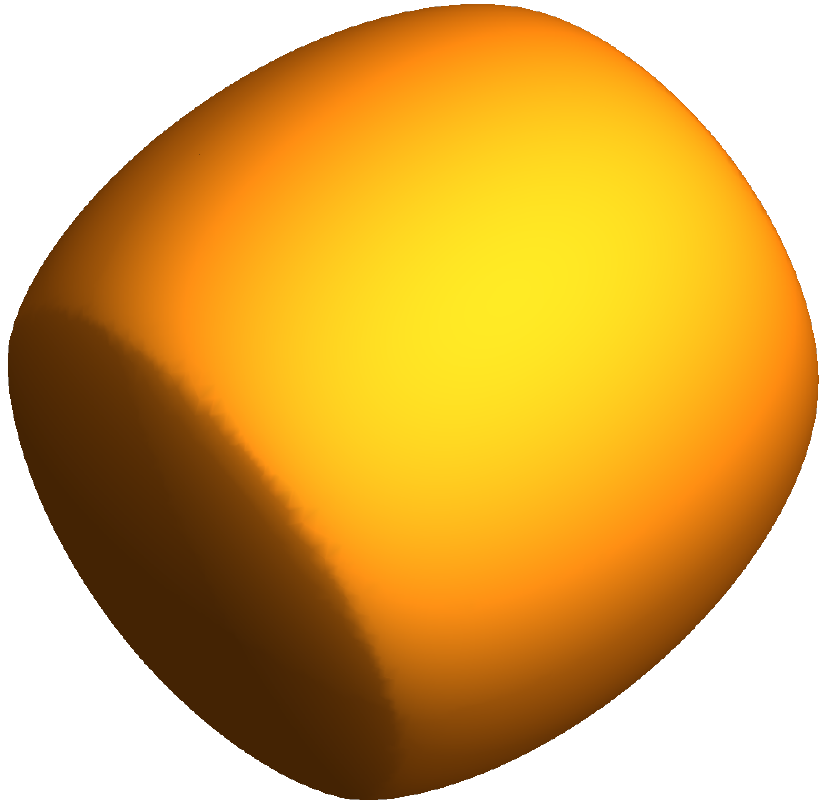}  
  \caption{$p=1.5$}
\end{subfigure}
\begin{subfigure}{.19\textwidth}
  \centering
  \includegraphics[width=\linewidth]{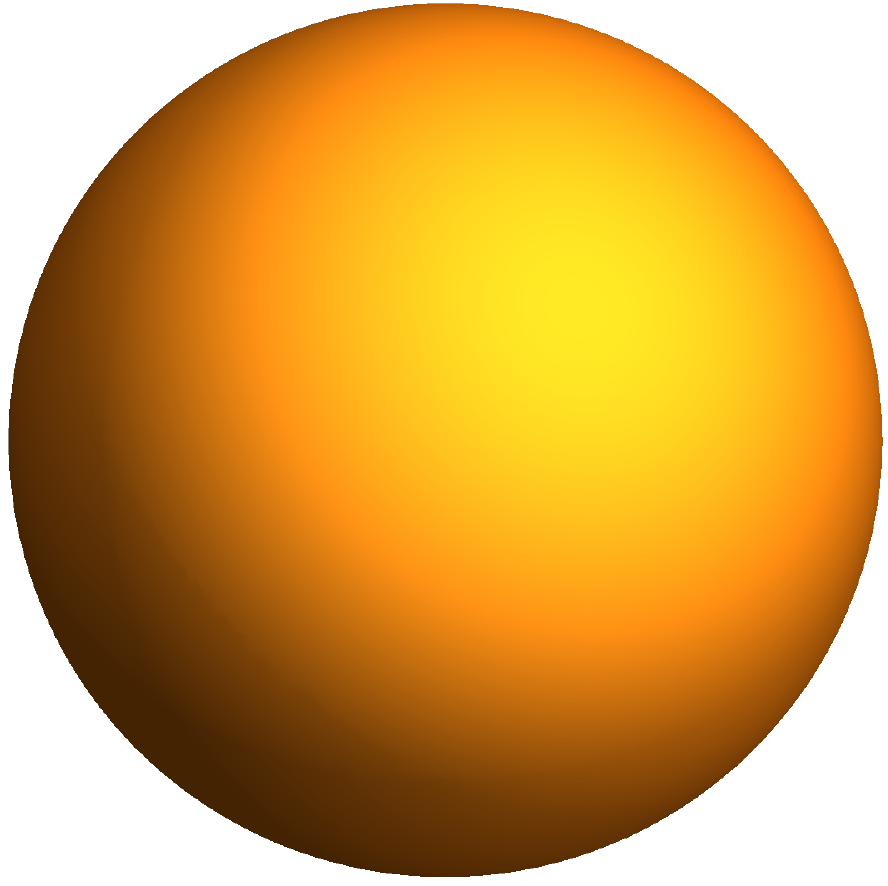}  
  \caption{$p=2$}
\end{subfigure}
\begin{subfigure}{.19\textwidth}
  \centering
  \includegraphics[width=\linewidth]{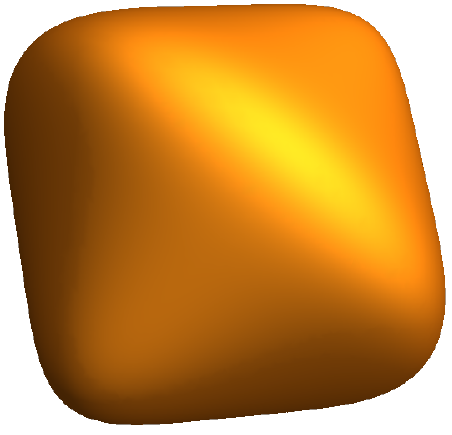}  
  \caption{$p=5$}
\end{subfigure}
\begin{subfigure}{.19\textwidth}
  \centering
  \includegraphics[width=\linewidth]{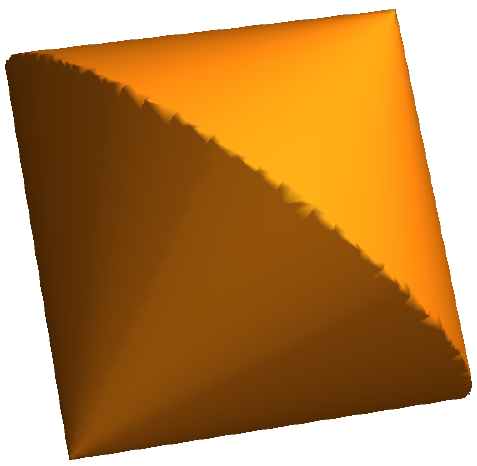}  
  \caption{$p=\infty$}
\end{subfigure}
\caption{Unit $\ell_p$ balls in $\R^2$ (top row) and unit balls of Schatten $\ell_p$-norms $\|A\|_{p}=\|\lambda(A)\|_p$ over $\cS^2$ (bottom row).}
\label{fig:lp_balls_sch}
\end{figure}

A prevalent theme in variational analysis is that a variety of geometric properties of a symmetric set $\cX$ and those of its induced spectral set $\lambda^{-1}(\cX)$ are in one-to-one correspondence.
Notable examples include convexity \cite{lewis1996convex,davis1957all}, smoothness \cite{lewis1996derivatives,lewis2005nonsmooth}, prox-regularity \cite{daniilidis2008prox}, and partial smoothness \cite{daniilidis2014orthogonal}. In this section, we add to this list the four regularity conditions. The key idea of the arguments is to pass through the projected conditions (Definition~\ref{defn:project_condt}) and then invoke Theorem~\ref{thm:atob}.

We will use the following expressions for the normal cone and the projection map to spectral sets $\lambda^{-1}(\cX)$: 
\begin{equation}\label{eqn:spectral_formulas}
\begin{aligned}
\proj_{\lambda^{-1}(\cX)}(X)&=\left\{U\Diag(w)U^T: w\in \proj_{\cX}(\lambda(X)),~ U\in O_X\right\}\\
N_{\lambda^{-1}(\cX)} (X)&=\left\{U\Diag(y)U^T: y\in N_{\cX} (\lambda(X)),~U\in O_X\right\}
\end{aligned}.
\end{equation}
where for any matrix $X$, we define the set of diagonalizing matrices 
$$O_X:=\{U\in  O(n): X=U\Diag(\lambda(X))U^T\}.$$
The expression for the proximal map was established in \cite{MR3756929} while the normal cone formula was proved in \cite{lewis1999nonsmooth}. An elementary proof of the subdifferential formula appears in \cite{MR3756929}.

\begin{thm}[Spectral preservation of projected regularity]\label{thm:spec_proj}
	Let $\bar X \in \cS^{n}$ be a symmetric matrix and set $\bar x=\lambda(\bar X)$. Consider two locally closed symmetric sets $\cX, \cY \subseteq \R^n$ such that $\cY$ contains $\bar x$. Let $\pi$ and $\Pi$ be the nearest-point projections onto $\cY$ and $\lambda^{-1}(\cY)$, respectively.
	 Then the following are true.
	\begin{enumerate}
		\item 	If $\cX$ is $(a)$-regular along $\cY$ near $\bar x$, then $\lambda^{-1}(\cX)$ is  $(a)$-regular along  $\lambda^{-1}(\cY)$ near $\bar X$.
		\item If $\cY$ is prox-regular at $\bar x$ and $\cX$ is strongly $(a^{\pi})$-regular along $\cY$ near $\bar x$, then $\lambda^{-1}(\cY)$ is prox-regular at $\bar X$ and $\cX$ is strongly $(a^{\Pi})$-regular along  $\lambda^{-1}(\cX)$ near $\bar X$.
		The analogous statement holds for $(b^{\pi}_{\diamond})$ and strong  $(b^{\pi}_{\diamond})$ conditions.
	\end{enumerate}	
\end{thm}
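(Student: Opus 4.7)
The overall plan is to exploit the normal cone and projection formulas \eqref{eqn:spectral_formulas} to reduce each claim about spectral sets to the corresponding claim about the underlying symmetric sets in $\R^n$. The crucial enabling observation is that once a matrix $X$ is diagonalized via some $U\in O_X$, one can simultaneously represent a normal $V\in N_{\lambda^{-1}(\cX)}(X)$ as $U\Diag(v)U^T$ with $v\in N_\cX(\lambda(X))$, and write a projection $\Pi(X)$ as $U\Diag(y)U^T$ with $y\in \pi(\lambda(X))$; unitary invariance of the Frobenius norm then gives the two conservation laws $\|V\|=\|v\|$ and $\|X-\Pi(X)\|=\|\lambda(X)-y\|$, which are exactly what is needed to transfer quantitative estimates.

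For claim (1), I would take a sequence $X_i\in\lambda^{-1}(\cX)$ converging to $Y\in\lambda^{-1}(\cY)$ together with unit normals $V_i\in N_{\lambda^{-1}(\cX)}(X_i)$ converging to some $V$. Using the spectral formula, write $V_i=U_i\Diag(v_i)U_i^T$ with $v_i\in N_\cX(\lambda(X_i))$ and $U_i\in O_{X_i}$. Compactness of $O(n)$ and unit-ball compactness in $\R^n$ allow passing to a subsequence so that $U_i\to U$ and $v_i\to v$; continuity of $\lambda$ forces $\lambda(X_i)\to\lambda(Y)$, and passing to the limit in $X_i=U_i\Diag(\lambda(X_i))U_i^T$ yields $U\in O_Y$. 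Then $(a)$-regularity of $\cX$ along $\cY$ at $\lambda(Y)$ hands us $v\in N_\cY(\lambda(Y))$, and re-applying the formula produces $V=U\Diag(v)U^T\in N_{\lambda^{-1}(\cY)}(Y)$.

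For claim (2), prox-regularity of $\lambda^{-1}(\cY)$ at $\bar X$ is already known to transfer from prox-regularity of $\cY$ at $\bar x$ (Daniilidis--Malick--Sendov), so $\Pi(X)$ is single-valued near $\bar X$; moreover, by monotonicity of the projection on symmetric sets, $y:=\pi(\lambda(X))=\lambda(\Pi(X))$ and $\Pi(X)=U\Diag(y)U^T$ for \emph{any} choice of $U\in O_X$. Given a unit $V\in N_{\lambda^{-1}(\cX)}(X)$, decompose $V=U\Diag(v)U^T$ with $v\in N_\cX(\lambda(X))$ (so $\|v\|\le 1$), and apply strong $(a^\pi)$-regularity of $\cX$ along $\cY$ at $\lambda(X)$ to obtain $w\in N_\cY(y)$ with $\|v-w\|\le C\|\lambda(X)-y\|$. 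Lifting, $V':=U\Diag(w)U^T$ lies in $N_{\lambda^{-1}(\cY)}(\Pi(X))$ and
\[
\|V-V'\|=\|v-w\|\le C\|\lambda(X)-y\|=C\|X-\Pi(X)\|,
\]
which is strong $(a^\Pi)$. The $(b^\pi_\diamond)$ and strong $(b^\pi_\diamond)$ cases follow by the same reduction, since
\[
\langle V,\Pi(X)-X\rangle=\langle v,y-\lambda(X)\rangle
\]
and $\|\Pi(X)-X\|=\|y-\lambda(X)\|$ allow the scalar inequality in $\R^n$ to be quoted verbatim.

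The main technical obstacle I anticipate is ensuring simultaneous-diagonalization compatibility: that one can indeed pick a \emph{single} orthogonal matrix $U$ that diagonalizes $X$, $\Pi(X)$ and the normal $V$ jointly, and that $\pi(\lambda(X))$ inherits the ordering of $\lambda(X)$ so that the identification $y=\lambda(\Pi(X))$ is legitimate. This is delicate precisely when $X$ has repeated eigenvalues, because $O_X$ then contains a nontrivial block-orthogonal group; the resolution is the standard fact that symmetry of $\cX,\cY$ under coordinate permutations forces $\pi$ and $N_\cX$ to commute with the block permutations stabilising each eigenvalue multiplicity, so that $U\Diag(w)U^T$ is independent of the choice of $U\in O_X$. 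Once this compatibility is established, the rest of the argument is essentially algebraic.
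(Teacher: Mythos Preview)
Your proposal is correct and follows essentially the same approach as the paper: both arguments use the spectral formulas \eqref{eqn:spectral_formulas} to write $V=U\Diag(v)U^T$ and $\Pi(X)=U\Diag(\pi(\lambda(X)))U^T$ with the same $U\in O_X$, then invoke unitary invariance of the Frobenius norm to transfer the $\R^n$ estimates verbatim. The only cosmetic difference is in how the simultaneous-diagonalization compatibility is justified: the paper argues directly that $\pi(\lambda(X))$ inherits the decreasing order of $\lambda(X)$ (since otherwise a coordinate permutation, which preserves the symmetric set $\cY$, would yield a closer point), hence $\lambda(\Pi(X))=\pi(\lambda(X))$ and $U\in O_{\Pi(X)}$; your block-stabilizer framing is equivalent but slightly more abstract than needed.
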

\begin{proof}
The result for $(a)$-regularity holds trivially from \eqref{eqn:spectral_formulas}. Suppose now that $\cY$ is prox-regular at $\bar x$. Then  the work \cite{daniilidis2008prox}  guarantees  that $\lambda^{-1}(\cY)$ is prox-regular at $\bar X$. As preparation for the rest of the proof, consider an arbitrary matrix $X\in \lambda^{-1}(\cX)$ near $\bar X$ and a normal vector $V\in N_{\lambda^{-1}(\cX)}(X)$ with unit Frobenius length. We may then write 
$$V = U\Diag(v) U^T,$$
for some unit vector $v \in N_{\cX}(\lambda(X))$ and orthogonal matrix $U\in O_X$.
Setting $Y:=\Pi(X)$  and using \eqref{eqn:spectral_formulas}, we may write 
$$Y=U\Diag(\pi(\lambda(X)))U^T.$$
Notice that because the coordinates of $\lambda(X)$ are decreasing and $\cY$ is symmetric, the coordinates of $\pi(\lambda(X))$ are also decreasing; otherwise, one may reorder $\pi(\lambda(X))$ and find a vector closer to $\lambda(X)$ in $\cY$. Consequently, we have
\begin{equation}\label{eqn:sim_order}
\lambda(Y)=\pi(\lambda(X))\qquad \textrm{and}\qquad U\in O_Y.
\end{equation}

Suppose now that $\cX$ is  strongly $(a^{\pi})$-regular along $\cY$ near $\lambda(\bar X)$ and let $C$ be the corresponding constant in \eqref{eqn:strong_a}. Thus there exists $w \in N_{\cY}(\pi(\lambda(X)))$ satisfying
	$$
	\|v-w\| = \dist(v,  N_{\cY}(P_{\cY}(\lambda(X)))\leq C\|\lambda (X)-\pi(\lambda(X))\|=C\|X-Y\|,
	$$
	where the last equation follows $X$ and $Y$ being simultaneously diagonalizable.	
	Taking into account \eqref{eqn:spectral_formulas} and \eqref{eqn:sim_order}, we deduce that  $W:=U\Diag(w)U^T$ lies in $N_{\lambda^{-1}(\cY)}(Y)$. Therefore we compute 
\begin{align*}
	\dist(V,N_{\lambda^{-1}(\cY)}(Y))\leq \|V - W\|= \|v-w\| \leq C\|X-Y\|.
	\end{align*}
Thus $\lambda^{-1}(\cX)$ is strongly $(a^{\pi})$-regular along $\lambda^{-1}(\cY)$ near $\bar X$, as claimed.

Next moving onto conditions  $(b^{\pi}_{\diamond})$ and strong $(b^{\pi}_{\diamond})$, we compute
$$\langle V,X-Y\rangle=\langle v,\lambda(X)-\pi(\lambda(X))\rangle.$$
The claimed results now follow immediately by noting $\|\lambda(X)-\pi(\lambda(X))\|=\|X-Y\|$.
\end{proof}

Combining Theorems~\ref{thm:spec_proj}, \ref{thm:atob}, and spectral preservation of smoothness \cite{daniilidis2014orthogonal} yields the main result of the section.

\begin{proposition}[Spectral Lifts]
		Let $\bar X \in \cS^{n}$ be a symmetric matrix and set $\bar x=\lambda(\bar X)$. Consider two locally closed symmetric sets $\cX,\cY\subseteq \R^n$ such that $\cY$ contains $\bar x$. 
	Then the following are true.
	\begin{enumerate}
		\item If $\cY$ is a $C^2$-smooth manifold at $\bar x$ and $\cX$ is strongly $(a)$-regular along $\cY$ near $\bar x$, then $\lambda^{-1}(\cY)$ is a $C^2$-smooth manifold at $\bar X$ and $\cX$ is strongly $(a)$-regular along  $\lambda^{-1}(\cX)$ near $\bar X$. The analogous statement holds for $(b_{\diamond})$.
		\item If $\cY$ is a $C^3$-smooth manifold at $\bar x$ and $\cX$ is both strongly $(a)$ and strongly $(b)$ regular along $\cY$ near $\bar x$, then $\lambda^{-1}(\cY)$ is a $C^3$-smooth manifold at $\bar X$ and $\cX$ is both strongly $(a)$ and strongly $(b)$ regular along  $\lambda^{-1}(\cX)$ near $\bar X$. 
	\end{enumerate}	
	\end{proposition}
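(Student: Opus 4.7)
The plan is to combine the projected regularity transfer Theorem~\ref{thm:spec_proj} with the equivalence Theorem~\ref{thm:atob}, routing through the projected variants of the conditions. Let $\pi$ denote the nearest-point projection onto $\cY$ and let $\Pi$ denote the nearest-point projection onto $\lambda^{-1}(\cY)$. Before starting, I would invoke the spectral smoothness result of Daniilidis-Malick-Sendov \cite{daniilidis2014orthogonal} to conclude that, under the hypotheses, $\lambda^{-1}(\cY)$ inherits the relevant smoothness class from $\cY$: namely $\lambda^{-1}(\cY)$ is a $C^2$ manifold at $\bar X$ in Part~$1$, and a $C^3$ manifold at $\bar X$ in Part~$2$. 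Standard manifold theory then ensures that $\Pi$ is a $C^{p-1}$-smooth retraction onto $\lambda^{-1}(\cY)$ near $\bar X$, and an analogous statement holds for $\pi$.

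For Part~$1$, strong $(a)$-regularity of $\cX$ along $\cY$ near $\bar x$ implies the projected version strong $(a^{\pi})$ tautologically, since the latter is just the restriction of the defining inequality \eqref{eqn:strong_a} to the case $y=\pi(x)$. Theorem~\ref{thm:spec_proj}(2) then lifts this to strong $(a^{\Pi})$-regularity of $\lambda^{-1}(\cX)$ along $\lambda^{-1}(\cY)$ near $\bar X$. Finally, Theorem~\ref{thm:atob}(1) (applied with the $C^1$-smooth retraction $\Pi$) upgrades this projected condition to the full strong $(a)$ condition, yielding the claim. For the analogous $(b_{\diamond})$ statement, I would use the equivalence in Theorem~\ref{thm:atob}(2): $(b_{\diamond})$-regularity of $\cX$ along $\cY$ entails both $(a)$-regularity (already handled via Theorem~\ref{thm:spec_proj}(1)) and $(b^{\pi}_{\diamond})$-regularity, which transfers to $(b^{\Pi}_{\diamond})$-regularity by Theorem~\ref{thm:spec_proj}(2). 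Reassembling these two projected pieces via Theorem~\ref{thm:atob}(2) recovers $(b_{\diamond})$-regularity of $\lambda^{-1}(\cX)$ along $\lambda^{-1}(\cY)$ near $\bar X$.

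For Part~$2$, starting from the joint hypothesis that $\cX$ is strongly $(a)$ and strongly $(b_{\diamond})$-regular along $\cY$, the projected conditions strong $(a^{\pi})$ and strong $(b^{\pi}_{\diamond})$ both hold trivially. Theorem~\ref{thm:spec_proj}(2) propagates each to strong $(a^{\Pi})$ and strong $(b^{\Pi}_{\diamond})$ for $\lambda^{-1}(\cX)$ along $\lambda^{-1}(\cY)$ near $\bar X$. To conclude, I would invoke the final implication of Theorem~\ref{thm:atob}, namely that strong $(a^{\Pi})$ together with strong $(b^{\Pi}_{\diamond})$ implies ambient strong $(b_{\diamond})$, provided the retraction is $C^2$-smooth. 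Ambient strong $(a)$ follows simultaneously via the first equivalence of Theorem~\ref{thm:atob}.

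The main obstacle I anticipate is in fact not analytic but rather the smoothness bookkeeping. The strong $(b)$ lifting step in Theorem~\ref{thm:atob} requires a $C^2$-smooth retraction, which forces the ambient manifold $\lambda^{-1}(\cY)$ to be $C^3$; this is exactly why Part~$2$ assumes $\cY$ is a $C^3$ manifold and why the spectral smoothness result of \cite{daniilidis2014orthogonal} needs to be applied in the correct regularity class. Once this bookkeeping is handled carefully, the remainder is a mechanical assembly of Theorems~\ref{thm:spec_proj} and~\ref{thm:atob}, with the projected conditions acting as a convenient intermediate layer that decouples the orthogonal-invariance argument from the eventual reduction to the full ambient condition.
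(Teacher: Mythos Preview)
Your proposal is correct and follows essentially the same route as the paper's own proof, which is a one-line invocation of Theorems~\ref{thm:spec_proj}, \ref{thm:atob}, and the spectral preservation of smoothness \cite[Theorem 2.7]{daniilidis2014orthogonal}. You have simply unpacked in detail how these three ingredients fit together, including the correct smoothness bookkeeping that explains the $C^3$ hypothesis in Part~2.
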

\begin{proof}
This follows directly by combining Theorems~\ref{thm:spec_proj}, \ref{thm:atob}, and spectral preservation of smoothness \cite[Theorem 2.7]{daniilidis2014orthogonal} yields the main result of the section. 
\end{proof}

All the results in this section extend in a standard way (e.g. \cite{lewis2005nonsmooth}) to orthogonally invariant sets of {\em rectangular matrices} $X\in \R^{m\times n}$. Namely, one only needs to replace (i) eigenvalues $\lambda_i(X)$ with singular values $\sigma_i(X)$, (ii) symmetric sets $\cX$ with absolutely symmetric sets (i.e. those invariant under all {\em signed permutations} of coordinates), and (iii) spectral sets $\mathcal{Q}$ with those that are in variant under the map $X\mapsto UXV^{\top}$ for any orthogonal matrices $U\in O(m)$ and $V\in O(n)$.

\subsection{Regularity of functions along manifolds}
The previous sections developed basic examples and calculus rules for the four basic regularity conditions.
In this section we interpret these results for functions through their epigraphs.
We begin with the following lemma, which follows directly from Propositions~\ref{prop:CONES}, \ref{prop:concl_affine_sub}, and \ref{prop:inner_semi_b}.

\begin{lem}[Basic examples]\label{lem:func_exa}
	Consider a function $f\colon\E\to\R\cup\{\infty\}$, a set $\cM\subset\dom\, f$, and a point $\bar x\in \cM$. The following statements are true.
	\begin{enumerate}
		\item If $f$ is a sublinear function and $\cM=\{x: f(x)=-f(-x)\}$ is its lineality space, then $f$ is both strongly $(a)$ and strongly $(b_{=})$ regular along $\cM$ near $\bar x$.
		\item If $f$ is convex, $\cM$ is locally affine near $\bar x$, and $f$ restricted to $\cM$ is an affine function near $\bar x$, then $f$ is strongly $(a)$-regular along $\cM$ near $\bar x$.
		\item If $f$ is weakly convex and locally Lipschitz  near $\bar x$ and the subdifferential map $x\mapsto \partial f(x)$ is inner-semicontinuous on $\cM$ near $\bar x$, then $f$ is $(b_{=})$-regular along $\cM$ near $\bar x$. 
	\end{enumerate}
\end{lem}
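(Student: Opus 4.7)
The plan is to translate each of the three claims about $f$ into an equivalent geometric claim about its epigraph, and then invoke the corresponding set-level proposition already established. Recall that, by definition, $f$ is regular along $\cM$ near $\bar x$ (in any of the four senses) precisely when $\cX := \epi f$ is regular along $\cY := \gph f|_{\cM}$ near $(\bar x, f(\bar x))$. So in each part it suffices to identify the structure of these two sets and verify the hypotheses of the matching proposition.

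For part (1), I would observe that the epigraph of a sublinear function is a convex cone in $\E\times\R$ whose lineality space is exactly $\gph f|_{\cM}$. Indeed, $(x,r)\in \Lin(\epi f)$ forces both $r\geq f(x)$ and $-r\geq f(-x)$, and sublinearity of $f$ then pins down $r=f(x)=-f(-x)$. Proposition~\ref{prop:CONES} applied to $\epi f$ immediately delivers strong $(a)$ and strong $(b_{=})$ regularity of $\epi f$ along $\gph f|_{\cM}$, which is exactly the desired conclusion for $f$.

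For part (2), convexity of $f$ makes $\epi f$ convex; the hypotheses that $\cM$ is locally affine near $\bar x$ and that $f|_{\cM}$ is affine together imply that $\gph f|_{\cM}$ is locally an affine subset of $\epi f$. Proposition~\ref{prop:concl_affine_sub} then yields strong $(a)$-regularity of $\epi f$ along $\gph f|_{\cM}$, as required.

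Part (3) requires slightly more care, and will be the main (though mild) obstacle. Weak convexity together with local Lipschitz continuity of $f$ near $\bar x$ guarantees that $\epi f$ is prox-regular at $(\bar x, f(\bar x))$, so the prox-regularity hypothesis of Proposition~\ref{prop:inner_semi_b} is in place. I still need to promote inner-semicontinuity of $\partial f$ along $\cM$ to inner-semicontinuity of $N_{\epi f}$ along $\gph f|_{\cM}$. This is handled by the standard epigraphical decomposition $N_{\epi f}(y,f(y)) = \{0\}\cup \R_{++}(\partial f(y)\times\{-1\})$, valid because $\partial^{\infty} f\equiv \{0\}$ under local Lipschitz continuity. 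Given $y_i\in \cM$ tending to $y\in \cM$ and a limiting normal $(v,-\lambda)\in N_{\epi f}(y,f(y))$, the case $\lambda=0$ is trivial, while for $\lambda>0$ we have $v/\lambda\in \partial f(y)$, and the hypothesis produces $w_i\in \partial f(y_i)$ converging to $v/\lambda$; then $(\lambda w_i,-\lambda)\in N_{\epi f}(y_i,f(y_i))$ tends to $(v,-\lambda)$. Proposition~\ref{prop:inner_semi_b} now yields $(b_{=})$-regularity of $\epi f$ along $\gph f|_{\cM}$, completing the argument.
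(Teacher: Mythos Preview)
Your approach is correct and matches the paper's, which simply notes that the lemma follows directly from Propositions~\ref{prop:CONES}, \ref{prop:concl_affine_sub}, and \ref{prop:inner_semi_b} applied to the epigraph. Your added detail in part~(3), lifting inner-semicontinuity of $\partial f$ to inner-semicontinuity of $N_{\epi f}$ along $\gph f|_{\cM}$ via the decomposition $N_{\epi f}(y,f(y))=\{0\}\cup\R_{++}(\partial f(y)\times\{-1\})$, is exactly the small verification the paper leaves implicit.
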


The baic calculus rule established in Theorem~\ref{thm:preim} yields the following chain rule.

\begin{thm}[Chain rule]\label{thm:chain_rule}
	Consider a  $C^p$-smooth map $c\colon\bf{Y}\to\E$ and a closed function $h\colon\E\to\R\cup\{\infty\}$. Fix a set $\cM\subset \E$ and a point $\bar x$ with $c(\bar x)\in \cM$. Suppose that $\cM$ is a $C^1$ manifold around $c(\bar x)$, the restriction $h\big|_{\cM}$ is $C^1$-smooth near $\bar x$, and transversality holds:
\begin{equation}\label{eqn:transv_calc_full}
	N_{\cM} (c(\bar x))\cap \Nul(\nabla c(\bar x)^*)=\{0\}.
\end{equation}
Define the composition $f(x)=h(c(x))$ and the set $\mathcal{L}:=c^{-1}(\cM)$. The following are true.
	\begin{enumerate}
	\item If $h$ is $(a)$-regular along $\cM$ near $c(\bar x)$  then $f$ is $(a)$-regular along $\mathcal{L}$ near $\bar x$.
	\item If $h$ is $(a)$-regular and $(b_{\diamond})$-regular along $\cM$ near $c(\bar x)$,  then $f$ is $(b_{\diamond})$-regular along $\cL$ near $\bar x$.
\end{enumerate}
If in addition $\cM$ is a $C^2$ manifold around $c(\bar x)$ and the restriction $h\big|_{\cM}$ is $C^2$-smooth near $\bar x$, then the following are true.
\begin{enumerate}
	\item[3] If $h$ is strongly $(a)$-regular along $\cM$, then $f$ is strongly $(a)$-regular along $\cL$ near $\bar x$.
	\item[4]  If $h$ is both $(a)$-regular and strongly $(b_{\diamond})$-regular along $\cM$ at $c(\bar x)$,  then $f$ is strongly $(b_{\diamond})$-regular along $\cL$ near $\bar x$.
\end{enumerate}
\end{thm}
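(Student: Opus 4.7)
The plan is to reduce all four claims to a single application of Theorem~\ref{thm:preim} by lifting the composition to epigraphs. Define the $C^p$-smooth map $F\colon \Y\times\R\to \E\times \R$ by $F(x,r):=(c(x),r)$. A direct computation gives
\begin{equation*}
F^{-1}(\epi h)=\epi f\qquad \textrm{and}\qquad F^{-1}(\gph h|_{\cM}) = \gph f|_{\cL},
\end{equation*}
using $r\geq h(c(x))=f(x)$ for the first identity and the definition $\cL=c^{-1}(\cM)$ for the second. By definition, regularity of $f$ along $\cL$ (in any of the four senses) means the corresponding regularity of $\epi f$ along $\gph f|_{\cL}$ at $(\bar x, f(\bar x))$; and similarly regularity of $h$ along $\cM$ at $c(\bar x)$ is regularity of $\epi h$ along $\gph h|_{\cM}$ at $(c(\bar x), h(c(\bar x)))$. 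So the four claims become exactly the four conclusions of Theorem~\ref{thm:preim} applied to $F$, $\cX=\epi h$, and $\cY=\gph h|_{\cM}$.

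Next I would check the hypotheses of Theorem~\ref{thm:preim}. The set $\gph h|_{\cM}$ is a $C^1$-smooth embedded submanifold of $\E\times\R$ because $\cM$ is a $C^1$-manifold near $c(\bar x)$ and $h|_{\cM}$ is $C^1$-smooth; hence it is locally closed and Clarke regular. For claims 1 and 2 the map $F$ inherits $C^1$-smoothness from $c$, and for claims 3 and 4 the additional $C^2$-smoothness of $c$ makes $F$ into a $C^2$ map, as required.

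The only nontrivial point is the transversality condition \eqref{eqn:transv_calc}. The Jacobian $\nabla F(\bar x, f(\bar x))$ is block diagonal, so
\begin{equation*}
\Nul\bigl(\nabla F(\bar x,f(\bar x))^*\bigr)=\Nul(\nabla c(\bar x)^*)\times\{0\}.
\end{equation*}
Using the tangent description \eqref{eqn:simple_tangent}, the normal space of the smooth manifold $\gph h|_{\cM}$ at $(c(\bar x), h(c(\bar x)))$ equals
\begin{equation*}
\bigl\{(v,\alpha)\in \E\times\R : v+\alpha\,\nabla_{\cM} h(c(\bar x))\in N_{\cM}(c(\bar x))\bigr\}.
\end{equation*}
Intersecting with $\Nul(\nabla F(\bar x,f(\bar x))^*)$ forces $\alpha=0$ and reduces to $v\in N_{\cM}(c(\bar x))\cap \Nul(\nabla c(\bar x)^*)$, which is $\{0\}$ by hypothesis \eqref{eqn:transv_calc_full}. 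Thus Theorem~\ref{thm:preim} applies and delivers all four conclusions in one stroke.

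The main obstacle is purely notational: one must correctly identify the normal cone to $\gph h|_{\cM}$ as a smooth manifold and track how transversality lifts from $c$ to $F$. Once the block-diagonal structure of $\nabla F$ is exploited, the verification collapses to the hypothesis \eqref{eqn:transv_calc_full}, and the epigraphical formulation of the four regularity conditions for functions makes the translation from Theorem~\ref{thm:preim} automatic.
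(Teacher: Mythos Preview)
Your proposal is correct and follows essentially the same approach as the paper: lift to epigraphs via $F(x,r)=(c(x),r)$, identify $\epi f=F^{-1}(\epi h)$ and $\gph f|_{\cL}=F^{-1}(\gph h|_{\cM})$, verify the transversality hypothesis of Theorem~\ref{thm:preim}, and apply that theorem. You in fact supply more detail on the transversality verification than the paper does (the paper simply asserts it follows by ``a quick computation''), and your explicit computation of the normal space to $\gph h|_{\cM}$ and its intersection with $\Nul(\nabla F^*)$ is correct.
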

\begin{proof}
First, the transversality condition \eqref{eqn:transv_calc_full} classically guarantees that $\mathcal{L}$ is a smooth manifold around $\bar x$ with the same order of smoothness as $\cM$. Moreover, for any $x\in \mathcal{L}$, we may write $f(x)=h(c(x))=(h\big|_{\cM}\circ c)(x)$. Therefore the restriction of $f$ to $\mathcal{L}$ has the same order of smoothness as $h\big|_{\cM}$.
Next, observe that we may write $\epi f=\{(x,r): (c(x),r)\in \epi h\}$. 
Thus in the notation of Theorem~\ref{thm:preim}, setting $\cX=\epi h$, $\cY=\gph h\big|_{\cM}$, and $F(x,r)=(c(x),r))$, we may write 
$$\epi f=F^{-1}(\cX)\qquad \textrm{ and }\qquad \gph f\big|_{\cL}=F^{-1}(\cY).$$ 
A quick computation shows that the transversality condition~\eqref{eqn:transv_calc} follows from \eqref{eqn:transv_calc_full}.
An application of Theorem~\ref{thm:preim} completes the proof.
\end{proof}

An interesting class of examples where the chain rule is useful consists of decomposable functions \cite{shapiroreducible}, which serve as functional analogues of cone reducible sets.
A function $f\colon\E\to\R\cup\{\infty\}$ is called {\em properly $C^p$ decomposable at $\bar x$ as }$h\circ c$ if on a neighborhood of $\bar x$ it can be written as 
		$$f(x)=f(\bar x)+h(c(x)),$$
		for some $C^p$-smooth mapping $c\colon\E\to{\bf Y}$ satisfying $c(\bar x)=0$ and some proper, closed sublinear function $h\colon{\bf Y}\to\R$ satisfying the transversality condition:
		$$\Lin(h)+ {\rm Range}(\nabla c(\bar x))={\bf Y}.$$

It is shown in \cite[p 683]{shapiroreducible} that if $f\colon\E\to\R\cup\{\infty\}$ is properly $C^p$ decomposable at $\bar x$ as $h\circ c$, then the set 
$\cL=c^{-1}(\Lin(h))$
is a $C^p$-active manifold around $\bar x$ for any subgradient $v\in \ri \partial f(\bar x)$. The following is immediate from Lemma~\ref{lem:func_exa} and Theorem~\ref{thm:chain_rule}.

\begin{cor}[Decomposable functions are regular]
	Suppose that a function $f$ is properly $C^1$ decomposable as $h\circ c$ around $\bar x$ and define $\mathcal{L}=c^{-1}(\Lin(h))$. Then $f$ is both $(a)$ and $(b_{=})$ regular along $\cL$ near $\bar x$. Moreover, if $f$ is properly $C^2$-decomposable  as $h\circ c$ around $\bar x$, then $f$ is strongly $(a)$ and strongly $(b_=)$ regular along $\cL$ near $\bar x$.
\end{cor}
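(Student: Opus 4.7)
The plan is to apply the chain rule (Theorem~\ref{thm:chain_rule}) with $\cM:=\Lin(h)$ and the smooth map $c$ from the decomposition, after checking that the hypotheses of that theorem reduce to the data we are given.

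First, I would verify that $h$ itself has the required regularity along $\cM=\Lin(h)$ near the point $c(\bar x)=0$. Since $h$ is a proper, closed, sublinear function and $\Lin(h)$ is its lineality space by construction, part (1) of Lemma~\ref{lem:func_exa} immediately yields that $h$ is both strongly $(a)$ and strongly $(b_{=})$-regular along $\Lin(h)$ near $0$. Moreover $\Lin(h)$ is a linear subspace (hence a $C^{\infty}$ manifold), and the restriction $h|_{\Lin(h)}$ is linear because, by definition, $\gph h|_{\Lin(h)}$ coincides with the lineality space of $\epi h$. So the smoothness hypotheses of Theorem~\ref{thm:chain_rule} on the ``outer'' data are automatically satisfied to arbitrary order.

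Second, I would translate the transversality condition. The chain rule requires $N_{\Lin(h)}(0)\cap \Nul(\nabla c(\bar x)^*)=\{0\}$. Since $\Lin(h)$ is a linear subspace, $N_{\Lin(h)}(0)=\Lin(h)^{\perp}$, and by basic linear algebra $\Nul(\nabla c(\bar x)^*)=\range(\nabla c(\bar x))^{\perp}$. Taking orthogonal complements, the displayed condition is equivalent to $\Lin(h)+\range(\nabla c(\bar x))=\mathbf{Y}$, which is exactly the transversality assumption built into the definition of proper $C^p$ decomposability.

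Third, with all hypotheses of Theorem~\ref{thm:chain_rule} in place, parts (1) and (2) of that theorem deliver $(a)$- and $(b_{=})$-regularity of $h\circ c$ along $c^{-1}(\Lin(h))=\cL$ in the $C^1$ case, and parts (3) and (4) deliver the strong versions in the $C^2$ case. Finally, the additive constant $f(\bar x)$ in $f=f(\bar x)+h\circ c$ merely translates the epigraph vertically; it preserves all Fr\'{e}chet, limiting, and Clarke normal cones to $\epi f$ and to $\gph f|_{\cL}$, so the regularity transfers from $h\circ c$ to $f$ verbatim. There is no serious obstacle here beyond the elementary passage from the decomposability transversality condition to the normal-cone transversality required by the chain rule; everything else is a bookkeeping check that the hypotheses line up.
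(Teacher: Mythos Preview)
Your proposal is correct and follows exactly the approach the paper intends: the paper declares the corollary immediate from Lemma~\ref{lem:func_exa} and Theorem~\ref{thm:chain_rule}, and you have simply unpacked those two citations, including the routine translation of the decomposability transversality $\Lin(h)+\range(\nabla c(\bar x))=\mathbf{Y}$ into the normal-cone form $N_{\Lin(h)}(0)\cap\Nul(\nabla c(\bar x)^*)=\{0\}$ required by the chain rule.
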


The chain rule can be used to obtain a variety of other calculus rules, including the sum rule. To see this, note that regularity of functions $f_i$ along sets $\cM_i$ directly implies regularity
of the separable function $f(y_1,\ldots, y_k)=\sum_{i=1}^k f_i(y_i)$ along the product set $\prod_{i=1}^k\cM_i$.
 Then a general sum rule for $f(x)=\sum_{i=1}^k f_i(x)$ follows from applying the chain rule (Theorem~\ref{thm:chain_rule}) to the decomposition $f(x)=h(c(x))$ with the linear map $c(x)=(x,\ldots, x)$ and the separable function $h(y_1,\ldots, y_k)=\sum_{i=1}^n f_i(y_i)$. For the sake of brevity, we leave details for the reader.

We end the section with an extension of the material in Section~\ref{sec:preser_spec_set} to the functional setting. Namely, a function $f\colon\R^n\to\R\cup\{\infty\}$ is called {\em symmetric} if equality $f(\pi x)=f(x)$ holds for all $x\in \R^n$ and all $\pi\in \Pi(n)$. A function  $f\colon{\bf S}^n\to\R\cup\{\infty\}$ is called {\em spectral} if it satisfies $F(UXU^{\top})=F(X)$ for all $X\in {\bf S}^n$ and all $U\in O(n)$. It is straightforward to see that any spectral function $F$ decomposes as $F=f\circ \lambda$ for some symmetric function $f$. Explicitly, we may take $f$ as the diagonal restriction $f(x)=(F\circ \Diag)(x)$. The subdifferentials of $F$ and $f$ are related by the expressions \cite{lewis1999nonsmooth}:
\begin{equation}\label{eqn:spectral_formula2}
	\begin{aligned}
	\partial F(X)&=\left\{U\Diag(y)U^T: y\in\partial f(\lambda(X)),~U\in O_X\right\}
	\end{aligned}.
\end{equation}
where for any matrix $X$, we define the set of diagonalizing matrices 
$$O_X:=\{U\in  O(n): X=U\Diag(\lambda(X))U^T\}.$$
The following theorem shows that the regularity of a symmetric function $f$ is inherited by the spectral function $F=f\circ \lambda$.

\begin{thm}[Spectral Lifts]
Consider a symmetric function $f\colon\R^d\to\R\cup\{\infty\}$  and let $\cM$ be a symmetric $C^2$-manifold containing $\bar x$. Suppose that $f$ is locally Lipschitz continuous around $\bar x$ and the restriction $f\big|_{\cM}$ is $C^2$-smooth near $\bar x$. Fix now a matrix $\bar X$ satisfying $\bar x:=\lambda(\bar X)$. Then if $f$ is $(a)$-regular along $\cM$ around $\bar x$, then $f\circ \lambda$ is $(a)$-regular along  $\lambda^{-1}(\cM)$ near $\bar X$. The analogous statement holds for strong $(a)$-regularity and $(b_{\diamond})$-regularity. If $\cM$ is in addition $C^3$ smooth, then the analogous statement holds for strong $(b_{\diamond})$-regularity.
\end{thm}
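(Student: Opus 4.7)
The plan is to lift everything to epigraphs and reduce to the set-version Spectral Lifts proposition proven just before this theorem. By definition, regularity of $F:=f\circ\lambda$ along $\lambda^{-1}(\cM)$ is regularity of $\epi F$ along $\gph F|_{\lambda^{-1}(\cM)}$, and analogously for $f$ along $\cM$. So the whole argument reduces to lifting these two sets in a compatible way under an extended eigenvalue map, and applying what is already in the paper.

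First I would introduce the extended eigenvalue map $\Lambda\colon\sym^n\times\R\to\R^n\times\R$ defined by $\Lambda(X,r):=(\lambda(X),r)$, and verify the identifications
\begin{equation*}
\epi F = \Lambda^{-1}(\epi f),\qquad \gph F|_{\lambda^{-1}(\cM)} = \Lambda^{-1}(\gph f|_\cM).
\end{equation*}
Both $\epi f$ and $\gph f|_\cM$ are symmetric subsets of $\R^n\times\R$ under permutations acting on the first $n$ coordinates, so their $\Lambda$-preimages are invariant under the orthogonal action $(X,r)\mapsto(UXU^T,r)$. The Lewis--Sendov formulas \eqref{eqn:spectral_formulas} for normals and projections under $\lambda$ extend directly to $\Lambda$ by the product structure: for any symmetric $\cA\subseteq\R^n\times\R$,
\begin{align*}
N_{\Lambda^{-1}(\cA)}(X,r) &= \{(U\Diag(v)U^T,s):(v,s)\in N_\cA(\lambda(X),r),\ U\in O_X\},\\
P_{\Lambda^{-1}(\cA)}(X,r) &= \{(U\Diag(w)U^T,s):(w,s)\in P_\cA(\lambda(X),r),\ U\in O_X\}.
\end{align*}
These are the only ingredients used in the proof of Theorem~\ref{thm:spec_proj}, so that theorem applies verbatim with $\Lambda$ in place of $\lambda$, yielding spectral preservation of the projected versions of $(a)$, $(b_\diamond)$, strong $(a)$, and strong $(b_\diamond)$ from $\epi f$--$\gph f|_\cM$ to $\epi F$--$\gph F|_{\lambda^{-1}(\cM)}$.

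To finish, I would invoke Theorem~\ref{thm:atob} to upgrade the projected conditions into the full conditions. This step requires $\gph F|_{\lambda^{-1}(\cM)}$ to be a $C^2$-smooth (respectively $C^3$-smooth) manifold, which is where the smoothness hypothesis on $\cM$ enters: by spectral preservation of smoothness \cite{daniilidis2014orthogonal}, $\lambda^{-1}(\cM)$ is a $C^2$ (resp.\ $C^3$) spectral manifold, and the restriction $F|_{\lambda^{-1}(\cM)}$ inherits the smoothness of $f|_\cM$ since on the spectral manifold the eigenvalue map admits a smooth selection. The main technical delicacy is the isometry $\|X-Y\|_F=\|\lambda(X)-\lambda(Y)\|_2$ for simultaneously diagonalizable matrices: combined with the observation, used in the proof of Theorem~\ref{thm:spec_proj}, that the nearest point in $\Lambda^{-1}(\gph f|_\cM)$ to $(X,r)$ is attained in an eigenbasis shared with $X$ (by the ordering argument applied to the symmetric set $\gph f|_\cM$), this identity ensures that the quadratic error estimates defining strong $(a)$ and strong $(b_\diamond)$ transfer across $\Lambda$ without loss. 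Once this is in hand the entire theorem is bookkeeping on top of the already proven set-level machinery.
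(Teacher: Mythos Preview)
Your strategy—lift to epigraphs via the extended map $\Lambda(X,r)=(\lambda(X),r)$, establish the projected regularity conditions by the argument of Theorem~\ref{thm:spec_proj}, then upgrade via Theorem~\ref{thm:atob}—is exactly the route the paper takes. The extended normal-cone and projection formulas for $\Lambda$ that you write down are correct, and for $(a)$, strong $(a)$, and $(b_\diamond)$ your argument goes through as written.

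There is, however, a genuine gap in the strong $(b_\diamond)$ case, and it lies precisely in the retraction you inherit from Theorem~\ref{thm:spec_proj}. That theorem uses the \emph{nearest-point projection} onto $\cY$, so in your setup the retraction is nearest-point projection onto $\gph F\big|_{\lambda^{-1}(\cM)}$. For Theorem~\ref{thm:atob} to deliver strong $(b_\diamond)$ you need this retraction to be $C^2$, hence the graph to be $C^3$. But the hypotheses only give $f\big|_{\cM}$ (and therefore $F\big|_{\lambda^{-1}(\cM)}$) as $C^2$; even with $\cM$ upgraded to $C^3$, the graph $\gph F\big|_{\lambda^{-1}(\cM)}$ is only a $C^2$ manifold, so its nearest-point projection is merely $C^1$. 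Your own sentence ``the restriction $F|_{\lambda^{-1}(\cM)}$ inherits the smoothness of $f|_\cM$'' is correct, but it yields $C^2$, not the $C^3$ you need; the paragraph silently promotes this to $C^3$.

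The paper sidesteps this by choosing a different retraction onto $\gph F\big|_{\lambda^{-1}(\cM)}$, namely
\[
\pi(X,r)=\bigl(P_{\lambda^{-1}(\cM)}(X),\,F(P_{\lambda^{-1}(\cM)}(X))\bigr).
\]
When $\cM$ is $C^3$ the spectral manifold $\lambda^{-1}(\cM)$ is $C^3$, so $P_{\lambda^{-1}(\cM)}$ is $C^2$; composing with the $C^2$ function $F\big|_{\lambda^{-1}(\cM)}$ keeps $\pi$ at $C^2$, which is exactly what Theorem~\ref{thm:atob} demands. The projected conditions with respect to this $\pi$ are then verified by the same simultaneous-diagonalization argument you outline. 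In short: your plan is right, but for strong $(b_\diamond)$ you must swap the nearest-point projection for this ``project-to-$\cM$-then-lift'' retraction.
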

\begin{proof}
	First, \cite[Theorem 2.7]{daniilidis2014orthogonal} shows that $\cM$ is a $C^p$ manifold (with $p\geq 2$) around $\bar x$ if and only if $\lambda^{-1}(\cM)$ is a $C^p$ manifold around $\bar X$. The analogous statement is true for the restriction of $f$ to $\cM$ and for the restriction of $f\circ\lambda$ to $\lambda^{-1}(\cM)$. 
	
	The claim about $(a)$-regularity follows immediately from  Lemma~\ref{lemma:inclusion_normal}.	
 The main idea for verifying the rest of the properties is to instead focus on the analogous conditions with respect to the retraction
 $\pi$ onto $\gph F\big|_{\lambda^{-1}(\cM)}$ defined by the expression
 $$\pi(X,r)=(P_{\lambda^{-1}(\cM)}(X),F(P_{\lambda^{-1}(\cM)}(X))).$$
To this end, suppose that $f$ is strongly $(a)$-regular near $\bar x$. 
  We claim that $\epi F$ is strongly $(a^{\pi})$ regular along $\gph f\big|_{\cM}$ near $(\bar X, F(\bar X))$. To see this, consider a matrix $X\in{\bf S}^n$ near $\bar X$ and set $Y=P_{\lambda^{-1}(\cM)}(X)$. Let $Z\in \partial F(X)$ be arbitrary. Exactly the same argument as in the proof of Theorem~\ref{thm:spec_proj} shows that there exists a matrix $W\in \partial F(Y)$ satisfying  $\|Z-W\|_F\leq C\|X-Y\|_F$, where $C$ is a fixed constant independent of $X$ and $Y$. It follows immediately that $\epi F$ is strongly $(a^{\pi})$ regular along $\gph f\big|_{\cM}$ near $(\bar X, F(\bar X))$. An application of Theorem~\ref{thm:atob} therefore guarantees that $F$ is strongly $(a)$ regular along $\lambda^{-1}(\cM)$ near $\bar X$.
  The claims about $(b_{\diamond})$ and strong $(b_{\diamond})$ properties follow similarly by using the characterization in Theorem~\ref{thm:reinterp0} and arguing regularity with respect to the retraction $\pi$. We leave the details for the reader.
\end{proof}

\subsection{Generic regularity along active manifolds}\label{sec:gen_reg}
How can one justify the use of a particular regularity condition? One approach, highlighted in the previous sections, is to verify the conditions for certain basic examples and then show that they are preserved under transverse smooth deformations.
Stratification theory adapts another viewpoint, wherein a regularity condition between two manifolds is considered acceptable if reasonable sets (e.g. semi-algebraic, subanalytic or definable) can always be partitioned into finitely many smooth manifolds so that the regularity condition holds along any two ``adjacent'' manifolds. See the survey \cite{trotman2020stratification} for an extensive discussion. 

To formalize this viewpoint, we begin with a definition of a stratification.

\begin{definition}[Stratification]
	{\rm A {\em $C^p$-stratification} ($p\geq 1$) of a set $Q\subset \E$ is a partition of $Q$ into finitely many $C^p$ manifolds, called {\em strata}, such that any two strata $\cX$ and $\cY$ satisfy  the implication:
		$$\cY\cap \cl \cX\neq \emptyset\quad \Longrightarrow\quad \cY\subset \cl \cX.$$
		A stratum $\cY$ is said to be {\em adjacent} to a stratum $\cX$ if the inclusion $\cY\subset \cl \cX$ holds. If the strata are definable in some o-minimal structure, the stratification is called {\em definable}.
	}
\end{definition}
Thus a  stratification of $Q$ is simply a partition of $Q$ into smooth manifolds so that the closure of any stratum is a union of strata. 
Stratifications such that any pair of adjacent strata are strongly (a)-regular are called Verdier stratifications.

\begin{definition}{\rm
	 A {\em $C^p$ Verdier stratification} ($p\geq 1$) of a set $Q\subset \E$ is a $C^p$ stratification of $Q$ such that any stratum  $\mathcal{X}$ is strongly (a)-regular along any stratum $\cY$ contained in $\cl \cX$.}
\end{definition}

It is often useful to refine stratifications. To this end,  a stratification is {\em compatible} with a collection of sets $Q_1,\ldots, Q_k$ if for every index $i$, every stratum $\cM$ is either contained in $Q_i$ or is disjoint from it.
The following theorem, due to Ta Le Loi \cite{le1998verdier}, shows that definable sets admit a Verdier stratification, which is compatible with any finite collection of definable sets.

\begin{thm}[Verdier stratification]\label{thm:stratification_exist}
	 For any $p\geq 1$, any definable set $Q\subset\E$ admits a definable  $C^p$ Verdier stratification. Moreover, given finitely many definable subsets $Q_1,\ldots,Q_k$, we may ensure that the Verdier stratification of $Q$ is compatible with $Q_1,\ldots,Q_k$.
\end{thm}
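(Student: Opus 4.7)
The plan is to build up the stratification in three stages: existence of a definable $C^p$ stratification, local Verdier refinement, and compatibility with the prescribed subsets. First, I would invoke the cell decomposition theorem for o-minimal structures (a standard black-box in the theory): any finite collection of definable sets in $\E$ can be partitioned into finitely many definable $C^p$-cells, each of which is a $C^p$ embedded submanifold of $\E$, and the partition can be arranged so that the closure of each cell is a union of cells. Applying this to the collection $\{Q, Q_1, \ldots, Q_k\}$ immediately yields a definable $C^p$ stratification of $Q$ that is compatible with $Q_1,\ldots,Q_k$. Compatibility is thus essentially automatic once the other two steps go through, so the substantive work is entirely about upgrading the stratification to a Verdier one.

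Next I would refine the stratification so that strong $(a)$-regularity holds between any pair of adjacent strata. The key lemma is that for any two definable $C^p$ manifolds $\cX$ and $\cY$ with $\cY\subset \cl \cX$, the \emph{bad locus}
\begin{equation*}
B(\cX,\cY) \;:=\; \{\, y\in \cY \;:\; \cX \text{ is not strongly $(a)$-regular along } \cY \text{ near } y \,\}
\end{equation*}
is a definable subset of $\cY$ of dimension strictly less than $\dim \cY$. This is the heart of the argument: definability of $B(\cX,\cY)$ follows from first-order logical quantification over the inequality in \eqref{eqn:strong_a} applied to all normal directions (the Gauss maps are definable, and the gap function $\Delta$ is a definable expression of suprema), while the dimension drop is the quantitative content of Ta Le Loi's Verdier theorem and uses the o-minimal curve selection lemma together with a Kurdyka–\L ojasiewicz-type gradient estimate of the form already exploited in the proof that strong $(a)$ implies $(b_=)$ earlier in this section. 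Given this lemma, I would refine every adjacent pair by removing $B(\cX,\cY)$ from $\cY$ and re-stratifying $\cY\cup B(\cX,\cY)$ compatibly with the stratification of $\cX$.

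The final step is to show this refinement procedure terminates. Here I would proceed by induction on the lexicographic ordering of the tuple of stratum dimensions: because $\dim B(\cX,\cY)<\dim \cY$, each refinement step strictly decreases the total number of top-dimensional strata or strictly decreases the multiset of dimensions in the lexicographic order. Since o-minimal dimension takes only finitely many values and each stratification has only finitely many strata, the process must stop after finitely many steps, at which point no bad locus remains and every pair of adjacent strata satisfies strong $(a)$-regularity. Compatibility with $Q_1,\ldots,Q_k$ is preserved throughout, since each refinement only subdivides existing strata.

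The hard part, and the only genuinely nontrivial step, is the dimension-drop lemma $\dim B(\cX,\cY)<\dim \cY$. For smooth $\cX$ this is the classical Verdier theorem; the adaptation here requires that for any $\bar y\in \cY$ at which strong $(a)$ fails one can, via definable curve selection, extract a definable arc $\gamma\colon (0,\varepsilon)\to \cX$ with $\gamma(0^+)=\bar y$ along which the gap $\Delta(N_{\cX}(\gamma(t)),N_\cY(\bar y))/\|\gamma(t)-\bar y\|$ is unbounded, and then apply a \L ojasiewicz-type inequality to the definable function controlling this gap to conclude that the bad locus is pinched into a proper definable subset of $\cY$. All other ingredients---cell decomposition, finite refinement, and compatibility---are standard once this lemma is in hand.
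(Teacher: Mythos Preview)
The paper does not actually prove this theorem: it is stated as a known result due to Ta~L\^e~Loi \cite{le1998verdier} and cited without argument. There is therefore no proof in the paper to compare against. Your outline is the standard strategy for establishing Verdier stratifications in the o-minimal category---initial cell decomposition compatible with the prescribed sets, identification of a definable ``bad locus'' where strong $(a)$ fails between adjacent strata, a dimension-drop lemma for that locus, and an inductive refinement---and in broad strokes it matches the architecture of Ta~L\^e~Loi's proof.

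That said, one point deserves a caution. Your sketch of the dimension-drop step is where the real work lies, and what you have written is suggestive rather than a proof. The curve-selection-plus-\L ojasiewicz heuristic you describe does not by itself force $\dim B(\cX,\cY)<\dim \cY$: extracting a single bad arc at a point $\bar y$ shows the bad locus is nonempty near $\bar y$, not that it is thin. The actual argument in \cite{le1998verdier} proceeds differently, using the ``wing lemma'' and a careful analysis of definable families of tangent spaces to show that generic points of $\cY$ are Verdier-regular; the KL-type inequality you reference from earlier in this section is used in the paper for the converse implication (strong $(a)\Rightarrow(b_=)$), not for genericity. If you intend to fill in this step yourself rather than cite it, you would need a genuinely different argument for genericity---for instance, showing that failure of strong $(a)$ on a set of full dimension in $\cY$ would contradict definable Sard-type bounds on the rank of the limiting Gauss map.
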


The analogous theorem for condition $(b_=)$ (and therefore condition $(a)$) was proved earlier; see the discussion in \cite{MR1404337}. The strong $(b_=)$ condition does not satisfy such decomposition properties. It can fail even relative to a single point of a definable set in $\R^2$, as Example~\ref{exa:bstrong_not_gen} shows. Nonetheless, as we have seen in previous sections,  it does hold in a number of  interesting settings in optimization (e.g. for cone reducible sets along the active manifold). 

\begin{example}[Strong $(b)$ is not generic]\label{exa:bstrong_not_gen}
	{\rm 
		Define the curve $\gamma(t)=(t,t^{3/2})$ in $\R^2$. Let $\cX$ be the graph of $\gamma$ and let $\cY$ be the origin in $\R^2$. Then a quick computation shows that a unit normal $u(t)\in N_{\cX}(\gamma(t))$ is given by $(-\frac{2}{3}\sqrt{t},1)/\sqrt{1+\frac{4}{9}t}$ and therefore 
		$$\left\langle  u(t),\frac{\gamma(t)}{\|\gamma(t)\|^2}\right\rangle=\frac{t^{3/2}}{3(t^2+t^3)\sqrt{1+\frac{4}{9}}t}\to \infty\qquad \textrm{as}\quad t\to 0.$$
		Therefore, the strong condition $(b_=)$ fails for the pair $(X,Y)$ at the origin.
	}
\end{example}

Applying Theorem~\ref{thm:stratification_exist}, to epigraphs immediately yields the following.

\begin{thm}[Verdier stratification of a function]
Consider a definable function $f\colon\E\to\R\cup\{\infty\}$ that is continuous on its domain. Then for any $p>0$, there exists a partition of $\dom f$ into finitely many $C^p$-smooth manifolds such that $f$ is $C^p$-smooth on each manifold $\cM$, and $f$ is strongly $(a)$-regular and $(b_=)$-regular along any manifold $\cM$. 
\end{thm}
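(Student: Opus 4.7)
The plan is to apply Theorem~\ref{thm:stratification_exist} to the epigraph $\epi f$ and then transfer the resulting stratification back to $\dom f$ via the graph projection $\pi(x,r)=x$. Since $f$ is definable and continuous on its domain, both $\gph f$ and $\epi f$ are definable subsets of $\E\times\R$, and $\pi$ restricts to a definable homeomorphism from $\gph f$ onto $\dom f$. Throughout, assume $p\geq 2$; the statement for smaller values follows by increasing $p$.

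First I would invoke the smooth cell decomposition for definable functions (see for instance \cite{MR1404337}) to partition $\dom f$ into finitely many $C^p$-cells $\cM_1,\ldots,\cM_N$ on which each restriction $f\big|_{\cM_i}$ is $C^p$-smooth. Each set $\cY_i:=\gph f\big|_{\cM_i}$ is then a $C^p$-smooth submanifold of $\E\times\R$, and the $\cY_i$'s partition $\gph f$. I would then apply Theorem~\ref{thm:stratification_exist} to $\epi f$, demanding compatibility with each $\cY_i$. Each $\cY_i$ becomes a union of strata; after pulling these strata back through $\pi$ and subdividing the $\cM_i$'s accordingly, we may assume that $\{\gph f\big|_{\cM_j}\}_{j=1}^M$ is exactly the collection of strata lying in $\gph f$, each $\cM_j$ is a $C^p$-manifold, and $f$ is $C^p$-smooth on each $\cM_j$.

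The heart of the argument is the upgrade from pairwise to global regularity: for each fixed $\cY=\gph f\big|_{\cM_j}$, the full set $\epi f$, not merely each individual adjacent stratum, is strongly $(a)$-regular along $\cY$. Fix $\bar X\in\cY$ and shrink to a neighborhood $U$ of $\bar X$ small enough that every stratum $\cX'$ meeting $U$ satisfies $\bar X\in\cl\cX'$, hence $\cY\subseteq\cl\cX'$ by the stratification axiom. For $X\in\epi f\cap U$, $Y\in\cY\cap U$, and a unit vector $v\in N_{\epi f}(X)$, write $v=\lim v_n$ with $v_n\in\hat N_{\epi f}(X_n)$ and $X_n\to X$; after passing to a subsequence the $X_n$ lie in a single stratum $\cX'$. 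The inclusion $\cX'\subseteq\epi f$ yields $v_n\in\hat N_{\cX'}(X_n)$, and the Verdier condition applied to $(\cX',\cY)$ gives a constant $C_{\cX'}$ with $\dist(v_n,N_\cY(Y))\leq C_{\cX'}\|X_n-Y\|$. Passing to the limit and maximizing $C_{\cX'}$ over the finitely many strata adjacent to $\cY$ produces a uniform $C$ with $\dist(v,N_\cY(Y))\leq C\|X-Y\|$. By definition this means $f$ is strongly $(a)$-regular along $\cM_j$, and then $(b_=)$-regularity follows from the implication ``strong $(a)\Rightarrow(b_=)$'' in the definable category established earlier in this section.

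The subtlety I expect to require the most care is ensuring the strata of $\epi f$ lying in $\gph f$ really do project diffeomorphically to $C^p$-manifolds in $\dom f$, rather than carrying vertical tangent directions such as those exhibited by $\gph(x\mapsto x^{1/3})$ at the origin. This is precisely why I decompose $\dom f$ smoothly \emph{first}: the pieces $\cY_i$ of $\gph f$ are then already graphs of $C^p$-smooth functions, and requiring compatibility with this finite family during the Verdier stratification of $\epi f$ preserves the diffeomorphism property through any further refinement.
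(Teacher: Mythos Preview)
Your proposal is correct and follows essentially the same approach as the paper: obtain a nonvertical Verdier stratification compatible with the graph, project the graph strata down to $\dom f$, and verify strong $(a)$-regularity of $\epi f$ along each resulting manifold via the normal-cone inclusion $N_{\epi f}\subseteq N_{\text{stratum}}$ together with the Verdier estimate between adjacent strata, with $(b_=)$ then following from the definable implication strong $(a)\Rightarrow(b_=)$. The only variation---you Verdier-stratify $\epi f$ whereas the paper stratifies $\gph f$---is cosmetic, though your choice does make the strong-$(a)$ step handle points of $\epi f\setminus\gph f$ (where horizon normals live) more directly than the paper's argument as written.
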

\begin{proof}
We first form a nonvertical stratification $\{\cM_i\}$ of $\gph f$, guaranteed to exist by \cite{doi:10.1137/060670080}. Choose any integer $p\geq 2$. Restratifying using Theorem~\ref{thm:stratification_exist} yields a nonvertical $C^p$-Verdier stratification $\{\cK_j\}$ of $\gph f$. Let $\cX_j$ denote the image of $\cK_j$ under the canonical projection $(x,r)\mapsto x$. As explained in \cite{doi:10.1137/060670080}, each set $\cX_j$ is a $C^p$-smooth manifolds, the function $f$ restricted to $\mathcal{X}_j$ is $C^p$-smooth, and equality $\gph f\big|_{\cX_j}=\cK_j$ holds. 

Consider now an arbitrary stratum $\cK_j$. It remains to verify that $\epi f$  is strongly $(a)$-regular along $\cK_j$. This follows immediately from the fact that there are finitely many strata and that the inclusion $N_{\epi f}(X)\subset N_{\cK_l}(X)$ holds for any index $l$ and any $X\in \cK_l$. 
\end{proof}

In this work, we will be interested in sets that are regular along a particular manifold---the active one. Theorem~\ref{thm:stratification_exist} quickly implies that critical points of ``generic'' definable functions lie on an active manifold along which the objective function is strongly (a)-regular.

\begin{thm}[Regularity at critical points of generic functions]\label{thm:genericfunctionsregular}
Consider a closed definable function $f\colon\E\to \R\cup\{\infty\}$. Then for almost every direction $v\in\E$ in the sense of Lebesgue measure,  the perturbed function $f_{v}:=f(x)-\langle v,x\rangle$ has at most finitely many limiting critical points, each lying on a unique $C^p$-smooth active manifold and along which the function $f_v$ is strongly (a)-regular.
\end{thm}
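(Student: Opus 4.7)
The plan is to combine the existing genericity results for active manifolds with the Verdier stratification theorem just developed. First I would apply Theorem~\ref{thm:stratification_exist} to construct a definable $C^p$ Verdier stratification $\{\cM_i\}_{i=1}^N$ of $\dom f$, refined as in the preceding stratification theorem for functions so that $f$ is $C^p$-smooth on each stratum and $\epi f$ is strongly $(a)$-regular along every $\cM_i$. Since the epigraph of $f_v$ is the image of $\epi f$ under the invertible linear shear $(x,r)\mapsto(x,r-\langle v,x\rangle)$, strong $(a)$-regularity of $\epi f$ along $\gph f\big|_{\cM_i}$ transfers automatically to strong $(a)$-regularity of $\epi f_v$ along $\gph f_v\big|_{\cM_i}$. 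Thus $f_v$ is strongly $(a)$-regular along each Verdier stratum $\cM_i$, for every $v$.

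Next I would invoke the genericity results of \cite{davis2021proximal, MR3461323}: for almost every $v\in\E$, the tilted function $f_v$ has at most finitely many limiting critical points, each lying on a unique $C^p$ active manifold. Applying a definable Sard-type argument stratum by stratum---using that the covariant gradient $\nabla_{\cM_i} f$ is a $C^{p-1}$ map on the definable manifold $\cM_i$---and discarding an additional Lebesgue-null set of directions, I can further ensure that every limiting critical point $\bar x$ of $f_v$ lies in the relative interior of a unique stratum $\cM_i$, and that $0\in\hat\partial f_v(\bar x)$, so the local uniqueness statement \cite[Proposition~8.2]{drusvyatskiy2014optimality} is in force.

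The crux of the argument is to identify the active manifold $\cM(\bar x)$ at each critical point $\bar x$ with the ambient stratum $\cM_i$. The smoothness clause of Definition~\ref{defn:ident_man} is satisfied for $\cM_i$ by construction. For sharpness, I would exploit the adjacency structure of the Verdier stratification: Clarke subgradients of $f$ at points off $\cM_i$ near $\bar x$ arise as convex combinations of limits of gradients along adjacent strata $\cM_j$ with $\cM_i\subset \cl\cM_j$, and strong $(a)$-regularity of $f$ along $\cM_i$ pins the tangential component of these limits down to $\nabla_{\cM_i} f(\bar x)$. Consequently, for a generic tilt $v$, the perturbed subgradients of $f_v$ off $\cM_i$ remain uniformly bounded away from the origin in a neighborhood of $\bar x$, establishing sharpness. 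Local uniqueness of active manifolds then forces $\cM(\bar x)=\cM_i$ near $\bar x$, and strong $(a)$-regularity of $f_v$ along $\cM(\bar x)$ is inherited from the first paragraph. The main obstacle I anticipate is precisely this sharpness verification: carving out the correct Lebesgue-null set of tilts for which the perturbed Clarke subgradients contributed by each adjacent stratum simultaneously avoid the origin, which will require careful bookkeeping of finitely many definable Sard-type exclusions---one for each pair of adjacent strata.
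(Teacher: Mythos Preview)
Your approach diverges from the paper's and carries a real gap at the sharpness step.

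The paper does not prove this theorem directly; it observes that the statement is a special case of the composite result Theorem~\ref{thm:genericcomposite}, whose proof is a one-line modification of the argument in \cite{MR3461323}. That argument already builds the active manifolds as strata of a definable stratification of $\dom f$ (more precisely, of certain definable sets arising from the graph of $\partial f$). The modification is simply to take that stratification to be a $C^p$ Verdier stratification from the outset, which is possible by the compatibility clause of Theorem~\ref{thm:stratification_exist}. Since the active manifolds are then Verdier strata by construction, strong $(a)$-regularity of $f_v$ along them is automatic. No post-hoc identification is needed.

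Your route instead treats \cite{MR3461323} as a black box for the existence of active manifolds, constructs an independent Verdier stratification, and then tries to argue that the two coincide via uniqueness. This forces you to verify sharpness for the Verdier stratum $\cM_i$, and your sketched argument for this does not work. You write that strong $(a)$-regularity ``pins the tangential component of these limits down to $\nabla_{\cM_i} f(\bar x)$'' and conclude that perturbed subgradients off $\cM_i$ stay bounded away from the origin. But the implication runs the wrong way: since $\bar x$ is critical for $f_v$ restricted to $\cM_i$, we have $\nabla_{\cM_i} f_v(\bar x)=0$, so strong $(a)$-regularity in fact forces the \emph{tangential} component of $\partial_c f_v(x)$ to be \emph{small} for $x$ near $\bar x$. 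Sharpness must therefore come entirely from the \emph{normal} component of the subgradients, and strong $(a)$-regularity says nothing about that. Your final paragraph's ``one Sard-type exclusion per pair of adjacent strata'' is closer in spirit, but carrying it out would amount to reproducing the subdifferential-graph analysis already done inside \cite{MR3461323}.

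The clean fix is the paper's: rather than running two stratifications and reconciling them, open the black box and use a Verdier stratification inside the construction of \cite{MR3461323}. Then the active manifold and the Verdier stratum are the same object by definition, and the sharpness obstacle disappears.
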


This theorem is a special case of a more general result that applies to structured problems of the form
\begin{equation}\label{eqn:compos_crit_clarke}
\min_x ~g(x) +h(x)
\end{equation}
for definable functions $g$ and $h$. Algorithms that utilize this structure, such as the proximal subgradient method, generate a sequence that may convergence to {\em  composite Clarke critical points} $\bar x$, meaning those satisfying 
$$0\in \partial_c g(\bar x)+\partial_c h(\bar x).$$ 
This condition is typically weaker than $0\in \partial_c (g+h)(\bar x)$. Points $\bar x$ satisfying the stronger inclusion $0\in \partial g(\bar x)+\partial h(\bar x)$ will be called {\em  composite limiting critical}.

The following theorem shows that under a reasonably rich class of perturbations, the problem \eqref{eqn:compos_crit_clarke} admits no extraneous composite limiting critical points. Moreover each of the functions involved admits an active manifold along which the function is strongly $(a)$-regular. The proof is a small modification of  \cite[Theorem 5.2]{MR3461323}.

\begin{thm}[Regularity at critical points of generic functions]\label{thm:genericcomposite}
Consider closed definable functions $g\colon\E\to \R\cup\{\infty\}$ and $h\colon\E\to \R\cup\{\infty\}$ and define the parametric family of problems
\begin{equation}\label{eqn:perturb_prob}
\min_{x} ~f_{y,v}(x)=g(x)-\langle v,x\rangle+h(x+y)
\end{equation}
Define the tilted function $g_v(x)=g(x)-\langle v,x\rangle$. 
Then there exists an integer $N>0$ such that for almost all parameters $(v,y)$ in the sense of Lebesgue measure, the problem \eqref{eqn:perturb_prob} has at most $N$ composite Clarke critical points. Moreover, for any limiting composite critical point $\bar x$, there exists a unique vector 
$$\bar \lambda\in \partial h(\bar x+y) \qquad \textrm{ satisfying } -\bar \lambda\in \partial g_v(\bar x),$$ 
and the following properties are true.
\begin{enumerate}
\item The inclusions 
$\bar \lambda\in \hat \partial h(\bar x+y)$ and  $-\bar \lambda\in \hat \partial g_v(\bar x)$ hold.
\item $g_v$ admits a $C^p$ active manifold $\cM$ at $\bar x$ for $-\bar\lambda$  and $h$ admits a $C^p$ active manifold $\mathcal{K}$ at $\bar x+y$ for $\bar \lambda$, and the two manifolds intersect transversally:
$$N_{\cK}(\bar x)\cap N_{\cM}(\bar x)=\{0\}.$$
\item\label{it:claim3} $\bar x$ is either a local minimizer of $f_{y,v}$ or a $C^p$ strict active saddle point of $f_{y,v}$. 
\item\label{it:claim4} $g_v$ is strongly $(a)$-regular along $\cM$ at $\bar x$ and $h$ is strongly $(a)$-regular along $\mathcal{K}$ at $\bar x+y$.
\end{enumerate}
\end{thm}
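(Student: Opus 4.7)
The plan is to reduce the new claim (strong $(a)$-regularity along the active manifold) to the existing genericity theorem \cite[Theorem 5.2]{MR3461323}, by supplying a Verdier stratification in place of a Whitney stratification at the outset. Concretely, I would first apply Theorem~\ref{thm:stratification_exist} to get definable $C^p$ Verdier stratifications $\{\widetilde{\cM}_i\}$ of $\gph g$ and $\{\widetilde{\cK}_j\}$ of $\gph h$, chosen compatible with the graphs and with the domains $\dom g$ and $\dom h$. Projecting these stratifications off the vertical direction (arguing as in the proof of the Verdier stratification of a function earlier in this section), I obtain finite partitions $\{\cM_i\}$ and $\{\cK_j\}$ of $\dom g$ and $\dom h$ into $C^p$ manifolds such that $g\big|_{\cM_i}$ and $h\big|_{\cK_j}$ are $C^p$-smooth, $\epi g$ is strongly $(a)$-regular along $\gph g\big|_{\cM_i}$ for every $i$, and $\epi h$ is strongly $(a)$-regular along $\gph h\big|_{\cK_j}$ for every $j$. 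In functional terms, $g$ is strongly $(a)$-regular along each $\cM_i$, and $h$ along each $\cK_j$.

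Next, I would run the argument of \cite[Theorem 5.2]{MR3461323} verbatim, using $\{\cM_i\}$ and $\{\cK_j\}$ as the underlying stratifications. That theorem already delivers, for almost every $(v,y)$, the existence of a uniform bound $N$ on the number of composite Clarke critical points of $f_{y,v}$, the existence and uniqueness of the multiplier $\bar\lambda$, the Fréchet inclusions of claim~(1), the transversality of the active manifolds in claim~(2), and the local-min / strict-saddle alternative of claim~(3). The core of that argument is a Morse--Sard / generic-smoothness application: for almost every perturbation parameter, each composite limiting critical point $\bar x$ has the property that $\bar x$ is not a critical value of the projection of some canonical definable correspondence restricted to a lower-dimensional stratum, which forces $\bar x$ to land on a top-dimensional stratum of the relevant refinement. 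Since ``being an active manifold at $\bar x$ for the corresponding subgradient'' is a local property, local uniqueness of the active manifold (from the $0\in\hat\partial g_v(\bar x)-\bar\lambda$ inclusion of claim~(1), via \cite[Proposition~8.2]{drusvyatskiy2014optimality}) then identifies $\cM$ and $\cK$ with the strata of $\{\cM_i\}$ and $\{\cK_j\}$ containing $\bar x$ and $\bar x+y$, respectively.

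Once this identification is in place, claim~(4) is immediate: $g_v$ differs from $g$ by a linear function and therefore inherits strong $(a)$-regularity along $\cM$ from $g$ (strong $(a)$-regularity is invariant under $C^2$ perturbations by Theorem~\ref{thm:preim} applied to $F(x)=x$, or, more directly, from the analytic characterization \eqref{eqn:str_a1}--\eqref{eqn:str_a2} which is unchanged when one subtracts a linear function), and similarly for $h$ along $\cK$.

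The main obstacle is verifying that the active manifold at the critical point is indeed the ambient Verdier stratum rather than a proper submanifold of it. This is exactly where a positive-measure exclusion is required, and it is handled in \cite{MR3461323} by a definable Morse--Sard argument ruling out tilts $(v,y)$ for which a composite limiting critical point $\bar x$ lies on the frontier of a stratum; the same exclusion works here without modification because the Verdier property is a stratum-wise strengthening of the Whitney property used in \cite{MR3461323}. The uniform bound $N$ on the number of composite Clarke critical points follows from the definable version of the constant-finiteness principle applied to the selection map $(v,y)\mapsto\{x:0\in\partial_c g_v(x)+\partial_c h(x+y)\}$, as in the cited proof.
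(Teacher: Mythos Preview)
Your proposal is correct and matches the paper's approach: the paper proves claim~\ref{it:claim4} by exactly the modification you describe---replacing the Whitney stratification used in \cite[Theorem~5.2]{MR3461323} with a Verdier stratification (so that the strata on which the active manifolds are built already carry strong $(a)$-regularity). The only minor discrepancy is attributional: the paper credits claim~\ref{it:claim3} separately to \cite[Theorem~5.2]{davis2021proximal} rather than folding it into the output of \cite{MR3461323}, but this does not affect the argument for claim~\ref{it:claim4}.
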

\begin{proof}
All the claims, except for \ref{it:claim3} and \ref{it:claim4}, are proved in \cite{MR3461323}; note, that in that work, active manifolds are defined using the limiting subdifferential, but exactly the same arguments apply under the more restrictive Definition~\ref{defn:ident_man}. Claim 
\ref{it:claim3} is proved in \cite[Theorem 5.2]{davis2021proximal}\footnote{weak convexity is invoked in the theorem statement but is not necessary for the result.}; it is a direct consequence of the classical Sard's theorem and existence of stratifications. Claim~\ref{it:claim4} follows from a small modification to the proof of \cite{MR3461323}. Namely, the first-bullet point in the proof may be replaced by ``$g$ is $C^p$-smooth and strongly (a) regular on $X^j_i(\widehat{U}_i)$ and $h$ is $C^p$-smooth and strongly (a)-regular on $F^j_i(\widehat{U}_i)$''.
\end{proof}

\section{Algorithm and main assumptions}\label{sec:algos_main}
In this chapter, we introduce our main algorithmic consequences of the strong (a) and $(b_{\diamond})$ regularity properties developed in the previous sections. Setting the stage, throughout we consider a minimization problem 
\begin{align}\label{eq:mainprob}
\min_{x \in \RR^d} f(x),
\end{align}
where $f \colon \RR^d \rightarrow \RR\cup \{+\infty\}$ is a closed function. The function $f$ may enforce constraints or regularization; it may also be the population loss of a stochastic optimization problem. In order to simultaneously model algorithms which exploit such structure, we take a fairly abstract approach, assuming access to  a \emph{generalized gradient mapping} for $f$:
$$
G \colon \RR_{++} \times \dom f \times \RR^d \rightarrow \RR^d
$$ 
We then consider the following stochastic method: given $x_0 \in \RR^d$, we iterate 
\begin{align}\label{alg:perturbedGiteration}
x_{k+1} = x_k - \alpha_kG_{\alpha_k}(x_k, \perturb_k), 
\end{align}
where $\alpha_k>0$ is a control sequence and $\perturb_k$ is stochastic noise. We will place relevant assumptions on the noise $\perturb_k$ later in Section~\ref{sec:twopillarsmainresults}.  The most important example of~\eqref{alg:perturbedGiteration}, valid for locally Lipschitz functions $f$, is the stochastic subgradient method:
$$
x_{k+1} = x_k - \alpha_k (w_k + \perturb_k) \qquad \text{where } w_k \in \partial_c f(x_k),
$$
In this case,  the mapping $G$ satisfies 
\begin{align}\label{eq:subgradientmethodintro2}
G_{\alpha}(x, \nu) \in \partial_c f(x) + \nu \qquad \text{for all $x, \nu \in \RR^d$ and $\alpha > 0$}.
\end{align}
More generally, $G$ may represent a stochastic projected gradient method or a stochastic proximal gradient method---two algorithms we examine in detail in Section~\ref{sec:examplesoperators}. 

The purpose of this chapter is to understand how iteration~\eqref{alg:perturbedGiteration} is affected by the existence of ``active manifolds" $\cM$ contained within the domain of $f$. For this, we posit a tight interaction between $G$ and the active manifold $\mathcal{M}$, described in the following assumption.
\begin{assumption}[Strong (a) and aiming]\label{assumption:A}
{\rm Fix a point $\bar x \in \dom f$. We suppose that there exist constants $\localconstant, \localmu > 0$, a neighborhood $\cU$ of $\bar x$, and a $C^{3}$ manifold $\cM \subseteq \dom f$ containing $\bar x$ such that the following hold for all $\nu \in \RR^d$ and $\alpha > 0$, where we set $\cU_f := \cU \cap \dom f$.
\begin{enumerate}[label=$\mathrm{(A\arabic*)}$]
\item \label{assumption:localbound} {\bf (Local Boundedness)}
	We have $$\sup_{x \in \cU_f} \|G_\alpha(x, \perturb)\| \leq \localconstant(1+\|\perturb\|).$$
	\item\label{assumption:smoothcompatibility} {\bf (Strong (a))}
The function $f$ is $C^{2}$ on $\cM$ and for all $x \in \cU_f$, we have
\begin{align*}
\|P_{\tangentM{P_{\cM}(x)}}(G_\alpha(x, \perturb) - \nabla_\cM f(P_{\cM}(x)) - \nu)\| \leq C (1 + \|\perturb\|)^2(\dist(x, \cM) +\alpha).
\end{align*}
\item \label{assumption:aiming} {\bf (Proximal Aiming)} For $x \in \cU_f$ tending to $\bar x$, we have 
\begin{align*}
\dotp{G_\alpha(x, \perturb) - \nu, x - P_{\cM}(x)} &\geq \mu \cdot \dist(x, \cM) - (1+\|\perturb\|)^2(o(\dist(x, \cM)) + C\alpha).
\end{align*}
\end{enumerate}}
\end{assumption}

Some comments are in order. Assumption~\ref{assumption:localbound} is similar to classical Lipschitz assumptions and ensures the steplength can only scale linearly in $\|\perturb\|$. 
Assumption~\ref{assumption:smoothcompatibility} is the natural analogue of strong (a) regularity for the operator $G_{\alpha}(x, \perturb)$. It ensures that the shadow sequence $y_k = \proj_{\cM}(x_k)$ locally remains an inexact stochastic Riemannian gradient sequence with implicit retraction.
Assumption~\ref{assumption:aiming} ensures that after subtracting the noise from $G_{\alpha_k}(x_k, \perturb_k)$, the update direction $x_{k+1}-x_{k}$ locally points towards the manifold $\cM$. We will later show that this ensures the iterates $x_k$ approach the manifold $\cM$ at a controlled rate. Finally we note in passing that the power of $(1+\|\perturb\|)$ in the above expressions must be at least 2 for common iterative algorithms to satisfy  Assumption~\ref{assumption:A}; one may also take higher powers, but this requires higher moment bounds on $\|\perturb_k\|$. Before making these results precise in Section~\ref{sec:twopillarsmainresults}, we first formalize our statements about the subgradient method and introduce several examples.

The rest of the section is devoted to examples of algorithms satisfying Assumption~\ref{assumption:A}.

\subsection{Stochastic subgradient method}\label{sec:examplesoperators}

The most immediate example of operator $G$ arises from the subgradient method applied to a locally Lipschitz function $f$. In this setting, any measurable selection $s \colon \RR^d \rightarrow \RR$ of $\partial_c f(x)$ gives rise to a mapping 
\begin{align}\label{eq:subgradientG}
G_{\alpha}(x, \perturb) = s(x) + \perturb,
\end{align}
which is independent of $\alpha$. Then Algorithm~\eqref{alg:perturbedGiteration} is the classical stochastic subgradient method: 
\begin{align}\label{eq:subgradient}
x_{k+1} = x_k - \alpha_k (s(x_k) + \perturb_k).
\end{align}
Let us place the following assumption on $f$, which we will shortly show implies Assumption~\ref{assumption:A}.
\begin{assumption}[Assumptions for the subgradient mapping]\label{assumption:subgradient}
{\rm Let $f \colon \RR^d \rightarrow \RR$ be a function that is locally Lipschitz  continuous around a point $\bar x\in \RR^d$. Let $\cM \subseteq \cX$ be a $C^3$ manifold containing $\bar x$ and suppose that $f$ is $C^2$ on $\cM$ near $x$.
\begin{enumerate}[label=$\mathrm{(B\arabic*)}$]
	\item \label{assumption:subgradient:stronga} {\bf (Strong (a))}
The function $f$ is strongly $(a)$-regular along $\cM$ near $\bar x$. 
\item \label{assumption:subgradient:proximalaiming}{\bf (Proximal aiming)} There exists $\mu > 0$ such that the inequality holds 
\begin{align}\label{prop:subgradient:eq:aiming}
\dotp{v, x - P_{\cM}(x)} \geq \mu\cdot  \dist(x, \cM) \qquad \text{for all $x$ near $\bar x$ and $v \in  \partial_c f(x)$.}
\end{align}
\end{enumerate}}
\end{assumption}

Note that Corollary~\ref{cor:prox-aiming_gen} shows that the aiming condition~\ref{assumption:subgradient:proximalaiming} holds as long as $\cM$ is an active manifold for $f$ at $\bar x$ satisfying $0\in\hat \partial f(\bar x)$ and $f$ is $(b_{\leq})$-regular along $\cM$ near $\bar x$.
The following proposition follows immediately from  Corollary~\ref{cor:prox-aiming_gen}.
\begin{proposition}[Subgradient method]\label{prop:subgradient}
Assumption~\ref{assumption:subgradient} implies  Assumption~\ref{assumption:A} with the map $G$ defined in \eqref{eq:subgradientG}. 
\end{proposition}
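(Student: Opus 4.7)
The plan is to verify each of the three conditions in Assumption~\ref{assumption:A} separately with the subgradient mapping $G_\alpha(x,\nu) = s(x) + \nu$. Throughout, I will work in a neighborhood $\cU$ of $\bar x$ small enough that $f$ is $L$-Lipschitz on $\cU$; consequently the Clarke subdifferential $\partial_c f$ is bounded by $L$ on $\cU$, and the horizon subdifferential $\partial^\infty f(x)$ is trivial for $x \in \cU$.

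For~\ref{assumption:localbound}, I simply note that $\|G_\alpha(x,\nu)\| \leq \|s(x)\| + \|\nu\| \leq L + \|\nu\| \leq \max\{L,1\}\cdot(1+\|\nu\|)$, which gives the local boundedness with constant $C := \max\{L,1\}$ (independent of $\alpha$).

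For~\ref{assumption:smoothcompatibility}, I would invoke the analytic characterization of strong $(a)$-regularity in Theorem~\ref{thm:reinterp}(2). Since $\partial^\infty f \equiv \{0\}$ near $\bar x$, estimate~\eqref{eqn:str_a2} is vacuous and~\eqref{eqn:str_a1} yields a constant $C'>0$ such that $\|P_{\tangentM{y}}(v - \nabla_\cM f(y))\| \leq C'\sqrt{1+\|v\|^2}\,\|x-y\|$ for all $x \in \cU$, $y \in \cM \cap \cU$, and every limiting subgradient $v \in \partial f(x)$. Taking convex combinations, together with the uniform bound $\|v\| \leq L$, extends this inequality to all $v \in \partial_c f(x)$ with a constant $C'' := C'\sqrt{1+L^2}$. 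Setting $y = \proj_{\cM}(x)$ and $v = s(x)$, and observing that the noise $\nu$ cancels inside the projection,
\[
\|P_{\tangentM{\proj_\cM(x)}}(G_\alpha(x,\nu) - \nabla_\cM f(\proj_\cM(x)) - \nu)\| = \|P_{\tangentM{\proj_\cM(x)}}(s(x) - \nabla_\cM f(\proj_\cM(x)))\| \leq C''\dist(x,\cM).
\]
Since $C''\dist(x,\cM) \leq C''(1+\|\nu\|)^2(\dist(x,\cM)+\alpha)$, condition~\ref{assumption:smoothcompatibility} follows.

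For~\ref{assumption:aiming}, the cancellation $G_\alpha(x,\nu) - \nu = s(x) \in \partial_c f(x)$ lets me apply the proximal aiming hypothesis~\ref{assumption:subgradient:proximalaiming} directly:
\[
\langle G_\alpha(x,\nu) - \nu, x - \proj_\cM(x)\rangle = \langle s(x), x - \proj_\cM(x)\rangle \geq \mu \cdot \dist(x,\cM),
\]
which is strictly stronger than the required estimate (the error term in~\ref{assumption:aiming} can be taken to be $0$). I do not anticipate a real obstacle here; the only mild subtlety is passing from limiting to Clarke subgradients in step~\ref{assumption:smoothcompatibility}, and this is handled by the convex-combination argument above combined with the uniform Lipschitz bound on $\|s(x)\|$.
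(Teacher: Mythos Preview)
Your proof is correct and is precisely the direct verification the paper has in mind; the paper itself gives no detailed argument, merely remarking that the proposition is immediate (its reference to Corollary~\ref{cor:prox-aiming_gen} concerns how one might \emph{obtain} the aiming hypothesis~\ref{assumption:subgradient:proximalaiming}, not the reduction from Assumption~\ref{assumption:subgradient} to Assumption~\ref{assumption:A}). Your explicit convex-combination step to pass from $\partial f$ to $\partial_c f$ in the strong~$(a)$ estimate is a detail the paper leaves implicit, and it is handled correctly.
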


Thus, all three properties arise from reasonable assumptions on the function $f$, as discussed in the previous sections. Moreover, for definable functions, they hold generically, as the following corollary shows. Indeed, this is a direct consequence of Theorem~\ref{thm:genericcomposite}.
\begin{cor}\label{prop:subgradientsemialgebraic}
Suppose that $f\colon \RR^d \rightarrow \RR$ is locally Lipschitz and definable in o-minimal structure. Then there exists a finite $N$ such that for a generic set of $v\in \RR^d$ the tilted function $f_{v}(x) := f(x) - \dotp{v, x}$  has at most $N$ Clarke critical points. Moreover, each limiting critical point $\bar x$ is in fact Fr{\'e}chet critical and satisfies the following.
\begin{enumerate}
\item The function $f$ and the subgradient mapping~\eqref{eq:subgradient} satisfy Assumption~\ref{assumption:A} at $\bar x$ with respect to some $C^3$ active manifold $\cM$.
\item The limiting critical point $\bar x$ is either a local minimizer or an active strict saddle point of $f$.
\end{enumerate}
\end{cor}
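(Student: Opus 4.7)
My approach is to specialize Theorem~\ref{thm:genericcomposite} to the case $g = f$ and $h \equiv 0$, extract its structural conclusions, and then repackage them through Proposition~\ref{prop:subgradient}. With this specialization, the parametric objective $f_{y,v}(x) = f(x) - \langle v, x\rangle + h(x+y)$ collapses to $f_v(x)$, independent of the shift $y$. Consequently, Fubini reduces the ``almost every $(v, y)$'' statement of Theorem~\ref{thm:genericcomposite} to ``almost every $v$'' as required. Since $\partial h \equiv \partial_c h \equiv \{0\}$, the unique multiplier produced by the theorem is $\bar\lambda = 0$, the notion of composite (Clarke or limiting) criticality for $f_{y,v}$ reduces to ordinary criticality for $f_v$, and the active manifold $\mathcal{K}$ for $h$ may be taken to be the ambient $\mathbb{R}^d$, vacuously verifying all conditions on the $h$ side.

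With the specialization in hand, for almost every $v$ and each of the at most $N$ limiting critical points $\bar x$ of $f_v$, Theorem~\ref{thm:genericcomposite} supplies: (i) Fr\'echet criticality $0 \in \hat\partial f_v(\bar x)$; (ii) a $C^p$ active manifold $\mathcal{M}$ for $f_v$ at $\bar x$; (iii) strong $(a)$-regularity of $f_v$ along $\mathcal{M}$ near $\bar x$; and (iv) the dichotomy that $\bar x$ is either a local minimizer or a $C^p$ active strict saddle of $f_v$. Item (iv) is precisely the second bullet of the corollary, and item (i) upgrades limiting criticality to Fr\'echet criticality as required by the statement.

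To obtain the first bullet, I would verify Assumption~\ref{assumption:subgradient} for $f_v$ along $\mathcal{M}$ and then invoke Proposition~\ref{prop:subgradient}, which packages it into Assumption~\ref{assumption:A} for the subgradient mapping. Condition \ref{assumption:subgradient:stronga} is exactly item (iii). For the proximal aiming condition \ref{assumption:subgradient:proximalaiming}, I would appeal to Corollary~\ref{cor:prox-aiming_gen}: its hypotheses are local Lipschitzness of $f_v$ (inherited from $f$), a $C^1$ active manifold (item (ii)), Fr\'echet criticality (item (i)), and $(b_{\leq})$-regularity of $f_v$ along $\mathcal{M}$. The one ingredient not handed to us directly is the $(b_{\leq})$ condition, which I would secure by upgrading strong~$(a)$ to $(b_{=})$ via the definable implication ``strong~$(a) \Rightarrow (b_{=})$'' established earlier in Section~\ref{sec:fund_reg_cond}; this upgrade applies because $f_v$ is definable as a linear perturbation of the definable $f$ and $\mathcal{M}$ is a $C^2$ manifold. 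Since $(b_{=})$ trivially implies $(b_{\leq})$, Corollary~\ref{cor:prox-aiming_gen} delivers proximal aiming, and Proposition~\ref{prop:subgradient} closes the argument. The only mildly delicate step is confirming that Theorem~\ref{thm:genericcomposite} remains meaningful when one input function is identically zero; one verifies that all of its hypotheses are met and that its conclusions on the $h$ side become trivial, so no content is lost.
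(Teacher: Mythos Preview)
Your proposal is correct and follows essentially the same route the paper indicates: the paper declares the corollary a direct consequence of Theorem~\ref{thm:genericcomposite}, and you correctly unpack that claim by specializing to $h\equiv 0$, extracting the active manifold, strong~$(a)$-regularity, Fr\'echet criticality, and the saddle dichotomy, and then feeding these through Proposition~\ref{prop:subgradient}. Your explicit invocation of the definable implication strong~$(a)\Rightarrow(b_=)$ to secure the $(b_{\leq})$ hypothesis for Corollary~\ref{cor:prox-aiming_gen} (and hence proximal aiming) is the one step the paper leaves implicit, and it is the right way to close the argument.
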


\subsection{Stochastic projected subgradient method}

Throughout this section let $g \colon \RR^d \rightarrow \RR$ be a locally Lipschitz function and let $\cX$ be a closed set and consider the constrained minimization problem 
$$
\min f(x) := g(x) + \delta_{\cX}(x).
$$
A classical algorithm for solving this problem is known as the stochastic projected subgradient method. Each iteration of the method updates
\begin{align}\label{eq:stochasticprojectedsubgradient}
x_{k+1} \in P_{\cX}(x_k - \alpha_k (v_k + \perturb_k)) \qquad \text{ where $v_k \in \partial_c g(x_k)$}
\end{align}
This algorithm can be reformulated as an instance of~\eqref{alg:perturbedGiteration}. Indeed, 
let $s_\cX \colon \RR^d \rightarrow \RR^d$ be a measurable selection of $P_{\cX}$, let $s_g \colon \RR^d \rightarrow \RR^d$ be a measurable selection of $\partial_c g$, and define the generalized gradient mapping
\begin{align}\label{eq:projectedsubgradientG}
G_{\alpha}(x, \perturb) := \frac{x - s_\cX(x - \alpha( s_g(x) + \perturb))}{\alpha} \qquad \text{for all $x \in \RR^d$, $\nu \in \RR^d$, $\alpha > 0$.}
\end{align}
Evidently, the update rule~\eqref{alg:perturbedGiteration} reduces to \eqref{eq:stochasticprojectedsubgradient}.

In order to ensure Assumption~\ref{assumption:A} for the stochastic projected subgradient method, we introduce the following assumptions on $g$ and $\cX$.
\begin{assumption}[Assumptions for the projected gradient mapping]\label{assumption:projectedgradient}
{\rm Let $f := g + \delta_{\cX}$, where $\cX$ is a closed set and $g \colon \RR^d \rightarrow \RR$ is a locally Lipschitz continuous function. Fix $\bar x \in \RR^d$ and let $\cM \subseteq \cX$ be a $C^3$ manifold containing $\bar x$ and suppose that $f$ is $C^2$ on $\cM$ near $\bar x$.
\begin{enumerate}[label=$\mathrm{(C\arabic*)}$]
	\item \label{assumption:projectedgradient:stronga} {\bf (Strong (a))}
The function $g$ and set $\cX$ are strongly $(a)$-regular along $\cM$ at $\bar x$. 
\item \label{assumption:projectedgradient:proximalaiming}{\bf (Proximal aiming)} There exists $\mu > 0$ such that the inequality holds 
\begin{align}\label{prop:projectedgradient:eq:aiming}
\dotp{v, x - P_{\cM}(x)} \geq \mu\cdot  \dist(x, \cM) \qquad \text{for all $x \in \cX$ near $\bar x$ and $v \in \partial_c g(x)$.}
\end{align}
\item\label{assumption:projectedgradient:bproxregularity} {\bf (Condition (b))} The set $\cX$ is $(b_{\leq})$-regular along $\cM$ at $\bar x$.\end{enumerate}}
\end{assumption}
Note that Corollary~\ref{cor:prox-aiming_gen} shows that the aiming condition~\ref{assumption:projectedgradient:proximalaiming} holds as long as $\cM$ is an active manifold for $f$ at $\bar x$ satisfying $0\in\hat \partial f(\bar x)$ and $f$ is $(b_{\leq})$-regular along $\cM$ at $\bar x$.\footnote{Corollary~\ref{cor:prox-aiming_gen} shows that there exists a constant $c>0$ such that for any $\delta>0$, the estimate
	\begin{equation}\label{eqn:prox_aim_super_dumb}
	\langle v,x-P_{\cM}(x)\rangle\geq (c-\delta\sqrt{1+\|v\|^2})\cdot\dist(x,\cM),
	\end{equation}
	holds for all $x \in \cX$ near $\bar x$ and for all $v\in \partial f(x)$. In particular, due to the inclusion $\hat \partial g(x)+\hat N_{\cX}(x)\subset \hat \partial f(x)$, we may choose any $v\in \hat \partial g(x)$ in \eqref{eqn:prox_aim_super_dumb}. Therefore, taking into account that $g$ is locally Lipschitz, we deduce that there is a constant $\mu$ such that $\langle v,x-P_{\cM}(x)\rangle\geq \mu\cdot\dist(x,\cM)$ for all $x \in \cX$ near $\bar x$ and for all $v\in \hat \partial g(x)$. Taking limits and convex hulls, the same statement holds for all $v\in \partial_c g(x)$.} The following proposition shows that Assumption~\ref{assumption:projectedgradient} is sufficient to ensure Assumption~\ref{assumption:A}; we defer the proof to Appendix~\ref{section:proof:prop:projectedgradient} since it's fairly long.
\begin{proposition}[Projected subgradient method]\label{prop:projectedgradient}
Assumption~\ref{assumption:projectedgradient} implies Assumption~\ref{assumption:A} for the map $G$ defined in \eqref{eq:projectedsubgradientG}.
\end{proposition}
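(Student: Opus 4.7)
The plan is to verify the three items of Assumption~\ref{assumption:A} for the mapping $G_\alpha(x,\nu) = (x - s_\cX(x - \alpha(s_g(x)+\nu)))/\alpha$ by decomposing each update through the normal cone to $\cX$. Abbreviate $z := x - \alpha(s_g(x) + \nu)$, $x^+ := s_\cX(z) \in P_\cX(z)$, and $y := P_\cM(x)$. The optimality condition for the nearest-point projection onto $\cX$ gives $z - x^+ \in N_\cX(x^+)$, and dividing by $\alpha>0$ delivers the key identity
\begin{equation}\label{eqn:proposal:proj}
G_\alpha(x,\nu) - \nu \,=\, s_g(x) + n, \qquad n := \frac{z - x^+}{\alpha} \in N_\cX(x^+).
\end{equation}
This cleanly separates the ``$g$-direction'' from the ``normal-to-$\cX$ direction,'' so that the regularity of $g$ and of $\cX$ can be invoked independently.

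The local boundedness~\ref{assumption:localbound} is the simplest step. Prox-regularity of $\cX$ at $\bar x$ (which may be taken as a standing assumption, since otherwise the projected method is not even well-defined) implies that $P_\cX$ is locally Lipschitz with some constant $L$ near $\bar x$; combined with $x \in \cX \cap \cU$, this gives $\|x - x^+\| = \|P_\cX(x) - P_\cX(z)\| \leq L\alpha(\|s_g(x)\| + \|\nu\|) \leq C\alpha(1+\|\nu\|)$, hence $\|G_\alpha(x,\nu)\| \leq C(1+\|\nu\|)$. As by-products I record $\|n\| \leq C(1+\|\nu\|)$ and $\|x^+ - y\| \leq C\alpha(1+\|\nu\|) + \dist(x,\cM)$, both used repeatedly below.

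For strong $(a)$~\ref{assumption:smoothcompatibility}, note that $\nabla_\cM f(y) = \nabla_\cM g(y)$ since $f$ and $g$ agree on $\cM$. Projecting~\eqref{eqn:proposal:proj} onto $T_\cM(y)$ and using the triangle inequality,
\[
P_{T_\cM(y)}\big(G_\alpha - \nu - \nabla_\cM f(y)\big) \,=\, P_{T_\cM(y)}\big(s_g(x) - \nabla_\cM g(y)\big) + P_{T_\cM(y)}(n).
\]
The first summand is $O(\dist(x,\cM))$ by strong $(a)$-regularity of $g$ along $\cM$, via Theorem~\ref{thm:reinterp}~(2) and local boundedness of $s_g$. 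For the second, strong $(a)$-regularity of $\cX$ along $\cM$ yields $\Delta(N_\cX(x^+), N_\cM(y)) \leq C\|x^+-y\|$; since $N_\cM(y)$ is a cone, rescaling produces some $w \in N_\cM(y)$ with $\|n - w\| \leq C\|n\|\|x^+-y\|$, and then $P_{T_\cM(y)}(w) = 0$ gives $\|P_{T_\cM(y)}(n)\| \leq C\|n\|\|x^+-y\| \leq C'(1+\|\nu\|)^2(\alpha + \dist(x,\cM))$. Summing yields~\ref{assumption:smoothcompatibility}.

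For proximal aiming~\ref{assumption:aiming}, expand
\[
\langle G_\alpha - \nu,\, x - y\rangle \,=\, \langle s_g(x),\, x - y\rangle + \langle n,\, x - x^+\rangle + \langle n,\, x^+ - y\rangle.
\]
The first term is at least $\mu\,\dist(x,\cM)$ by Assumption~\ref{assumption:projectedgradient:proximalaiming}, and the second is bounded in absolute value by $\|n\|\|x-x^+\| \leq C(1+\|\nu\|)^2\alpha$. The crux is the third term, which I would handle via $(b_{\leq})$-regularity of $\cX$ along $\cM$ in its quantitative form: for every $\epsilon > 0$ there exists a neighborhood of $\bar x$ in which $\langle n/\|n\|,\, y - x^+\rangle \leq \epsilon\|y - x^+\|$, so that $\langle n,\, x^+ - y\rangle \geq -\epsilon C(1+\|\nu\|)(\alpha(1+\|\nu\|) + \dist(x,\cM))$. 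The principal obstacle, and what requires the most care, is the bookkeeping that converts this hybrid bound into the form demanded by~\ref{assumption:aiming}: the $\epsilon\alpha(1+\|\nu\|)^2$ piece is absorbed into the allowed $(1+\|\nu\|)^2 C\alpha$ term once $\epsilon$ is fixed uniformly bounded, while the $\epsilon(1+\|\nu\|)\dist(x,\cM)$ piece becomes $(1+\|\nu\|)^2 o(\dist(x,\cM))$ because $\epsilon$ may be taken to tend to zero as $x \to \bar x$, uniformly in $\alpha$ and $\nu$. Assembling the three contributions completes the proof.
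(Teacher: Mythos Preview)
Your decomposition \eqref{eqn:proposal:proj} is exactly the one in the paper, and the overall strategy matches. However, there is a genuine gap: you never address what happens when the projected point $x^+$ falls outside the neighborhood of $\bar x$ where strong $(a)$-regularity of $\cX$ and $(b_{\leq})$-regularity of $\cX$ are guaranteed. The noise $\nu$ is unconstrained, so for large $\|\nu\|$ the point $x^+=s_\cX(x-\alpha(s_g(x)+\nu))$ can land arbitrarily far from $\bar x$, and then neither the estimate $\Delta(N_\cX(x^+),N_\cM(y))\leq C\|x^+-y\|$ nor the $(b_{\leq})$ bound $\langle n/\|n\|,y-x^+\rangle\leq o(\|y-x^+\|)$ is available. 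The paper handles this by splitting into two cases: if $x^+\in B_\epsilon(\bar x)$, your argument goes through; if $x^+\notin B_\epsilon(\bar x)$, then $\|x-x^+\|\geq\epsilon/2$, which forces $\alpha(1+\|\nu\|)\gtrsim 1$, and then every term of interest can be crudely bounded by $C(1+\|\nu\|)^2\alpha$ without invoking any regularity at $x^+$.

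A secondary issue: your verification of~\ref{assumption:localbound} invokes prox-regularity of $\cX$ to make $P_\cX$ Lipschitz, but Assumption~\ref{assumption:projectedgradient} does not include prox-regularity, and the method is perfectly well-defined for any closed $\cX$ via the measurable selection $s_\cX$. The paper instead uses the elementary estimate $\|x-s_\cX(z)\|\leq\|x-z\|+\dist_\cX(z)\leq 2\|x-z\|$ (valid because $x\in\cX$), which gives $\|G_\alpha(x,\nu)\|\leq 2\|s_g(x)+\nu\|$ directly.
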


Given this proposition, an immediate question is whether Assumption~\ref{assumption:projectedgradient} holds generically under for problems that are definable in an o-minimal structure. The following corollary, which is an immediate consequence of Proposition~\ref{prop:projectedgradient}, Theorem~\ref{thm:genericcomposite}, and Corollary~\ref{cor:prox-aiming_gen}, shows that the answer is yes.

\begin{cor}\label{prop:projectedgradientsemialgebraic}
Suppose that $f = g + \delta_\cX$, where $\cX \subseteq \RR^d$ is closed and $g\colon \RR^d \rightarrow \RR$ is locally Lipschitz, and both $\cX$ and $g$ are definable in an o-minimal structure. Then there exists a finite $N$ such that for a generic set of $v, w \in \RR^d$ the tilted function $f_{v, w}(x) := g(x + w) + \delta_{\cX}(x) - \dotp{v, x}$ has at most $N$  composite Clarke critical points. Moreover, each composite limiting critical point $\bar x$ is in fact Fr{\'e}chet critical and satisfies the following,
\begin{enumerate}
\item The function $f$ and the projected subgradient mapping $G$ define in \eqref{eq:projectedsubgradientG} satisfy Assumption~\ref{assumption:A} at $\bar x$ with respect to some $C^3$ active manifold $\cM$.
\item The composite limiting critical point $\bar x$ is either a local minimizer or an active strict saddle point of $f$.
\end{enumerate}
\end{cor}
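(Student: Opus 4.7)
The proof plan is to apply Theorem~\ref{thm:genericcomposite} to the decomposition $f_{v,w}=\delta_\cX(x)-\langle v,x\rangle+g(x+w)$, identifying ``$g$'' with $\delta_\cX$ and ``$h$'' with $g$, and then to verify Assumption~\ref{assumption:projectedgradient} so that Proposition~\ref{prop:projectedgradient} delivers Assumption~\ref{assumption:A}. The theorem immediately supplies the integer $N$, the uniform bound on composite Clarke critical points, and for each composite limiting critical point $\bar x$ a unique multiplier $\bar\lambda\in\hat\partial g(\bar x+w)$ with $v-\bar\lambda\in\hat N_\cX(\bar x)$ (hence Fr\'echet criticality). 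It also produces $C^3$-active manifolds $\cM'\subseteq\cX$ of $\delta_\cX$ at $\bar x$ for $v-\bar\lambda$ and $\cK$ of $g$ at $\bar x+w$ for $\bar\lambda$, transversality $N_{\cM'}(\bar x)\cap N_\cK(\bar x)=\{0\}$, strong $(a)$-regularity of each piece along its respective manifold, and the local minimizer/active strict saddle dichotomy that is exactly part~(2) of the corollary.

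Part~(2) already implicitly asserts the existence of a $C^3$-active manifold of $f_{v,w}$ at $\bar x$, and the natural and correct choice (as in the proof of Theorem~\ref{thm:genericcomposite} in~\cite{MR3461323}) is $\cM:=\cM'\cap(\cK-w)$. By transversality, $\cM$ is a $C^3$-embedded submanifold of $\RR^d$ contained in $\cX$, on which $f_{v,w}$ coincides with the $C^2$ function $\tilde g(x):=g(x+w)-\langle v,x\rangle$. I would then verify that the three conditions of Assumption~\ref{assumption:projectedgradient} hold for $\tilde g$, $\cX$, and $\cM$. Strong $(a)$-regularity of $\tilde g$ along $\cM$ and of $\cX$ along $\cM$ are inherited from the strong $(a)$-regularities of $g$ along $\cK$ and of $\delta_\cX$ along $\cM'$, respectively: since $\cM\subseteq(\cK-w)$ and $\cM\subseteq\cM'$, enlarging the reference normal cone at a submanifold can only decrease the defining gap in~\eqref{eqn:strong_a}; the tilt by $-\langle v,\cdot\rangle$ and the shift by $w$ preserve strong $(a)$.

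Condition $(b_\leq)$ of $\cX$ along $\cM$ follows from the implication strong $(a)\Rightarrow(b_{=})$ in the definable setting, proved earlier in the excerpt. For the proximal aiming inequality for $\tilde g$ along $\cM$, I would apply Corollary~\ref{cor:prox-aiming_gen} to the composite $f_{v,w}$ along $\cM$ at the Fr\'echet critical point $\bar x$ (whose $(b_\leq)$-regularity along $\cM$ is inherited additively from the two summands), then descend from $\partial_c f_{v,w}$ to $\partial_c \tilde g$ by the Fr\'echet sum rule $\hat\partial\tilde g(x)+\hat N_\cX(x)\subseteq\hat\partial f_{v,w}(x)$ (choosing the normal component to be zero), followed by closure and convexification, absorbing the $\sqrt{1+\|v\|^2}$ factor through local Lipschitzness of $\tilde g$. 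Proposition~\ref{prop:projectedgradient} then yields part~(1). The main obstacle is verifying that $\cM'\cap(\cK-w)$ is indeed an active manifold of $f_{v,w}$: the smoothness of the intersection and the $C^2$ reduction of $f_{v,w}$ on it are immediate, but sharpness requires linking the two component subdifferentials through the multiplier $\bar\lambda$, a step whose essentials are baked into the proof of Theorem~\ref{thm:genericcomposite} and which relies crucially on the genericity of $(v,w)$.
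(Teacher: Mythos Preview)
Your proposal is correct and follows essentially the same approach as the paper, which simply states that the corollary is ``an immediate consequence of Proposition~\ref{prop:projectedgradient}, Theorem~\ref{thm:genericcomposite}, and Corollary~\ref{cor:prox-aiming_gen}.'' You supply considerably more detail than the paper does---in particular the explicit identification of $\cM=\cM'\cap(\cK-w)$, the inheritance of strong $(a)$-regularity to the transversal intersection via the tangent-space inclusion $T_\cM(y)\subseteq T_{\cM'}(y)$, and the passage from strong $(a)$ to $(b_\leq)$ in the definable category---all of which are the natural steps one must carry out to verify Assumption~\ref{assumption:projectedgradient} and are implicit in the cited references.
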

In the above corollary, the qualification \emph{composite critical points}, as defined in Theorem~\ref{thm:genericcomposite}, is important, since the projected subgradient method is only known to converge to such points.

\subsection{Proximal gradient method}

Throughout this section let $g \colon \RR^d \rightarrow \RR$ be a $C^1$ function and let $h \colon \RR^d \rightarrow \RR \cup \{+\infty\}$ be a closed function. We then consider the minimization problem 
$$
\min_{x\in\RR^d} f(x) := g(x) + h(x) .
$$
A classical algorithm for solving this problem is the stochastic proximal gradient method. Each iteration of the method solves the proximal problem:
\begin{equation}\label{eqn:prox_grad}
x_{k+1} \in \argmin_{x \in \RR^d}\left\{ h(x) + \dotp{\nabla g(x_k) + \perturb_k, x - x_k} + \frac{1}{2\alpha_k}\|x - x_k\|^2\right\}.
\end{equation}
This algorithm can be reformulated as an instance of~\eqref{alg:perturbedGiteration}. Indeed, 
let $s \colon \RR_{++} \times \RR^d \rightarrow \RR^d$ be a measurable selection of the proximal map
$(x,\alpha)\mapsto \argmin_y\{h(y)+\frac{1}{2\alpha}\|y-x\|^2\}$ and consider the mapping $G$ defined by
\begin{align}\label{eq:proximalgradientG}
G_{\alpha}(x, \perturb) = \frac{x - s_{\alpha}(x - \alpha(\nabla g(x) + \perturb))}{\alpha} \qquad \text{for all $x \in \RR^d$, $\nu\in \RR^d$ and $\alpha > 0$}.
\end{align}
Evidently, the update rule~\eqref{alg:perturbedGiteration} is equivalent to \eqref{eqn:prox_grad}.

In order to ensure Assumption~\ref{assumption:A} for the stochastic proximal gradient method, we introduce the following assumptions on $g$ and $h$.
\begin{assumption}[Assumptions for the proximal gradient mapping]\label{assumption:proximalgradient}
{\rm Let  $f := g + h$, where $g \colon \RR^d \rightarrow \RR$ is $C^1$ and $h \colon \RR^d \rightarrow \RR \cup \{+\infty\}$ is closed. Denote $\cX := \dom h$ and let $\cM \subseteq \cX$ be a $C^3$ manifold containing some point $\bar x$ and suppose that $f$ is $C^2$ on $\cM$ near $\bar x$.
\begin{enumerate}[label=$\mathrm{(D\arabic*)}$]
\item \label{assumption:proximalgradient:Lipschitz1} {\bf (Lipschitz gradient/boundedness)}
	The gradient $\nabla g$ Lipschitz near $\bar x$. Moreover, there exists $C > 0$ such that $\|\nabla g(x)\| \leq C(1 + \|x\|)$ for all $x \in \cX$.
\item \label{assumption:proximalgradient:Lipschitz2} {\bf (Lipschitz proximal term)} The function $h$ is Lipschitz on $\cX$.
	\item \label{assumption:proximalgradient:stronga}{\bf (Strong (a))}
The function $h$ is strongly $(a)$-regular along $\cM$ at $\bar x$. 
\item \label{assumption:proximalgradient:proximalaiming} {\bf (Proximal Aiming)} There exists $\mu > 0$ such that the inequality holds 
\begin{align}\label{prop:proximalgradient:eq:aiming}
\dotp{v, x - P_{\cM}(x)} \geq \mu\cdot  \dist(x, \cM) - (1+\|v\|)o(\dist(x, \cM))
\end{align}
for all $x \in \dom h$ near $\bar x$ and $v \in \partial f(x)$.
\end{enumerate}}
\end{assumption}

Note that Corollary~\ref{cor:prox-aiming_gen} shows that the aiming condition~\ref{assumption:proximalgradient:proximalaiming} holds as long as $\cM$ is an active manifold for $f$ at $\bar x$ satisfying $0\in\hat \partial f(\bar x)$ and $f$ is $(b_{\leq})$-regular along $\cM$ at $\bar x$.
The following proposition shows that Assumption~\ref{assumption:proximalgradient} is sufficient to ensure Assumption~\ref{assumption:A}. The proof of the Proposition appears in Appendix~\ref{section:proof:prop:proximalgradient}
\begin{proposition}[Proximal gradient method]\label{prop:proximalgradient}
If assumption~\ref{assumption:proximalgradient} holds at $\bar x \in \dom f$, then $f$ and $G$ satisfy Assumption~\ref{assumption:A} at $\bar x$.
\end{proposition}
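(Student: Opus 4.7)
The plan is to verify the three conditions of Assumption~\ref{assumption:A} in turn, using the first-order optimality condition for the proximal subproblem as the key structural input.

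The preliminary step is to unpack $G_{\alpha}(x,\perturb)$: setting $p := x - \alpha G_{\alpha}(x,\perturb)$, optimality of $p$ for the proximal problem gives $w := G_{\alpha}(x,\perturb) - \perturb - \nabla g(x) \in \partial h(p)$, so I can write $v := G_{\alpha}(x,\perturb) - \perturb = \nabla g(x) + w$. This decomposition --- which separates a smooth gradient at $x$ from a subgradient at the proximal iterate $p$ --- underpins all three verifications. Local boundedness \ref{assumption:localbound} is then immediate: the Lipschitz assumption \ref{assumption:proximalgradient:Lipschitz2} yields $\|w\| \leq L_h$, assumption \ref{assumption:proximalgradient:Lipschitz1} bounds $\|\nabla g(x)\|$ uniformly in a neighborhood of $\bar x$, and hence $\|G_{\alpha}(x,\perturb)\| \leq C(1+\|\perturb\|)$.

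To establish strong (a) \ref{assumption:smoothcompatibility}, let $y := P_\cM(x)$ and use $\nabla_\cM f(y) = P_{T_\cM(y)}\nabla g(y) + \nabla_\cM h(y)$ to split
\[
P_{T_\cM(y)}\bigl(G_{\alpha}(x,\perturb) - \perturb - \nabla_\cM f(y)\bigr) = P_{T_\cM(y)}\bigl(\nabla g(x) - \nabla g(y)\bigr) + P_{T_\cM(y)}\bigl(w - \nabla_\cM h(y)\bigr).
\]
The first term is bounded by $L_g\dist(x,\cM)$ via the Lipschitz gradient. For the second, I would invoke the analytic reformulation of strong (a) for $h$ along $\cM$ from Theorem~\ref{thm:reinterp}, applied with input point $p\in \dom h$ and manifold point $y$; since $\|w\|\leq L_h$ controls the factor $\sqrt{1+\|w\|^2}$, this yields $\|P_{T_\cM(y)}(w - \nabla_\cM h(y))\| \leq C_1\|p-y\|$. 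The triangle inequality $\|p-y\|\leq \alpha\|G_{\alpha}(x,\perturb)\|+\dist(x,\cM)$ combined with the bound from \ref{assumption:localbound} produces the required estimate of the form $C(1+\|\perturb\|)^2(\dist(x,\cM)+\alpha)$.

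The main obstacle will be the proximal aiming condition \ref{assumption:aiming}: because $w$ is a subgradient at $p$ rather than at $x$, the aiming assumption \ref{assumption:proximalgradient:proximalaiming} cannot be applied to $v$ directly. My plan is to transport the subgradient by defining $\tilde v := \nabla g(p) + w \in \partial f(p)$ (by the sum rule for a smooth plus closed function), and then applying \ref{assumption:proximalgradient:proximalaiming} at $p$ to obtain $\dotp{\tilde v,\, p - P_\cM(p)} \geq \mu\cdot\dist(p,\cM) - (1+\|\tilde v\|)\cdot o(\dist(p,\cM))$; this is legitimate because $\|p-\bar x\|\leq \|x-\bar x\|+\alpha\|G_{\alpha}(x,\perturb)\|$ places $p$ in the aiming neighborhood once $x$ is close to $\bar x$ and $\alpha$ is bounded. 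The target quantity $\dotp{v,\,x - P_\cM(x)}$ differs from $\dotp{\tilde v,\,p - P_\cM(p)}$ by the three drift terms $\dotp{v,\,x-p}$, $\dotp{\nabla g(x)-\nabla g(p),\, p - P_\cM(p)}$, and $\dotp{v,\,P_\cM(p)-P_\cM(x)}$; each will be bounded in absolute value by $C\alpha(1+\|\perturb\|)^2$ using $\|x-p\|=\alpha\|G_{\alpha}(x,\perturb)\|\leq C\alpha(1+\|\perturb\|)$, the Lipschitz continuity of $\nabla g$ and of $P_\cM$ near $\bar x$, and the uniform bound on $\|v\|$. Finally, the elementary inequality $|\dist(p,\cM)-\dist(x,\cM)|\leq \|x-p\|$ allows me to replace $\mu\dist(p,\cM)$ by $\mu\dist(x,\cM)$ up to an $O(\alpha(1+\|\perturb\|))$ correction, and to convert $o(\dist(p,\cM))$ into $o(\dist(x,\cM))+O(\alpha(1+\|\perturb\|))$; the residual small multiple of $\dist(x,\cM)$ arising from the latter step is absorbed into the leading term by shrinking the aiming constant (e.g.\ replacing $\mu$ by $\mu/2$) on a smaller neighborhood, which yields \ref{assumption:aiming} with a possibly smaller constant than appears in \ref{assumption:proximalgradient:proximalaiming}.
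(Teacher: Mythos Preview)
Your overall strategy---writing $G_\alpha(x,\nu)-\nu=\nabla g(x)+w$ with $w\in\hat\partial h(p)$ via first-order optimality of the proximal subproblem, then verifying \ref{assumption:localbound}--\ref{assumption:aiming} term by term, and in particular transporting the aiming inequality from $x$ to the proximal point $p$---matches the paper's proof. Two genuine gaps remain, however.

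First, the inference ``\ref{assumption:proximalgradient:Lipschitz2} yields $\|w\|\leq L_h$'' is invalid. Since $h$ is only Lipschitz on its domain $\cX$ and may equal $+\infty$ off it, the subdifferential $\hat\partial h(p)$ can be unbounded at boundary points of $\cX$; for instance $h=\delta_{[0,\infty)}$ is $0$-Lipschitz on its domain yet $\hat\partial h(0)=(-\infty,0]$, and one can easily arrange $\|w\|$ arbitrarily large. The paper instead bounds $\|p-x\|$ from the proximal inequality
\[
\tfrac{1}{2\alpha}\|p-x\|^2\leq h(x)-h(p)-\langle\nabla g(x)+\nu,\,p-x\rangle\leq (L_h+\|\nabla g(x)+\nu\|)\|p-x\|,
\]
giving $\|p-x\|\leq C\alpha(1+\|\nu\|)$ and hence $\|G_\alpha(x,\nu)\|\leq C(1+\|\nu\|)$; the bound $\|w\|\leq C(1+\|\nu\|)$ then follows by the triangle inequality. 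Your downstream arguments survive verbatim with this corrected bound on $\|w\|$.

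Second, Assumption~\ref{assumption:A} must hold for \emph{all} $\alpha>0$ and $\nu\in\RR^d$, so the proximal point $p$ need not lie in the neighborhood of $\bar x$ where strong (a) for $h$, the aiming condition \ref{assumption:proximalgradient:proximalaiming}, and Lipschitz continuity of $P_\cM$ are available. Your remark ``once $x$ is close to $\bar x$ and $\alpha$ is bounded'' does not cover the remaining regime. The paper handles this by splitting on whether $p\in B_\epsilon(\bar x)$. When $p\notin B_\epsilon(\bar x)$ one has $\|x-p\|\geq\epsilon/2$, which forces $(1+\|\nu\|)^2\alpha$ to be bounded below by a fixed constant; the right-hand sides of \ref{assumption:smoothcompatibility} and \ref{assumption:aiming} are then already large enough to dominate the crude estimates $\|G_\alpha(x,\nu)-\nu-\nabla_\cM f(y)\|\leq C(1+\|\nu\|)$ and $\langle G_\alpha(x,\nu)-\nu,\,x-P_\cM(x)\rangle\geq -C(1+\|\nu\|)\dist(x,\cM)$. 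This case is easy but cannot be omitted.
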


The following corollary, which is an immediate consequence of Proposition~\ref{prop:proximalgradient} and Theorem~\ref{thm:genericfunctionsregular}, shows that assumption~\ref{assumption:proximalgradient} is automatically true for definable problems.

\begin{cor}\label{prop:proximalgradientsemialgebraic}
Suppose that $f = g + h_0 + \delta_\cX$, where $\cX \subseteq \RR^d$, $g$ is a $C^1$ function with Lipschitz gradient, the function $h_0 \colon \RR^d\rightarrow \RR$ is Lipschitz on $\cX$, and we define $h := h_0 + \delta_{\cX}$.
Suppose that $g, h_0,$ and $\cX$ are  definable in an o-minimal structure. Then there exists a finite $N$ such that for a full measure set of $v, w \in \RR^d$, the tilted function $f_{v, w} := g(x+w) +h_0(x+w) +  \delta(x) - \dotp{v, x} $ has at most $N$  composite Clarke critical points $\bar x$. Moreover, each composite limiting critical point $\bar x$ is in fact composite Fr{\'e}chet critical and satisfies the following.
\begin{enumerate}
\item The function $f$ and the proximal gradient mapping~\eqref{eq:proximalgradientG} satisfy Assumption~\ref{assumption:A} at $\bar x$ with respect to some active manifold $\cM$.
\item The critical point $\bar x$ is either a local minimizer or an active strict saddle point of $f$.
\end{enumerate}
\end{cor}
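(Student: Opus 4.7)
The plan is to derive the corollary from Theorem~\ref{thm:genericcomposite} together with Proposition~\ref{prop:proximalgradient}. Setting $\tilde g := \delta_{\cX}$ and $\tilde h := g + h_0$, both of which are definable, the parametric family rewrites as
\[
f_{v,w}(x) = \tilde g(x) - \langle v, x\rangle + \tilde h(x + w),
\]
which matches the form of Theorem~\ref{thm:genericcomposite}. That theorem then yields a finite integer $N$ and a full-measure set of parameters $(v, w)$ for which $f_{v,w}$ has at most $N$ composite Clarke critical points $\bar x$; for each such $\bar x$ there exists a unique multiplier $\bar \lambda$ whose associated inclusions are in fact Fr\'echet, the functions $\tilde g_v$ and $\tilde h$ admit active manifolds $\cM$ (at $\bar x$) and $\cK$ (at $\bar x + w$) with transversally intersecting normal cones and with strong $(a)$-regularity along each, and $\bar x$ is either a local minimizer or a $C^p$ strict active saddle of $f_{v,w}$. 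This settles part (2) and the composite Fr\'echet criticality.

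For part (1), I would regroup $f_{v,w} = \bar g + \bar h$ with $\bar g(x) := g(x+w) - \langle v,x\rangle$ (a $C^1$ function with Lipschitz gradient and linear growth) and $\bar h(x) := h_0(x+w) + \delta_{\cX}(x)$, and then verify Assumption~\ref{assumption:proximalgradient} so that Proposition~\ref{prop:proximalgradient} applies. The natural active manifold of $\bar h$ at $\bar x$ is $\widetilde{\cM} := \cM \cap (\cK - w)$, which is $C^3$ near $\bar x$ by transversality of the normal cones; the restriction of $f_{v,w}$ to $\widetilde{\cM}$ is $C^2$, and sharpness of $\bar h$ off $\widetilde{\cM}$ follows from the sharpness of $\delta_\cX$ along $\cM$ and of $h_0$ along $\cK - w$ combined with transversality. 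Clause $(D1)$ is the hypothesis on $g$; $(D2)$ holds after further shrinking the parameter set, since $h_0$ is definable and therefore locally Lipschitz off a lower-dimensional definable set, so we may assume $h_0$ is locally Lipschitz at $\bar x + w$; $(D3)$ follows by first subtracting the $C^{1,1}$ function $g$ from $\tilde h = g + h_0$ to conclude that $h_0(\cdot + w)$ is strongly $(a)$-regular along $\cK - w$, and then combining this with strong $(a)$-regularity of $\delta_\cX$ along $\cM$ via a product/chain rule (Theorem~\ref{thm:preim} applied to the diagonal embedding $x \mapsto (x,x)$ with the separable function $(x_1,x_2) \mapsto \delta_\cX(x_1) + h_0(x_2 + w)$); $(D4)$ follows from Corollary~\ref{cor:prox-aiming_gen} applied to $f_{v,w}$ at $\bar x$ along $\widetilde{\cM}$, using that strong $(a)$-regularity of $f_{v,w}$ along $\widetilde{\cM}$ implies $(b_{\leq})$-regularity by the definable ``strong $(a)\Rightarrow(b_=)$'' theorem and that $0\in\hat\partial f_{v,w}(\bar x)$ is guaranteed by Fr\'echet criticality. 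Proposition~\ref{prop:proximalgradient} then delivers Assumption~\ref{assumption:A} at $\bar x$ with active manifold $\widetilde{\cM}$.

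The hard part will be the sum-rule step in $(D3)$ and the accompanying sharpness check for $\widetilde{\cM}$. Both are morally routine applications of transversality and the preimage calculus (Theorem~\ref{thm:preim}), but require a product rule for strong $(a)$-regularity of separable functions along product manifolds, which I would first establish directly from the analytic characterization in Theorem~\ref{thm:reinterp}, and then push through the diagonal embedding.
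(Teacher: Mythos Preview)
Your approach is the same as the paper's: the paper states the corollary as ``an immediate consequence of Proposition~\ref{prop:proximalgradient} and Theorem~\ref{thm:genericfunctionsregular}'' (the latter reference is evidently meant to be Theorem~\ref{thm:genericcomposite}, since a shift $w$ is involved), and you invoke exactly the same two results. The difference is that the paper leaves the verification of Assumption~\ref{assumption:proximalgradient} implicit, whereas you actually carry it out. Your decomposition $\tilde g=\delta_\cX$, $\tilde h=g+h_0$ is the only one that matches the perturbation format of Theorem~\ref{thm:genericcomposite}, and your construction of $\widetilde{\cM}=\cM\cap(\cK-w)$ together with the sum/chain-rule argument for strong $(a)$ and the proximal-aiming check via Corollary~\ref{cor:prox-aiming_gen} is precisely the content hidden behind the word ``immediate''. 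The product rule for strong $(a)$ of separable functions along product manifolds that you plan to prove is in fact asserted (without proof) by the paper in the paragraph following Theorem~\ref{thm:chain_rule}, so you are on solid ground there.

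One small correction: your concern in $(D2)$ is legitimate but your proposed fix is off. The hypothesis gives $h_0$ Lipschitz on $\cX$, not on $\cX+w$, so $h_0(\cdot+w)$ need not be Lipschitz on $\cX$; definability alone does not rescue this (e.g.\ $\sqrt{|t|}$ is definable but not locally Lipschitz at $0$). However, the proof of Proposition~\ref{prop:proximalgradient} only uses Lipschitzness of $h$ \emph{locally} near $\bar x$ (see the bound on $h(x)-h(x_+)$ in Claim~\ref{eq:claiminitialboundproxgradient}), so it suffices to know $h_0$ is locally Lipschitz near $\bar x+w$. Since $\cK$ is an active manifold for $g+h_0$ at $\bar x+w$ and $g$ is $C^{1}$, the restriction of $h_0$ to $\cK$ is $C^p$ and the sharpness condition forces $h_0$ to be locally Lipschitz there---no further shrinking of the parameter set is needed.
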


Thus, we find that Assumption~\ref{assumption:A} is satisfied for common iterative mappings, under reasonable assumptions, and is even automatic for certain generic classes of functions. In the next several sections, we turn our attention to the algorithmic consequences of theses assumptions.

\section{The two pillars}\label{sec:twopillarsmainresults}

Assumption~\ref{assumption:A} at a point $\bar x$ guarantees two useful behaviors, provided the iterates $\{x_k\}$ of iteration~\eqref{alg:perturbedGiteration} remain in a small ball around $\bar x$. First $x_k$ must approach the manifold $\cM$ containing $\bar x$ at a controlled rate, a consequence of the proximal aiming condition. Second the shadow $y_k = P_{\cM}(x_k)$ of the iterates along the manifold form an approximate Riemannian stochastic gradient sequence with an implicit retraction. Moreover, the approximation error of the sequence decays with $\dist(x_k, \cM)$ and $\alpha_k$, quantities that quickly tend to zero.

The formal statements of our results crucially require local arguments and frequently refer to the following stopping time:
given an index $k \geq 1$ and a constant $\delta > 0$, define
\begin{align}\label{def:stoppingtime}
\tau_{\discrete, \delta} := \inf\{j \geq k \colon x_j  \notin B_{\delta}(\bar x)\}.
\end{align}
Note that the stopping time implicitly depends on $\bar x$, a point at which Assumption~\ref{assumption:A} is satisfied. In the statements of our result, the point $\bar x$ will always be clear from the context. Second, we make the following standing assumption on $\alpha_k$ and $\perturb_k$. We assume they are in force throughout the rest of the sections.

\begin{assumption}[Standing assumptions]\label{assumption:zero}
{\rm~Assume the following.
\begin{enumerate}[label=$\mathrm{(E\arabic*)}$]
\item The map $G$ is measurable.
\item There exist constants $c_1, c_2 > 0$ and $\gamma \in (1/2, 1]$ such that 
$$
\frac{c_1}{k^\gamma} \leq \alpha_k \leq \frac{c_2}{k^\gamma}.
$$
\item $\{\perturb_k\}$ is a martingale difference sequence w.r.t.\ to the increasing sequence of $\sigma$-fields 
$$
\cF_k = \sigma(x_j \colon j \leq k \text{ and } \perturb_j  \colon j<k),
$$
and there exists a function $q \colon \RR^d \rightarrow \RR_+$ that is bounded on bounded sets with 
$$
\EE[\perturb_k \mid \cF_k] = 0 \qquad \text{ and } \qquad  \EE[\|\perturb_k\|^4\mid \cF_k]  < q(x_k).
$$
We let $\EE_k[\cdot ] = \EE[ \cdot \mid \cF_k]$ denote the conditional expectation.
\item The inclusion $x_k \in \dom f$ holds for all $k \geq 1$.
\end{enumerate}}
\end{assumption}
All items in Assumption~\ref{assumption:zero} are standard in the literature on stochastic approximation methods and mirror those found in~\cite[Assumption C]{davis2020stochastic}. The only exception is the fourth moment bound on $\|\perturb_k\|$, which stipulates that $\nu_k$ has slightly lighter tails. This bound appears to be necessary for the setting we consider. We now turn to the first pillar.

\subsection{Pillar I: Aiming towards the manifold}

The following proposition ensures the sequence $x_k$ approaches the manifold. 
The proof appears in Section~\ref{sec:proof:prop:gettingclosertothemanifold}.
\begin{proposition}\label{prop:gettingclosertothemanifold}
Suppose that $f$ satisfies Assumption~\ref{assumption:A} at $\bar x$. Let $\gamma \in (1/2, 1]$ and assume $c_1 \geq 32/\mu$ if $\gamma = 1$. Then for all $k_0 \geq 1$ and sufficiently small $\delta > 0$, there exists a constant $C$, such that the following hold with stopping time $\tau_{k_0, \delta}$ defined in~\eqref{def:stoppingtime}: 
\begin{enumerate}
\item \label{eq:prop:gettingclosertothemanifoldbound1}  There exists a random variable $V_{k_0, \delta}$ such that 
\begin{enumerate}
 \item \label{eq:prop:gettingclosertothemanifoldbound1:as} The limit holds: $$\frac{k^{2\gamma - 1}}{\log(k+1)^2}\dist^2(x_{k}, \cM)1_{\tau_{k_0, \delta} > k} \xrightarrow{\text{a.s.}}  V_{k_0, \delta}.$$
\item \label{eq:prop:gettingclosertothemanifoldbound1:sum} The sum is almost surely finite: $$\sum_{k=1}^\infty \frac{k^{\gamma - 1}}{\log(k+1)^2}\dist(x_{k}, \cM)1_{\tau_{k_0, \delta} > k} < +\infty.$$ 
\end{enumerate} 
\item \label{eq:prop:gettingclosertothemanifoldbound1p5}  We have
\begin{enumerate}
\item \label{eq:prop:gettingclosertothemanifoldbound1p5:expectedsquareddistance}The expected squared distance satisfies:
 $$
\EE[\dist^2(x_k, \cM)1_{\tau_{k_0, \delta} > k}]  \leq C\alpha_k \qquad \text{for all $k \geq 1$}.
$$
\item \label{eq:prop:gettingclosertothemanifoldbound1p5:saddle} The tail sum  is bounded:
$$
\EE\left[  \sum_{i=k}^\infty  \alpha_i \dist(x_i , \cM)1_{\tau_{k_0, \delta} > i}\right] \leq C\sum_{i=k}^\infty \alpha_i^2 \qquad \text{for all $k \geq 1$.}
$$
\end{enumerate}
\end{enumerate}
\end{proposition}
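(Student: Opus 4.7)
The plan is to build everything from one master recursion for the truncated squared distance $v_k := \dist^2(x_k,\cM)1_{\tau_{k_0,\delta}>k}$, and then to extract the expected and almost-sure statements by, respectively, an elementary linear recurrence analysis and the Robbins--Siegmund theorem applied to a rescaled Lyapunov function. Writing $p_k := P_{\cM}(x_k)$ and $d_k := \dist(x_k,\cM)$, I will use freely that on $\{\tau_{k_0,\delta}>k\}$ one has $x_k \in B_\delta(\bar x)$ and hence $d_k \le \delta$ (since $\bar x\in\cM$), so in particular $d_k \ge v_k/\delta$. Starting from $d_{k+1}^2 \le \|x_{k+1}-p_k\|^2$ and expanding, I split $G_{\alpha_k}(x_k,\nu_k) = (G_{\alpha_k}(x_k,\nu_k)-\nu_k) + \nu_k$: the proximal-aiming assumption~\ref{assumption:aiming} controls the first piece, while $\nu_k$ is conditionally mean zero. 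Combining this with local boundedness~\ref{assumption:localbound}, the fourth-moment bound on $\nu_k$ in Assumption~\ref{assumption:zero} (which gives $\EE_k[(1+\|\nu_k\|)^2] \le K$ uniformly on bounded sets), and choosing $\delta$ small enough that the $o(d_k)$ factor is absorbed into a fraction of $\mu d_k$, I obtain the master recursion
\begin{equation}\label{eq:masterplan}
\EE_k[v_{k+1}] \le v_k - \mu\alpha_k d_k 1_{\tau_{k_0,\delta}>k} + C_1\alpha_k^2
\end{equation}
for some constant $C_1>0$.

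Item~\ref{eq:prop:gettingclosertothemanifoldbound1p5} follows from~\eqref{eq:masterplan} by elementary means. For~\ref{eq:prop:gettingclosertothemanifoldbound1p5:expectedsquareddistance}, invoking $d_k \ge v_k/\delta$ and taking expectations transforms~\eqref{eq:masterplan} into the deterministic recurrence $\EE[v_{k+1}] \le (1-\mu\alpha_k/\delta)\EE[v_k] + C_1\alpha_k^2$, whose standard iterated solution gives $\EE[v_k] = O(\alpha_k)$. The hypothesis $c_1 \ge 32/\mu$ in the $\gamma=1$ case enters precisely here: it ensures $\mu c_1/\delta > 1$ once $\delta$ is sufficiently small, which is the threshold at which the product $\prod_j(1-\mu\alpha_j/\delta)$ decays fast enough that the iteration converges at the $\alpha_k$ rate. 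For~\ref{eq:prop:gettingclosertothemanifoldbound1p5:saddle}, I leave $d_k$ intact and rearrange~\eqref{eq:masterplan} as $\mu\alpha_k\EE[d_k 1_{\tau_{k_0,\delta}>k}] \le \EE[v_k] - \EE[v_{k+1}] + C_1\alpha_k^2$; summing from $i=k$ to $\infty$ telescopes the left two terms, and the already-established~\ref{eq:prop:gettingclosertothemanifoldbound1p5:expectedsquareddistance} together with the elementary inequality $\alpha_k \le C\sum_{i\ge k}\alpha_i^2$ (valid for $\gamma \in (1/2,1]$) completes the argument.

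For the almost-sure statements in item~\ref{eq:prop:gettingclosertothemanifoldbound1}, I apply the Robbins--Siegmund supermartingale convergence theorem to the rescaled Lyapunov function $W_k := (k^{2\gamma-1}/\log^2(k+1))\,v_k$. Multiplying~\eqref{eq:masterplan} through by $(k+1)^{2\gamma-1}/\log^2(k+2)$ gives
\[\EE_k[W_{k+1}] \le \rho_k W_k - \mu\cdot\frac{(k+1)^{2\gamma-1}}{\log^2(k+2)}\,\alpha_k d_k 1_{\tau_{k_0,\delta}>k} + \eta_k,\]
with $\rho_k = ((k+1)/k)^{2\gamma-1}(\log(k+1)/\log(k+2))^2 = 1 + (2\gamma-1)/k + O(1/(k\log k))$ and $\eta_k = O(\alpha_k^2 \cdot k^{2\gamma-1}/\log^2 k) = O(1/(k\log^2 k))$, which is summable. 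The key move is to split the drift as $\mu\alpha_k d_k = (\mu/2)\alpha_k d_k + (\mu/2)\alpha_k d_k$: using $d_k \ge v_k/\delta$ on the first half produces a factor $-\rho_k(\mu\alpha_k/(2\delta))W_k$ which, for $\delta$ sufficiently small (and, in the $\gamma=1$ case, using $c_1\ge 32/\mu$), dominates the inflation $(\rho_k-1)W_k$ so that $\rho_k(1-\mu\alpha_k/(2\delta)) \le 1$; the second half is retained intact as the telescoping decrement $c_k := (\mu/2)(k+1)^{2\gamma-1}/\log^2(k+2)\cdot\alpha_k d_k 1_{\tau_{k_0,\delta}>k}$, which is of order $k^{\gamma-1}/\log^2(k+1)\cdot d_k 1_{\tau_{k_0,\delta}>k}$. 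The resulting inequality $\EE_k[W_{k+1}] \le W_k + \eta_k - c_k$ is in Robbins--Siegmund form, so $W_k$ converges almost surely to a nonnegative random limit $V_{k_0,\delta}$ (giving~\ref{eq:prop:gettingclosertothemanifoldbound1:as}) and $\sum_k c_k < \infty$ almost surely (giving~\ref{eq:prop:gettingclosertothemanifoldbound1:sum}).

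The main obstacle is the calibration in this last step. Proximal aiming supplies only a drift of order $\alpha_k d_k$, while the rescaling needed to capture the correct a.s.\ rate introduces a non-summable multiplicative inflation $(\rho_k-1)W_k \sim (2\gamma-1)/k \cdot W_k$. One must split the single drift so that one piece dominates this inflation through the quadratic bound $d_k \ge v_k/\delta$ while the other piece stays available as a $d_k$-shaped telescoping term; the logarithmic factor in $W_k$ is exactly what makes $\eta_k$ summable after inflation by $k^{2\gamma-1}$, and the threshold $c_1 \ge 32/\mu$ is used in the $\gamma=1$ regime because there is then only a $1/k$ margin with which to absorb $(\rho_k-1)$, leaving essentially no slack in the constants.
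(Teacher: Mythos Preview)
Your proof is correct and follows essentially the same strategy as the paper: derive a one-step supermartingale recursion for the truncated squared distance via proximal aiming and local boundedness, then extract Part~\ref{eq:prop:gettingclosertothemanifoldbound1p5} from the resulting deterministic linear recurrence and Part~\ref{eq:prop:gettingclosertothemanifoldbound1} by applying Robbins--Siegmund after rescaling by $k^{2\gamma-1}/\log^2(k+1)$ (the paper packages this rescaled Robbins--Siegmund step into a standalone lemma rather than spelling it out inline). One remark: because you use $d_k \ge v_k/\delta$ rather than the paper's $d_k \ge v_k$ (valid under $\delta \le 1$), your effective contraction coefficient in the $\gamma=1$ case is $\mu c_1/\delta$, which exceeds any fixed threshold once $\delta$ is small---so your explanation of where the hypothesis $c_1 \ge 32/\mu$ is used is muddled (in your formulation it is in fact not needed), though this does not affect correctness.
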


We note that Part~\ref{eq:prop:gettingclosertothemanifoldbound1:sum} of the proposition holds not only almost surely, but also in expectation, which is  a stronger statement in general. Now we turn our attention to Pillar II: the shadow iteration.

\subsection{Pillar II: The shadow iteration}

Next we study the evolution of  the shadow $y_k = P_{\cM}(x_k)$  along the manifold, showing that $y_k$ is locally an inexact Riemannian stochastic gradient sequence with error that asymptotically decays as $x_k$ approaches the manifold. Consequently, we may control the error using Proposition~\ref{prop:gettingclosertothemanifold}. The proof appears in Section~\ref{sec:proof:prop:shadow}
\begin{proposition}\label{prop:shadow}
Suppose that $f$ satisfies Assumption~\ref{assumption:A} at $\bar x$. Then for all $k_0 \geq 1$ and sufficiently small $\delta > 0$, there exists a constant $C$, such that the following hold with stopping time $\tau_{k_0, \delta}$ defined in~\eqref{def:stoppingtime}: there exists a sequence of $\cF_{k+1}$-measurable random vectors $E_k \in \RR^d$ such that
\begin{enumerate}
\item \label{prop:shadow:part:sequence} The shadow sequence 
\begin{align*}
y_k = \begin{cases}
P_{\cM}(x_k) & \text{if $x_k \in B_{2\delta}(\bar x)$} \\
\bar x &  \text{otherwise.}
\end{cases}
\end{align*}
satisfies $y_k \in B_{4\delta}(\bar x) \cap \cM$ for all $k$ and the recursion holds:
\begin{align}\label{eqn:shadow:eq:iteration}
\boxed{y_{k+1} = y_k - \alpha_k\nabla_{\cM} f(y_k) - \alpha_k P_{\tangentM{y_k}}(\perturb_k) + \alpha_k E_k \qquad \text{for all $k \geq 1.$}} 
\end{align}
Moreover, for such $k$, we have $\EE_k[P_{\tangentM{y_k}}(\perturb_k)] = 0$.
\item \label{prop:shadow:part:error}  Let $\gamma \in (1/2, 1]$ and assume that $c_1 \geq 32/\mu$ if $\gamma = 1$.  
\begin{enumerate}
\item\label{prop:shadow:part:error:part:upperbound} We have the following bounds for $k_0 \leq k \leq \tau_{k_0, \delta} -1$:
\begin{enumerate}
\item \label{prop:shadow:part:error:part:upperbound:1} 
$\|E_k\|1_{\tau_{k_0, \delta} > k} \leq C(1+\|\perturb_k\|)^2(\dist(x_k, \cM) + \alpha_k)1_{\tau_{k_0, \delta} > k} $
\item \label{prop:shadow:part:error:part:upperbound:2}$\max\{\EE_k [\|E_k\|]1_{\tau_{k_0, \delta} > k}, \EE_k [\|E_k\|^2]1_{\tau_{k_0, \delta} > k}\} \leq C$.
\item \label{prop:shadow:part:error:part:upperbound:3}$\EE[\|E_k\|^2]1_{\tau_{k_0, \delta} > k} \leq C\alpha_k$
\end{enumerate}
\item \label{prop:shadow:part:error:part:random} The following sums are finite
\begin{enumerate}
\item \label{prop:shadow:part:error:part:random:1} $\sum_{k=1}^\infty \frac{k^{\gamma - 1}}{\log(k+1)^2}\max\{\|E_k\|1_{\tau_{k_0, \delta} > k}, \EE_k[\|E_k\|]1_{\tau_{k_0, \delta} > k}\}  < +\infty$
\item \label{prop:shadow:part:error:part:random:2}$\sum_{k=1}^\infty \frac{k^{\gamma - 1}}{\log(k+1)^2}\max\{\|E_k\|^21_{\tau_{k_0, \delta} > k}, \EE_k[\|E_k\|^2]1_{\tau_{k_0, \delta} > k}\}  < +\infty$
\end{enumerate}
\item \label{prop:shadow:part:error:part:random:saddle} The tail sum is bounded
$$
\EE\left[1_{\tau_{k_0, \delta} = \infty}  \sum_{i=k}^\infty  \alpha_i\|E_k\|\right] \leq C\sum_{i=k}^\infty \alpha_i^2 \qquad \text{for all $k \geq 1$}.
$$

\end{enumerate}
\end{enumerate}
\end{proposition}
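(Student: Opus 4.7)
The plan is to take the update $x_{k+1} = x_k - \alpha_k G_{\alpha_k}(x_k, \nu_k)$, apply $P_\cM$, and let $E_k$ be the residual required to close the Riemannian recursion~\eqref{eqn:shadow:eq:iteration}. The technical core is the pointwise bound~\ref{prop:shadow:part:error:part:upperbound:1}; every other claim in part~2 falls out from it by taking conditional expectation or by invoking Proposition~\ref{prop:gettingclosertothemanifold}. Throughout, the analysis splits according to whether $x_{k+1}$ remains in $B_{2\delta}(\bar x)$.

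In the ``good'' regime, $y_{k+1} = P_\cM(x_{k+1})$. Since $\cM$ is $C^{3}$, the projection $P_\cM$ is $C^{2}$ on a tubular neighborhood of $\cM$ with $\nabla P_\cM(y_k) = P_{\tangentM{y_k}}$. A second-order Taylor expansion of $P_\cM$ about $x_k$, combined with the Lipschitz estimate $\nabla P_\cM(x_k) - P_{\tangentM{y_k}} = O(\dist(x_k,\cM))$ and Assumption~\ref{assumption:localbound}'s bound $\|x_{k+1}-x_k\|\le C\alpha_k(1+\|\nu_k\|)$, produces $y_{k+1}-y_k = -\alpha_k P_{\tangentM{y_k}}(G_{\alpha_k}(x_k,\nu_k)) + R_k^{(1)}$ with $\|R_k^{(1)}\|\leq C\alpha_k(1+\|\nu_k\|)\bigl(\dist(x_k,\cM)+\alpha_k(1+\|\nu_k\|)\bigr)$. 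The strong-$(a)$ Assumption~\ref{assumption:smoothcompatibility} then gives $P_{\tangentM{y_k}}(G_{\alpha_k}(x_k,\nu_k)) = \nabla_\cM f(y_k) + P_{\tangentM{y_k}}(\nu_k) + R_k^{(2)}$ with $\|R_k^{(2)}\|\leq C(1+\|\nu_k\|)^{2}(\dist(x_k,\cM)+\alpha_k)$. Setting $E_k := -R_k^{(1)}/\alpha_k - R_k^{(2)}$ produces the recursion of part~\ref{prop:shadow:part:sequence} and yields~\ref{prop:shadow:part:error:part:upperbound:1}. In the ``bad'' regime $x_{k+1}\notin B_{2\delta}(\bar x)$, by definition $y_{k+1}=\bar x$, so $\|y_{k+1}-y_k\|\le 6\delta$; but the escape forces $\alpha_k(1+\|\nu_k\|)\ge \delta$, hence $\alpha_k(1+\|\nu_k\|)^{2} \ge \delta^{2}/(4C^{2}\alpha_k)$ for $k$ large, and allowing the constant in~\ref{prop:shadow:part:error:part:upperbound:1} to depend on $\delta$ (and enlarging it to absorb finitely many initial indices) handles this case as well.

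The remaining claims are largely mechanical. The mean-zero property $\EE_k[P_{\tangentM{y_k}}(\nu_k)]=0$ is immediate, since $y_k$ is $\cF_k$-measurable and $\EE_k[\nu_k]=0$. For~\ref{prop:shadow:part:error:part:upperbound:2}--\ref{prop:shadow:part:error:part:upperbound:3}, squaring the pointwise bound, taking conditional (or total) expectation, and combining the fourth-moment control on $\nu_k$ from Assumption~\ref{assumption:zero} with the expected-squared-distance estimate $\EE[\dist^{2}(x_k,\cM)\mathbf{1}_{\tau_{k_0,\delta}>k}]\le C\alpha_k$ of Proposition~\ref{prop:gettingclosertothemanifold} controls each quantity. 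For the almost-sure summability~\ref{prop:shadow:part:error:part:random}, Proposition~\ref{prop:gettingclosertothemanifold} guarantees $\sum_k a_k<\infty$ and $\sum_k a_k^{2}<\infty$ almost surely for the $\cF_k$-measurable sequence $a_k:=\frac{k^{\gamma-1}}{\log^{2}(k+1)}(\dist(x_k,\cM)+\alpha_k)\mathbf{1}_{\tau_{k_0,\delta}>k}$; a square-integrable martingale convergence argument applied to $M_n:=\sum_{k\le n}(a_k(1+\|\nu_k\|)^{2}-\EE_k[a_k(1+\|\nu_k\|)^{2}])$ then upgrades this to a.s.\ summability of the random series carrying the extra $(1+\|\nu_k\|)^{2}$ factor. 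The tail bound~\ref{prop:shadow:part:error:part:random:saddle} is a one-line Cauchy--Schwarz consequence of the analogous tail estimate in Proposition~\ref{prop:gettingclosertothemanifold}.

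The main obstacle I anticipate is the bookkeeping across the good/bad split: ensuring that the constants depend only on $\delta$ and $k_0$, and that the random $(1+\|\nu_k\|)^{2}$ factor in~\ref{prop:shadow:part:error:part:upperbound:1} interacts correctly with the almost-sure and $L^{1}$ estimates supplied by Proposition~\ref{prop:gettingclosertothemanifold}. Once the pointwise estimate is established with the correct uniformity, each subsequent claim is a routine consequence of the corresponding moment or tail bound for $\dist(x_k,\cM)$.
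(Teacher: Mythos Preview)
Your plan is essentially the same as the paper's. The paper also splits on whether $x_{k+1}\in B_{2\delta}(\bar x)$, defines $E_k$ as the residual needed to close~\eqref{eqn:shadow:eq:iteration}, and reduces everything in part~2 to the pointwise bound~\ref{prop:shadow:part:error:part:upperbound:1}. Your Taylor expansion of $P_\cM$ at $x_k$ is organized slightly differently from the paper's tangent/normal decomposition $\|E_k\|\le R_1+R_2$ with $R_1=\|y_{k+1}-y_k-P_{T_k}(y_{k+1}-y_k)\|/\alpha_k$, but the two are equivalent. Your treatment of the ``bad'' regime (use $\|x_{k+1}-x_k\|\ge\delta$ to trade a factor of $\delta/\alpha_k$ for $(1+\|\nu_k\|)^2\alpha_k$) is exactly what the paper does.

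Two small points. First, your ``one-line Cauchy--Schwarz'' for~\ref{prop:shadow:part:error:part:random:saddle} is not the right tool: Cauchy--Schwarz on $\EE[\alpha_k\|E_k\|]$ only yields $O(\alpha_k^{3/2})$, which does not sum to $O(\sum\alpha_i^2)$. What you actually want (and what the paper does) is the tower property: take $\EE_k$ first to replace $(1+\|\nu_k\|)^2$ by a constant, then invoke Proposition~\ref{prop:gettingclosertothemanifold}\ref{eq:prop:gettingclosertothemanifoldbound1p5:saddle} directly. Second, for~\ref{prop:shadow:part:error:part:random} your martingale-convergence route is valid, but the paper's approach via conditional Borel--Cantelli is slightly more direct and avoids any concern about moments: once $\EE_k[\|E_k\|]\mathbf{1}_{\tau>k}\le C(\dist(x_k,\cM)+\alpha_k)\mathbf{1}_{\tau>k}$, Proposition~\ref{prop:gettingclosertothemanifold}\ref{eq:prop:gettingclosertothemanifoldbound1:sum} gives a.s.\ summability of the conditional expectations, and conditional Borel--Cantelli lifts this to a.s.\ summability of $\|E_k\|\mathbf{1}_{\tau>k}$ itself. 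For the squared term~\ref{prop:shadow:part:error:part:random:2} your martingale argument would require eighth moments of $\nu_k$; instead use $\EE_k[\|E_k\|^2]\le C(\dist^2(x_k,\cM)+\alpha_k^2)$ (fourth moments suffice) together with $\dist^2\le\delta\cdot\dist$ on $\{\tau>k\}$.
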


With the two pillars we separate our study of the sequence $x_k$ into two orthogonal components: In the tangent/smooth directions, we study the sequence $y_k$, which arises from an inexact gradient method with rapidly decaying errors and is amenable to the techniques of smooth optimization. In the normal/nonsmooth directions, we steadily approach the manifold, allowing us to infer strong properties of $x_k$ from corresponding properties for $y_k$.

\section{Avoiding saddle points}\label{sec:avoidingsaddlepointschapter3}

In this section, we ask whether $x_k$ can converge to points $\bar x$ at which $\nabla^2_{\cM}f(\bar x)$ has at least one strictly negative eigenvalue. We call such points \emph{strict saddle points}, and when $\cM$ is in addition an active manifold for $f$, then we call such points \emph{active strict saddle points}.
We use a well-known technique in the stochastic approximation literature: isotropic noise injection~\cite{pemantle1990nonconvergence,AIHPB_1996__32_3_395_0,benaim1996dynamical,benaim1999dynamics}.

Let us briefly describe this technique. Fix a point $p \in\RR^d$ and consider a $C^2$ mapping $F_p \colon \RR^d \rightarrow \RR^d$ with an unstable zero at $p$, meaning $\nabla F_p(p)$ has an eigenvalue with a strictly positive real part. Then a well-known result of Pemantle~\cite{pemantle1990nonconvergence}  states that, with probability 1, the following perturbed iteration cannot converge to $p$:
\begin{align}\label{eq:pemantleiterationintro}\left\{
	\begin{aligned}
	&\textrm{Sample } \xi_\discrete \sim \unif(B_1(0))\\
	& \textrm{Set } Y_{k+1} = Y_k + \alpha_k F_p(Y_k) + \alpha_k \xi_k 
	\end{aligned}\right\}.
\end{align}
As stated, the result of~\cite{pemantle1990nonconvergence} does not shed light on the iteration~\eqref{alg:perturbedGiteration}. Nevertheless, in light of \eqref{eqn:shadow:eq:iteration}, the shadow iteration $y_k$ does satisfy an iteration similar to~\eqref{eq:pemantleiterationintro} with mapping 
$$
F_p(y) = -\nabla (f \circ P_{\cM})(y),
$$
which under reasonable assumptions is locally $C^2$ near $p$ and satisfies $F_p(y) = -\nabla_{\cM} f(y)$ and $\nabla F_p(y) = -\nabla_\cM^2 f(y)$ for all $y \in \cM$ near $p$. Moreover, if $p$ is an active strict saddle of $f$, then  $\nabla_{\cM}^2 f(p)$ has a strictly negative eigenvalue, so $p$ is an ``unstable zero" of $F_p$. Thus, we might reasonably expect $y_k$ to converge to $p$ only with probability zero. If this is the case, we can then lift the argument to $x_k$, showing that if $x_k$ converges to $p$, then so does $y_k$---a probability zero event. This is the strategy we will apply in what follows, taking into account the additional error term $E_k$ in the shadow iteration~\eqref{eqn:shadow:eq:iteration}, a key technical issue that we have so far ignored.

In order to formalize the above strategy, we prove the following extension of the main result of~\cite{pemantle1990nonconvergence} which takes into account the relationship between $x_k$ and $y_k$ described above. The proof, which we defer to Section~\ref{sec:proofofpemantlething}, draws on  the techniques of~\cite{pemantle1990nonconvergence,brandiere1998some,AIHPB_1996__32_3_395_0,benaim1996dynamical,benaim1999dynamics}.

\begin{thm}[Nonconvergence]\label{thm: nonconvergence to unstable points}
Fix $c_1, c_2 > 0$ and let $S \subseteq \RR^d$. Suppose for any $p\in S$, there exists a ball $\ball{p}{\epsilon_p}$ centered at $p$ and a $C^2$ mapping $F_p \colon \ball{p}{\epsilon_p} \rightarrow \RR^d$ that vanishes at $p$ and has a symmetric Jacobian $\nabla F_p(p)$ that has at least one positive eigenvalue. Suppose $\{ X_\discrete\}_{\discrete=1}^{\infty}$ is a stochastic process and for any $\discrete_0$, $p \in S$, and $\delta > 0$ define the stopping time: 
$$
\tau_{\discrete_0, \delta}(p) = \inf\left\{ \discrete \geq \discrete_0 \colon X_\discrete \notin \ball{p}{\delta}\right\}.
$$
Suppose that for any $p \in S$, $k_0 \geq 1$, and all sufficiently small $\delta_p \leq \epsilon_p$ the following hold: there exists $c_3,c_4 > 0$ possibly depending on $p$, but not on $\delta_p$ and $\epsilon_p$, such that on the event $\Omega_{0} = \{\tau_{k_0, \delta_p}(p) = \infty\}$, we have
\begin{enumerate}
\item\label{item:localiteration} \textbf{(Local iteration.)} There exists a process $\{Y_\discrete\colon  \discrete \ge \discrete_0\} \subseteq \ball{p}{\epsilon_p/2}$ satisfying
\begin{equation}\label{eqn: dynamics}
	Y_{\discrete+1} = Y_\discrete + \stepsize_\discrete F_p(Y_\discrete) +  \stepsize_\discrete \xi_\discrete + \stepsize_\discrete \error_\discrete
\end{equation}
for error sequence $\{\error_\discrete\}$, noise sequence $\{\xi_\discrete\}$, and deterministic stepsize sequence $\{\stepsize_\discrete\}$ that are square summable, but not summable.
\item\label{item:noise} \textbf{(Noise Conditions.)} Let $\cF_k$ be the sigma algebra generated by $X_{\discrete_0}, \ldots, X_{\discrete}$ and $Y_{\discrete_0}, \ldots, Y_{\discrete}$. Define $W_p$ to be the subspace of eigenvectors of $\nabla F_p(p)$ with positive eigenvalues. Then we have
\begin{enumerate}
\item $\expect{\xi_\discrete \mid \cF_\discrete} = 0$.
\item \label{item:noise:ub} $\limsup_{k}\EE[\|\xi_k\|^4 \mid \cF_k] \leq c_3$.
\item \label{item:noise:lb} $
\expect{|\dotp{\xi_\discrete, w}| \mid \cF_\discrete} \ge c_4  \qquad \text{for $k \geq \discrete_0$ and all unit norm $w \in W_p$.}
$
\end{enumerate}
\item \label{item:error} \textbf{(Error Conditions.)} 
\begin{enumerate}
\item We have $\limsup_{k} \EE[1_{\Omega_0}\|E_k\|^4 \mid\cF_k] < \infty$. 
\item \label{label:stochasticprocess:averaging} For all $n \geq k_0$, we have $\EE\left[1_{\Omega_0}  \sum_{k=n}^\infty  \alpha_k \|E_{k}\|\right] = O_{k_0}\left(\sum_{k=n}^\infty \alpha_k^2\right)$.
\end{enumerate}
\end{enumerate}
	Then $P(\lim_{\discrete\rightarrow \infty} X_\discrete \in S)=0$.
\end{thm}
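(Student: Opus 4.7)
The plan is to reduce the statement to a single accumulation point $p \in S$ and then adapt the classical Pemantle nonconvergence argument to tolerate the new error sequence $\error_\discrete$. Since any convergent sequence has a unique limit, the events $\{\lim_\discrete X_\discrete = p\}$ for distinct $p \in S$ are pairwise disjoint, so only countably many of them can have positive probability. It therefore suffices to prove $\PP(\lim_\discrete X_\discrete = p) = 0$ for each fixed $p \in S$. On the event $\{X_\discrete \to p\}$, the iterates eventually stay inside $\ball{p}{\delta_p}$, so $\tau_{\discrete_0, \delta_p}(p) = \infty$ for some $\discrete_0$ and all small $\delta_p \le \epsilon_p$; hence one may localize to $\Omega_0 = \{\tau_{\discrete_0, \delta_p}(p) = \infty\}$ for arbitrarily small $\delta_p$ and large $\discrete_0$. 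On this event the hypothesis furnishes the companion process $Y_\discrete \in \ball{p}{\epsilon_p/2}$ satisfying \eqref{eqn: dynamics}, and since convergence of $X_\discrete$ to $p$ inside $\Omega_0$ will force convergence of $Y_\discrete$ to $p$, the reduction is to show $\PP(Y_\discrete \to p,\, \Omega_0) = 0$.

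Next I would linearize \eqref{eqn: dynamics} around $p$. Let $W_p$ be the positive eigenspace of $\nabla F_p(p)$ with smallest positive eigenvalue $\lambda > 0$, let $\Pi$ denote the orthogonal projection onto $W_p$, and set $Z_\discrete := \Pi(Y_\discrete - p)$. Since $\nabla F_p(p)$ is symmetric it commutes with $\Pi$, and Taylor-expanding $F_p$ at $p$ yields the projected recursion
\begin{equation*}
Z_{\discrete+1} = (I + \stepsize_\discrete \nabla F_p(p)\Pi)\,Z_\discrete + \stepsize_\discrete \Pi\xi_\discrete + \stepsize_\discrete \Pi \error_\discrete + \stepsize_\discrete r_\discrete,
\end{equation*}
with quadratic remainder $\|r_\discrete\| = O(\|Y_\discrete - p\|^2)$. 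Iterating from a base index $\discrete_1$ expresses $Z_\discrete$ as the expanding operator $\Phi_{\discrete_1}^{\discrete} := \prod_{j=\discrete_1}^{\discrete - 1}(I + \stepsize_j \nabla F_p(p)\Pi)$ applied to $Z_{\discrete_1}$, plus a similarly weighted sum of past noise, error and remainder terms. On $W_p$ each factor has norm at least $1 + \stepsize_j \lambda$, so the homogeneous part grows geometrically once $Z_{\discrete_1}$ is nonzero, and the substantive question is whether the noise and error terms can combine to keep $Z_\discrete$ near zero.

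Following the strategy of Pemantle \cite{pemantle1990nonconvergence} and its refinements by Brandi\`ere--Duflo \cite{brandiere1998some} and Bena{\"\i}m \cite{benaim1996dynamical,benaim1999dynamics}, one would show that the normalized martingale $\sum_j \Phi_{j+1}^\discrete \stepsize_j \Pi \xi_j$ has quadratic variation bounded below by a multiple of $\sum_j \stepsize_j^2$ via the pointwise lower bound \ref{item:noise:lb}, and bounded above thanks to the fourth-moment condition \ref{item:noise:ub}. Classical martingale convergence theorems then prevent this martingale from degenerating to zero, so conditional on $Z_\discrete \to 0$ one derives a contradiction by splitting $Z_\discrete$ into the geometrically amplified noise term (which cannot vanish) and lower-order perturbations. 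The quadratic remainder $r_\discrete$ is of strictly smaller order than the noise on $\{Y_\discrete \to p\}$ and is absorbed easily.

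The genuinely new obstacle, and the reason the hypotheses on $\error_\discrete$ are tailored as they are, is to ensure that the accumulated error contribution $\sum_j \Phi_{j+1}^\discrete \stepsize_j \Pi \error_j$ does not overwhelm the noise diffusion. Both parts of \ref{item:error} are essential here: the uniform fourth-moment bound lets the error enter the same Burkholder-type square-function estimates used for the martingale noise, while the tail-averaging bound \ref{label:stochasticprocess:averaging}---stating that $\EE[1_{\Omega_0}\sum_{k \ge n}\stepsize_k\|\error_k\|]$ is of order $\sum_{k\ge n}\stepsize_k^2$---is precisely what is needed so that after Abel summation the aggregated error has the same scale as $\sum\stepsize_k^2$, matching the noise variance rather than dominating it. With this control in place, Pemantle's argument goes through essentially unchanged to yield $\PP(Y_\discrete \to p,\, \Omega_0) = 0$. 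Summing over the at most countably many $p \in S$ with positive convergence probability completes the proof.
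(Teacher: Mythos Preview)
There are two genuine gaps in your reduction.

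\textbf{The countability step does not prove the theorem for uncountable $S$.} You argue that only countably many of the disjoint events $\{\lim_\discrete X_\discrete = p\}$ can have positive probability, and then claim it suffices to show $P(\lim_\discrete X_\discrete = p)=0$ for each fixed $p$. But even if every singleton has probability zero, one cannot conclude $P(\lim_\discrete X_\discrete \in S)=0$: probabilities are countably additive, not uncountably additive. The paper instead takes a \emph{countable subcover} of $S$ by balls $\ball{p}{\delta_p}$ and proves the much stronger fact $P(\tau_{\discrete_0,\delta_p}(p)=\infty)=0$ for each ball in the subcover; since $\{\lim_\discrete X_\discrete \in S\}\subseteq \bigcup_{\discrete_0}\bigcup_{p\in\Lambda}\{\tau_{\discrete_0,\delta_p}(p)=\infty\}$ with $\Lambda$ countable, that suffices.

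\textbf{The bridge from $X_\discrete$ to $Y_\discrete$ is unjustified, and the missing step is substantial.} You assert that ``convergence of $X_\discrete$ to $p$ inside $\Omega_0$ will force convergence of $Y_\discrete$ to $p$'', but nothing in the abstract hypotheses links the two processes beyond the common filtration; in the application one has $Y_\discrete = P_{\cM}(X_\discrete)$, but the theorem does not assume this. The paper does not use any such link. Instead it proves $P(\Omega_0)=0$ in two stages. First, on $\Omega_0$ it invokes the asymptotic pseudotrajectory theorem of Bena{\"\i}m \cite{benaim1996dynamical} to show that the limit set of $\{Y_\discrete\}$ is invariant under the flow $\dot\gamma=F_p(\gamma)$, and then a Gronwall argument with Pemantle's nonlinear Lyapunov function $\eta(v)=\|A(\Phi(v)-p)\|$ forces $\eta(Y_\discrete)\to 0$ almost surely on $\Omega_0$. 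This first stage is entirely absent from your sketch. Only after it is established does one run a Brandi\`ere--Duflo-type argument (the paper's Lemma~\ref{lem:repulsion}) to show $P(\Omega_0\cap\{\eta(Y_\discrete)\to 0\})=0$; your linearization and martingale analysis correspond roughly to this second stage. Without the first stage you can at best conclude $P(\Omega_0\cap\{Y_\discrete\to p\})=0$, which is not enough to feed back into the covering argument, since you need $P(\Omega_0)=0$. Note also that on $\Omega_0$ alone your quadratic remainder $r_\discrete=O(\|Y_\discrete-p\|^2)$ is only $O(\epsilon_p^2)$, not $o(1)$, so ``absorbed easily'' is optimistic unless you have already arranged $Y_\discrete\to p$---which is precisely the step you cannot take for granted.
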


Looking at the theorem, recursion condition~\eqref{eqn: dynamics} is clearly modeled on the shadow sequence of Proposition~\ref{prop:shadow}. Moreover, the error condition~\ref{label:stochasticprocess:averaging} on $E_k$ precisely matches~\ref{prop:shadow:part:error:part:random:saddle}. Finally, the noise $\xi_k$ is modeled on $P_{\tangentM{y_k}}(\perturb_k)$ in the shadow iteration, which is mean zero and has bounded fourth moment. Condition~\ref{item:noise:lb} is not automatic for all noise distributions and requires that $\perturb_k$ has nontrivial mass in all directions of negative curvature for $f$.

Given Theorem~\ref{thm: nonconvergence to unstable points}, we now ask: can $x_k$ converge to critical points $\bar x$ at which $\nabla^2_{\cM} f(\bar x)$ has a strict negative eigenvalue? In the following theorem we show that the answer is no, provided that we choose the noise $\perturb_k$ according to the following assumption: 
\begin{assumption}[Uniform noise]\label{assumption:uniform}
{\rm There exists $r > 0$ such that $\nu_k \sim \text{Unif}(B_r(0))$ for all $k$.}
\end{assumption}
The proof of the theorem appears in Section~\ref{section:proof:thm:avoidance}.
 
\begin{thm}[Nonconvergence to strict saddle point]\label{thm:avoidance}
Let $S \subseteq \RR^d$ and suppose that Assumption~\ref{assumption:A} holds at each point $\bar x \in S$, where each manifold is $C^4$. Let $\cM$ be the manifold associated to a point $\bar x\in S$ and suppose that $\nabla_\cM^2f(\bar x)$ has a strictly negative eigenvalue.  Suppose that $\nu_k$ satisfies Assumption~\ref{assumption:uniform}. In addition, suppose that  $\gamma \in (\frac{1}{2},1)$. Then 
	\begin{equation}
		P\left(\lim_{\discrete\rightarrow \infty} x_\discrete \in S\right) =0.
	\end{equation} 
\end{thm}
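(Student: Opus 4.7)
The plan is to reduce the claim to Theorem~\ref{thm: nonconvergence to unstable points} applied with $X_k = x_k$, where $Y_k$ and $E_k$ are supplied by the shadow iteration of Proposition~\ref{prop:shadow}. Fix $p \in S$ with associated active manifold $\cM$. Because $\cM$ is $C^4$ and $f$ is $C^2$ on $\cM$, the projection $P_\cM$ is $C^3$ near $p$, and extending $f|_\cM$ to a $C^2$ function $\hat f$ on a tubular neighborhood of $p$, I would set
\[
F_p(y) := -\nabla(\hat f \circ P_\cM)(y).
\]
Then $F_p$ is $C^2$ on a small ball $B_{\epsilon_p}(p)$, vanishes at $p$ (since $0 \in \partial_c f(p)$ forces $\nabla_\cM f(p) = 0$), and a direct computation using $\nabla P_\cM(p) = P_{T_\cM(p)}$ shows that its Jacobian at $p$ equals $-\nabla_\cM^2 f(p)$, which is symmetric with a strictly positive eigenvalue by hypothesis. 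The corresponding eigenspace $W_p$ of positive eigenvalues lies in $T_\cM(p)$.

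With $Y_k := y_k$ from Proposition~\ref{prop:shadow}, $E_k$ as given there, and $\xi_k := -P_{T_\cM(y_k)}(\nu_k)$, the shadow recursion \eqref{eqn:shadow:eq:iteration} becomes exactly
\[
Y_{k+1} = Y_k + \alpha_k F_p(Y_k) + \alpha_k\xi_k + \alpha_k E_k,
\]
because $F_p$ agrees with $-\nabla_\cM f$ on $\cM$ near $p$. Since $\gamma \in (1/2,1)$, the stepsizes are square summable but not summable. For the noise conditions, Assumption~\ref{assumption:uniform} yields $\|\nu_k\| \leq r$ almost surely, so $\xi_k$ is conditionally mean zero with uniformly bounded fourth moment. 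For the crucial lower bound, I would fix a unit vector $w \in W_p \subseteq T_\cM(p)$ and shrink $\delta$ so that $\|P_{T_\cM(y_k)}w\| \geq 1/2$ whenever $y_k \in B_{2\delta}(p)\cap \cM$. Rotational symmetry of the uniform distribution on $B_r(0)$ then gives
\[
\EE_k\bigl[|\langle \xi_k,w\rangle|\bigr] \;=\; \EE\bigl[|\langle \nu_k, P_{T_\cM(y_k)}w\rangle|\bigr] \;\geq\; c_r\,\|P_{T_\cM(y_k)}w\| \;\geq\; c_r/2,
\]
for a positive constant $c_r$ depending only on $r$ and $d$.

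The error conditions are read off Proposition~\ref{prop:shadow}: on the stopping event $\{\tau_{k_0,\delta}=\infty\}$ we have $\|\nu_k\|\leq r$, $\dist(x_k,\cM)\leq \delta$, and $\alpha_k$ is bounded, so part~\ref{prop:shadow:part:error:part:upperbound:1} provides a uniform deterministic bound on $\|E_k\|$ (hence on all its conditional moments), while part~\ref{prop:shadow:part:error:part:random:saddle} delivers exactly the tail-averaging condition $\EE\bigl[\mathbf{1}_{\Omega_0}\sum_{i\geq n}\alpha_i\|E_i\|\bigr] = O\bigl(\sum_{i\geq n}\alpha_i^2\bigr)$. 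All hypotheses of Theorem~\ref{thm: nonconvergence to unstable points} are then in force, yielding the conclusion $P(\lim_k x_k \in S) = 0$.

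The main obstacle to anticipate is the construction of $F_p$ together with the identification of a direction of persistent noise excitation aligned with the negative-curvature direction of $\nabla^2_\cM f(p)$. Both are handled by exploiting the containment $W_p\subseteq T_\cM(p)$ and the isotropy of the uniform noise, together with the observation that $\hat f \circ P_\cM$ agrees with $f$ on $\cM$ to second order at $p$, so that its Euclidean Hessian restricted to $T_\cM(p)$ coincides with the covariant Hessian $\nabla_\cM^2 f(p)$ and is therefore symmetric.
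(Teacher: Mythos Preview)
The proposal is correct and takes essentially the same approach as the paper: both reduce to Theorem~\ref{thm: nonconvergence to unstable points} by setting $F_p = -\nabla(f\circ P_{\cM})$ (equivalently $-\nabla(\hat f\circ P_{\cM})$), supplying $Y_k$, $\xi_k$, and $E_k$ from the shadow iteration of Proposition~\ref{prop:shadow}, and verifying the noise lower bound via the inclusion $W_p\subseteq T_{\cM}(p)$ together with continuity of $y\mapsto P_{T_{\cM}(y)}$. Your formulation of the noise lower bound (shrinking $\delta$ so that $\|P_{T_{\cM}(y_k)}w\|\geq 1/2$) and the paper's (bounding $\|(P_{T_{\cM}(Y_k)}-P_{T_{\cM}(p)})w\|$ and shrinking $\epsilon_p$) are cosmetic variants of the same idea.
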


Note that the theorem applies to arbitrary sets $S$, making no assumptions on countability/isolatedness. Second the result does not preclude the {\em limit points} of $x_k$ from lying in $S$. Thus, the result is useful only when $x_k$ is known to converge. 

We now examine two applications of the above theorem for the projected and proximal subgradient methods. The following corollary provides sufficient conditions for the projected subgradient method to avoid active strict saddle points. We place the proof in Appendix~\ref{appendix:cor:activestrictsaddleregular}.
\begin{cor}[Projected subgradient methods]\label{cor:activestrictsaddleregular}
Suppose that $f = g + \delta_{\cX}$, where $g\colon \RR^d \rightarrow \RR$ is locally Lipschitz and $\cX \subseteq \RR^d$ is closed. Let $S$ consist of points $x$ satisfying $0\in \hat{\partial} f(x)$ and that are $C^4$ active strict saddle points of $f$. Suppose the following hold for all $x \in S$ with associated active manifold $\cM_x$: \begin{enumerate}
\item The function $g$ and the set $\cX$ are strongly $(a)$-regular along $\cM_x$ at $x$.
\item The function $g$ is weakly convex around $x$ or $(b_{\leq})$-regular along $\cM_x$ at $x$.
\item The set $\cX$ is prox-regular at $x$ or $(b_{\leq})$-regular along $\cM_x$ at $x$.
\end{enumerate}
Suppose that $\nu_k$ satisfies Assumption~\ref{assumption:uniform}. Then the iterates of the stochastic projected subgradient method~\eqref{eq:stochasticprojectedsubgradient} satisfy 
$$
P\left(\lim_{k\rightarrow \infty} x_k \in S\right) = 0.
$$
\end{cor}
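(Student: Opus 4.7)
The strategy is to verify the hypotheses of Theorem~\ref{thm:avoidance} at each $\bar x\in S$ for the projected subgradient mapping $G$ defined in~\eqref{eq:projectedsubgradientG}. Since each $\bar x \in S$ is a $C^4$ active strict saddle of $f$ by hypothesis, $\gamma\in(1/2,1)$ is part of the standing assumptions, and $\nu_k$ satisfies Assumption~\ref{assumption:uniform}, only Assumption~\ref{assumption:A} itself remains to be checked. By Proposition~\ref{prop:projectedgradient}, it suffices to verify the three items of Assumption~\ref{assumption:projectedgradient} at each $\bar x\in S$: strong $(a)$-regularity of $g$ and of $\cX$ along $\cM_{\bar x}$, proximal aiming for $g$, and $(b_\leq)$-regularity of $\cX$ along $\cM_{\bar x}$.

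The strong $(a)$-regularity condition~\ref{assumption:projectedgradient:stronga} is exactly hypothesis~(1). For condition~\ref{assumption:projectedgradient:bproxregularity}, hypothesis~(3) either grants $(b_\leq)$-regularity of $\cX$ directly or asserts that $\cX$ is prox-regular at $\bar x$; in the latter case, the remark immediately following the definition of the Whitney conditions in Section~\ref{sec:fund_reg_cond} states that any prox-regular set is $(b_\leq)$-regular along any subset, so~\ref{assumption:projectedgradient:bproxregularity} holds in both cases. By an identical argument applied to $\epi g$ (prox-regular whenever $g$ is weakly convex, per Section~\ref{sec:subdifferentials}), hypothesis~(2) yields $(b_\leq)$-regularity of $g$ along $\cM_{\bar x}$.

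To establish the proximal aiming condition~\ref{assumption:projectedgradient:proximalaiming}, I plan to apply Corollary~\ref{cor:prox-aiming_gen} to the extended-valued function $f = g + \delta_\cX$. This requires the Fr\'echet criticality $0 \in \hat\partial f(\bar x)$---which holds by definition of $S$---and $(b_\leq)$-regularity of $f$ along $\cM_{\bar x}$. The latter I would deduce from the $(b_\leq)$-regularity of $g$ and of $\cX$ by a short sum-rule argument: fix $x\in \cX$ and $y\in \cM_{\bar x}$ close to $\bar x$ and $v\in\partial f(x)$; since $g$ is locally Lipschitz (so $\partial^\infty g\equiv 0$ and the qualification condition is automatic), the sum rule gives $v = v_g + v_\cX$ with $v_g\in\partial g(x)$ and $v_\cX\in N_\cX(x)$. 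Adding the analytic $(b_\leq)$-regularity inequalities of Theorem~\ref{thm:reinterp0} for $g$ and for $\cX$ yields
\[
f(y) = g(y) \geq g(x) + \langle v_g, y - x\rangle + o(\|y-x\|) \geq f(x) + \langle v, y - x\rangle + o(\|y-x\|),
\]
which is exactly the analytic form of $(b_\leq)$-regularity of $f$ along $\cM_{\bar x}$. Corollary~\ref{cor:prox-aiming_gen} then yields the aiming estimate for every $v \in \partial f(x)$, and the footnote attached to condition~\ref{assumption:projectedgradient:bproxregularity} transfers it to every $v \in \partial_c g(x)$, establishing~\ref{assumption:projectedgradient:proximalaiming}.

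With Assumption~\ref{assumption:projectedgradient} verified at each $\bar x\in S$, Proposition~\ref{prop:projectedgradient} yields Assumption~\ref{assumption:A}, and Theorem~\ref{thm:avoidance} delivers the conclusion $P(\lim_{k\to\infty} x_k \in S)=0$. I expect the most delicate point in this plan to be the sum-rule step: one must be careful that $f$, although extended-valued, is locally Lipschitz on its domain $\cX$ (which is precisely what allows Theorem~\ref{thm:reinterp0} to apply to $f$), and that the subdifferential sum rule of Rockafellar--Wets is invoked in a form compatible with the limiting rather than Clarke subdifferential. Everything else is a direct translation between the hypotheses of the corollary and those of the upstream results.
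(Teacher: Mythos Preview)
Your proposal is correct and follows the same overall route as the paper: verify Assumption~\ref{assumption:projectedgradient} at each $\bar x\in S$ and invoke Proposition~\ref{prop:projectedgradient} and Theorem~\ref{thm:avoidance}. The handling of \ref{assumption:projectedgradient:stronga} and \ref{assumption:projectedgradient:bproxregularity} is identical to the paper's.

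The only genuine difference is in how you establish the proximal aiming condition~\ref{assumption:projectedgradient:proximalaiming}. You uniformly reduce hypothesis~(2) to $(b_\leq)$-regularity of $g$ (observing that weak convexity of $g$ makes $\epi g$ prox-regular, hence $(b_\leq)$-regular along any subset), then combine it with $(b_\leq)$-regularity of $\cX$ via a subdifferential sum rule to obtain $(b_\leq)$-regularity of $f$, and finally invoke Corollary~\ref{cor:prox-aiming_gen} together with the footnote after Assumption~\ref{assumption:projectedgradient}. The paper instead splits into two cases: when $g$ is $(b_\leq)$-regular it cites Corollary~\ref{cor:prox-aiming_gen} directly (with the footnote doing the work you make explicit), and when $g$ is weakly convex it bypasses the sum rule entirely, combining the sharpness estimate of Proposition~\ref{prop: sharpness} for $f$ (which equals $g$ on $\cX$) with the weak-convexity subgradient inequality for $g$ to obtain $\langle v, x - P_{\cM}(x)\rangle \geq g(x) - g(P_\cM(x)) - O(\dist(x,\cM)^2) \geq (\mu/2)\dist(x,\cM)$ directly. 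Your route is slightly more uniform and conceptually cleaner; the paper's is a touch more elementary in the weakly convex case since it avoids the sum rule. One small point to tighten in your chain: the $o(\|y-x\|)$ term coming from $(b_\leq)$-regularity of $\cX$ carries a factor $\|v_\cX\|$, which must then be absorbed into the $\sqrt{1+\|v\|^2}$ weight in Theorem~\ref{thm:reinterp0} using $\|v_\cX\|\leq \|v\|+\sup\|v_g\|$; this is routine but should be made explicit.
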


Next we analyze the the proximal gradient method. Recall that the paper~\cite{davis2021proximal} showed that randomly initialized proximal gradient methods avoid active strict saddles of weakly convex functions. The following Corollary shows that the same behavior holds for perturbed proximal gradient methods beyond the weakly convex class. We place the proof in Appendix~\ref{appendix:cor:activestrictsaddleregularproximalgradient}.
\begin{cor}[Proximal gradient methods]\label{cor:activestrictsaddleregularproximalgradient}
Suppose that $f = g + h$, where $h \colon \RR^d \rightarrow \RR\cup \{\infty\}$ is closed and Lipschitz on its domain $\cX := \dom h$ and $g\colon \RR^d \rightarrow \RR$ is $C^1$ with Lipschitz continuous gradient on $\cX$. Let $S$ consist of points $x$ satisfying $0\in \hat{\partial} f(x)$ and that are $C^4$ active strict saddle points of $f$. Suppose that for all $x \in S$ with associated active manifold $\cM_x$, the function $f$ is strong (a)-regular and $(b_{\leq})$-regular along $\cM_x$ at $x$. Suppose that $\nu_k$ satisfies Assumption~\ref{assumption:uniform}. Then the iterates of the stochastic proximal gradient method~\eqref{eq:stochasticprojectedsubgradient}  satisfy 
$$
P\left(\lim_{k\rightarrow \infty} x_k \in S\right) = 0.
$$
\end{cor}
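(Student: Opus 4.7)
The plan is to invoke Theorem~\ref{thm:avoidance}, which reduces the problem to verifying Assumption~\ref{assumption:A} at each point $\bar x\in S$ with respect to its associated $C^4$ active manifold $\cM_x$ (the negative-eigenvalue hypothesis and the noise assumption are given directly, and the restriction $\gamma\in(1/2,1)$ is inherited from the standing assumption). By Proposition~\ref{prop:proximalgradient}, it is enough to verify the four items of Assumption~\ref{assumption:proximalgradient} at a fixed $\bar x\in S$. I will do this by separating $(D1)$--$(D2)$, which come for free, from $(D3)$ and $(D4)$, which require short arguments.

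Conditions \ref{assumption:proximalgradient:Lipschitz1} and \ref{assumption:proximalgradient:Lipschitz2} are essentially automatic: Lipschitz continuity of $\nabla g$ near $\bar x$ is directly assumed, and the linear growth bound $\|\nabla g(x)\|\leq C(1+\|x\|)$ on $\cX$ follows by fixing a base point in $\cX$ and applying the global Lipschitz inequality; Lipschitzness of $h$ on $\cX$ is in the hypothesis. For condition~\ref{assumption:proximalgradient:proximalaiming}, I apply Corollary~\ref{cor:prox-aiming_gen} to $f$: the set $\cM_x$ is an active manifold of $f$ at $\bar x$, the inclusion $0\in\hat\partial f(\bar x)$ is built into the definition of $S$, the function $f$ is locally Lipschitz near $\bar x$ (being the sum of a $C^1$ function and a locally Lipschitz one), and $f$ is $(b_{\leq})$-regular along $\cM_x$ at $\bar x$ by hypothesis. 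The corollary then yields the aiming inequality in the required form for $v\in\partial f(x)$.

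For the strong $(a)$ condition~\ref{assumption:proximalgradient:stronga} on $h$, I transfer the strong $(a)$ regularity of $f$ along $\cM_x$ through the identity $h=f-g$. For any $y\in\cM_x$ near $\bar x$ and $w\in\partial h(x)=\partial f(x)-\nabla g(x)$, writing $v:=w+\nabla g(x)\in\partial f(x)$ gives
\[
P_{T_{\cM_x}(y)}\bigl(w-\nabla_{\cM_x} h(y)\bigr)
= P_{T_{\cM_x}(y)}\bigl(v-\nabla_{\cM_x} f(y)\bigr)-P_{T_{\cM_x}(y)}\bigl(\nabla g(x)-\nabla g(y)\bigr),
\]
where I used that $\nabla g(y)-\nabla_{\cM_x} g(y)\in N_{\cM_x}(y)$ and $\nabla_{\cM_x}h(y)=\nabla_{\cM_x}f(y)-\nabla_{\cM_x}g(y)$. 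Applying the analytic criterion of Theorem~\ref{thm:reinterp} to the first term (using strong $(a)$ of $f$) and Lipschitzness of $\nabla g$ to the second gives the desired linear bound $\|P_{T_{\cM_x}(y)}(w-\nabla_{\cM_x}h(y))\|\leq C\sqrt{1+\|w\|^2}\|x-y\|$; the horizon subdifferential condition in Theorem~\ref{thm:reinterp} is vacuous since $h$ is locally Lipschitz, so $\partial^\infty h=\{0\}$.

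The only substantive step is the transfer of strong $(a)$ from $f$ to $h$ in $(D3)$, and even this is routine once one observes that local Lipschitzness of $h$ near $\bar x$ makes $\partial h$ uniformly bounded, so the $\sqrt{1+\|w\|^2}$ prefactor in Theorem~\ref{thm:reinterp} can be absorbed into the constant. With all of Assumption~\ref{assumption:proximalgradient} verified at each $\bar x\in S$, Proposition~\ref{prop:proximalgradient} yields Assumption~\ref{assumption:A} at each such point, and a direct invocation of Theorem~\ref{thm:avoidance} concludes $P(\lim_{k\to\infty} x_k\in S)=0$.
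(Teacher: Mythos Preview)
Your approach is the same as the paper's: reduce to Theorem~\ref{thm:avoidance} via Proposition~\ref{prop:proximalgradient}, then verify Assumption~\ref{assumption:proximalgradient}. The paper's proof is a single sentence asserting that \ref{assumption:proximalgradient:Lipschitz1}, \ref{assumption:proximalgradient:Lipschitz2}, \ref{assumption:proximalgradient:stronga} ``hold by assumption'' and that \ref{assumption:proximalgradient:proximalaiming} follows from Corollary~\ref{cor:prox-aiming_gen}; you give strictly more detail, and in particular you correctly notice that the hypothesis gives strong $(a)$ for $f$ while \ref{assumption:proximalgradient:stronga} asks for it on $h$, a point the paper elides.

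There is one slip in your transfer argument for \ref{assumption:proximalgradient:stronga}. You dismiss the horizon condition \eqref{eqn:str_a2} by saying ``$h$ is locally Lipschitz, so $\partial^\infty h=\{0\}$.'' This is only correct at interior points of $\cX=\dom h$; if $\bar x\in\partial\cX$ then $h$ takes the value $+\infty$ arbitrarily close to $\bar x$ and $\partial^\infty h(x)$ can contain nonzero normals to $\cX$. The fix is immediate: since $g$ is $C^1$, one has $\partial^\infty h(x)=\partial^\infty f(x)$, so the horizon estimate \eqref{eqn:str_a2} for $h$ is inherited directly from strong $(a)$-regularity of $f$ via Theorem~\ref{thm:reinterp}. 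With that correction your argument is complete and in fact cleaner than the paper's on this point.
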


\subsection{Consequences for generic semialgebraic functions}
The results we have presented so far show that the perturbed projected subgradient and the proximal gradient method cannot converge to Fr{\'e}chet active strict saddle points, provided that $x_k$ converges and various regularity properties hold. Although the convergence of $x_k$ and the required regularity properties may seem stringent, they are in a precise sense generic. Indeed, the genericity of the regularity properties was already addressed in Section~\ref{sec:gen_reg}. Convergence also holds generically: it is known that all limit points of the stochastic subgradient method, the stochastic projected subgradient method, and the stochastic proximal method are (composite) Clarke critical points, as long as $f$ is a semialgebraic function~\cite[Corollary 6.4.]{davis2020stochastic}. Thus, since generic semialgebraic functions have only finitely many (composite) Clarke critical points and one can show (with small effort) that the set of limit points of each  algorithm is connected, it follows that the entire sequence $x_k$ must converge on generic problems (if the sequence remains bounded). Thus we have the following three corollaries, whose proofs we place in Appendix~\ref{sec:genericsaddleavoidance}.

\begin{cor}[Subgradient method on generic semialgebraic functions]\label{thm:globalsemialgebraic}
Let $f \colon \RR^d \rightarrow \RR$ be a locally Lipschitz semialgebraic function. Then for a full measure set of $v$ the following is true for the tilted function $f_v(x) := f(x) - \dotp{v, x}$: Let $\{x_k\}_{k \in \NN}$ be generated by the subgradient method~\ref{eq:subgradient} on $f_v$. Suppose that $\nu_k$ satisfies Assumption~\ref{assumption:uniform}. Then 
on the event $\{x_k\}_{k \in \NN}$ is bounded, almost surely we have only two possibilities 
\begin{enumerate}
\item $x_k$ converges to a local minimizer $\bar x$ of $f_v$.
\item $x_k$ converges to a Clarke critical point of $f_{v}$
\end{enumerate}
Thus, if $f$ is Clarke regular, the sequence $x_k$ must converge to a local minimizer of $f_v$. 
\end{cor}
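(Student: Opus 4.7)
The plan is to combine three ingredients available in the paper: the generic regularity result for tilt perturbations (Theorem~\ref{thm:genericfunctionsregular} / Corollary~\ref{prop:subgradientsemialgebraic}), the baseline subsequential convergence guarantee of \cite{davis2020stochastic} for the semialgebraic subgradient method, and the saddle-avoidance Theorem~\ref{thm:avoidance}.

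First I would invoke Theorem~\ref{thm:genericfunctionsregular} together with its algorithmic specialization, Corollary~\ref{prop:subgradientsemialgebraic}, to extract a full-measure set $V\subseteq\RR^d$ and an integer $N$ with the following property: for every $v\in V$, the tilt $f_v$ has at most $N$ Clarke critical points, each of which is Fr\'echet critical, lies on a unique $C^4$ active manifold $\cM_{\bar x}$ along which $f_v$ is strongly $(a)$-regular, and is either a local minimizer or a $C^4$ active strict saddle of $f_v$. Proposition~\ref{prop:subgradient} then certifies that Assumption~\ref{assumption:A} is in force at every Clarke critical point of $f_v$ for the subgradient mapping under consideration.

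Next I would appeal to \cite[Corollary~6.4]{davis2020stochastic}: for a semialgebraic locally Lipschitz $f_v$, Robbins--Monro stepsizes $\alpha_k\asymp k^{-\gamma}$ with $\gamma\in(1/2,1)$, and the zero-mean, bounded-moment noise supplied by Assumption~\ref{assumption:uniform}, every limit point of a bounded subgradient trajectory is almost surely Clarke critical for $f_v$. A standard Bena\"im-style argument identifies the limit set of such a bounded trajectory with a connected, internally chain-transitive invariant set of the Clarke subgradient differential inclusion. Combined with the finiteness of the Clarke critical set established in Step~1, connectedness forces this limit set to be a singleton, so on the bounded event $\{x_k\}$ converges almost surely to some Clarke critical point $\bar x^{\star}$ of $f_v$, giving the first conclusion of the dichotomy.

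Finally, I would apply Theorem~\ref{thm:avoidance} with $S$ taken to be the finite set of $C^4$ active strict saddle points of $f_v$; its hypotheses are supplied by Step~1 (which provides Assumption~\ref{assumption:A} at each $\bar x\in S$ together with a strictly negative direction of $\nabla^2_{\cM} f_v(\bar x)$) and Assumption~\ref{assumption:uniform}, yielding $\Prob(\lim_k x_k \in S)=0$. Under the additional hypothesis that $f$ is Clarke regular, the limiting and Clarke subdifferentials coincide, so every Clarke critical point of $f_v$ is Fr\'echet critical; by Step~1 it is then either a local minimizer or an active strict saddle, and the saddle case is eliminated by what has just been proved, forcing the limit $\bar x^{\star}$ to be a local minimizer. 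The step that demands the most care is the connectedness of the limit set in the second paragraph: in the nonsmooth setting this cannot be read off from a classical smooth stochastic-approximation argument and instead rests on the differential inclusion framework of \cite{davis2020stochastic}, where one must verify that the piecewise-linear interpolation of $\{x_k\}$ is an asymptotic pseudo-trajectory of the Clarke subgradient flow before invoking the fact that limit sets of such trajectories are connected and internally chain transitive.
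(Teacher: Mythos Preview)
Your proposal is correct and follows essentially the same three-step architecture as the paper's proof: invoke the generic regularity/finiteness result, establish convergence of the full sequence from boundedness plus finiteness of the critical set, and then apply Theorem~\ref{thm:avoidance} to rule out strict saddles.

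The one noteworthy difference is in how you argue that the limit set is connected (and hence, being finite, a singleton). You propose invoking the asymptotic pseudo-trajectory/differential inclusion machinery of \cite{davis2020stochastic} and the fact that limit sets of such trajectories are internally chain-transitive and connected. The paper instead takes a more elementary route: it shows directly that $\|x_{k+1}-x_k\|=\alpha_k\|G_{\alpha_k}(x_k,\nu_k)\|\to 0$ almost surely on the bounded event (using the decomposition $G_{\alpha_k}(x_k,\nu_k)=w_k+\xi_k$ with $w_k$ bounded and $\sum_k\alpha_k\xi_k$ convergent, as in \cite[Lemmas~A.4--A.6]{davis2020stochastic}), and then appeals to the purely topological fact that any bounded sequence with vanishing increments has a connected limit set \cite[Lemma~5(iii)]{palm}. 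Both arguments are valid; the paper's avoids the need to verify the asymptotic pseudo-trajectory property and is therefore shorter. One small slip: in your Step~1 you say Assumption~\ref{assumption:A} holds at every \emph{Clarke} critical point, but Corollary~\ref{prop:subgradientsemialgebraic} only guarantees this at \emph{limiting} critical points---which is precisely why the non-regular case leaves open the possibility in item~2 of the statement.
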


\begin{cor}[Projected subgradient method on generic semialgebraic functions]\label{thm:globalsemialgebraic2}
Let $f = g+ \delta_{\cX}$, where $\cX \subseteq \RR^d$ semialgebraic and closed and $g\colon \RR^d \rightarrow \RR$ is locally Lipschitz and semialgebraic. Then for a full measure set of $v, w \in \RR^d$ the following is true for the tilted function $f_{v, w}(x) := g(x + w) + \delta_{\cX}(x) - \dotp{v, x}$. Let $\{x_k\}_{k \in \NN}$ be generated by the projected subgradient method~\ref{eq:projectedsubgradientG}. Suppose that $\nu_k$ satisfies Assumption~\ref{assumption:uniform}. Then 
on the event $\{x_k\}_{k \in \NN}$ is bounded, almost surely we have only two possibilities 
\begin{enumerate}
\item $x_k$ converges to a local minimizer $\bar x$ of $f_{v,w}$.
\item $x_k$ converges to a composite Clarke critical point of $f_{v, w}$.
\end{enumerate}
Thus, if $g$ and $\cX$ are Clarke regular, the sequence $x_k$ converges to a local minimizer of $f_{v, w}$. 
\end{cor}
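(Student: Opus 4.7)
The plan is to combine four ingredients: (i) genericity of active manifolds and their regularity at composite critical points, (ii) the known fact that limit points of projected subgradient iterates are composite Clarke critical, (iii) a connectedness argument forcing the entire sequence to converge, and (iv) the saddle-avoidance theorem for the projected subgradient method.

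First I would apply Theorem~\ref{thm:genericcomposite} to the decomposition $f = g + \delta_{\cX}$. This furnishes a uniform integer $N$ and a full-measure set of parameters $(v, w) \in \RR^d \times \RR^d$ such that $f_{v,w}$ has at most $N$ composite Clarke critical points, and every such point $\bar x$ is in fact composite Fr\'{e}chet critical, lies on a $C^4$ active manifold $\cM$ for $g_v := g - \langle v, \cdot\rangle$ along which $g_v$ is strongly $(a)$-regular, lies on a transverse $C^4$ active manifold $\cK$ for $\cX$ along which $\cX$ is strongly $(a)$-regular, and is either a local minimizer or a $C^4$ active strict saddle of $f_{v,w}$. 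Because $g$ and $\cX$ are semialgebraic, the earlier theorem ``strong $(a)$ implies $(b_=)$'' upgrades the regularity along the active manifolds to $(b_{\leq})$-regularity, so all of the hypotheses of Corollary~\ref{cor:activestrictsaddleregular} are met at every point of this finite critical set.

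Next, I would invoke \cite[Corollary 6.4]{davis2020stochastic} to obtain that, on the event $\{x_k\}_{k \in \NN}$ is bounded, almost surely every cluster point of $\{x_k\}$ is a composite Clarke critical point of $f_{v,w}$. The local boundedness assumption~\ref{assumption:localbound}, the step-size decay $\alpha_k \to 0$, and the fourth moment bound on $\nu_k$ from Assumption~\ref{assumption:zero} together ensure $\|x_{k+1}-x_k\| \to 0$ almost surely; a standard lemma in stochastic approximation then guarantees that the cluster set of $\{x_k\}$ is connected. Since the composite Clarke critical set has at most $N$ elements, its connected subsets are singletons, so $x_k$ must converge almost surely to one of these critical points. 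Letting $S$ denote the (finite) subset of active strict saddles among them, Corollary~\ref{cor:activestrictsaddleregular} yields $\Prob(\lim_k x_k \in S) = 0$, which establishes the stated dichotomy.

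For the Clarke-regular case, if $g$ and $\cX$ are both Clarke regular, then composite Clarke criticality coincides with composite limiting criticality, so Theorem~\ref{thm:genericcomposite} forces every composite Clarke critical point of $f_{v,w}$ to be either a local minimizer or an active strict saddle; the previous step then rules out the latter, leaving convergence to a local minimizer. The main technical obstacle I anticipate is a clean verification of $\|x_{k+1}-x_k\| \to 0$, since Assumption~\ref{assumption:localbound} is only a local bound on $G$; on the event of boundedness of $\{x_k\}$ one must cover the trajectory by finitely many neighborhoods on which $\|G_{\alpha_k}(x_k,\nu_k)\|\le C(1+\|\nu_k\|)$ and then combine the uniform moment bound with a Borel--Cantelli argument.
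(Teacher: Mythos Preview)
Your proposal is correct and follows essentially the same route as the paper's proof: finiteness of composite critical points via Theorem~\ref{thm:genericcomposite}, convergence of $x_k$ via the connectedness-of-limit-points argument, and saddle avoidance via the results of Section~\ref{sec:avoidingsaddlepointschapter3}. The paper resolves your anticipated obstacle $\|x_{k+1}-x_k\|\to 0$ by citing \cite[Lemmas A.4--A.6]{davis2020stochastic} (which decompose $G_{\alpha_k}(x_k,\nu_k)$ into a bounded part plus a term whose $\alpha_k$-weighted sum converges a.s.), and it packages the saddle-avoidance step through Corollary~\ref{prop:projectedgradientsemialgebraic} and Theorem~\ref{thm:avoidance} rather than verifying the hypotheses of Corollary~\ref{cor:activestrictsaddleregular} directly, but the logic is the same.
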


\begin{cor}[Proximal gradient method on generic semialgebraic functions]\label{thm:globalsemialgebraic3}
Suppose that $f = g + h_0 + \delta_\cX$, where $\cX \subseteq \RR^d$, $g$ is a $C^1$ function with Lipschitz gradient on $\cX$, the function $h_0 \colon \RR^d\rightarrow \RR$ is Lipschitz on $\cX$, and we define $h := h_0 + \delta_{\cX}$.
Then for a full measure set of $v, w \in \RR^d$ the following is true for the tilted function $f_{v, w} := g(x+w) +h_0(x+w) + \delta(x) - \dotp{v, x}  $. Let $\{x_k\}_{k \in \NN}$ be generated by the proximal gradient method~\ref{eq:proximalgradientG}. Suppose that $\nu_k$ satisfies Assumption~\ref{assumption:uniform}. Then 
on the event $\{x_k\}_{k \in \NN}$ is bounded, almost surely we have only two possibilities 
\begin{enumerate}
\item $x_k$ converges to a local minimizer $\bar x$ of $f_{v,w}$.
\item $x_k$ converges to a composite Clarke critical point of $f_{v, w}$.
\end{enumerate}
Thus, if $h_0$ and $\cX$ are Clarke regular, the sequence $x_k$ converges to a local minimizer of $f_{v, w}$. 
\end{cor}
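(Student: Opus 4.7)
My plan is to assemble four pieces already in place: the generic structure result (Corollary~\ref{prop:proximalgradientsemialgebraic}) which identifies the composite critical set and its regularity under generic tilts, the saddle avoidance result (Corollary~\ref{cor:activestrictsaddleregularproximalgradient}) which forbids convergence of the proximal gradient iterates to active strict saddles, the baseline result from \cite[Corollary 6.4]{davis2020stochastic} that limit points of the stochastic proximal gradient iteration on semialgebraic problems are composite Clarke critical, and the topological fact that the limit set of a bounded stochastic approximation with square-summable-but-not-summable stepsizes is connected.

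I would begin by applying Corollary~\ref{prop:proximalgradientsemialgebraic} to the composite decomposition of $f_{v,w}$ to produce a full-measure set of tilts $(v,w)\in\RR^d\times\RR^d$ and an integer $N$ such that $f_{v,w}$ has at most $N$ composite Clarke critical points; every composite limiting critical point of $f_{v,w}$ is composite Fr\'echet critical, sits on a $C^4$ active manifold $\cM$ along which $f_{v,w}$ satisfies Assumption~\ref{assumption:A} and strong $(a)$ plus $(b_{\leq})$-regularity; and is either a local minimizer or a $C^4$ active strict saddle of $f_{v,w}$.

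On the event that $\{x_k\}$ is bounded, the third ingredient places every limit point of $x_k$ in this finite composite Clarke critical set almost surely, while the fourth ingredient forces the (nonempty, compact) limit set to be connected and hence a singleton. Thus $x_k$ converges almost surely to a composite Clarke critical point $\bar x$ of $f_{v,w}$. If $\bar x$ happens to be composite limiting critical, the dichotomy above narrows it to a local minimizer or an active strict saddle, and applying Corollary~\ref{cor:activestrictsaddleregularproximalgradient} with $S$ equal to the finite set of such active strict saddles---whose required strong $(a)$ and $(b_{\leq})$-regularity along $\cM$ are supplied exactly by the previous paragraph, and with the noise hypothesis guaranteed by Assumption~\ref{assumption:uniform} in the statement---rules out convergence to $S$ with probability one. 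This yields alternative (1). Otherwise $\bar x$ is composite Clarke critical but not composite limiting critical, yielding alternative (2). Finally, when $h_0$ and $\cX$ are Clarke regular, so is $f_{v,w}$, whence its Clarke and limiting subdifferentials coincide; every composite Clarke critical point is then composite limiting critical, collapsing alternative (2) into alternative (1).

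The step I expect to require the most care is verifying connectivity of the limit set for the stochastic proximal gradient iteration in this generality: it is standard in stochastic approximation but relies on interpreting $\{x_k\}$ as an asymptotic pseudo-trajectory of the continuous time flow, which in turn uses the local boundedness of $G_{\alpha_k}(x_k,\nu_k)$ from Assumption~\ref{assumption:A}\ref{assumption:localbound}, the fourth-moment bound on $\nu_k$ in Assumption~\ref{assumption:zero}, and the stepsize regime $\gamma\in(1/2,1)$. Once connectivity is in hand, the remainder of the argument is structural, and the matching of the regularity hypotheses of Corollary~\ref{cor:activestrictsaddleregularproximalgradient} to those delivered by Corollary~\ref{prop:proximalgradientsemialgebraic} is essentially bookkeeping.
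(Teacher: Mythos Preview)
Your proposal is correct and follows essentially the same approach as the paper: finiteness of the composite Clarke critical set from Corollary~\ref{prop:proximalgradientsemialgebraic}, criticality of limit points from \cite[Corollary 6.4]{davis2020stochastic}, connectivity of the limit set to force convergence, and then Theorem~\ref{thm:avoidance} (via Corollary~\ref{cor:activestrictsaddleregularproximalgradient}) to exclude active strict saddles. The only minor deviation is your route to connectivity---the paper avoids the asymptotic pseudo-trajectory machinery and instead verifies the elementary criterion $\|x_{k+1}-x_k\|\to 0$ directly (using \cite[Lemmas A.4--A.6]{davis2020stochastic} to write $G_{\alpha_k}(x_k,\nu_k)=w_k+\xi_k$ with $w_k$ bounded and $\sum_k\alpha_k\xi_k$ convergent), which is simpler than you anticipate.
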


In short, the main conclusion of the above three theorems is
\begin{quote}
\centering
On generic regular semialgebraic functions, perturbed subgradient/proximal methods converge only to local minimizers
\end{quote}
We note in passing that the results hold verbatim if one replaces the word ``semialgebraic" with ``definable in an $o$-minimal structure," throughout.

\section{Proofs of the two pillars}\label{sec:proofoftwopillars}

Throughout this section, we let $\EE_k[\cdot ] = \EE[ \cdot \mid \cF_k]$ denote the conditional expectation. We now present the proofs of the two pillars.

\subsection{Proof of Proposition~\ref{prop:gettingclosertothemanifold}: aiming towards the manifold}\label{sec:proof:prop:gettingclosertothemanifold}
Throughout the proof, we let $C$ denote a constant depending on $k_0$ and $\delta$, which may change from line to line. Choose $\delta \leq \min\{1, \frac{c_1\mu}{12\gamma}\}$, satisfying $B_{\delta}(\bar x) \subseteq \mathcal{U}$ where $\mathcal{U}$ is the neighborhood in which Assumption~\ref{assumption:A} holds.
Define $Q := \max\{\sup_{x \in B_{\delta}} q(x), 1\}$. By shrinking $\delta$ slightly, we can assume that the little $o$ term in~\ref{assumption:aiming} satisfies 
$$
o(\dist(x, \cM)) \leq \frac{\mu}{4(1+ Q)}\dist(x, \cM) \qquad  \text{for all $x \in B_{\delta}(\bar x).$}
$$
Now define: $D_k := \dist(x_k, \cM)$ for all $k \geq 0$. We prove a recurrence relation satisfied by the sequence $D_k$. To that end, define $v_k = G_{\alpha_k}(x_k, \perturb_k)$ and observe that in the event $A_k := \{\tau_{k_0, \delta} > k\}$, we have
\begin{align}\label{eq:stayinball1}
D_{k+1}^2 &\le \norm{x_{k+1} - P_{\cM}(x_k)}^2\notag\\
	&=\norm{x_k - \stepsize_k \subgrad_k -P_{\cM}(x_k)}^2 \notag\\
	&=\norm{x_k - P_{\cM}(x_k)}^2  - 2\stepsize_k \dotp{\subgrad_k, x_k - P_{\cM}(x_k)} + \stepsize_k^2 \norm{\subgrad_k}^2\notag\\
&\leq D_k^2  - 2\stepsize_k \mu D_k + 2\alpha_k (1+\|\perturb_k\|)^2o(D_k) \notag \\
&\hspace{20pt}- 2\alpha_k\dotp{\perturb_k, x_k - \proj_{\cM}(x_k)}+ \underbrace{C(1+\|\perturb_k\|)^2}_{:=B_k}\alpha_k^2,
\end{align}
	where the second inequality follows from the proximal aiming and local boundedness properties of $G$; see Assumption~\ref{assumption:A}. 
This inequality will allow us to prove all parts of the result.

Indeed, let us prove Part~\ref{eq:prop:gettingclosertothemanifoldbound1}. To that end, first note that the bound $\EE_k[\|\nu_k\|^2]1_{A_k} \leq q(x_k)1_{A_k} \leq Q$ implies that there exists $C > 0$ such that 
$$
\EE_k[B_k]1_{A_k} \leq C,
$$
meaning the conditional expectation is bounded for all $k$. Moreover, by our choice of $\delta$, 
$$
\EE_k[(1+\|\perturb\|)^2o(D_k)1_{A_k}] \leq \frac{\mu}{2}D_k1_{A_k}.
$$
Thus, for each $k$, we have
\begin{align}
\EE_k[D_{k+1}^21_{A_{k+1}}] &\leq \EE_k[D_{k+1}^21_{A_k}]\notag\\
&\leq D_k^21_{A_k}  - \stepsize_k \mu D_k1_{A_k} + \EE_k[B_k]1_{A_k}\alpha_k^2 -2\alpha_k\dotp{\EE_k[\perturb_k], x_k - \proj_{\cM}(x_k)}1_{A_k} \notag\\
&\leq D_k^21_{A_k}  - \stepsize_k \mu D_k1_{A_k} + C\alpha_k^2  \notag  \\
&\leq (1- (\stepsize_k/2) \mu) D_k^21_{A_k}  - (\stepsize_k/2) \mu D_k1_{A_k} + C\alpha_k^2, \label{eq:decrease}
\end{align}
where the first inequality follows from $1_{A_{k+1}} \leq 1_{A_k}$; the second inequality follows from $\cF_k$-measurability of $A_k$; and the fourth inequality follows since $D_k1_{A_k} \geq D_k^21_{A_k}$ (recall $\delta \leq 1)$. Now apply  Lemma~\ref{lem:fastsummability}  with the sequences $X_k := D_k^21_{A_k}, Y_k := \alpha_k \mu D_k1_{A_k},$ and $Z_k :=  C\alpha_k^2$ and deduce that $(k^{2\gamma - 1}/\log(k+1)^2)D_k^2$ almost surely converges to a finite valued random variable and the following sum is finite:
$$
\sum_{k=1}^\infty \frac{k^{2\gamma - 1}\alpha_k}{\log(k+1)^2} \mu D_k1_{A_k} < + \infty.
$$
Recalling that $\alpha_k \geq c_1/k^\gamma$, we get the claimed summability result. 

Next we prove Part~\ref{eq:prop:gettingclosertothemanifoldbound1p5}. To that end, take expectation of~\eqref{eq:decrease} and use the law of total expectation to deduce that for some $C > 0$, we have
\begin{align*}
\EE[D_{k+1}^21_{A_k}] &\leq (1-\mu\alpha_k/2)\EE[D_k^21_{A_k}] - (\stepsize_k/2) \mu \EE[D_k1_{A_k}]  + C \alpha_k^2 \\
&\leq (1-\mu c_1 k^{-\gamma}/2)\EE[D_k^21_{A_k}] - (\stepsize_k/2) \mu \EE[D_k1_{A_k}] + Ck^{-2\gamma} 
\end{align*}
To prove part~\ref{eq:prop:gettingclosertothemanifoldbound1p5:expectedsquareddistance}, simply apply Lemma~\ref{lem:sequencelemmasquared} applied with sequence $s_k = \EE[D_k^21_{A_k}]$ and constants $c = \mu c_1/2$ and $C$. To prove part~\ref{eq:prop:gettingclosertothemanifoldbound1p5:saddle}, sum the above inequality from $n$ to infinity to get 
\begin{align*}
\sum_{k=n}^\infty (\stepsize_k/2) \mu \EE[D_k1_{A_k}] \leq \EE[D_n^21_{A_n}] + C\sum_{k=n}^\infty \alpha_k^2 &\leq Cn^{-\gamma} + C\sum_{k=n}^\infty \alpha_k^2,
\end{align*}
where the second inequality follows from Part~\ref{eq:prop:gettingclosertothemanifoldbound1p5:expectedsquareddistance}.
Noting that $n^{-\gamma} = O(\sum_{k=n}^\infty \alpha_k^2)$ proves the result.

\subsection{Proof of Proposition~\ref{prop:shadow}: the shadow iteration}\label{sec:proof:prop:shadow}

Throughout the proof we let $C$ denote a constant depending on $k_0$ and $\delta$, but not on $k$, which may change from line to line. We assume $\delta$ is small enough that the conclusions of Proposition~\ref{prop:gettingclosertothemanifold} hold; that $B_{4\delta}(\bar x) \subseteq \mathcal{U}$ where $\mathcal{U}$ is the neighborhood in which Assumption~\ref{assumption:A} holds; and that $P_{\cM}$ and $\nabla P_{\cM}$ are Lipschitz continuous on $B_{4\delta}(\bar x)$. Write $\tau = \tau_{k_0, \delta}$ and fix index $k\geq 1$. Finally, recall that $\proj_{\cM}$ is $C^2$ on $\mathcal{U}$ and $\nabla \proj_{\cM}(x) = \proj_{\tangentM{x}}$ for all $x \in \cM$.  

Let us first prove that $y_k \in B_{4\delta}(\bar x)$. Clearly, we need only consider the case $x\in B_{2\delta}(\bar x)$. In this case,
$$
\|y_k - \bar x\| \leq \|y_k - x_k\| + \|x_k - \bar x\| \leq 2\|x_k - \bar x\| \leq 4\delta,
$$
where the final inequality follows since $\bar x \in \cM$. Therefore, we always have $\norm{y_k-\bar x}\le 4\delta$. 

Next, let us define the error sequence $E_k$ in the shadow iteration. To that end, denote $T_k := \tangentM{y_k}$ and
$$
w_k: = y_{\discrete} -\alpha_k \nabla_\cM f(y_k) - \alpha_k P_{T_\discrete}(\perturb_k)
$$ 
Then with error sequence $E_k := (y_{k+1} - w_k)/\alpha_k$, the claimed recursion is trivially true. Thus, in the remainder of the proof, we bound $E_k$.

Turning to the bound, we first note that throughout the proof, we must separate the analysis into two cases:  $x_{k+1} \in B_{2\delta}(\bar x)$ and $x_{k+1} \notin B_{2\delta}(\bar x)$.  In the second case, the following preliminary observation will be useful: 
\begin{claim}\label{claim:shadowlowerbound}
Suppose that in the event $\{\tau > k\}$ it holds that $x_{k+1} \notin B_{2\delta}(\bar x)$. Then there exists $C > 0$ such that 
\begin{align}\label{eq:shadowlowerbound}
\|y_{k+1} - y_k\| \leq 4\delta \leq C \|x_{k+1} - x_k\|.
\end{align}
\end{claim}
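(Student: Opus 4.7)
The plan is to exploit the dichotomy built into the definition of the shadow sequence: whenever $x_{k+1}$ leaves the larger ball $B_{2\delta}(\bar x)$, the shadow $y_{k+1}$ is reset to $\bar x$, whereas $y_k$ remains close to $\bar x$ because $x_k \in B_{\delta}(\bar x)$ on the event $\{\tau > k\}$. With this observation both inequalities become essentially triangle-inequality calculations, so there is no real obstacle — the claim is a sanity check that lets the proof of Proposition~\ref{prop:shadow} handle the ``boundary crossing'' case uniformly with the interior case.

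First I would use $\{\tau > k\}$ to conclude $x_k \in B_{\delta}(\bar x)$, and hence $x_k \in B_{2\delta}(\bar x)$, so $y_k = P_{\cM}(x_k)$. The same computation used just above the claim to show $\|y_k - \bar x\| \le 2\|x_k - \bar x\|$ (using that $\bar x \in \cM$ and the definition of the projection) then gives $\|y_k - \bar x\| \le 2\delta$. The hypothesis $x_{k+1} \notin B_{2\delta}(\bar x)$ puts us in the ``otherwise'' branch of the definition of $y_{k+1}$, so $y_{k+1} = \bar x$. Combining these two facts yields
$$
\|y_{k+1} - y_k\| \;=\; \|y_k - \bar x\| \;\le\; 2\delta \;\le\; 4\delta,
$$
which is the first inequality of \eqref{eq:shadowlowerbound}.

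For the second inequality I would bound $\|x_{k+1} - x_k\|$ from below using the same triangle inequality in reverse: since $\|x_{k+1} - \bar x\| > 2\delta$ while $\|x_k - \bar x\| \le \delta$, we get
$$
\|x_{k+1} - x_k\| \;\ge\; \|x_{k+1} - \bar x\| - \|x_k - \bar x\| \;>\; 2\delta - \delta \;=\; \delta.
$$
Therefore $4\delta < 4\|x_{k+1} - x_k\|$, so the claim holds with the explicit constant $C = 4$ (independent of $k$, $k_0$, and $\delta$). The entire argument is local and deterministic; it uses only the definitions of the stopping time and of $y_k$, plus the elementary projection bound $\|P_{\cM}(x_k) - \bar x\| \le 2\|x_k - \bar x\|$ already noted earlier in the proof.
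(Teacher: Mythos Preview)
Your proof is correct and follows essentially the same approach as the paper: both bound $\|x_{k+1}-x_k\|$ below by $\delta$ via the triangle inequality and then observe that $\|y_{k+1}-y_k\|\le 4\delta$. In fact you are more explicit than the paper, which simply asserts ``the result trivially holds since $\|y_{k+1}-y_k\|\le 4\delta$''; you spell out that $y_{k+1}=\bar x$ in this branch and hence $\|y_{k+1}-y_k\|=\|y_k-\bar x\|\le 2\delta$, and you extract the concrete constant $C=4$.
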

\begin{proof}
First notice that 
$$
\|x_{k+1} - x_k\|  \geq \|x_{k+1} - \bar x\| - \|x_{k} - \bar x\| \geq 2\delta - \delta\geq \delta.
$$
Therefore, the result trivially holds since $\|y_{k+1} - y_k\| \leq 4\delta$. 
\end{proof}

With the preliminaries set, we now bound $\|E_k\|$. To that end, in what follows we assume we are in the event $\{\tau > k\}$ where $k \geq k_0$. In this event, our strategy will be to bound the terms $R_1$ and $R_2$ in the following decomposition:
\begin{align}
\|E_k\| &= \|(y_{k+1} - w_k)/\alpha_k\|  \notag\\
&\leq \underbrace{\|y_{k+1} - y_k - P_{T_k}(y_{k+1} - y_k)\|/\alpha_k}_{:= R_1} + \underbrace{\|P_{T_k}(y_{k+1} - y_k)/\alpha_k +  \nabla f_\cM(y_k) +  P_{T_\discrete}(\perturb_k)\|}_{:= R_2}. \label{eq:EKboundfirst}
\end{align}
In our bounds of these terms, we frequently use the following bound: there exists $C > 0$ such that 
\begin{align}\label{eq:successiveiteratediff1}
\|x_{k+1} - x_k\| \leq \alpha_k\|G_{\alpha_k}(x_k, \perturb_k)\| \leq C(1+\|\perturb_k\|)\alpha_k.
\end{align}
We now bound $R_1$ and $R_2$ separately. 

The following claim bounds $R_1$.
\begin{claim}\label{claim:R1bound}
There exists $C > 0$ such that
\begin{align}\label{eq:R1bound}
R_11_{\tau > k} \leq C(1 + \|\nu_k\|)^2\alpha_k1_{\tau > k}.
\end{align}
\end{claim}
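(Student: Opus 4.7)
The plan is to exploit the $C^2$ smoothness of $P_{\cM}$ on a neighborhood of $\bar x$ (which follows from $\cM$ being $C^3$) and the key identity $\nabla P_{\cM}(y)=P_{T_{\cM}(y)}$ valid for $y\in\cM$. I will split the analysis into two cases depending on whether the surrogate definition of $y_{k+1}$ is the actual projection of $x_{k+1}$ or the fallback value $\bar x$.

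In the main case $x_{k+1}\in B_{2\delta}(\bar x)$, both $y_k=P_{\cM}(x_k)$ and $y_{k+1}=P_{\cM}(x_{k+1})$ are given by the smooth retraction. Since $P_{\cM}$ is $C^2$ on a neighborhood of $\bar x$, a first-order Taylor expansion around $x_k$ (using that $x_k,x_{k+1}$ remain inside the neighborhood on $\{\tau>k\}$) yields
\begin{equation*}
y_{k+1}-y_k=\nabla P_{\cM}(x_k)(x_{k+1}-x_k)+r_k,\qquad \|r_k\|\le C'\|x_{k+1}-x_k\|^{2},
\end{equation*}
for a constant $C'$ depending only on a uniform bound for $\nabla^{2}P_{\cM}$ over $B_{4\delta}(\bar x)$. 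Because $y_k\in\cM$, we have $\nabla P_{\cM}(x_k)=P_{T_k}$, so the leading term already lies in $T_k$ and is annihilated by $I-P_{T_k}$. Consequently $\|(I-P_{T_k})(y_{k+1}-y_k)\|\le C'\|x_{k+1}-x_k\|^{2}$, and dividing by $\alpha_k$ together with the bound $\|x_{k+1}-x_k\|\le C(1+\|\nu_k\|)\alpha_k$ from~\eqref{eq:successiveiteratediff1} delivers the desired estimate.

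In the fallback case $x_{k+1}\notin B_{2\delta}(\bar x)$, I have $y_{k+1}=\bar x$, so I cannot invoke smoothness of the projection directly. Instead I use the crude bound $\|y_{k+1}-y_k\|\le 4\delta$ (both $y_k$ and $y_{k+1}$ lie in $B_{4\delta}(\bar x)$) together with the reverse estimate $\|x_{k+1}-x_k\|\ge\delta$ extracted in the proof of Claim~\ref{claim:shadowlowerbound}. Combining these two inequalities gives $\|y_{k+1}-y_k\|\cdot\delta\le 4\delta^{2}\le 4\|x_{k+1}-x_k\|^{2}$, hence $\|y_{k+1}-y_k\|\le(4/\delta)\|x_{k+1}-x_k\|^{2}$. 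Trivially bounding $R_1\le\|y_{k+1}-y_k\|/\alpha_k$ and again applying~\eqref{eq:successiveiteratediff1} yields $R_1\le C(1+\|\nu_k\|)^{2}\alpha_k$ with a constant absorbing $1/\delta$.

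The substantive point is the second-order cancellation in the main case, which is where the factor $(1+\|\nu_k\|)^{2}\alpha_k$ (rather than $(1+\|\nu_k\|)$) comes from; this is essential downstream because it is what will ultimately let the error term $E_k$ be treated as lower-order relative to the Riemannian gradient step. The only mild subtlety is the fallback case, where smoothness is lost but is recovered through the inequality $\|x_{k+1}-x_k\|\ge\delta$, which effectively upgrades a linear bound to a quadratic one at the cost of a $1/\delta$ factor in the constant.
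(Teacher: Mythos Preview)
Your argument is essentially the paper's, with one slip to correct. The equality $\nabla P_{\cM}(x_k)=P_{T_k}$ is false in general: it holds only at points of $\cM$, and $x_k$ need not lie on $\cM$. What you actually need---and what suffices for ``the leading term already lies in $T_k$''---is the weaker fact $\range(\nabla P_{\cM}(x_k))\subseteq T_{\cM}(P_{\cM}(x_k))=T_k$, which holds for any $x_k$ near $\cM$ simply because $P_{\cM}$ maps into $\cM$. With that correction your Taylor-expansion argument goes through unchanged.

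The paper reaches the same quadratic bound by a slightly different route in the main case: rather than expanding $P_{\cM}$, it uses directly that the secant between two nearby points $y_k,y_{k+1}\in\cM$ is tangent up to second order, $\|(I-P_{T_k})(y_{k+1}-y_k)\|\le C\|y_{k+1}-y_k\|^2$, and then bounds $\|y_{k+1}-y_k\|$ by $\|x_{k+1}-x_k\|$ via Lipschitzness of $P_{\cM}$. Your fallback case matches the paper's exactly.
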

\begin{proof}
We consider two cases. First suppose $x_{k+1} \in B_{2\delta}(\bar x)$. Let $C > 0$ be a local Lipschitz constant of $\nabla P_{\mathcal{M}}$ and $P_{\cM}$. Then it follows that vector $y_{k+1} - y_k = P_{\cM}(x_{k+1}) - P_{\cM}(x_k)$ is nearly tangent to the manifold at $y_k$: 
\begin{align*}
\|y_{k+1} - y_k- \proj_{T_\discrete}(y_{k+1} - y_{\discrete})\|\leq  C\|y_{k+1} - y_k\|^2 \leq  C^3 \|x_{k+1} - x_k\|^2.
\end{align*}
Thus, taking into account~\eqref{eq:successiveiteratediff1}, we have for some $ C> 0$, the bound:
$$
R_1 \leq C(1 + \|\nu_k\|)^2\alpha_k,
$$
as desired. Now suppose that $x_{k+1} \notin B_{2\delta}(\bar x)$. Therefore, there exists $C > 0$ such that 
\begin{align*}
\|y_{k+1} - y_k- \proj_{T_\discrete}(y_{k+1} - y_{\discrete})\| \leq 2\|y_{k+1} - y_k\| \leq C\|x_{k+1} - x_k\| \leq \frac{C^2}{\delta}\|x_{k+1} - x_k\|^2,
\end{align*}
where the first inequality follows since $\|P_{T_k}\| \leq 1$ and the second and third inequalities follow from Claim~\ref{claim:shadowlowerbound}. Thus taking into account~\eqref{eq:successiveiteratediff1}, we again have for some $C > 0$, the bound: 
$$
R_1 \leq  C(1 + \|\nu_k\|)^2\alpha_k,
$$
Thus, putting together both bounds on $R_1$, the result follows.
\end{proof}

The following claim bounds $R_2$.

\begin{claim}\label{claim:R2bound}
There exists $C > 0$ such that  
\begin{align}\label{eq:R2bound}
R_21_{\tau > k} \leq C (1 + \|\nu_k\|)^2(\dist(x_k, \cM) + \alpha_k)1_{\tau > k}.
\end{align}
\end{claim}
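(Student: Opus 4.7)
I would split the argument into the same two cases that governed the bound on $R_1$: whether $x_{k+1}\in B_{2\delta}(\bar x)$ or not.

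\textbf{Case 1: $x_{k+1}\in B_{2\delta}(\bar x)$.} Here $y_{k+1}=P_{\cM}(x_{k+1})$, so $y_{k+1}-y_k = P_{\cM}(x_{k+1})-P_{\cM}(x_k)$. Since $P_{\cM}$ is $C^2$ on $B_{4\delta}(\bar x)$, I would Taylor-expand around $x_k$:
\[
y_{k+1}-y_k = \nabla P_{\cM}(x_k)(x_{k+1}-x_k) + O(\|x_{k+1}-x_k\|^2).
\]
Using $\nabla P_{\cM}(y_k)=P_{T_k}$, the local Lipschitz continuity of $\nabla P_{\cM}$, and $\|x_k-y_k\|=\dist(x_k,\cM)$, I can replace $\nabla P_{\cM}(x_k)$ by $P_{T_k}$ at cost $O(\dist(x_k,\cM))$. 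Applying $P_{T_k}$, using $x_{k+1}-x_k=-\alpha_k G_{\alpha_k}(x_k,\nu_k)$, and the linear-in-$\|\nu_k\|$ bound $\|x_{k+1}-x_k\|\leq C(1+\|\nu_k\|)\alpha_k$ from~\eqref{eq:successiveiteratediff1}, this yields
\[
\tfrac{1}{\alpha_k}P_{T_k}(y_{k+1}-y_k) = -P_{T_k}\bigl(G_{\alpha_k}(x_k,\nu_k)\bigr) + r_k,
\]
with $\|r_k\|\leq C(1+\|\nu_k\|)^2(\dist(x_k,\cM)+\alpha_k)$. Substituting into the definition of $R_2$ and rewriting the leading term as $-P_{T_k}(G_{\alpha_k}(x_k,\nu_k)-\nu_k)+\nabla_{\cM}f(y_k)$, the strong-(a) assumption~\ref{assumption:smoothcompatibility} (applied at $y_k=P_{\cM}(x_k)$) controls that expression by exactly $C(1+\|\nu_k\|)^2(\dist(x_k,\cM)+\alpha_k)$, giving the desired bound.

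\textbf{Case 2: $x_{k+1}\notin B_{2\delta}(\bar x)$.} Now $y_{k+1}=\bar x$ by definition, so a direct Taylor expansion is not available. I would argue crudely:
\[
R_2 \le \tfrac{1}{\alpha_k}\|y_{k+1}-y_k\| + \|\nabla_{\cM}f(y_k)\| + \|\nu_k\|.
\]
Since $f$ is $C^2$ on $\cM\cap B_{4\delta}(\bar x)$, the covariant gradient is bounded there by a constant. Claim~\ref{claim:shadowlowerbound} gives $\|y_{k+1}-y_k\|\leq C\|x_{k+1}-x_k\|\leq C'(1+\|\nu_k\|)\alpha_k$, so $R_2\leq C''(1+\|\nu_k\|)$. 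Finally, the lower bound $\|x_{k+1}-x_k\|\geq \delta$ (established inside the proof of Claim~\ref{claim:shadowlowerbound}) combined with $\|x_{k+1}-x_k\|\leq C(1+\|\nu_k\|)\alpha_k$ forces $\delta\leq C(1+\|\nu_k\|)\alpha_k$, i.e.\ $1\leq (C/\delta)(1+\|\nu_k\|)\alpha_k$. Multiplying the previous bound on $R_2$ by this factor converts the $(1+\|\nu_k\|)^1$ into $(1+\|\nu_k\|)^2\alpha_k$, which is absorbed into the target $(1+\|\nu_k\|)^2(\dist(x_k,\cM)+\alpha_k)$.

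The main obstacle is Case 2: the naive bound loses one power of $(1+\|\nu_k\|)$ compared to the target form. The clean way to recover it is the observation that a ``big jump'' of the shadow to $\bar x$ can only occur when $(1+\|\nu_k\|)\alpha_k$ is bounded below by $\delta/C$, so we get a free factor of $\alpha_k(1+\|\nu_k\|)/\delta\geq 1$ to boost the exponent. Case 1 is essentially a calculus exercise with the $C^2$ smoothness of $P_{\cM}$ plus a direct invocation of the strong-(a) axiom; no deeper ideas are needed there.
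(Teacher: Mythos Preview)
Your proof is correct; both cases go through with the bounds you claim, and the ``free factor'' trick in Case~2 is exactly what is needed to recover the missing power of $(1+\|\nu_k\|)$.

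The paper arrives at the same conclusion through a slightly different decomposition in Case~1. Rather than Taylor-expanding $P_{\cM}$ around $x_k$, it writes $y_{k+1}-y_k=(y_{k+1}-x_{k+1})+(x_{k+1}-x_k)+(x_k-y_k)$ and applies $P_{T_k}$ termwise. The middle term produces $-\alpha_k P_{T_k}(G_{\alpha_k}(x_k,\nu_k))$ directly, the last term vanishes because $x_k-y_k\in N_{\cM}(y_k)$, and the first term is handled via the geometric observation $P_{T_{k+1}}(y_{k+1}-x_{k+1})=0$, so $P_{T_k}(y_{k+1}-x_{k+1})=(P_{T_k}-P_{T_{k+1}})(y_{k+1}-x_{k+1})$ is controlled by Lipschitz continuity of $y\mapsto P_{T_{\cM}(y)}$. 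Your Taylor expansion packages these same ingredients---Lipschitz $\nabla P_{\cM}$, the identity $\nabla P_{\cM}(y_k)=P_{T_k}$, and the strong-$(a)$ assumption---in a more analytic form. Neither approach is deeper than the other; the paper's version leans on normal/tangent orthogonality explicitly, while yours hides it inside the first-order Taylor remainder. In Case~2 the two arguments are essentially identical: both exploit $\|x_{k+1}-x_k\|\geq\delta$ (via Claim~\ref{claim:shadowlowerbound}) to convert a crude $O(1+\|\nu_k\|)$ bound into $O\bigl((1+\|\nu_k\|)^2\alpha_k\bigr)$.
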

\begin{proof}
To bound $R_2$, we first simplify:
\begin{align}\label{eq:R2intermediate}
R_2 &= \|P_{T_k}(y_{k+1} - y_k)/\alpha_k +  \nabla_\cM f(y_k) +  P_{T_\discrete}(\perturb_k)\|\notag \\
&\leq \|\proj_{T_\discrete}(y_{k+1} - x_{k+1})/\alpha_k\| + \|P_{T_k}(x_{k} - y_k)/\alpha_k \| +  \|P_{T_k}(x_{k+1} - x_k)/\alpha_k +  \nabla_\cM f(y_k) +  P_{T_\discrete}(\perturb_k)\| \notag\\
&\leq \|\proj_{T_\discrete}(y_{k+1} - x_{k+1})/\alpha_k\| + C(1+\|\perturb_k\|)^2(\dist(x_k, \cM) +\alpha),
\end{align}
where the second inequality follows from by Assumption~\ref{assumption:A} and the inclusion $x_{\discrete} - y_k \in N_{\cM}(y_k)$, which implies that $\proj_{T_\discrete}(x_{k} - y_{\discrete}) = 0$. We now bound the term $\|\proj_{T_\discrete}(y_{k+1} - x_{k+1})/\alpha_k\|$.

First suppose that $x_{k+1} \in B_{2\delta}(\bar x)$ and note that $y_{k+1} \in B_{4\delta}(\bar x)\cap \cM \subseteq \mathcal{U} \cap \cM$. Let $C' > 0$ be a local Lipschitz constant of $\nabla P_{M}$ and $P_{\cM}$. Then for some $C > 0$ larger than $C'$, we have 
\begin{align*}
\|\proj_{T_\discrete}(y_{k+1} - x_{k+1})/\alpha_k\| &\leq \|(\proj_{T_{k+1}} - \proj_{T_k})(y_{k+1} - x_{k+1})/\alpha_k\| \\
&\leq C'\|y_{k+1} - y_k\|\dist(x_{k+1}, \cM)/\alpha_k \\
&\leq (C')^2\|x_{k+1} - x_k\| (\dist(x_k, \cM) + \|x_{k+1} - x_k\|)/\alpha_k \\
&\leq C^3(1 + \|\perturb_k\|)\dist(x_k, \cM) + C^4(1 + \|\nu_k\|)^2\alpha_k,
\end{align*}
where the first inequality follows from $x_{k+1} - y_{k+1} \in N_{\cM}(y_{k+1})$, which implies $P_{T_{k+1}}(y_{k+1} - x_{k+1}) = 0$; the second inequality follows from Lipschitz continuity of $\nabla P_{\cM}(y) = P_{\tangentM{y}}$ in $y$; the third inequality follows from Lipschitz continuity of $P_{\cM}$ and Lipschitz continuity of $\dist(\cdot, \cM)$; and the fourth inequality follows from~\eqref{eq:successiveiteratediff1}.
Plugging this bound into~\eqref{eq:R2intermediate}, yields that for some $C > 0$, we have
\begin{align*}
R_2 \leq C (1 + \|\nu_k\|)^2(\dist(x_k, \cM) + \alpha_k), 
\end{align*}
as desired.

Now suppose that $x_{k+1} \notin B_{2\delta}(\bar x)$. 
Then, there exists $C > 0$ such that 
\begin{align*}
\|\proj_{T_\discrete}(y_{k+1} - x_{k+1})/\alpha_k\| &\leq \|\proj_{T_\discrete}(y_{k+1} - x_k)\|/\alpha_k + \|\proj_{T_\discrete}(x_k - x_{k+1})\|/\alpha_k\\
&\leq 2\delta/\alpha_k + \|x_{k} - x_{k+1}\|/\alpha_k \\
&\leq (1+C)\|x_k - x_{k+1}\|/\alpha_k \\
&\leq \frac{(1+C)C}{\delta\alpha_k}\|x_k - x_{k+1}\|^2 \\
&\leq \frac{(1+C)C^3}{\delta}(1+\|\perturb_k\|)^2 \alpha_k
\end{align*}
where first inequality follows from the triangle inequality; the second inequality follows since $x_k \in B_{2\delta}(\bar x)$ and $y_{k+1} = \bar x$; the third and fourth third inequalities follow from Claim~\ref{claim:shadowlowerbound}; and the fifth follows from~\eqref{eq:successiveiteratediff1}. Thus, in this case, we find that there exists $C > 0$ with
\begin{align*}
R_2 \leq C (1 + \|\nu_k\|)^2(\dist(x_k, \cM) +\alpha_k).
\end{align*}
Therefore, putting together both bounds on $R_2$, the result follows.
\end{proof}

Now we prove Part~\ref{prop:shadow:part:error:part:upperbound}. Beginning with subpart~\ref{prop:shadow:part:error:part:upperbound:1}, we find that by Claim~\ref{claim:R1bound} and~\ref{claim:R2bound}, we have that for some $C > 0$, the bound
\begin{align}
\|E_k\|1_{\tau > k} \leq R_11_{\tau > k} + R_21_{\tau > k} \leq  C(1+\|\perturb_k\|)^2(\dist(x_k, \cM) + \alpha_k)1_{\tau > k}, \label{eq:EKboundfinal}
\end{align}
as desired. Turning to Part~\ref{prop:shadow:part:error:part:upperbound:2}, first note that that $\dist(x_k, \cM)1_{\tau > k} \leq \delta$. Thus, the bound will follow if the conditional expectation of $(1+\|\perturb_k\|)^4$ is bounded whenever $x_k \in B_{\delta}(\bar x)$. This holds by assumption, since 
$$
\EE_k[\|\nu_k\|^4]1_{\tau > k} \leq  \sup_{x \in B_{\delta}(\bar x)} q(x) < \infty.
$$
Finally, we prove Part~\ref{prop:shadow:part:error:part:upperbound:3}. Again using the boundedness of the conditional fourth moment of $\|\nu_k\|1_{\tau > k}$, we find that there exists a $C > 0$ such that 
\begin{align}\label{eq:conditionalEKstuff}
 \EE_k[\|E_k\|^2 1_{\tau > k}] \leq C \dist^2(x_k, \cM) 1_{\tau > k} + C \alpha_k^2 1_{\tau > k},
\end{align}
where the first inequality follows from Jensen's inequality and the second inequality follows from~\eqref{eq:EKboundfinal}. Consequently, there exists $C' > 0$ such that
$$
\EE[\|E_k\|^21_{\tau > k}] = \EE[\EE_k\|E_k\|^21_{\tau > k}] \leq C\EE[  \dist^2(x_k, \cM)1_{\tau > k}] + C\alpha_k^2 
\leq C'\alpha_k,
$$
where the third inequality follows from Part~\ref{eq:prop:gettingclosertothemanifoldbound1p5:expectedsquareddistance} of Proposition~\ref{prop:gettingclosertothemanifold}. This prove Part~\ref{prop:shadow:part:error:part:upperbound}.  

Now we prove Part~\ref{prop:shadow:part:error:part:random}, beginning with Part~\ref{prop:shadow:part:error:part:random:1}. To that end, define $F_k = \frac{k^{\gamma - 1}}{\log(k+1)^2} \|E_k\|1_{\tau > k}$. Recall that by the conditional Borel-Cantelli theorem (Lemma~\ref{lem:condborelcantelli}), the sequence $F_k$ is summable whenever $\EE_k[F_k]$ is summable. 
Thus, we first upper bound $\EE_k[F_k]$ by a summable sequence: there exists $C > 0$ such that 
\begin{align*}
\EE_k[F_k] &\leq C \frac{k^{\gamma - 1}}{\log(k+1)^2}(\dist(x_k, \cM) + \alpha_k)1_{\tau > k}  \\
&\leq C\frac{k^{\gamma - 1}}{\log(k+1)^2}\dist(x_k, \cM)1_{\tau > k} + C\frac{c_2}{k\log(k+1)^2},
\end{align*}
where the first inequality follows from~\eqref{eq:conditionalEKstuff} and the second inequality follows by definition of $\alpha_k$. By Part~\ref{eq:prop:gettingclosertothemanifoldbound1} of Proposition~\ref{prop:gettingclosertothemanifold}, it follows that we have upper bounded $\EE_k[F_k]$ by a summable sequence. Therefore, it follows that $F_k$ is summable, as desired. This proves part~\ref{prop:shadow:part:error:part:random:1}. 

Now we prove part~\ref{prop:shadow:part:error:part:random:2}. The conditional expectation is summable by Part~\ref{prop:shadow:part:error:part:upperbound:3}, since 
$$
\sum_{k=k_0}^\infty \frac{k^{\gamma - 1}}{\log(k + 1)^2}\EE[\|E_k\|^21_{\tau > k}] \leq C\sum_{k=k_0}^\infty \frac{k^{-1}}{\log(k + 1)^2} < + \infty.
$$
By conditional Borel-Cantelli theorem (Lemma~\ref{lem:condborelcantelli}), we also have that $$\sum_{k=k_0}^\infty \frac{k^{\gamma - 1}}{\log(k + 1)^2}\|E_k\|^21_{\tau > k} < +\infty,$$ as desired. 

Now we prove Part~\ref{prop:shadow:part:error:part:random:saddle}. To that end, note that there exists $C > 0$ such that 
\begin{align*}
\EE[ \alpha_k\|E_k\|1_{\tau > k}] = \EE[ \alpha_k\EE_k[\|E_k\|1_{\tau > k}]] 
\leq C\EE[ \alpha_k\dist(x_k, \cM)1_{\tau > k} + \alpha_k^21_{\tau > k}].
\end{align*}
where the inequality follows from~\eqref{eq:conditionalEKstuff}. Thus, the result follows by Part~\ref{eq:prop:gettingclosertothemanifoldbound1p5:saddle} of Proposition~\ref{prop:gettingclosertothemanifold}.

\section{Proofs of the main theorems}\label{sec:proofsofstochastic}

In this section, we prove the remaining theorems.
\subsection{Proof of Theorem~\ref{thm: nonconvergence to unstable points}: nonconvergence of stochastic process}\label{sec:proofofpemantlething}

We begin by recalling and slightly reframing Proposition 3 in~\cite{pemantle1990nonconvergence}. This result provides a Lyapunov function, which we will use to show that each local process $Y_k$ escapes a local neighborhood of each $p \in S$.
\begin{proposition}[Lyapunov Function]\label{prop: geometric part}
Fix $p \in \RR^d$ and suppose $F \colon \RR^d \rightarrow \RR^d$ is a $C^2$ mapping that is zero at $p$ and has a symmetric Jacobian $\nabla F(p)$. Suppose that $\nabla F(p)$ has at least one positive eigenvalue and let $W$ denote the subspace of eigenvectors of $\nabla F(p)$ with positive eigenvalues. Then, there exists a matrix $A \in \RR^{d\times d}$ with $\range(A^T) = W$, a ball $\cB$ centered at $p$, and a $C^2$ mapping $\Phi \colon \cB \rightarrow \RR^d$ with $\Phi(p) = p$ and $\nabla \Phi(p) = I_d$ such that the function $\eta \colon \cB \rightarrow \RR$ defined as 
$$
\eta(v) = \|A(\Phi(v) - p)\|_2
$$
satisfies the following condition: There exists $c,c'>0$ such that 
$$
\eta(v+\epsilon F(v)) \ge (1+c\epsilon) \eta(v) - c'\epsilon^2 \qquad \text{for $v$ in $\cB$ and all sufficiently small $\epsilon$}.
$$
In particular, we have 
$$
\eta'(v; F(v)) \geq c\eta(v) \qquad \text{for all $v \in \cB$.}
$$
\end{proposition}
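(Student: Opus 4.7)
The strategy is classical: after translating so that $p=0$, use a $C^2$ change of coordinates that straightens a local invariant manifold of $F$ through $0$ tangent to the non-expanding directions of $L := \nabla F(0)$, and take $\eta$ to be the Euclidean norm of the orthogonal projection onto the unstable directions.

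Since $L$ is symmetric, diagonalize it by an orthogonal matrix and split $\RR^d = W \oplus S$ orthogonally, with $W$ the span of the eigenvectors having strictly positive eigenvalues and $S$ its orthogonal complement (eigenvalues $\le 0$). In this basis $L = \Lambda_W \oplus \Lambda_S$ with $\Lambda_W$ symmetric positive definite; denote by $\lambda>0$ its smallest eigenvalue. Let $A$ be the orthogonal projection onto $W$, so $A=A^T$ and $\range(A^T)=W$. Orthogonality of the eigenspaces of $L$ means that the linearization already preserves $W$ and $S$; the remaining task is to kill the nonlinear cross terms.

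To that end, apply the local center--stable manifold theorem to obtain a $C^2$ local invariant manifold $\cN_s$ through $0$ tangent to $S$, and choose a $C^2$ diffeomorphism $\Phi$ on a ball $\cB$ about $0$ with $\Phi(0)=0$, $\nabla\Phi(0)=I_d$, and $\Phi(\cN_s\cap\cB)\subset S$. Writing $\widetilde F := (\nabla\Phi\cdot F)\circ\Phi^{-1}$ for the pushforward and $u=(u_W,u_S)\in W\oplus S$ for the new coordinates, the invariance of $\cN_s$ forces $\widetilde F_W(0,u_S)=0$, so the fundamental theorem of calculus yields
\begin{equation*}
  \widetilde F_W(u_W,u_S) \;=\; \Lambda_W u_W + R(u)\,u_W, \qquad R \text{ continuous, } R(0)=0.
\end{equation*}

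Taylor expansion of $\Phi$ at $v$ in the direction $\epsilon F(v)$ gives $\Phi(v+\epsilon F(v)) = u + \epsilon\widetilde F(u) + \epsilon^2\rho(v,\epsilon)$ with $\|\rho\|\le c''$ uniformly on $\cB$ for $|\epsilon|\le 1$. Projecting onto $W$,
\begin{equation*}
  A\Phi(v+\epsilon F(v)) \;=\; (I+\epsilon\Lambda_W)u_W + \epsilon R(u)u_W + \epsilon^2 A\rho.
\end{equation*}
Positive definiteness of $\Lambda_W$ gives $\|(I+\epsilon\Lambda_W)u_W\|\ge(1+\epsilon\lambda)\|u_W\|$ for small $\epsilon\ge 0$. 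Shrinking $\cB$ so that $\|R(u)\|\le\lambda/2$ throughout and applying the reverse triangle inequality yields
\begin{equation*}
  \eta(v+\epsilon F(v)) \;\ge\; \bigl(1+(\lambda/2)\epsilon\bigr)\,\eta(v) - c'\epsilon^2,
\end{equation*}
with $c'=c''$; the directional derivative bound follows by dividing by $\epsilon$ and sending $\epsilon\to 0^+$. The main technical obstacle is producing $\Phi$ with $C^2$ regularity: one needs the $C^2$ local center--stable manifold theorem (e.g., via Hirsch--Pugh--Shub) rather than the merely continuous Hartman--Grobman conjugacy, since the spectrum of $L$ may contain zero eigenvalues in $S$ and so the theorem is used in its partially hyperbolic form. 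Once $\Phi$ is in hand, the rest is Taylor expansion combined with the spectral theorem for $L$.
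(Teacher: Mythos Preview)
The paper does not actually prove this proposition; the text introduces it as ``recalling and slightly reframing Proposition 3 in~\cite{pemantle1990nonconvergence}'' and then immediately uses it. So your sketch supplies an argument where the paper gives none, and your approach---straighten a local invariant manifold tangent to the non-expanding eigenspace, project onto the unstable directions, and Taylor expand---is essentially the classical route taken by Pemantle.

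The outline is correct. Two remarks on execution. First, the regularity point you flag is real: for a merely $C^2$ vector field whose non-expanding block $\Lambda_S$ contains zero eigenvalues, the standard center/center--stable manifold theorems can lose a derivative, so a naive appeal yields only a $C^1$ manifold and hence only a $C^1$ diffeomorphism $\Phi$. The Hirsch--Pugh--Shub theory does give $C^r$ center--stable manifolds under the right normal-hyperbolicity/spectral-gap hypotheses, but you should quote the precise version. For the downstream use in this paper, inspection of the proof of Theorem~\ref{thm: nonconvergence to unstable points} shows that only Lipschitz continuity of $\nabla\Phi$ (i.e., $\Phi\in C^{1,1}$) is actually needed, so even a slight regularity loss would not be fatal. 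Second, a small domain bookkeeping point: you need $v+\epsilon F(v)$ to stay in the domain of $\Phi$; this is handled by defining $\Phi$ on a slightly larger ball and stating the inequality on the smaller one, but it should be said.
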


Turning to the proof of Theorem~\ref{thm: nonconvergence to unstable points}, we begin with a covering argument: For any $p\in S$, choose $\epsilon_p$ small enough that both the conditions of Theorem~\ref{thm: nonconvergence to unstable points} and Proposition~\ref{prop: geometric part} hold in $B_{\epsilon_p}(p)$ for $F_p$. Let $\delta_p \leq \epsilon_p$ and $c_3, c_4, c_5>0$ be the associated constants.  Clearly, the union $\cup_{p \in S}\xkball$ is an open cover of set $S$, and therefore there exists a countable index set $\Lambda\subset S$ such that $S\subset \cup_{p \in \Lambda}\xkball$. Therefore, to prove Theorem~\ref{thm: nonconvergence to unstable points}, it suffices to show that 
\begin{align}\label{eqn: markov property}
	P\left(X_\discrete \in \xkball, \forall \discrete \ge k_0\right) =0 \qquad \text{for all $k_0\ge K_p$.}
\end{align}

To this end, fix $p \in \Lambda$ and $k_0 \geq K_p$. Let $F = F_p$ denote the local mapping in Condition~\ref{item:localiteration} of Theorem~\ref{thm: nonconvergence to unstable points}. In addition, let $\eta = \eta_p$, denote the mapping associated to $F$, guaranteed to exist by Theorem~\ref{prop: geometric part}.\footnote{Note that strictly speaking we should extend $F$ to all $\RR^d$, for example, by a partition of unity~\cite[Lemma 2.26]{lee2012manifold}. Since the argument that follows is local, we omit this discussion for simplicity.} Furthermore, recall the stopping time $\tau_{k_0} = \tau_{k_0, \delta_p}(p)$, defined as $\tau_{k_0, \delta_p}(p) = \inf\{\discrete \ge k_0 \colon X_\discrete \notin \xkball\}$. Note that~\eqref{eqn: markov property} holds if $P(\tau_{k_0} = \infty) = 0$.

Our strategy is as follows. We first prove that on the event $\{\tau_{k_0} = \infty\}$, we have $\eta(Y_k) \rightarrow 0$ almost surely. Then we show that $P(\{\tau_{k_0} = \infty\}\cap \{\eta(Y_k) \rightarrow 0\}) = 0$. This will imply that $P(\{\tau_{k_0} = \infty\}) = 0$ and the proof will be complete. These two claims are subjects of the following two subsections.

\subsubsection{Claim: On the event $\{\tau_{k_0} = \infty\}$, we have $\eta(Y_k) \rightarrow 0$}

To prove this claim, note that the following hold for almost all sample paths in the event $\{\tau_{\discrete_0} = \infty\}$:
\begin{enumerate}
	\item The sequence $Y_\discrete$ is bounded. 
	\item Define $\beta_k = \sum_{i=0}^{k-1}\stepsize_i$. Then for each $T>0$, the limit holds:
	\begin{align}\label{eqn: small tail}
		\lim_{n\rightarrow \infty}\left(\sup_{k\colon 0 \le \beta_k - \beta_n \le T} \norm{\sum_{i=n}^{k-1}\stepsize_i \cdot (\xi_i + E_k)}\right)=0.
	\end{align}
Indeed, note that by Condition~\ref{label:stochasticprocess:averaging} of the Theorem, it suffices to show $M_k=\sum_{i=0}^k \alpha_i \xi_i$ converges almost surely, since then it is a Cauchy sequence. To prove that $M_k$ converges, note that $\sum_{i}\alpha_i^2<\infty$ and $\limsup \EE[\norm{\xi_k}^2\mid \cF_k]< \infty$, so $M_k$ is a martingale. Moreover, 
\begin{align}
	\sup_{k\ge 0} \expect{\norm{M_k}}^2 \leq \sup_{k\ge 0} \expect{\norm{M_k}^2} \le  c_3^{\frac{1}{2}} \sum_{i\ge 0}\alpha_i^2 <\infty.
\end{align}
Standard martingale theory then shows that $M_k$ converges almost surely~(Theorem 4.2.11 in~\cite{durrett2019probability}). Therefore, \eqref{eqn: small tail} holds almost surely.
	\end{enumerate}

These conditions match those of~\cite[Theorem 1.2]{benaim1996dynamical}. Consequently, by this result it holds that the set of limit points of $Y_\discrete$ is almost surely invariant under the mapping $\Theta_t \colon \ykball \rightarrow \RR^d$, defined as the time-$t$ map of the ODE $\dot \gamma(t) = F(\gamma(t))$. Thus, for any $x'$ in limit set of $Y_\discrete$, we have $\Theta_t(x') \in \overline \ykball$ for all $t \geq 0$. Consequently, by Proposition~\ref{prop: geometric part}, we have
\begin{align}
	\eta'(\Theta_t(x'); F(\Theta_t(x'))) \ge c \eta(\Theta_t(x')) \qquad \text{for all $t \geq 0$.}
\end{align}
Therefore, by integrating $\eta'$ with respect to $t$, we have for all $t \geq 0$, the bound 
\begin{align*}
\eta(\Theta_t(x')) = \eta(\Theta_0(x')) + \int_{0}^t \eta'(\Theta_s(x'); F(\Theta_s(x')))ds \geq \eta(\Theta_0(x')) + \int_{0}^t c \eta(\Theta_s(x'))ds. 
\end{align*}
Thus, by Gronwall's inequality~\cite{gronwall1919note} it holds that 
$$
\eta(\Theta_t(x')) \ge e^{ct}\eta(\Theta_0(x')) = e^{ct}\eta(x') \qquad \text{for all $t \geq 0$.}
$$
Now observe that since $\Theta_t(x') \in \overline  \ykball$, the quantity $\eta(\Theta_t(x'))$ is bounded for all $t \geq 0$. Consequently, we must have $\eta(x')=0$. Thus, we have shown that for all limits points $x'$ of $Y_k$, we have $\eta(x') = 0$. Since $\eta$ is continuous in $\overline \ykball$, we must therefore have $\eta(Y_\discrete) \rightarrow 0$.

\subsubsection{Claim: We have $P(\{\tau_{k_0} = \infty \} \cap \{ \eta(Y_k) \rightarrow 0\}) = 0$.}

We begin by stating the following straightforward extension of~\cite[Theorem 4.1]{brandiere1998some}.
\begin{lemma}\label{lem:repulsion}
Let $\{\zeta_k\}_k$ be a nonnegative sequence of random variables adapted to a filtration $\{\cF_k\}$ satisfying the following recurrence almost surely on an $\cF_{\infty}$-measurable set $\Omega_0$:
\begin{align*}
\zeta_{k+1} \geq \zeta_k + \alpha_k (e_{k+1} + r_{k+1} + \hat r_{k+1}) \qquad \text{for all $k \geq k_0$.}
\end{align*}
where $\{\alpha_k\}$ is a square-summable, but not summable sequence. Assume that $\{e_k\}_k$, $\{r_k\}$, and $\{\hat r_k\}_k$ are $\cF_k$ measurable and satisfy 
\begin{align*}
\EE[e_{k+1} \mid \cF_k] = 0; \qquad && &&\qquad &&\sum_{k=1}^\infty r_k^2 < + \infty \\
\limsup_{k} \EE[e_{k+1}^2 \mid \cF_k] < \infty \qquad &&  &&\qquad && \liminf_k \EE[|e_{k+1}| \mid \cF_k] > 0, 
\end{align*}
almost surely on $\Omega_0$.
Assume that for $n \geq k_0$, we have
$$
\EE\left[1_{\Omega_0}\sum_{k=n}^\infty \alpha_k |\hat r_{k+1}|\right] = O\left(\sum_{k = n}^\infty \alpha_k^2\right).
$$
Then we have $P( \Omega_0 \cap \{\zeta_k \rightarrow 0\}) = 0$.
\end{lemma}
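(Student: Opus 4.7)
The statement is a mild extension of Theorem~4.1 of \cite{brandiere1998some}, which is the case $\hat r_k \equiv 0$. The whole plan is to reduce the present lemma to that classical result by absorbing the extra drift $\hat r_{k+1}$ into an asymptotically vanishing correction, so that the core ``noise repels from zero'' argument of Brandi\`ere--Duflo applies verbatim.

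First, I would exploit the averaging hypothesis. By monotone convergence and Fubini, the assumption
$\EE[1_{\Omega_0}\sum_{k\ge n}\alpha_k|\hat r_{k+1}|]=O(\sum_{k\ge n}\alpha_k^2)$ with $\sum_k \alpha_k^2<\infty$ shows that the random variable $S:=1_{\Omega_0}\sum_{k\ge 1}\alpha_k|\hat r_{k+1}|$ has finite expectation and hence is almost surely finite. Consequently the tail
\[
R_n \;:=\; \sum_{j\ge n}\alpha_j\hat r_{j+1}
\]
is well-defined on $\Omega_0$ and satisfies $R_n 1_{\Omega_0}\to 0$ almost surely, and also $\EE[|R_n|1_{\Omega_0}]\to 0$. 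Define the modified sequence $\tilde\zeta_k:=\zeta_k+R_k$ on $\Omega_0$. A telescoping computation gives
\[
\tilde\zeta_{k+1}-\tilde\zeta_k \;=\; (\zeta_{k+1}-\zeta_k)-\alpha_k\hat r_{k+1}\;\ge\;\alpha_k(e_{k+1}+r_{k+1}),
\]
so $\tilde\zeta_k$ obeys the classical Brandi\`ere--Duflo recurrence with no $\hat r$ term. Since $R_k\to 0$ on $\Omega_0$, we also have the equality of events $\{\zeta_k\to 0\}\cap\Omega_0=\{\tilde\zeta_k\to 0\}\cap\Omega_0$, so it suffices to show $P(\Omega_0\cap\{\tilde\zeta_k\to 0\})=0$.

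Next, I would run the Brandi\`ere--Duflo argument on $\tilde\zeta_k$. Suppose for contradiction $P(\Omega_0\cap\{\tilde\zeta_k\to 0\})>0$. Fix $\epsilon>0$ small and consider the event $A_n:=\Omega_0\cap\{\sup_{k\ge n}\tilde\zeta_k\le\epsilon\}$; by assumption $P(A_n)>0$ for all large $n$. On $A_n$, iterating the recurrence yields
\[
\tilde\zeta_k-\tilde\zeta_n \;\ge\; \sum_{j=n}^{k-1}\alpha_j e_{j+1}\;+\;\sum_{j=n}^{k-1}\alpha_j r_{j+1},
\]
and the martingale $M_k:=\sum_{j<k}\alpha_j e_{j+1}$ converges almost surely (its increments have uniformly bounded conditional second moments and $\sum_j\alpha_j^2<\infty$), while $\sum_j\alpha_j|r_{j+1}|<\infty$ by Cauchy--Schwarz using $\sum r_j^2<\infty$ and $\sum \alpha_j^2<\infty$. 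Hence on $A_n$, $\tilde\zeta_k-\tilde\zeta_n$ is forced to have a finite limit. The contradiction then follows from the uniform lower bound $\liminf_k\EE[|e_{k+1}|\mid\cF_k]>0$ on $\Omega_0$: a standard ``conditional anti-concentration'' argument (Lemma~5.9 of \cite{benaim1999dynamics} or the core lemma of \cite{brandiere1998some}) shows that, on any event of positive probability where the recurrence stays trapped in $[0,\epsilon]$, the noise $e_{k+1}$ must, with positive probability, push $\tilde\zeta_k$ above $\epsilon$ infinitely often, contradicting the trapping.

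The only non-routine point to verify carefully is that the modification $\tilde\zeta_k=\zeta_k+R_k$ does not spoil the measurability requirements in the Brandi\`ere--Duflo contradiction: $R_k$ is not $\cF_k$-measurable, so $\tilde\zeta_k$ may fail to be adapted. I would handle this by observing that the classical argument is \emph{really} about the martingale $M_k$ and its tail oscillations, combined with deterministic bounds on the other summands. Concretely, one can keep $\zeta_k$ itself and estimate
$\zeta_k\ge\zeta_n+M_{k,n}+\sum_{j=n}^{k-1}\alpha_j r_{j+1}-|R_n|-|R_k|$
on $A_n$, where $M_{k,n}=M_k-M_n$, and then use $\sup_{k\ge n}|R_k|1_{\Omega_0}\to 0$ in probability to show that on a further sub-event of positive probability the residual $|R_n|+|R_k|$ is uniformly at most $\epsilon/4$ for all $k\ge n$. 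The repulsion mechanism of \cite{brandiere1998some} is then applied directly to $\zeta_k$, which is $\cF_{k+1}$-measurable. I expect this measurability/localization step to be the main technical obstacle; everything else is bookkeeping on top of the classical proof.
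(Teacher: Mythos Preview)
Your instinct that this is a mild extension of Brandi\`ere--Duflo is correct, but the reduction you propose does not close. The modification $\tilde\zeta_k=\zeta_k+R_k$ fails adaptedness (and may fail nonnegativity), as you note, and your fallback---localizing to a sub-event where $\sup_{k\ge n}|R_k|$ is small and then invoking a ``repulsion mechanism''---inherits the same defect: the localizing event is itself only $\cF_\infty$-measurable, so you cannot condition on it and still treat $\sum_j \alpha_j e_{j+1}$ as a martingale with respect to $\{\cF_k\}$. You correctly flag this as the main obstacle but leave it unresolved; the vague appeal to ``the classical argument is really about $M_k$'' does not explain how to run that argument on a non-adapted event.

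The paper takes a route that sidesteps the modification entirely. Rather than absorbing $\hat r$ into $\zeta$, it keeps the original adapted sequence and introduces the nonnegative slack $\alpha_k U_{k+1}:=\zeta_{k+1}-\zeta_k-\alpha_k(e_{k+1}+r_{k+1}+\hat r_{k+1})$. On $G:=\Omega_0\cap\{\zeta_k\to0\}$ the full series $\sum_j\alpha_j(e_{j+1}+r_{j+1}+\hat r_{j+1}+U_{j+1})$ telescopes to the $\cF_0$-measurable value $-\zeta_0$, and one invokes Theorem~A of \cite{AIHPB_1996__32_3_395_0} (reproduced in the paper as Lemma~\ref{lem:nonmeasruable}), a result engineered precisely for $\cF_\infty$-measurable events via the L\'evy martingale $\EE[1_G\mid\cF_n]\to 1_G$. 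That lemma needs a decomposition $\Phi_k=r_k+R_k$ with $\EE\big[1_G\sum_{k\ge n}\alpha_k|R_k|\big]=o\big((\sum_{k\ge n}\alpha_k^2)^{1/2}\big)$; here one takes $R_k=\hat r_{k+1}+U_{k+1}$. The $\hat r$ contribution is immediate from the averaging hypothesis since $O(\sum\alpha_k^2)=o((\sum\alpha_k^2)^{1/2})$. The $U$ contribution is the real work: using nonnegativity of $U_{k+1}$ and of $\zeta_n$ together with the tail identity $\sum_{j\ge n}\alpha_j U_{j+1}=-\zeta_n-\sum_{j\ge n}\alpha_j(e_{j+1}+r_{j+1}+\hat r_{j+1})$ on $G$, one bounds $\EE\big[1_G\sum_{j\ge n}\alpha_j U_{j+1}\big]$ by the same L\'evy-martingale estimate as in \cite{brandiere1998some}. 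The point is that Theorem~A already contains the measurability gymnastics you were worried about, so no modification of $\zeta_k$ is needed.
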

\begin{proof}
Without loss of generality we may assume $k_0 = 0.$ Following \cite[Theorem 4.1]{brandiere1998some} (itself based on \cite[Page 401]{AIHPB_1996__32_3_395_0}) it suffices to work in the case where there exist fixed constants $\mu$ and $C > 0$ such that almost surely on the whole probability space, we have
\begin{align*}
\EE[e_{k+1} \mid \cF_k] = 0 \qquad \text{ and } \qquad \limsup_{k} \EE[e_{k+1}^2 \mid \cF_k ] < C \\
\liminf_{k} \EE[|e_{k+1}| \mid \cF_k ] > \mu > 0 \qquad  \text{ and } \qquad\sum_{k=1}^\infty r_{k}^2 < C.
\end{align*}

Now define the nonnegative residual sequence:
$$
\alpha_k U_{k+1} = \zeta_{k+1} - \zeta_k - \alpha_k (e_{k+1} + r_{k+1} + \hat r_{k+1})
$$
Notice that for all $k \geq 0$, we have 
$$
\zeta_{k} = \left[ \zeta_0 + \sum_{j = 0}^k \alpha_j(e_{j+1} + r_{j+1} + \hat r_{j+1} +  U_{j+1})\right] \qquad \text{ on $G := \Omega_0 \cap \{ \zeta_k \rightarrow 0\}$.}
$$
Therefore, on $G$, we have
$$
- \zeta_0 = \left[\sum_{j = 0}^\infty \alpha_j(e_{j+1} + r_{j+1} + \hat r_{j+1} +  U_{j+1})\right].
$$
Then as argued the proof of~\cite[Theorem 4.1]{brandiere1998some} it suffices by Theorem A of~\cite{AIHPB_1996__32_3_395_0} (included as Lemma~\ref{lem:nonmeasruable} in the Appendix) to show that 
$$
\EE\left[1_{G}\sum_{k=n}^\infty\alpha_k |U_{k+1} + \hat r_{j+1}|\right] = o\left(\left(\sum_{k=n}^\infty \alpha_k^2\right)^{1/2}\right),
$$
 Clearly, it suffices to bound the series $\EE\left[1_G\sum_{j=K}^\infty \alpha_j U_{j+1}\right]$ (which consists of nonnegative terms), since by assumption, we have
$$
\EE\left[1_{G}\sum_{k=n}^\infty\alpha_k |\hat r_{j+1}|\right] = O\left(\sum_{j=n}^\infty \alpha_j^2\right) = o\left(\left(\sum_{j=n}^\infty \alpha_j^2\right)^{1/2}\right).
$$
To that end, note that for all $k, n \geq 0$, we have 
\begin{align*}
\zeta_{n + k} =  \left[ \zeta_n+ \sum_{j = n}^{n + k} \alpha_j(e_{j+1} + r_{j+1} + \hat r_{j+1} + U_{j+1})\right]
\end{align*}
Hence on $G$, we may let $k$ tend to infinity, yielding:
\begin{align*}
-\zeta_{n} = \sum_{j = n}^{\infty} \alpha_j(e_{j+1} + r_{j+1} + \hat r_{j+1} + U_{j+1}) .
\end{align*}
Thus, on the event $G$, we have  
\begin{align*}
 \sum_{j=n}^\infty \alpha_j U_{j+1} = - \zeta_n - \sum_{j = n}^{\infty} \alpha_j(e_{j+1} + r_{j+1} + \hat r_{j+1} )
\end{align*}
Therefore, we find that 
\begin{align*}
\EE\left[1_G\sum_{j=n}^\infty \alpha_j U_{j+1}\right]  &\leq - \EE\left[\zeta_n \right] -  \EE\left[1_{G}\sum_{j = n}^{\infty} \alpha_j(e_{j+1} + r_{j+1} + \hat r_{j+1} )\right] \\
&\leq  \abs{\EE\left[1_{G}\sum_{j = n}^{\infty} \alpha_j(e_{j+1} + r_{j+1} )\right]} + o\left(\left(\sum_{j=n}^\infty \alpha_j^2\right)^{1/2}\right).
\end{align*}
where the second inequality follows from nonnegativity of $\zeta_n$ and our assumptions on $\hat r_{j+1}$. Thus, to complete the bound of $\EE[1_G\sum_{j=K}^\infty \alpha_j U_{j+1}] $ we must show that 
$$
\abs{\EE\left[1_{G}\sum_{j = n}^{\infty} \alpha_j(e_{j+1} + r_{j+1} )\right]} = o\left(\left(\sum_{j=n}^\infty \alpha_j^2\right)^{1/2}\right).
$$

The above bound follows by the exact same argument as \cite[Theorem 4.1]{brandiere1998some}, which we reproduce for completeness: First let $G_n = \EE[1_G \mid \cF_n]$, recall that $G$ is $\cF_\infty$ measurable and that $G_n$ converges to $1_{G}$ almost surely in $L^p$ for every $p \geq 1$, e.g., $\EE[(G_n - 1_G)^2] \rightarrow 0$. Turning to the bound, we have
\begin{align*}
&\abs{\EE\left[1_{G}\sum_{j = n}^{\infty} \alpha_j(e_{j+1} + r_{j+1} )\right] }\\
&\leq \abs{\EE\left[(1_{G} - G_n)\sum_{j = n}^{\infty} \alpha_j(e_{j+1} + r_{j+1} )\right]} + \abs{\EE\left[G_n\sum_{j = n}^{\infty} \alpha_j(e_{j+1} + r_{j+1} )\right]}\\
&\leq \EE[(1_G - G_n)^2]^{1/2}\underbrace{\left(\EE\left[\left(\sum_{j = n}^{\infty} \alpha_j(e_{j+1} + r_{j+1})\right)^2\right]\right)^{1/2}}_{=:R_1} + \underbrace{\EE\left[\sum_{j = n}^{\infty} \alpha_j|r_{j+1}|\right]}_{=:R_2}.
\end{align*}
The proof will be complete if $R_1 = O\left(\sum_{j=n}^\infty \alpha_j^2\right)^{1/2}$ and $R_2 = o\left(\sum_{j=n}^\infty \alpha_j^2\right)^{1/2}$. Let us first bound $R_2$: 
\begin{align*}
R_2 \leq \left(\sum_{j=n}^\infty \alpha_j^2\right)^{1/2} \EE\left[\sum_{j = n}^{\infty} r_{j+1}^2\right]^{1/2} = o\left(\left(\sum_{j=n}^\infty \alpha_j^2\right)^{1/2}\right),
\end{align*}
where the last inequality follows from the bound $\sum_{k=1}^\infty r_{k+1}^2 < C$. Now we bound $R_1$:
\begin{align*}
R_1 &\leq \left(\EE\left[\left(\sum_{j = n}^{\infty} \alpha_je_{j+1}\right)^2\right]\right)^{1/2} + \left(\EE\left[\left(\sum_{j = n}^{\infty} \alpha_jr_{j+1}\right)^2\right]\right)^{1/2} \\
&\leq \left(\EE\left[\sum_{j = n}^{\infty} \alpha_j^2 \EE[e_{j+1}^2 \mid \cF_k] \right]\right)^{1/2} + \left(\sum_{j=n}^\infty \alpha_j^2\right)^{1/2} \left(\EE\left[\sum_{j = n}^{\infty} r_{j+1}^2\right]\right)^{1/2} = O\left(\left(\sum_{j=n}^\infty \alpha_j^2\right)^{1/2}\right).
\end{align*}
Therefore, the proof is complete.
\end{proof}

Now we apply the above Lemma. To that end, we state a few simplifications and facts to be used below. First, throughout the proof, we let $C$ be a positive constant that changes from line to line. Second, we simplify notation and let $\tau$ denote $\tau_{k_0, \delta}$. Third, we recall the bound $\frac{c_1}{k^\gamma} \le \stepsize_k \le \frac{c_2}{k^\gamma}$.  Fourth, the function $\eta$ is weakly convex and Lipschitz continuous on $B_{\epsilon_p}(p)$. Fifth, the Jacobian $ \nabla \Phi$ is $\text{Lip}_{\nabla \Phi}$-Lipschitz in $B_{\epsilon_p}(p)$. Sixth, we note that for sufficiently large $k$, we have the following on $\{\tau = \infty\}$: $Y_k + \alpha_kF(Y_k) \in B_{\epsilon_p}(p)$. We may assume without loss of generality that these assertions hold for all $k \geq 1$. Finally, we note that by shrinking $\epsilon_p$, if necessary, we can assume that on the event $\{\tau = \infty\}$, we have
\begin{align}\label{eq:a3}
s_{\min}(A)\liminf_k\inf_{w \in W\cap \sphere^{d-1}}\EE[|\dotp{w,  \xi_k}|\mid \cF_k] -  \epsilon_p\limsup_k\EE[\|\xi_k\| \mid \cF_k] \|A\|_{\rm op}\text{Lip}_{\nabla \Phi} \notag \\
\geq c_4s_{\min}(A) - \epsilon_p c_3^{1/4}\|A\|_{\rm op}\text{Lip}_{\nabla \Phi} > 0
\end{align}
where $c_4$ and $c_3$ are independent of $\epsilon_p$ and $\delta_p$, $A$ is defined in Proposition~\ref{prop: geometric part}, and $s_{\min}(A)$ denotes the minimal nonzero singular value of $A$.

Now let $s \colon B_{\epsilon_p}(p) \rightarrow \RR^d$ be a selection of $\partial \eta$ defined as follows: for all $y \in B_{\epsilon_p}(p)$, 
\begin{itemize}
\item If $\eta(y) \neq 0$, then $\eta$ is differentiable at $Y$, so set $s(y) = \nabla \eta(y).$
\item If $\eta(y) = 0$, then $\eta$ is nondifferentiable, so we choose subgradient
$$
s(Y) = \nabla \Phi(y)^\top A^\top u \in \partial \eta(y)
$$
where $u \in \sphere^{d-1}$ satisfies $\|A^\top u\| = \|A\|_{\rm op} > 0$. \end{itemize}
Next, consider the event $\Omega_0 = \{\tau = \infty\}$. Then by the boundedness of $s(Y_k + \alpha_k F(Y_k))$ and the weak convexity of $\eta$ on $B_{\epsilon_p}(p)$, there exists $C > 0$ such that 
\begin{align}
\eta(Y_{k+1}) &\geq \eta(Y_{k} + \alpha_k F(Y_k)) +\dotp{s(Y_k + \alpha_k F(Y_k)),\alpha_k E_k + \alpha_k \xi_k} - C\| \alpha_k E_k + \alpha_k \xi_k\|^2 \notag \\
&\geq \eta(Y_{k} + \alpha_k F(Y_k)) + \dotp{s(Y_k + \alpha_k F(Y_k)),\alpha_k \xi_k} - C\| \alpha_k E_k + \alpha_k \xi_k\|^2- C\alpha_k\|E_k\| \notag \\
&\geq (1+c \alpha_k) \eta(Y_k) + \dotp{s(Y_k + \alpha_k F(Y_k)),\alpha_k \xi_k} - C\| \alpha_k E_k + \alpha_k \xi_k\|^2 - C\alpha_k\|E_k\|- C\alpha_k^2.
\end{align}
Now define four sequences:
$$
\zeta_k := \eta(Y_{k}); \quad e_{k+1} := \dotp{s(Y_k + \alpha_k F(Y_k)), \xi_k}; \quad r_{k+1} := - C\alpha_k\left(1+\|  E_k + \xi_k\|^2 \right);  \quad \hat r_{k+1} := -C\|E_k\|
$$
and observe that on $\Omega_0$, we have 
$$
\zeta_{k+1} \geq \zeta_k + \alpha_k (e_{k+1} + r_{k+1} + \hat r_{k+1}).
$$

Now we must verify the assumptions of the Lemma. We begin with $\hat r_{k+1}$. To that end, observe that
$$
\EE\left[1_{\Omega_0} \sum_{k=n}^\infty  \alpha_k \hat r_{k+1}\right] = O\left(\sum_{k=n}^\infty \alpha_k^2\right), 
$$ 
by our assumption on $\|E_k\|$. 
Next we prove square summability of $r_{k+1}$ on $\Omega_0$: Indeed, observe
$$
  r_{k+1}^2 \leq C\alpha_k^2 (\|\xi_k\|^4 + \|E_k\|^4+1). 
$$
Moreover both $\limsup_{k}\EE_k[\|\xi_k\|^4\mid \cF_k] < \infty$ and $\limsup_{k}\EE_k[\|E_k\|^4\mid \cF_k] < \infty$ are bounded on $\Omega_0$. Therefore, by conditional Borel-Cantelli Lemma~\ref{lem:condborelcantelli}, we have 
$$
\sum_{k=1}^\infty r_{k+1}^2 < + \infty.
$$
almost surely on $\Omega_0$. 

Finally we prove that $e_k$ has the desired properties. First note that  we have 
$$
\EE[e_{k+1} \mid \cF_k] = 0 \qquad  \text{ and } \qquad \limsup_{k} \EE[e_{k+1}^2 \mid \cF_k] < \infty.
$$
on $\Omega_0$. Indeed, this follows since $\limsup_{k} \EE[\|\xi_k\|^4 \mid\cF_k] < \infty$ almost surely and and $Y_k + \alpha_k F(Y_k) \in B_{\epsilon_p}(p)$ on $\Omega_0$. Next, since $\eta$ is globally Lipschitz on $B_{\epsilon_p}(p)$, we have that $s(Y_k + \alpha_k F(Y_k))$ is uniformly bounded. Thus, 
$$
\limsup_{k} \EE[e_{k+1}^2 \mid \cF_k]  \leq \limsup_{k} \EE[\|s(Y_k + \alpha_k F(Y_k))\|^2\|\xi_k\|^2 \mid \cF_k] < \infty,
$$
on $\Omega_0$, as desired.

Now we prove that $\liminf \EE[|e_{k+1}| \mid \cF_k]$ is positive on $\Omega_0$. To that end, recall that the mapping $\Phi$ satisfies $\nabla \Phi(p) = I_d$.  Turning to the proof, there are two cases to consider. First suppose that $\eta(Y_k + \alpha_k F(Y_k)) \neq 0$. Then $\eta$ is differentiable at $Y_k + \alpha_k F(Y_k)$. Now define $u_k := \frac{A(\Phi(Y_k+\alpha_kF(Y_k)) - p)}{\norm{A(\Phi(Y_k+\alpha_kF(Y_k))-p)}}$ and note that  
\begin{align*}
s(Y_k + \alpha_k F(Y_k)) = \nabla \eta(Y_k + \alpha_k F(Y_k)) &= \nabla \Phi(Y_k +\alpha_kF(Y_k))^\top A^\top u_k\\
&= A^\top u_k + (\nabla \Phi(Y_k +\alpha_kF(Y_k)) - \nabla \Phi(p))^\top A^\top u_k \\
&\in A^\top u_k +  \epsilon_p\|A\|_{\rm op}\text{Lip}_{\nabla \Phi}B_1(0),
\end{align*}
where the inclusion follows since $Y_k + \alpha_k F(Y_k) \in B_{\epsilon_p}(p)$. Let $s_{\min}(A)$ denote the minimal nonzero singular value of $A$ and notice that since $u_k \in \sphere^{d-1} \cap \range(A)$, we have that $w_k := A^T u_k $ satisfies and 
$$
w_k \in W \qquad \text{ and } \qquad\|w_k\| \geq s_{\min}(A) > 0. 
$$
Therefore, it follows that on the event $\Omega_0$, we have
\begin{align*}
 \EE[|e_{k+1}| \mid \cF_k] &=  \EE[|\dotp{s(Y_k +  \alpha_k F(Y_k)),  \xi_k}| \mid \cF_k] \\
&\geq \EE[|\dotp{w_k,  \xi_k}|\mid \cF_k] -  \epsilon_p\EE[\|\xi_k\| \mid \cF_k] \|A\|_{\rm op}\text{Lip}_{\nabla \Phi} \\
&\geq s_{\min}(A)\inf_{w \in W\cap \sphere^{d-1}}\EE[|\dotp{w,  \xi_k}|\mid \cF_k] -  \epsilon_p\EE[\|\xi_k\| \mid \cF_k] \|A\|_{\rm op}\text{Lip}_{\nabla \Phi} 
\end{align*}
We now consider the case $\eta(Y_k + \alpha_k F(Y_k)) = 0$. In this case, there exists $u_k\in \sphere^{d-1} $ such that $\|A^\top u_k\| = \|A\|_{\rm op}$ and 
\begin{align*}
s(Y_k + \alpha_k F(Y_k)) =  \nabla \Phi(Y_k + \alpha_k F(Y_k))^\top A^\top u_k &\in A^\top u_k +  \epsilon_p\|A\|_{\rm op}\text{Lip}_{\nabla \Phi}B_1(0),
\end{align*}
Recall $\range(A^\top)=W$. Thus, we have that the vector  $w_k :=A^\top u_k$ is in $W$ and  $\norm{w_k} = \norm{A}_{\rm op}>0$. Thus, for all $v \in \RR^d$, we have 
\begin{align*}
 |\dotp{s(Y_k + \alpha_k F(Y_k)), v}| &= \dotp{\nabla \Phi(Y_k + \alpha_k F(Y_k))^\top A^\top u_k, v} \ge \dotp{w_k, v} - \epsilon_p\text{Lip}_{\nabla \Phi}\norm{A}_{\rm op}\norm{v}. 
\end{align*}
Taking $v= \xi_k$, we obtain
\begin{align*}
	\EE[|\dotp{s(Y_k + \alpha_k F(Y_k)), \xi_k}| \mid \cF_k] \geq  \|A\|_{\rm op}\inf_{w \in W\cap \sphere^{d-1}}\EE[|\dotp{w,  \xi_k}|\mid \cF_k] -  \epsilon_p\EE[\|\xi_k\| \mid \cF_k] \|A\|_{\rm op}\text{Lip}_{\nabla \Phi} 
\end{align*}
Thus, putting both cases together, we find that on the event $\Omega_0$, we have
$$
\liminf_{k} \EE[|e_{k+1}| \mid \cF_k ] \geq s_{\min}(A)\liminf_k\inf_{w \in W\cap \sphere^{d-1}}\EE[|\dotp{w,  \xi_k}|\mid \cF_k] -  \epsilon_p\limsup_k\EE[\|\xi_k\| \mid \cF_k] \|A\|_{\rm op}\text{Lip}_{\nabla \Phi} > 0,
$$
where the last inequality follows from~\eqref{eq:a3}.  

\subsection{Proof of Theorem~\ref{thm:avoidance}: nonconvergence to saddle points}\label{section:proof:thm:avoidance}

In this section, prove Theorem~\ref{thm:avoidance} by verifying that the iterates $\{x_k\}_{k \in \NN}$ satisfy the conditions of Theorem~\ref{thm: nonconvergence to unstable points}. We begin with some notation. To this end, observe that there exists $\epsilon > 0$ such that the function $f_{\cM} \colon B_{2\epsilon}(\bar x) \rightarrow \RR$, defined as the composition 
\begin{align}
f_{\cM}  := f\circ P_{\cM}
\end{align}
is $C^2$ and satisfies
$$
\nabla f_{\cM}(x) = \nabla_\cM f(x) \qquad \text{ and } \qquad \nabla^2 f_{\cM}(x) = \nabla_\cM^2 f(x) 
$$
for all $x \in B_{2\epsilon}(\bar x) \cap \cM$. Moreover, we may also assume that the projection map $P_{\cM} \colon B_{2\epsilon}(\bar x) \rightarrow \RR^d$ is $C^2$, in particular, Lipschitz with Lipschitz Jacobian. Throughout the proof, we assume that $\delta \leq \epsilon/4$ is small enough that conclusions of Propositions~\ref{prop:gettingclosertothemanifold} and~\ref{prop:shadow} are valid; we shrink $\delta$ several further times throughout the proof. In addition, we let $C$ denote a constant depending on $k_0$ and $\delta$, which may change from line to line.

Now, denote stopping time~\eqref{def:stoppingtime} by $\tau := \tau_{k_0, \delta}$ and the noise bound by $Q := \sup_{x \in B_{\delta}(\bar x)} q(x)$. Observe that by Proposition~\ref{prop:shadow}, the shadow sequence $y_k $ satisfies $y_k \in B_{4\delta}(x_k) \cap \cM \subseteq B_{\epsilon}(\bar x) \cap \cM$ and recursion holds:
$$
y_{k+1} = y_k - \alpha_k\nabla f_{\cM}(y_k) - \alpha_k P_{\tangentM{y_k}}(\perturb_k) + \alpha_k E_k.
$$
In addition, defining $$f^\ast := \inf_{x \in B_{\epsilon}(\bar x)}  f_{\cM}(x),$$ we have the bound $f^\ast1_{\tau > k} \leq f(y_k)1_{\tau > k}$ for all $k$. We now turn to the proofs.

To that end, fix a point $p \in S$ with associated manifold $\cM$ and neighborhood $\cU$. Let $\epsilon_p$ be small enough that $B_{\epsilon_p}(\bar x) \subseteq \cU$ and define the $C^2$ mapping $F_{p} \colon \ball{p}{\epsilon_p} \rightarrow \RR^d$ by:
$$
F_{p}(y) = - \nabla f_\cM(y),
$$
where $f_{\cM} := f \circ P_{\cM}$. Note that the mapping $F$ is indeed $C^2$, since $\cM$ is a $C^4$ manifold, and hence, $f_{\cM}$ is $C^3$. Moreover, since $\nabla F(p) = -\nabla_{\cM}^2 f(p)$, the mapping $F_p$ has at least one eigenvector with positive eigenvalue. In addition, the subspace $W_p$ spanned by such eigenvectors is contained in $\tangentM{p}$.

Turning to the proof, define $X_\discrete = x_\discrete$ for all $k \geq 1$. We now construct the sequences $Y_k, \xi_k,$ and $E_k$ and show they satisfy the assumptions of the theorem. Beginning with $Y_k$, recall that by Proposition~\ref{prop:shadow}, for all  $k \geq 1$ and all sufficiently small $\delta > 0$, the sequence 
\begin{align}\label{eq:Ykdefinition}
Y_{k}  := \begin{cases}
\proj_{\cM}(X_k) &\text{if $x_k \in B_{2\delta}(\bar x)$} \\
p & \text{otherwise.}
\end{cases},
\end{align}
satisfies $Y_k \in B_{4\delta}(\bar x) \cap \cM$ and the recursion 
\begin{align*}
Y_{k+1} &= Y_k - \alpha_k\nabla f_\cM(y_k) - \alpha_k \xi_k + \alpha_k E_k \qquad \text{for all $k \geq 1.$}
\end{align*}
where $\xi_k := P_{\tangentM{Y_k}}(\perturb_k)$ and $E_k$ is an error sequence. Moving to $E_k$, let us show that the error sequence satisfies the assumptions of the theorem. To that end, Proposition~\ref{prop:shadow} shows that  for $\delta$ sufficiently small, there exists $C > 0$ such that for all $n \geq k_0$, we have
$$
\EE\left[1_{\tau_{k_0, \delta} = \infty}  \sum_{k=n}^\infty  \alpha_k\|E_k\|\right] \leq C\sum_{k=n}^\infty \alpha_k^2.
$$
Moreover, by the Part \ref{prop:shadow:part:error:part:upperbound:1}  from Proposition~\ref{prop:shadow}, the sequence $\|E_k\|1_{\tau_{k_0, \delta} > k}$ is bounded above by a bounded sequence that almost surely converges to zero:
\begin{align*}
\|E_k\|1_{\tau_{k_0, \delta} > k} \leq C(1+\|\perturb_k\|)^2(\dist(x_k, \cM) + \alpha_k)1_{\tau_{k_0, \delta} > k} \leq C(1+r)^2(\delta + \alpha_k),
\end{align*}
Thus, on the event $\{\tau_{k_0, \delta} = \infty \}$, we have 
$$
\limsup_k 1_{\Omega_0}\EE[\|E_k\|^4\mid \cF_k] \leq \limsup_k \EE[\|E_k\|^41_{\tau_{k_0, \delta} > k}\mid \cF_k] \leq \left(C(1+r)^2(\delta + \alpha_k)\right)^4.
$$
Therefore, $Y_k$ and $E_k$ satisfy the  conditions~\ref{item:localiteration} and~\ref{item:error} of Theorem~\ref{thm: nonconvergence to unstable points} for all sufficiently small $\delta_p$ satisfying $\delta_p \leq \epsilon_p/8$.

To conclude the proof, we now show that Condition~\ref{item:noise} of Theorem~\ref{thm: nonconvergence to unstable points} is satisfied. To that end, clearly $\|\xi_\discrete\| = \|\proj_{T_\discrete}(\perturb_\discrete)\| \leq r =: c_3$ for all $k \geq \discrete_0$. In addition, we have that 
$$
\expect{\xi_k \mid \cF_k} = \proj_{T_\discrete}(\expect{\nu_k \mid X_{\discrete_0}, \ldots, X_{\discrete}}) = 0.
$$
Indeed, this follows from two facts: first $Y_{\discrete}$ is a measurable function of $X_\discrete$; and second the noise sequence $\perturb_\discrete$ is mean zero and independent of $X_{\discrete_0}, \ldots, X_{\discrete}$. Finally, we must show that $\xi_\discrete$ has positive correlation with the unstable subspace $W_p$. 

To prove correlation with the unstable subspace, recall that there exists $C' > 0$ such that the mapping $x \mapsto \proj_{\tangentM{x}}$ is $C'$-Lipschitz mapping on $\cM \cap  \ball{p}{\epsilon_p}$. In addition, we have that $W_p \subseteq \tangentM{p}$. Therefore, since $Y_\discrete \in   \cM\cap \ball{p}{\epsilon_p}$ for all $\discrete \geq \discrete_0$, we have the following bound for all $w \in W \cap \sphere^{d-1}$:
\begin{align*}
\EE[|\dotp{\xi_k, w}| \mid \cF_k] &= \EE[|\dotp{\perturb_\discrete, \proj_{\tangentM{Y_k}} w}| \mid \cF_k] \\
&\geq\EE[|\dotp{\perturb_\discrete, w}| \mid \cF_k] -r\|(\proj_{\tangentM{Y_k}} -\proj_{\tangentM{p}})w\|\\
&\geq rc_d - rC'\|Y_k - p\|,
\end{align*}
where $c_d$ is a constant dependent only on $d$ since $\perturb_k \sim \unif(B_r(0))$. By slightly shrinking $\epsilon_p$ if needed, we can ensure that $\inf_{x \in  \ball{p}{\epsilon_p}} \{rc_d - rC'\|x - p\|\} > (1/2)rc_d =: c_4$, as desired.

\bibliographystyle{plain}
	\bibliography{reference}

\begin{thebibliography}{10}

\bibitem{benaim1996dynamical}
Michel Benaim.
\newblock A dynamical system approach to stochastic approximations.
\newblock {\em SIAM Journal on Control and Optimization}, 34(2):437--472, 1996.

\bibitem{benaim1999dynamics}
Michel Bena{\"\i}m.
\newblock Dynamics of stochastic approximation algorithms.
\newblock In {\em Seminaire de probabilites XXXIII}, pages 1--68. Springer,
  1999.

\bibitem{benaim2005stochastic}
Michel Bena{\"\i}m, Josef Hofbauer, and Sylvain Sorin.
\newblock Stochastic approximations and differential inclusions.
\newblock {\em SIAM Journal on Control and Optimization}, 44(1):328--348, 2005.

\bibitem{bhojanapalli2016global}
S.~Bhojanapalli, B.~Neyshabur, and N.~Srebro.
\newblock Global optimality of local search for low rank matrix recovery.
\newblock In {\em Advances in Neural Information Processing Systems}, pages
  3873--3881, 2016.

\bibitem{bianchi2021stochastic}
Pascal Bianchi, Walid Hachem, and Sholom Schechtman.
\newblock Stochastic subgradient descent escapes active strict saddles.
\newblock {\em arXiv preprint arXiv:2108.02072}, 2021.

\bibitem{palm}
J{\'e}r{\^o}me Bolte, Shoham Sabach, and Marc Teboulle.
\newblock Proximal alternating linearized minimization for nonconvex and
  nonsmooth problems.
\newblock {\em Mathematical Programming}, 146(1):459--494, 2014.

\bibitem{doi:10.1137/060670080}
Jérôme Bolte, Aris Daniilidis, Adrian Lewis, and Masahiro Shiota.
\newblock Clarke subgradients of stratifiable functions.
\newblock {\em SIAM Journal on Optimization}, 18(2):556--572, 2007.

\bibitem{borwein2010convex}
Jonathan Borwein and Adrian~S Lewis.
\newblock {\em Convex analysis and nonlinear optimization: theory and
  examples}.
\newblock Springer Science \& Business Media, 2010.

\bibitem{borwein2000lipschitz}
Jonathan Borwein and Xianfu Wang.
\newblock Lipschitz functions with maximal clarke subdifferentials are generic.
\newblock {\em Proceedings of the American Mathematical Society},
  128(11):3221--3229, 2000.

\bibitem{brandiere1998some}
Odile Brandiere.
\newblock Some pathological traps for stochastic approximation.
\newblock {\em SIAM journal on control and optimization}, 36(4):1293--1314,
  1998.

\bibitem{AIHPB_1996__32_3_395_0}
Odile Brandi\`ere and Marie Duflo.
\newblock Les algorithmes stochastiques contournent-ils les pi\`eges ?
\newblock {\em Annales de l'I.H.P. Probabilit\'es et statistiques},
  32(3):395--427, 1996.

\bibitem{candes2011robust}
Emmanuel~J Cand{\`e}s, Xiaodong Li, Yi~Ma, and John Wright.
\newblock Robust principal component analysis?
\newblock {\em Journal of the ACM (JACM)}, 58(3):1--37, 2011.

\bibitem{chandrasekaran2011rank}
Venkat Chandrasekaran, Sujay Sanghavi, Pablo~A Parrilo, and Alan~S Willsky.
\newblock Rank-sparsity incoherence for matrix decomposition.
\newblock {\em SIAM Journal on Optimization}, 21(2):572--596, 2011.

\bibitem{condborellcantelli}
Louis H.~Y. Chen.
\newblock {A Short Note on the Conditional Borel-Cantelli Lemma}.
\newblock {\em The Annals of Probability}, 6(4):699 -- 700, 1978.

\bibitem{clarke2008nonsmooth}
Francis~H Clarke, Yuri~S Ledyaev, Ronald~J Stern, and Peter~R Wolenski.
\newblock {\em Nonsmooth analysis and control theory}, volume 178.
\newblock Springer Science \& Business Media, 2008.

\bibitem{clarke1995proximal}
Francis~H Clarke, RJ~Stern, and PR~Wolenski.
\newblock Proximal smoothness and the lower-c2 property.
\newblock {\em J. Convex Anal}, 2(1-2):117--144, 1995.

\bibitem{daniilidis2014orthogonal}
Aris Daniilidis, Dmitriy Drusvyatskiy, and Adrian~S Lewis.
\newblock Orthogonal invariance and identifiability.
\newblock {\em SIAM Journal on Matrix Analysis and Applications},
  35(2):580--598, 2014.

\bibitem{daniilidis2008prox}
Aris Daniilidis, Adrian Lewis, J{\'e}r{\^o}me Malick, and Hristo Sendov.
\newblock Prox-regularity of spectral functions and spectral sets.
\newblock {\em Journal of Convex Analysis}, 15(3):547--560, 2008.

\bibitem{davis1957all}
Chandler Davis.
\newblock All convex invariant functions of hermitian matrices.
\newblock {\em Archiv der Mathematik}, 8(4):276--278, 1957.

\bibitem{davis2019active}
Damek Davis and Dmitriy Drusvyatskiy.
\newblock Active strict saddles in nonsmooth optimization.
\newblock {\em arXiv preprint arXiv:1912.07146}, 2019.

\bibitem{davis2019stochastic}
Damek Davis and Dmitriy Drusvyatskiy.
\newblock Stochastic model-based minimization of weakly convex functions.
\newblock {\em SIAM Journal on Optimization}, 29(1):207--239, 2019.

\bibitem{davis2021proximal}
Damek Davis and Dmitriy Drusvyatskiy.
\newblock Proximal methods avoid active strict saddles of weakly convex
  functions.
\newblock {\em Foundations of Computational Mathematics}, pages 1--46, 2021.

\bibitem{davis2019stochastic_geo}
Damek Davis, Dmitriy Drusvyatskiy, and Vasileios Charisopoulos.
\newblock Stochastic algorithms with geometric step decay converge linearly on
  sharp functions.
\newblock {\em arXiv preprint arXiv:1907.09547}, 2019.

\bibitem{davis2020stochastic}
Damek Davis, Dmitriy Drusvyatskiy, Sham Kakade, and Jason~D Lee.
\newblock Stochastic subgradient method converges on tame functions.
\newblock {\em Foundations of computational mathematics}, 20(1):119--154, 2020.

\bibitem{davis2020nonsmooth}
Damek Davis, Dmitriy Drusvyatskiy, and Courtney Paquette.
\newblock The nonsmooth landscape of phase retrieval.
\newblock {\em IMA Journal of Numerical Analysis}, 40(4):2652--2695, 2020.

\bibitem{dembo2016lecture}
A~Dembo.
\newblock Lecture notes on probability theory: Stanford statistics 310.
\newblock {\em Accessed October}, 1:2016, 2016.

\bibitem{drusvyatskiy2020subgradient}
D~Drusvyatskiy and D~Davis.
\newblock Subgradient methods under weak convexity and tame geometry.
\newblock {\em SIAG/OPT Views and News}, 28:1--10, 2020.

\bibitem{MR3461323}
D.~Drusvyatskiy, A.~D. Ioffe, and A.~S. Lewis.
\newblock Generic minimizing behavior in semialgebraic optimization.
\newblock {\em SIAM J. Optim.}, 26(1):513--534, 2016.

\bibitem{MR3756929}
D.~Drusvyatskiy and C.~Paquette.
\newblock Variational analysis of spectral functions simplified.
\newblock {\em J. Convex Anal.}, 25(1):119--134, 2018.

\bibitem{drusvyatskiy2017proximal}
Dmitriy Drusvyatskiy.
\newblock The proximal point method revisited.
\newblock {\em SIAG/OPT Views and News}, 26:1--8, 2017.

\bibitem{drusvyatskiy2014optimality}
Dmitriy Drusvyatskiy and Adrian~S Lewis.
\newblock Optimality, identifiability, and sensitivity.
\newblock {\em Mathematical Programming}, 147(1):467--498, 2014.

\bibitem{duchi2019solving}
John~C Duchi and Feng Ruan.
\newblock Solving (most) of a set of quadratic equalities: Composite
  optimization for robust phase retrieval.
\newblock {\em Information and Inference: A Journal of the IMA}, 8(3):471--529,
  2019.

\bibitem{durrett2019probability}
Rick Durrett.
\newblock {\em Probability: theory and examples}, volume~49.
\newblock Cambridge university press, 2019.

\bibitem{eldar2014phase}
Yonina~C Eldar and Shahar Mendelson.
\newblock Phase retrieval: Stability and recovery guarantees.
\newblock {\em Applied and Computational Harmonic Analysis}, 36(3):473--494,
  2014.

\bibitem{federer1959curvature}
Herbert Federer.
\newblock Curvature measures.
\newblock {\em Transactions of the American Mathematical Society},
  93(3):418--491, 1959.

\bibitem{fletcher1982model}
R~Fletcher.
\newblock A model algorithm for composite nondifferentiable optimization
  problems.
\newblock In {\em Nondifferential and Variational Techniques in Optimization},
  pages 67--76. Springer, 1982.

\bibitem{ge2017no}
R.~Ge, C.~Jin, and Y.~Zheng.
\newblock No spurious local minima in nonconvex low rank problems: A unified
  geometric analysis.
\newblock In {\em Proceedings of the 34th International Conference on Machine
  Learning-Volume 70}, pages 1233--1242. JMLR. org, 2017.

\bibitem{ge2016matrix}
R.~Ge, J.D. Lee, and T.~Ma.
\newblock Matrix completion has no spurious local minimum.
\newblock In {\em Advances in Neural Information Processing Systems}, pages
  2973--2981, 2016.

\bibitem{gfrerer2021semismooth}
H.~Gfrerer and V.~Outrata.
\newblock On a semismooth* newton method for solving generalized equations.
\newblock {\em SIAM Journal on Optimization}, 31(1):489--517, 2021.

\bibitem{gronwall1919note}
Thomas~Hakon Gronwall.
\newblock Note on the derivatives with respect to a parameter of the solutions
  of a system of differential equations.
\newblock {\em Annals of Mathematics}, pages 292--296, 1919.

\bibitem{jin2020local}
Chi Jin, Praneeth Netrapalli, and Michael Jordan.
\newblock What is local optimality in nonconvex-nonconcave minimax
  optimization?
\newblock In {\em International conference on machine learning}, pages
  4880--4889. PMLR, 2020.

\bibitem{kuo1971ratio}
T-C Kuo.
\newblock The ratio test for analytic whitney stratifications.
\newblock In {\em Proceedings of Liverpool Singularities—Symposium I}, pages
  141--149. Springer, 1971.

\bibitem{kuo1972characterizations}
Tzee-Char Kuo.
\newblock Characterizations of v-sufficiency of jets.
\newblock {\em Topology}, 11(1):115--131, 1972.

\bibitem{le1998verdier}
Ta~L{\^e}~Loi.
\newblock Verdier and strict thom stratifications in o-minimal structures.
\newblock {\em Illinois Journal of Mathematics}, 42(2):347--356, 1998.

\bibitem{Lee:2019:FMA:3349830.3349888}
J.D. Lee, I.~Panageas, G.~Piliouras, M.~Simchowitz, M.I. Jordan, and B.~Recht.
\newblock First-order methods almost always avoid strict saddle points.
\newblock {\em Math. Program.}, 176(1-2):311--337, July 2019.

\bibitem{gradient_descent_jason}
J.D. Lee, M.~Simchowitz, M.I. Jordan, and B.~Recht.
\newblock Gradient descent only converges to minimizers.
\newblock In {\em Conference on learning theory}, pages 1246--1257, 2016a.

\bibitem{lee2012manifold}
Sangkyun Lee, Stephen~J Wright, and L{\'e}on Bottou.
\newblock Manifold identification in dual averaging for regularized stochastic
  online learning.
\newblock {\em Journal of Machine Learning Research}, 13(6), 2012.

\bibitem{lemarecha2000}
Claude Lemar{\'e}chal, Fran{\c{c}}ois Oustry, and Claudia Sagastiz{\'a}bal.
\newblock The $\mathcal{U}$-lagrangian of a convex function.
\newblock {\em Transactions of the American mathematical Society},
  352(2):711--729, 2000.

\bibitem{lewis1996convex}
Adrian~S Lewis.
\newblock Convex analysis on the hermitian matrices.
\newblock {\em SIAM Journal on Optimization}, 6(1):164--177, 1996.

\bibitem{lewis1999nonsmooth}
Adrian~S Lewis.
\newblock Nonsmooth analysis of eigenvalues.
\newblock {\em Mathematical Programming}, 84(1):1--24, 1999.

\bibitem{lewis2002active}
Adrian~S Lewis.
\newblock Active sets, nonsmoothness, and sensitivity.
\newblock {\em SIAM Journal on Optimization}, 13(3):702--725, 2002.

\bibitem{lewis2005nonsmooth}
Adrian~S Lewis and Hristo~S Sendov.
\newblock Nonsmooth analysis of singular values. part i: Theory.
\newblock {\em Set-Valued Analysis}, 13(3):213--241, 2005.

\bibitem{lewis1996derivatives}
Adrian~Stephen Lewis.
\newblock Derivatives of spectral functions.
\newblock {\em Mathematics of Operations Research}, 21(3):576--588, 1996.

\bibitem{mifflin1977semismooth}
Robert Mifflin.
\newblock Semismooth and semiconvex functions in constrained optimization.
\newblock {\em SIAM Journal on Control and Optimization}, 15(6):959--972, 1977.

\bibitem{mifflin2005algorithm}
Robert Mifflin and Claudia Sagastiz{\'a}bal.
\newblock A ${VU}$-algorithm for convex minimization.
\newblock {\em Mathematical programming}, 104(2):583--608, 2005.

\bibitem{miller2005newton}
Scott~A Miller and J{\'e}r{\^o}me Malick.
\newblock Newton methods for nonsmooth convex minimization: connections
  among-lagrangian, riemannian newton and sqp methods.
\newblock {\em Mathematical programming}, 104(2):609--633, 2005.

\bibitem{mordukhovich2006variational}
Boris~S Mordukhovich.
\newblock {\em Variational analysis and generalized differentiation I: Basic
  theory}, volume 330.
\newblock Springer Science \& Business Media, 2006.

\bibitem{norkin1980generalized}
VI~Norkin.
\newblock Generalized-differentiable functions.
\newblock {\em Cybernetics}, 16(1):10--12, 1980.

\bibitem{ostrovskii2021efficient}
Dmitrii~M Ostrovskii, Andrew Lowy, and Meisam Razaviyayn.
\newblock Efficient search of first-order nash equilibria in nonconvex-concave
  smooth min-max problems.
\newblock {\em SIAM Journal on Optimization}, 31(4):2508--2538, 2021.

\bibitem{pemantle1990nonconvergence}
Robin Pemantle et~al.
\newblock Nonconvergence to unstable points in urn models and stochastic
  approximations.
\newblock {\em The Annals of Probability}, 18(2):698--712, 1990.

\bibitem{penot2012calculus}
Jean-Paul Penot.
\newblock {\em Calculus without derivatives}, volume 266.
\newblock Springer Science \& Business Media, 2012.

\bibitem{poliquin1996prox}
Ren{\'e} Poliquin and R~Rockafellar.
\newblock Prox-regular functions in variational analysis.
\newblock {\em Transactions of the American Mathematical Society},
  348(5):1805--1838, 1996.

\bibitem{poliquin2000local}
Ren{\'e} Poliquin, R~Rockafellar, and Lionel Thibault.
\newblock Local differentiability of distance functions.
\newblock {\em Transactions of the American mathematical Society},
  352(11):5231--5249, 2000.

\bibitem{poliquin1992amenable}
Ren{\'e}~A Poliquin and R~Tyrrell Rockafellar.
\newblock Amenable functions in optimization.
\newblock {\em Nonsmooth optimization: methods and applications (Erice, 1991)},
  pages 338--353, 1992.

\bibitem{poliquin2010calculus}
Ren{\'e}~A Poliquin and Ralph~Tyrell Rockafellar.
\newblock A calculus of prox-regularity.
\newblock {\em J. Convex Anal}, 17(1):203--210, 2010.

\bibitem{robbins1971convergence}
Herbert Robbins and David Siegmund.
\newblock A convergence theorem for non negative almost supermartingales and
  some applications.
\newblock In {\em Optimizing methods in statistics}, pages 233--257. Elsevier,
  1971.

\bibitem{rockafellar1981favorable}
R~Tyrrell Rockafellar.
\newblock Favorable classes of lipschitz continuous functions in subgradient
  optimization.
\newblock 1981.

\bibitem{rockafellar2009variational}
R~Tyrrell Rockafellar and Roger J-B Wets.
\newblock {\em Variational analysis}, volume 317.
\newblock Springer Science \& Business Media, 2009.

\bibitem{schechtman2021stochastic}
Sholom Schechtman.
\newblock Stochastic subgradient descent on a generic definable function
  converges to a minimizer.
\newblock {\em arXiv preprint arXiv:2109.02455}, 2021.

\bibitem{shapiroreducible}
Alexander Shapiro.
\newblock On a class of nonsmooth composite functions.
\newblock {\em Mathematics of Operations Research}, 28(4):677--692, 2003.

\bibitem{sun2015nonconvex}
J.~Sun, Q.~Qu, and J.~Wright.
\newblock When are nonconvex problems not scary?
\newblock {\em arXiv preprint arXiv:1510.06096}, 2015.

\bibitem{sun2018geometric}
J.~Sun, Q.~Qu, and J.~Wright.
\newblock A geometric analysis of phase retrieval.
\newblock {\em Foundations of Computational Mathematics}, 18(5):1131--1198,
  2018.

\bibitem{trotman2020stratification}
David Trotman.
\newblock Stratification theory.
\newblock In {\em Handbook of Geometry and Topology of Singularities I}, pages
  243--273. Springer, 2020.

\bibitem{MR1404337}
Lou van~den Dries and Chris Miller.
\newblock Geometric categories and o-minimal structures.
\newblock {\em Duke Math. J.}, 84(2):497--540, 1996.

\bibitem{verdier1976stratifications}
Jean-Louis Verdier.
\newblock Stratifications de whitney et th{\'e}oreme de bertini-sard.
\newblock {\em Inventiones mathematicae}, 36(1):295--312, 1976.

\bibitem{whitney1992elementary}
Hassler Whitney.
\newblock Elementary structure of real algebraic varieties.
\newblock In {\em Hassler Whitney Collected Papers}, pages 456--467. Springer,
  1992.

\bibitem{whitney1992local}
Hassler Whitney.
\newblock Local properties of analytic varieties.
\newblock In {\em Hassler Whitney Collected Papers}, pages 497--536. Springer,
  1992.

\bibitem{whitney1992tangents}
Hassler Whitney.
\newblock Tangents to an analytic variety.
\newblock In {\em Hassler Whitney Collected Papers}, pages 537--590. Springer,
  1992.

\bibitem{wright1993identifiable}
Stephen~J Wright.
\newblock Identifiable surfaces in constrained optimization.
\newblock {\em SIAM Journal on Control and Optimization}, 31(4):1063--1079,
  1993.

\end{thebibliography}
	\appendix
	\section{Appendix}
	\subsection{Proof of Proposition~\ref{prop:square_root}}
	Since $X$ is a $C^3$ manifold, the projection $P_Y$ is $C^2$-smooth. Therefore, there exist constants $\epsilon,L>0$ satisfying
	\begin{equation}\label{eqn:proj_lip}
	\|P_Y(y+h)-P_Y(y)-\nabla P_Y(y)h\|\leq L\|h\|^2
	\end{equation}
	for all $y\in B_{\epsilon}(\bar x)$ and $h\in \epsilon \mathbb{B}$.
	Fix now two points $x \in X$ and $y \in Y$ and a unit vector  $v \in N_X(x)$. Clearly, we may suppose $v\notin N_{Y}(y)$, since otherwise the claim is trivially true. Define the normalized vector $w := -\frac{P_{\tangent{Y}{y}}( v)}{\|P_{\tangent{Y}{y}}( v)\|}$. 
Noting the equality $\nabla P_Y(y)=P_{T_Y(y)}$ and appealing to \eqref{eqn:proj_lip}, we deduce the estimate
	\begin{align*}
	\|P_{Y}(y - \alpha w) -  (y - \alpha w)\| \leq L\| \alpha w\|^2 = L\alpha^2,
	\end{align*}
	for all $y\in B_{\epsilon}(\bar x)$ and $\alpha\in (0,\epsilon)$.
 	Shrinking $\epsilon>0$, prox-regularity yields the estimate
	\begin{align*}
	\dotp{v, P_{Y}(y - \alpha w)-x} \leq \frac{\rho}{2}\|x-P_{Y}(y - \alpha w)\|^2,
	\end{align*}
	for some constant $\rho>0$.
	Therefore, we conclude 
	\begin{align*}
	\alpha\|P_{\tangent{Y}{y}}v\| = -\alpha\dotp{v, w}  
	&= \dotp{v,  x-y} + \dotp{v, P_{Y}(y - \alpha w)-x} + \dotp{v,  (y - \alpha w)- P_{Y}(y - \alpha w)} \\
	&\leq \|x - y\| + \frac{\rho}{2}\|x-P_{Y}(x - \alpha w)\|^2 + L\alpha^2.
	\end{align*}
	Note that the middle term is small: 
	$$
	\| P_{Y}(y - \alpha w)-x\|^2 \leq 2\|P_{Y}(y - \alpha w) - (y - \alpha w) \|^2 + 2\|y - \alpha w - x\|^2 \leq 2L^2\alpha^4 + 4\|y - x\|^2 + 4\alpha^2.
	$$
	Thus, we have 
	\begin{align*}
	\alpha \|P_{\tangent{Y}{y}}v\| &\leq  \|x - y\| + \rho L^2\alpha^4 + 2\rho \|x - y\|^2 + 2\rho\alpha^2 + L\alpha^2.
	\end{align*}
	Dividing both sides by $\alpha$ and setting $\alpha = \sqrt{\|x - y\|}$ completes the proof of \eqref{eqn:sqrt_estimate}.

\subsection{Proof of Proposition~\ref{prop:projectedgradient}: the projected gradient method}\label{section:proof:prop:projectedgradient}
Choose $\epsilon > 0$ small enough that the following hold for all $x \in B_{\epsilon}(\bar x) \cap \cX$. First \eqref{prop:projectedgradient:eq:aiming} holds. 
Second we require that for some $L > 0$, we have
\begin{align}
	\|P_{\tangentM{P_{\cM}(x)}}(s_g(x) - \nabla_{\cM} g(P_{\cM}(x))\| &\leq L\dist(x, \cM),\label{eqn:dumbo0}\\
	\|P_{\tangentM{z}}(u)\| &\leq L \|x-z\|, \label{eqn:dumbo}
\end{align}
for all $u \in N_{\cX}(x)$ of unit norm and all $z\in B_{\epsilon}(\bar x)\cap\cM$, a consequence of \ref{assumption:projectedgradient:stronga}. Third, given an arbitrary $\delta\in (0,1)$ we may choose $\epsilon>0$ so small so that
\begin{equation}\label{eqn:blah_prox}
	\dotp{z, x - x'} \geq -o(\|x-x'\|) 
\end{equation}
for all $z \in N_{\cX}(x)$ of unit norm, and $x'\in \cM\cap B_{\epsilon}(\bar x)$---a consequence of \ref{assumption:projectedgradient:bproxregularity}. 
We will fix $x\in B_{\epsilon/2}(\bar x)\cap \cX$ and arbitrary $\alpha > 0$ and $\nu \in \RR^d$, and choose an arbitrary $y\in\proj_{\cM}(x)$. Define 
\begin{align*}
	w = G_\alpha(x, \perturb) - \perturb-s_g(x)  \qquad \text{ and } 
	\qquad x_+ = s_\cX(x - \alpha(s_g(x)+ \perturb)). 
\end{align*}
Note the inclusion $w\in N_{\cX}(x^+)$. Next, to verify \ref{assumption:localbound},  we compute 
\begin{align*}
	\alpha\|G_{\alpha}(x, \perturb)\|&=\|x-s_{\cX}(x-\alpha(s_g(x)+\nu))\|\\
	&\leq \dist_{\cX}(x-\alpha(s_g(x)+ \perturb))+\alpha\|s_g(x)+ \perturb\|\\
	&\leq 2\alpha\|s_g(x)+ \perturb\|=  O(\alpha (1 +\|\perturb\|)).
\end{align*}
Thus there exists a constant $C>0$ satisfying
$$
\max\{\|w\|,\|G_{\alpha}(x, \perturb)\|\} \leq C(1 + \|\perturb\|)  \qquad \text{and} \qquad \|x_+ - x\| \leq C(1 + \|\perturb\|)\alpha.
$$
We will use these estimates often in the proof. Finally, we let $C$ be a constant independent of $x, \alpha$ and $\nu$, which changes from line to line.

\noindent\underline{Assumption~\ref{assumption:smoothcompatibility}:}
Suppose first that $x_+\in B_{\epsilon}(\bar x)$. Using \eqref{eqn:dumbo}, we compute
\begin{align}\label{eq:projgradientverdierbound1}
	\|P_{\tangentM{P_{\cM}(x)}}w\|&\leq L\|w\| \|x_+ - P_{\cM}(x)\|\notag \\
	&\leq L\|w\|(\|x_+ - x\| + \dist(x, \cM)) \notag \\
	&\leq \localconstant(1+ \|\perturb\|)^2\alpha + \localconstant(1 + \|\perturb\|)\dist(x, \cM).
\end{align}
On the other hand, if $x_+\notin B_{\epsilon}(\bar x)$, then we compute \begin{align}\label{eq:projgradientverdierbound3}
	\|P_{\tangentM{P_{\cM}(x)}}w\| \leq \|w\| \leq \frac{2}{\epsilon}\|w\|\|x_+ - x\| \leq \localconstant(1 + \|\perturb\|)^2\alpha.
\end{align}
In either case, Assumption~\ref{assumption:smoothcompatibility} now follows since from \eqref{eqn:dumbo0} we have
\begin{align*}
	\|P_{\tangentM{P_{\cM}(x)}}(s_g(x) - \nabla_\cM g(P_{\cM}(x))\|	&\le \localconstant \dist(x, \cM),
\end{align*}
as we had to show.

\noindent\underline{Assumption~\ref{assumption:aiming}:} We write the decomposition
\begin{equation}\label{eqn:decomp}
	\langle G_{\alpha}(x,\nu)-\nu, x-y\rangle=\underbrace{\langle s_g(x),x-y\rangle}_{R_1}+\underbrace{\langle w,x_+-y\rangle}_{R_2}+\underbrace{\langle w,x-x_+\rangle}_{R_3}.
\end{equation}
The aiming condition \ref{assumption:projectedgradient:proximalaiming} ensures 
\begin{equation}\label{eqn:dumb3a}
	\begin{aligned}
		R_1& \geq \mu\cdot\dist(x,\cM).
	\end{aligned}
\end{equation}
We next look at two cases. Suppose first $x_+\in B_{\epsilon}(\bar x)$ and therefore $\|x_+ - x\| \geq \epsilon/2$.
Using the inclusion $w\in N_{\cX}(x_+)$ and Assumption~\ref{assumption:projectedgradient:bproxregularity}, we compute 
\begin{align}
	R_2\geq -\|w\|\cdot o(\|x_+-y\|)&\geq -\|w\|\cdot (o(\|y-x\|)+\|x-x_+\|)\notag\\
	&\geq -C(1+\|\nu\|)^2(o(\dist(x,\cM))+\alpha)\label{eqn:dumb1a}.
\end{align}
Next, the Cauchy–Schwarz inequality implies
\begin{equation}\label{eqn:dumb2a}
	|R_3|=\|w\| \|x-x_+\|\leq C(\alpha(1+\|\nu\|)^2).
\end{equation}
Combining \eqref{eqn:decomp}-\eqref{eqn:dumb2a} yields the claimed bound \ref{assumption:aiming}.

Suppose now on the contrary that $x_+\notin B_{\epsilon}(\bar x)$ and therefore $\|x-y\|\leq \|x-x_+\|$. We thus deduce $R_2+R_3=\langle w,x-y\rangle\geq -\|w\|\|x-y\|\geq -C\alpha(1+\|\nu\|)^2$ holds. Combining this estimate with \eqref{eqn:decomp} and \eqref{eqn:dumb3a} verifies the claim \ref{assumption:aiming}.

\subsection{Proof of Proposition~\ref{prop:proximalgradient}: the proximal gradient method}\label{section:proof:prop:proximalgradient}

Let $\epsilon \in (0,1)$ be small enough such that the following hold for all $x \in B_{\epsilon}(\bar x) \cap \dom f$. First, \eqref{prop:proximalgradient:eq:aiming} holds and therefore:
\begin{equation}\label{eqn:anglea}
	\dotp{\nabla g(x)+v, x - P_{\cM}(x)} \geq \mu\cdot  \dist(x, \cM) - (1+\|v\|)o(\dist(x, \cM)),
\end{equation}
for all $v \in \hat\partial h(x)$.
Second we require that for some $L > 0$, we have
\begin{equation}\label{eqn:basic_est_needed}
	\|P_{\tangentM{P_{\cM}(x)}}(u - \nabla_{\cM} h(P_{\cM}(x))\| \leq L\sqrt{1+ \|u\|^2}\cdot\dist(x, \cM)
\end{equation}
for all $u \in \partial h(x)$, a consequence of strong (a) regularity. Third, we assume that $\nabla_{\cM} f$ is $L$-Lipschitz on $B_{\epsilon}(\bar x) \cap \cM$. Fourth, we assume that $\nabla g(\cdot)$ is $L$-Lipschitz. Shrinking $\epsilon$ we may moreover assume $\epsilon\leq \frac{\mu}{4L}$.
Finally, we may also assume that the assignments $P_{\cM}$ is $L$-Lipschitz  on $B_{\epsilon}(\bar x)$ and that the map $x\mapsto P_{T_{\cM}(P_{\cM}(x))}(\cdot)$ is $L$-Lipschitz on $B_{\epsilon}(\bar x)$ with respect to the operator norm.

Fix $x \in B_{\epsilon/2}(\bar x)\cap \dom f$ and $\nu \in \RR^d$ and set $y:=P_{\cM}(x)$. We define the vectors 
\begin{align*}
	w = G_\alpha(x, \perturb) - \nabla g(x) - \perturb  \qquad \text{ and } 
	\qquad x_+ = s_\alpha(x - \alpha(\nabla g(x)+ \perturb)). 
\end{align*}
\begin{claim}\label{eq:claiminitialboundproxgradient}
	We have $w \in \hat \partial h(x_+)$ and there exists a constant $C$ independent of $x, \perturb, \alpha$, such that the following bounds hold:
	$$
	\max\{\|G_{\alpha}(x, \perturb)\|, \|w\|\} \leq C(1 + \|\perturb\|) ; \qquad \text{and} \qquad \|x_+ - x\| \leq C(1 + \|\perturb\|)\alpha.
	$$
\end{claim}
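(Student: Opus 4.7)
The claim has two parts: the Fr\'echet subdifferential inclusion $w \in \hat\partial h(x_+)$ and the quantitative norm bounds. The plan is to derive the inclusion from Fermat's rule applied to the proximal subproblem, and then to re-use the same variational inequality as a descent-type estimate to control $\|x_+ - x\|$.

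For the subgradient inclusion, set $z := x - \alpha(\nabla g(x) + \nu)$ and recall that by construction $x_+$ is a global minimizer of $y \mapsto h(y) + \frac{1}{2\alpha}\|y - z\|^2$. Applying Fermat's rule with the Fr\'echet subdifferential---valid since the quadratic summand is $C^1$ and the Fr\'echet subdifferential satisfies an exact sum rule with smooth functions---yields $0 \in \hat\partial h(x_+) + \frac{1}{\alpha}(x_+ - z)$. Rearranging gives exactly $w = \frac{1}{\alpha}(x - x_+) - \nabla g(x) - \nu = \frac{1}{\alpha}(z - x_+) \in \hat\partial h(x_+)$, as needed.

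For the bounds, I would exploit the minimization inequality
\[
h(x_+) + \frac{1}{2\alpha}\|x_+ - z\|^2 \;\le\; h(x) + \frac{1}{2\alpha}\|x - z\|^2,
\]
which is legal because $x \in \dom f = \dom h$. Expanding $\|x_+ - z\|^2 = \|x_+ - x\|^2 + 2\langle x_+ - x, x - z\rangle + \|x - z\|^2$ and rearranging gives
\[
\|x_+ - x\|^2 \;\le\; 2\alpha\bigl(h(x) - h(x_+)\bigr) + 2\alpha\langle x_+ - x,\, \nabla g(x) + \nu\rangle.
\]
Since $x_+ \in \dom h$ (the minimum is finite) and $h$ is globally Lipschitz on $\dom h$ by Assumption~\ref{assumption:proximalgradient:Lipschitz2}, we have $|h(x) - h(x_+)| \le L\|x_+ - x\|$. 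By Assumption~\ref{assumption:proximalgradient:Lipschitz1} and the boundedness of $x$ over the fixed neighborhood $B_{\epsilon/2}(\bar x)$, the quantity $\|\nabla g(x)\|$ is bounded by a constant $C'$ independent of $x$. Applying Cauchy--Schwarz to the inner product and dividing both sides of the resulting inequality by $\|x_+ - x\|$ then yields $\|x_+ - x\| \le 2\alpha L + 2\alpha(C' + \|\nu\|) \le C\alpha(1 + \|\nu\|)$. The remaining two bounds follow immediately: $\|G_{\alpha}(x,\nu)\| = \|x - x_+\|/\alpha \le C(1 + \|\nu\|)$, and by the triangle inequality $\|w\| \le \|G_\alpha\| + \|\nabla g(x)\| + \|\nu\| \le C(1 + \|\nu\|)$ after absorbing constants.

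The computation is essentially routine, and I do not foresee a genuine obstacle. The only point that requires a bit of care is ensuring the constant $C$ is genuinely independent of $\alpha$, which relies on two features of Assumption~\ref{assumption:proximalgradient}: the \emph{global} Lipschitz control of $h$ on $\dom h$ (as opposed to merely local control, since $x_+$ could a priori drift outside a small neighborhood for large $\alpha$), and the linear-growth bound on $\nabla g$ paired with the fact that $x$ lives in a fixed bounded neighborhood of $\bar x$. These are exactly what Assumptions~\ref{assumption:proximalgradient:Lipschitz1} and~\ref{assumption:proximalgradient:Lipschitz2} supply.
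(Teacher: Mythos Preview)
Your proposal is correct and follows essentially the same route as the paper: Fermat's rule for the inclusion, then the minimization inequality at the feasible point $x$ combined with the Lipschitz bound on $h$ and boundedness of $\nabla g$ to control $\|x_+-x\|$, from which the other two bounds follow. (Minor slip: your rearranged inequality should read $\|x_+-x\|^2 \le 2\alpha(h(x)-h(x_+)) - 2\alpha\langle x_+-x,\nabla g(x)+\nu\rangle$, but since you apply Cauchy--Schwarz anyway the sign is irrelevant.)
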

\begin{proof}
	Beginning with the inclusion, first-order optimality conditions imply that $w$ is a Fr{\'e}chet subgradient:
	\begin{align*}
		w = \frac{x - \alpha (\nabla g(x) + \perturb) - x_{+}}{\alpha} \in \hat \partial h(x_+),
	\end{align*}
	as desired. First, we bound $\|x_+ - x\|$: Let $v = \nabla g(x) + \perturb$ and observe from the very definition of $x^+$ that there exists $ C> 0$ such that 
	\begin{align*}
		\frac{1}{2\alpha}\|x_+ - x\|^2  &\leq h(x) - h(x_+) - \dotp{v, x_+ - x} \leq \localconstant\|x_+ - x\| + \|v\|\|x_+ -x\|.
	\end{align*}
	Consequently, we have 
	$
	\|x_+ - x\| \leq (2\localconstant +2 \|v\|)\alpha \leq 2(2\localconstant + \|\perturb\|)\alpha,
	$
	as desired.
	Second, the bound on $G_{\alpha}(x, \perturb)$ follows trivially from the computation
	$$
	\|G_{\alpha}(x, \perturb)\| = \|x_+ - x\|/\alpha \leq 2(2\localconstant + \|\perturb\|).
	$$
	Finally, we bound $\|w\|$ using the estimate
	$$
	\|w\| = \|x - x_+\|/\alpha + \|\nabla g(x) + \perturb\|  \leq 4(2\localconstant + \|\perturb\|),
	$$
	as desired.
\end{proof}

We will use the estimates in the claim often in the proof. Finally, we let $C$ be a constant independent of $x, \alpha$ and $\nu$, which changes from line to line.

\noindent\underline{Assumption~\ref{assumption:smoothcompatibility}:} 
First suppose $x_+\in B_{\epsilon}(\bar x)$. Using the triangle inequality, we write 
\begin{align*}
	&\|P_{\tangentM{P_{\cM}(x)}}(G_\alpha(x, \perturb) - \nabla_{\cM}f(P_{\cM}(x)) - \nu)\|\\
	&=\|P_{\tangentM{P_{\cM}(x)}}(w+\nabla g(x)- \nabla g(P_{\cM}(x))-\nabla_{\cM} h(P_{\cM}(x)))\|\\
	&\leq \underbrace{\|P_{\tangentM{P_{\cM}(x)}}(w-\nabla_{\cM} h(P_{\cM}(x_+)))\|}_{R_1}+\underbrace{\|\nabla g(x)- \nabla g(P_{\cM}(x))\|}_{R_2}+\underbrace{\|\nabla_{\cM} h(P_{\cM}(x)))-\nabla_{\cM} h(P_{\cM}(x_+)))\|}_{R_3}.
\end{align*}
Taking into account that the assignment $x\mapsto P_{T_{\cM}(P_{\cM}(x))}(\cdot)$ is Lipschitz with respect to the operator norm, the estimate \eqref{eqn:basic_est_needed} implies
\begin{align*}
	R_1&\leq L\sqrt{1+\|w\|^2}\cdot\dist(x_+,\cM)+L\|x-x_+\|\|w-\nabla_{\cM}h(P_{\cM}(x_+))\|\\
	&\leq C(1+\|\nu\|)\dist(x_+,\cM)+L(1+\|\nu\|)^2\alpha\\
	&\leq C(1+\|\nu\|)(\dist(x,\cM)+C\|x-x_+\|)+L(1+\|\nu\|)^2\alpha\\
	&\leq C(1+\|\nu\|)\dist(x,\cM)+C(1+\|\nu\|)^2\alpha.
\end{align*}
Moreover, clearly we have
$R_2\leq C\dist(x,\cM)$
and 
$R_3\leq C\|x-x_+\|\leq (1+\|\nu\|)\alpha.$ Condition~\ref{assumption:smoothcompatibility} follows immediately.

Now suppose that $x_+ \notin B_{\epsilon}(\bar x)$, and therefore $\|x_+ - x\| \geq \epsilon/2$. Then, we may write
\begin{align*}
	\|P_{\tangentM{P_{\cM}(x)}}(G_{\alpha}(x, \perturb)- \perturb - \nabla f_{\cM}(P_{\cM}(x)))\| &\leq \|G_{\alpha}(x, \perturb)\| +  \|\perturb\| +  \|\nabla f_{\cM}(P_{\cM}(x))\|\\
	&\leq \frac{2}{\epsilon}(\|G_{\alpha}(x, \perturb)\| +  \|\perturb\| +  \|\nabla f_{\cM}(P_{\cM}(x))\|)\|x - x_+\| \\
	&\leq C(1+\|\perturb\|)^2\alpha,
\end{align*}
as desired.

\noindent\underline{Assumption~\ref{assumption:aiming}:} We begin with the decomposition 
\begin{align*}
	\langle G_{\alpha}(x,\nu)-\nu, x-y\rangle=&\underbrace{\langle \nabla g(x_+)+w,x_+-P_{\cM}(x_+)\rangle}_{R_1}\\
	&+\underbrace{\langle \nabla g(x)-\nabla g(x_+),x-y\rangle}_{R_2}+\underbrace{\langle \nabla g(x_+)+w, (x-P_{\cM}(x))-(x_+-P_{\cM}(x_+))\rangle}_{R_3}.
\end{align*}
We now bound the two terms on the right in the case $x_+\in B_{\epsilon}(\bar x)$. Using \eqref{eqn:angle}, we estimate
\begin{align*}
	R_1&\geq \mu\cdot  \dist(x_+, \cM) - (1+\|v\|)o(\dist(x_+, \cM))\\
	&\geq \mu\cdot  (\dist(x, \cM)-\|x-x_+\|) - (1+\|v\|)(o(\dist(x, \cM)) +\|x-x_+\|)\\
	&\geq \mu  \cdot\dist(x, \cM)-(1+\|\nu\|)^2(o(\dist(x, \cM))+C\alpha).
\end{align*}
Next, we compute
$$|R_2|\leq \|\nabla g(x)-\nabla g(x_+)\|\cdot\dist(x,\cM)\leq 2L\epsilon\cdot \dist(x,\cM)\leq \frac{\mu}{2} \dist(x,\cM).$$
Next using Lipschitz continuity of the map $I-P_{\cM}$ on $B_{\epsilon}(\bar x)$,  we deduce
$$|R_3|\leq (1+L)\|\nabla g(x_+)+w\|\cdot\|x-x_+\|\leq  C(1+\|\nu\|)^2\alpha.$$
The claimed proximal aiming condition follows immediately.

Let us look now at the case $x_+ \notin B_{\epsilon}(\bar x)$, and therefore $\dist(x,\cM)\leq \frac{\epsilon}{2}\leq  \|x-x^+\|$.
Then we compute \begin{align*}
	\dotp{G_{\alpha}(x, \perturb) - \perturb, x - P_{\cM}(x)} &\geq -\dist(x,\cM)\cdot \|G_{\alpha}(x, \perturb) - \perturb\|\\
	&= \dist(x, \cM) - \dist(x, \cM)(1 + \|G_{\alpha}(x, \perturb) - \perturb\|) \\
	&\geq\dist(x, \cM) - C\|x - x_+\| (1 + C(1+\|\perturb\|)) \\
	&\geq \dist(x, \cM) -C(1+\|\perturb\|)^2 \alpha,
\end{align*}
as desired. The proof is complete.

\subsection{Proof of Corollary~\ref{cor:activestrictsaddleregular}: avoiding active strict saddle via projected subgradient method}\label{appendix:cor:activestrictsaddleregular}

By Proposition~\ref{prop:projectedgradient} we need only show that Assumption~\ref{assumption:projectedgradient} holds. To that end, note that Assumptions\ref{assumption:projectedgradient:stronga} and ~\ref{assumption:projectedgradient:bproxregularity} hold by assumption. Next we prove ~\ref{assumption:projectedgradient:proximalaiming}. Note that if $g$ satisfies $(b_{\leq})$ along $\cM$, then~\ref{assumption:projectedgradient:proximalaiming} holds by Corollary~\ref{cor:prox-aiming_gen}. Next, suppose that $g$ is weakly convex around $x$. In this case, since each $x \in S$ is Fr{\'e}chet critical and $\cM_x$ is an active manifold, it follows by Proposition~\ref{prop: sharpness} that for some $\mu > 0$, we have
$$
g(y) - g(\proj_{\cM_x}(y)) \geq \mu\dist(y,\cM),
$$ 
near $x$. Consequently, for all $v \in \partial_c g(x)$, we have 
$$
\dotp{v, y - \proj_{\cM_x}(y)} \geq g(y) - g(\proj_{\cM_x}(y)) - O(\|x - y\|^2) \geq (\mu/2)\dist(x, \cM), 
$$
for all $y$ near $x$, verifying~\ref{assumption:projectedgradient:proximalaiming}.

 \subsection{Proof of Corollary~\ref{cor:activestrictsaddleregularproximalgradient}: avoiding active strict saddle via proximal gradient method}\label{appendix:cor:activestrictsaddleregularproximalgradient}
 
 By Proposition~\ref{prop:proximalgradient}, we need only show that Assumption~\ref{assumption:proximalgradient} holds. Note that~\ref{assumption:proximalgradient:Lipschitz1},~\ref{assumption:proximalgradient:Lipschitz2}, and~\ref{assumption:proximalgradient:stronga} hold by assumption. Thus, we need only verify~\ref{assumption:proximalgradient:proximalaiming}, which is immediate from $(b_{\leq})$-regularity and Corollary~\ref{cor:prox-aiming_gen}.

\subsection{Proofs of Corollaries~\ref{thm:globalsemialgebraic},~\ref{thm:globalsemialgebraic2}, and~\ref{thm:globalsemialgebraic3}: saddle point avoidance for generic semialgebraic problems.}\label{sec:genericsaddleavoidance}

We first claim that the collection of limit points for all three methods is a connected set of composite Clarke critical points. To that end, note that by \cite[Theorem 6.2/Corollary 6.4]{davis2020stochastic}, we know that for each method, on the event the sequence $x_k$ is bounded, all limit points are composite Clarke critical. We claim that the set of limit points is in fact connected. Indeed, by~\cite[Lemma 5(iii)]{palm}, this will follow if 
$$
\lim_{k \rightarrow 0}\|x_{k+1} - x_k\| = \lim_{k \rightarrow 0}\|\alpha_kG_{\alpha_k}(x_k, \perturb_k)\| = 0.
$$ 
This in turn follows from \cite[Lemma A.4, A.5, and A.6]{davis2020stochastic}, which shows that $G_{\alpha_k}(x_k, \perturb_k) = w_k + \xi_k$, where $w_k$ is bounded and $\sum_{k=1}^\infty \alpha_k\xi_k$ exists almost surely. Consequently, we have $\|\alpha_kG_{\alpha_k}(x_k, \perturb_k)\| = \alpha_k\|w_k + \xi_k\| \rightarrow 0$ almost surely, as desired.

Next we claim that the sequence $x_k$ converges for all three methods. Indeed, by Corollaries~\ref{prop:subgradientsemialgebraic},~\ref{prop:projectedgradientsemialgebraic}, and~\ref{prop:proximalgradientsemialgebraic}, it follows that each of the set of composite Clarke critical points for all three problems is finite for generic semialgebraic problems. Therefore, since the set of limit points of $x_k$ is connected and discrete, it follows that on the event the sequence $x_k$ is bounded, it must converge to a composite Clarke critical point. 

To wrap up the proof, suppose that $x_k$ converges to a composite limiting critical point. Then by  Corollaries~\ref{prop:subgradientsemialgebraic},~\ref{prop:projectedgradientsemialgebraic}, and~\ref{prop:proximalgradientsemialgebraic} for any  of the three methods, every composite limiting critical point of $f$ is a composite Fr{\'e}chet critical point which is either a local minimizer or an active strict saddle point at which Assumption~\ref{assumption:A} holds along the active manifold. By Theorem~\ref{thm:avoidance}, the sequence $x_k$ can converge to the such active strict saddle points only with probability zero. Therefore, the limit point must be a local minimizer, as desired.

\subsection{Sequences and Stochastic Processes}\label{sec:stochasticprocess}

\subsubsection{Lemmas from other works.}

\begin{lem}[Robbins-Siegmund\cite{robbins1971convergence}]\label{lem: Robbins-Siegmund}
Let $A_k, B_k,C_k,D_k \ge 0$ be non-negative random variables adapted to the filtration  $\{\cF_{k}\}$ and satisfying 
$$
\EE[A_{k+1}\mid \mathcal{F}_k] \le (1+B_k)A_k + C_k - D_k.
$$
Then on the event $\{\sum_{k}B_k <\infty, \sum_{k}C_k < \infty\}$, there is a random variable $A_\infty<\infty$ such that $A_k \xrightarrow{\text{a.s.}} A_\infty$ and $\sum_k D_k <\infty$ almost surely. 
\end{lem}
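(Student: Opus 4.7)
The plan is to carry out the classical compensation-and-localization argument that converts the stated inequality into a supermartingale convergence problem. First I would introduce the $\cF_{k-1}$-measurable normalizing factor
$$\beta_k := \prod_{j=0}^{k-1}(1+B_j)^{-1} \in (0,1],$$
and rewrite the hypothesis, using that $\beta_{k+1}$ is $\cF_k$-measurable and $\beta_{k+1}(1+B_k) = \beta_k$, as
$$\EE[\beta_{k+1}A_{k+1}\mid\cF_k] \le \beta_k A_k + \beta_{k+1}(C_k - D_k).$$
Setting $X_k := \beta_k A_k + \sum_{j=0}^{k-1}\beta_{j+1}(D_j - C_j)$, this rearranges to $\EE[X_{k+1}\mid\cF_k]\le X_k$, so $X_k$ is a supermartingale.

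The main obstacle is that $X_k$ need not be bounded below on the entire sample space, since the term $-\sum_{j=0}^{k-1}\beta_{j+1}C_j$ can be arbitrarily negative and the standard non-negative supermartingale convergence theorem does not apply directly. To handle this, I would localize via the stopping times
$$\tau_N := \inf\Bigl\{k\ge 0: \sum_{j=0}^{k-1}\beta_{j+1}C_j > N\Bigr\},$$
and observe that
$$X_{k\wedge \tau_N} + N \ge \beta_{k\wedge \tau_N}A_{k\wedge \tau_N} + \sum_{j=0}^{(k\wedge \tau_N) - 1}\beta_{j+1}D_j \ge 0.$$
Hence $X_{k\wedge \tau_N}+N$ is a non-negative supermartingale, and Doob's convergence theorem delivers an almost sure finite limit, so $X_{k\wedge \tau_N}$ converges almost surely for every $N$.

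Finally I would stitch everything together on the event $\Omega_0 := \{\sum_k B_k < \infty,\ \sum_k C_k < \infty\}$. On $\Omega_0$ the infinite product $\beta_\infty := \lim_k \beta_k$ exists and is strictly positive, and $\sum_j \beta_{j+1}C_j \le \sum_j C_j < \infty$. Hence $\Omega_0 \subseteq \bigcup_N \{\tau_N = \infty\}$, so $X_k$ itself converges almost surely to a finite limit on $\Omega_0$. Rewriting $\beta_k A_k = X_k + \sum_{j=0}^{k-1}\beta_{j+1}C_j - \sum_{j=0}^{k-1}\beta_{j+1}D_j$, the first two terms on the right converge, while the last is monotone nondecreasing; non-negativity of $A_k$ then forces $\sum_{j}\beta_{j+1}D_j < \infty$. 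Since $\beta_{j+1} \ge \beta_\infty > 0$ on $\Omega_0$, we conclude $\sum_j D_j < \infty$. The convergence of $X_k$ and of the two tail sums yields convergence of $\beta_k A_k$, and dividing by $\beta_\infty > 0$ produces the desired almost-sure limit $A_k \to A_\infty < \infty$, completing the proof.
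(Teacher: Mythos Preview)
The paper does not supply its own proof of this lemma; it is listed in the appendix under ``Lemmas from other works'' and simply attributed to Robbins--Siegmund~\cite{robbins1971convergence}. Your argument is the classical one---normalize by $\beta_k$, compensate to produce a supermartingale, localize, apply Doob's convergence theorem, and then unwind on $\Omega_0$---and the ideas are all correct.

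There is one small indexing slip in the localization step. With your definition $\tau_N := \inf\bigl\{k:\sum_{j=0}^{k-1}\beta_{j+1}C_j>N\bigr\}$, at time $k\wedge\tau_N=\tau_N$ the partial sum already \emph{exceeds} $N$, so the displayed inequality $X_{k\wedge\tau_N}+N\ge 0$ fails there, and you lose the nonnegativity needed for Doob's theorem. The standard remedy is to shift the index, for instance setting $\tau_N := \inf\bigl\{k:\sum_{j=0}^{k}C_j>N\bigr\}$; then for every $k$ one has $(k\wedge\tau_N)-1<\tau_N$, hence $\sum_{j=0}^{(k\wedge\tau_N)-1}C_j\le N$, and since $\beta_{j+1}\le 1$ this gives $\sum_{j=0}^{(k\wedge\tau_N)-1}\beta_{j+1}C_j\le N$ as required. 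With this correction the remainder of your argument goes through verbatim.
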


\begin{lem}[Conditional Borel-Cantelli~{\cite{condborellcantelli}}]\label{lem:condborelcantelli}
Let $\{X_n \colon n \ge 1\}$ be a sequence of nonnegative random variables defined on the probability space $(\Omega, \cF, \mathbb{P})$ and  $\{\cF_n \colon n \ge 0\}$ be a sequence of sub-$\sigma$-algebras of $\cF$. Let $M_n = \expect{X_n \mid \cF_{n-1}}$ for $n\ge 1$. If $\{\cF_n \colon n \ge 0\}$ is nondecreasing, i.e., it is a filtration, then $\sum_{n=1}^{\infty} X_n < \infty$ almost surely on $\{\sum_{n=1}^{\infty}M_n <\infty\}$.
\end{lem}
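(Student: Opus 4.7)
The plan is to derive the conditional Borel--Cantelli statement directly from the Robbins--Siegmund lemma (Lemma~\ref{lem: Robbins-Siegmund}) stated immediately above. Once the right quantities are identified this becomes essentially a one-line reduction, which is presumably why the authors placed the two results back to back.

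First I would set $A_k := \sum_{j=1}^{k} X_j$, $B_k := 0$, $C_k := M_{k+1}$, and $D_k := 0$. All four sequences are nonnegative by hypothesis. The sequence $A_k$ is $\cF_k$-measurable provided $X_n$ is $\cF_n$-measurable (a standard tacit assumption for this form of the statement; note that $M_n$ is automatically $\cF_{n-1}$-measurable, so $C_k$ is $\cF_k$-measurable as required for the Robbins--Siegmund recursion).

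Next I would verify the recursion. By linearity of conditional expectation and the very definition $M_{k+1}=\EE[X_{k+1}\mid \cF_k]$,
\[
\EE[A_{k+1}\mid \cF_k] \;=\; A_k + \EE[X_{k+1}\mid \cF_k] \;=\; A_k + M_{k+1} \;=\; (1+B_k)A_k + C_k - D_k,
\]
so the hypothesis of Lemma~\ref{lem: Robbins-Siegmund} holds (with equality, in fact). Applying that lemma on the event $\{\sum_k C_k < \infty\} = \{\sum_{n\ge 1} M_n <\infty\}$ yields that $A_k$ converges almost surely to a finite random variable on this event. Since $A_k$ is monotone nondecreasing in $k$, this is equivalent to $\sum_{n=1}^{\infty} X_n<\infty$ almost surely on the event, which is precisely the conclusion.

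The only subtle point---and the one I would flag as the main obstacle---is the implicit adaptedness of $X_n$ to $\cF_n$. Without it, the very object $A_k$ need not be $\cF_k$-measurable and Robbins--Siegmund does not apply; moreover the conclusion can genuinely fail in that generality (one can rig $X_n$ to be unpredictable in ways that defeat any a.s.\ statement from the predictable compensator alone). So in a careful write-up I would either add ``$X_n$ is $\cF_n$-measurable'' to the hypotheses, or, if one insists on the stated form, first pass to $\tilde X_n := \EE[X_n \mid \cF_n]$ and then use Jensen/tower to recover $\sum X_n<\infty$ from $\sum \tilde X_n<\infty$. Everything else is routine bookkeeping.
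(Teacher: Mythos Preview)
The paper does not supply a proof of this lemma: it is listed under ``Lemmas from other works'' and attributed to the cited reference, so there is no in-paper argument to compare against. Your reduction to Robbins--Siegmund is clean and correct under the adaptedness assumption you flag; indeed, setting $A_k=\sum_{j\le k}X_j$, $B_k=D_k=0$, $C_k=M_{k+1}$ yields the recursion with equality, and the monotone convergence of $A_k$ to a finite limit is exactly the claim. This is a perfectly acceptable way to derive the result, and since every application in the paper (e.g.\ in the proofs of Propositions~\ref{prop:gettingclosertothemanifold} and~\ref{prop:shadow}) is to sequences that are adapted after an index shift, your version suffices for the paper's needs.

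One caveat on your final remark: the proposed repair for the non-adapted case, passing to $\tilde X_n=\EE[X_n\mid\cF_n]$ and then ``recovering $\sum X_n<\infty$ from $\sum \tilde X_n<\infty$ via Jensen/tower,'' does not work as stated. Tower gives you the same compensator $M_n$ for $\tilde X_n$, so Robbins--Siegmund yields $\sum\tilde X_n<\infty$ on the event, but there is no Jensen-type inequality that bounds $X_n$ by $\tilde X_n$; the implication $\sum\EE[X_n\mid\cF_n]<\infty\Rightarrow\sum X_n<\infty$ is not automatic. Since the paper never needs the non-adapted case, I would simply add the adaptedness hypothesis explicitly and drop the attempted extension.
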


\begin{lem}[{\cite[Theorem A]{AIHPB_1996__32_3_395_0}}]\label{lem:nonmeasruable}
Let $\{\cF_k\}$ be a filtration and let $\{\epsilon_k\}$ be a sequence of random variables adapted to $\{\cF_k\}$ satisfying for all $k$ the bound
$$
\EE[\epsilon_{k+1}^2 \mid \cF_k] < \infty \qquad \text{ and } \qquad \EE[\epsilon_{k+1}\mid \cF_k] = 0.
$$
Let $\{\Phi_k\}_k$ be another sequence of random variables adapted to $\{\cF_k\}$. Let $\{c_k\}$ be a deterministic sequence that is square summable but not summable. Suppose that the following hold almost surely on an event $H$: 
\begin{itemize}
\item We have the Marcinkiewick-Zygmund conditions: 
\begin{align*}
 \limsup_{k}~ \EE[\epsilon_{k+1}^2 \mid \cF_k] < \infty \qquad \text{ and } \qquad  \liminf_{k}~ \EE[|\epsilon_{k+1}| \mid \cF_k] > 0.
\end{align*}
\item There exists sequences of random variables $\{r_k\}$ and $\{R_k\}$, adapted to $\{\cF_k\}$ such that $\Phi_k = r_k + R_k$ and 
\begin{align*}
\sum_{k} \|r_k\|^2 < \infty \qquad \text{ and } \qquad \EE\left[ 1_{H}\sum_{k = K}^\infty c_k|R_k|\right] = o\left(\left(\sum_{k=K}^\infty c_k^2\right)^{1/2}\right).
\end{align*}
\end{itemize}
Then on $H$ the series $\sum_{k=1}^\infty c_k(\Phi_k + \epsilon_k)$ converges almost surely to a finite random variable $L$.  Moreover,  for any $p \in \NN$ and any $\cF_p$-measurable random variable $Y$ we have 
$$
P(H \cap (L = Y)) = 0.
$$
\end{lem}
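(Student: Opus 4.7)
The plan is to prove the statement in two stages: first, almost-sure convergence of $L := \sum_k c_k(\Phi_k+\epsilon_k)$ on $H$; second, a rate-type argument ruling out $L = Y$ for any $\cF_p$-measurable $Y$ on any positive-probability subset of $H$. The heuristic tension behind the second stage is that the martingale tail $M_\infty - M_n := \sum_{k>n}c_k\epsilon_k$ has intrinsic fluctuations of order $\sigma_n := (\sum_{k>n}c_k^2)^{1/2}$ (thanks to the $L^1$-lower bound $\liminf \EE[|\epsilon_{k+1}|\mid\cF_k]\ge\delta>0$), while agreement with a predictable $Y$ would force those fluctuations to be $o(\sigma_n)$ on the exceptional set.

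\textbf{Stage 1 (convergence).} I would split the series into its martingale piece $M_n := \sum_{k\le n}c_k\epsilon_k$, the ``regular'' tail $\sum c_kr_k$, and the ``small'' tail $\sum c_kR_k$. For $M_n$, localize at $\tau_M := \inf\{k: \EE[\epsilon_{k+1}^2\mid\cF_k] > M\}$: since $\limsup_k \EE[\epsilon_{k+1}^2\mid\cF_k]<\infty$ a.s.\ on $H$, for $M$ large one has $\tau_M = \infty$ on $H$, and the stopped process is $L^2$-bounded (using $\sum c_k^2<\infty$), so Doob's convergence theorem gives almost-sure convergence of $M_{n\wedge\tau_M}$; a diagonal argument over $M\uparrow\infty$ pulls this back to all of $H$. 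The tail $\sum c_k r_k$ is absolutely convergent a.s.\ on $H$ by Cauchy--Schwarz against $\sum r_k^2<\infty$. The tail $\sum c_kR_k$ is absolutely convergent a.s.\ on $H$ by Markov's inequality applied to $\EE[\mathbf{1}_H\sum c_k|R_k|]<\infty$, which is a trivial consequence of the stated $o$-bound.

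\textbf{Stage 2 (identity and bilateral bounds).} Fix $p$ and a $\cF_p$-measurable $Y$; set $A := H\cap\{L=Y\}$ and suppose for contradiction $P(A)>0$. For every $n\ge p$ the identity
\begin{equation*}
M_\infty - M_n \;=\; (Y - L_n) \;-\; \sum_{k>n}c_kr_k \;-\; \sum_{k>n}c_kR_k \qquad\text{holds on } A.
\end{equation*}
Two of the three right-hand terms are controlled at scale $o(\sigma_n)$ in $L^1(\mathbf{1}_A)$: Cauchy--Schwarz bounds the $r$-tail by $\sigma_n(\sum_{k>n}r_k^2)^{1/2}$ and dominated convergence on $H$ turns the second factor into $o(1)$; the $R$-tail is $o(\sigma_n)$ directly by the stated hypothesis. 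Against this upper bound I would derive the matching conditional lower bound $\EE[\,|M_\infty - M_n|\mid\cF_n]\ge c\sigma_n$ almost surely on $H$: truncate $\epsilon_k$ at a large threshold $T$, using Cauchy--Schwarz against the uniform $L^2$ upper bound to see that the $L^1$ lower bound $\liminf \EE[|\epsilon_{k+1}|\mid\cF_k]\ge\delta$ persists for the truncated sequence (up to a factor $1/2$ once $T$ is large), then apply the Burkholder--Davis--Gundy inequality to the bounded truncated martingale, and absorb the truncation error as $o(\sigma_n)$ in $L^2$ via a Chebyshev estimate of the form $M/T^2\cdot\sigma_n^2$.

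\textbf{Main obstacle.} The delicate point is that $Y - L_n\to 0$ on $A$ only qualitatively, with no a-priori rate in $\sigma_n$, so the contradiction cannot be produced by naive cancellation inside the identity. My plan is to bridge this gap with the auxiliary Doob martingale $N_n := P(A\mid\cF_n)$, which is bounded and converges both a.s.\ and in $L^2$ to $\mathbf{1}_A$. Using the orthogonality $\EE[(M_\infty - M_n)\mid\cF_n] = 0$, I can rewrite
\begin{equation*}
\EE[\mathbf{1}_A(M_\infty - M_n)] \;=\; \EE[(\mathbf{1}_A - N_n)(M_\infty - M_n)] \;\le\; \|\mathbf{1}_A - N_n\|_2\cdot\|(M_\infty - M_n)\mathbf{1}_H\|_2 \;=\; o(\sigma_n),
\end{equation*}
and similarly for the squared quantity $\EE[\mathbf{1}_A(M_\infty - M_n)^2]$, by splitting $\mathbf{1}_A = N_n + (\mathbf{1}_A - N_n)$ and controlling the two halves separately. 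Combined with the conditional $L^1$ lower bound (integrated against $\mathbf{1}_A$ to produce a term of order $P(A)\sigma_n$), and the identity's upper bound modulo $Y - L_n$, this forces $\EE[\mathbf{1}_A|Y - L_n|]$ itself to carry a rate of order $\sigma_n$---contradicting the qualitative $L^1(\mathbf{1}_A)$-convergence of $Y - L_n$ to $0$ (as the latter already goes to $0$ in $L^1$ after any uniform integrability reduction via an auxiliary localization of $\sup_n|L_n|$). This signed-to-unsigned conversion---only available under the interplay of the $L^1$ noise lower bound and the $L^2$ noise upper bound---is the technical heart of the argument of~\cite{AIHPB_1996__32_3_395_0} and is where I would expect to spend the most effort.
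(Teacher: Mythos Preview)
The paper does not prove this lemma. It appears in the appendix subsection ``Lemmas from other works'' and is simply quoted verbatim from \cite[Theorem~A]{AIHPB_1996__32_3_395_0} without argument; the authors only \emph{use} it, inside the proof of Lemma~\ref{lem:repulsion}. So there is no in-paper proof to compare your proposal against.

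For what it is worth, your outline is broadly in the spirit of the original Brandi\`ere--Duflo argument: Stage~1 (convergence) is correct and straightforward, and the key idea in Stage~2---that on the event $H\cap\{L=Y\}$ the martingale tail $M_\infty-M_n$ would be forced to be $o(\sigma_n)$ in an $L^1$ sense, contradicting the Marcinkiewicz--Zygmund lower bound $\EE[|M_\infty-M_n|\mid\cF_n]\gtrsim\sigma_n$---is the right one. The device of replacing $\mathbf{1}_A$ by the Doob martingale $N_n=P(A\mid\cF_n)$ to exploit $\EE[M_\infty-M_n\mid\cF_n]=0$ is exactly the maneuver used in \cite{AIHPB_1996__32_3_395_0}. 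The step you flag as the ``main obstacle''---showing that $\EE[\mathbf{1}_A|Y-L_n|]=o(\sigma_n)$ rather than merely $o(1)$---is indeed the crux, and your sketch of how to close it (combining the conditional lower bound with the $\|\mathbf{1}_A-N_n\|_2$ decay) is plausible but not fully worked out here; that is where the real work in the cited paper lies.
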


\begin{lem}[{\cite[Exercise 5.3.35]{dembo2016lecture}}]\label{lem: L2martingaleconvergence}
	Let  $M_k$ be an $L^2$ martingale adapted to a filtration $\{\cF_k\}$ and let $b_k \uparrow \infty$ be a positive deterministic sequence. Then if 
	$$\sum_{k\ge 1}b_k^{-2}\expect{(M_k - M_{k-1})^2 \mid \cF_{k-1}} < +\infty,$$
	we have $b_n^{-1}M_n \xrightarrow{\text{a.s.}} 0$.
\end{lem}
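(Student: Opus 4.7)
The plan is to prove the lemma by the classical Kronecker-plus-martingale strategy. First I would introduce the normalized difference martingale
\[
N_n := \sum_{k=1}^{n} b_k^{-1}(M_k - M_{k-1}),
\]
which is adapted to $\{\cF_k\}$ and, being built from martingale differences scaled by previsible (in fact deterministic) weights, is itself a martingale. The key observation is that its conditional quadratic variation is
\[
\langle N\rangle_n = \sum_{k=1}^{n} b_k^{-2}\,\EE[(M_k - M_{k-1})^2 \mid \cF_{k-1}],
\]
which by hypothesis has an almost surely finite limit $\langle N\rangle_\infty$.

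Next I would show that $N_n$ converges almost surely to a finite random variable. The subtlety is that $\langle N\rangle_\infty$ is finite a.s.\ but not necessarily uniformly bounded, so $N$ need not be $L^2$-bounded. To get around this, I would localize: for each integer $m$, define the previsible stopping time $T_m := \inf\{n \geq 1 : \langle N\rangle_{n+1} > m\}$. The stopped martingale $N^{T_m}$ has $\EE[N_{n\wedge T_m}^2] = \EE[\langle N\rangle_{n\wedge T_m}] \leq m$, hence is $L^2$-bounded, and the standard $L^2$ martingale convergence theorem gives almost sure convergence of $N_{n\wedge T_m}$ to a finite limit. On the event $\{\langle N\rangle_\infty \leq m\}$ we have $T_m = \infty$, so $N_n$ itself converges there. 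Since $\{\langle N\rangle_\infty < \infty\}$ has full probability by assumption and equals $\bigcup_m\{\langle N\rangle_\infty \leq m\}$, the sequence $N_n$ converges a.s.

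Finally I would apply Kronecker's lemma in its deterministic form pathwise: for any scalar sequences $a_k$ and $b_k \uparrow \infty$, convergence of $\sum_k a_k/b_k$ implies $b_n^{-1}\sum_{k=1}^{n} a_k \to 0$. Setting $a_k := M_k - M_{k-1}$ on a sample path where $N_n$ converges, the conclusion reads $b_n^{-1}(M_n - M_0) \to 0$, and since $M_0/b_n \to 0$ trivially, we obtain $b_n^{-1}M_n \to 0$ almost surely. The only nontrivial step is the localization argument for a.s.\ convergence of $N_n$; everything else is bookkeeping.
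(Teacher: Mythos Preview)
The paper does not prove this lemma; it is listed under ``Lemmas from other works'' with a citation to Dembo's lecture notes and no argument given. Your proof is correct and is exactly the standard route the cited exercise intends: form the weighted-difference martingale $N_n=\sum_{k\le n} b_k^{-1}(M_k-M_{k-1})$, localize via the previsible stopping times $T_m$ to force $L^2$-boundedness and hence almost sure convergence of $N_n$, then apply the Kronecker lemma (which the paper also records separately) pathwise to conclude $b_n^{-1}(M_n-M_0)\to 0$. The only point worth a small remark is that $\{T_m=\infty\}=\{\langle N\rangle_\infty\le m\}$ relies on $\langle N\rangle_{n+1}$ being $\cF_n$-measurable, which you correctly flag by calling $T_m$ previsible; otherwise the argument is complete as written.
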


\begin{lem}[Kronecker Lemma]\label{lem:kronecker}
Suppose $\{x_k\}_k$ is an infinite sequence of real number such that the sum
$
\sum_{k=1}^{\infty} x_k 
$ 
exists and is finite. Then for any divergent positive nondecreasing sequence $\{b_k\}$, we have 
$$
\lim_{K \rightarrow \infty}\frac{1}{b_K}\sum_{k=1}^K b_k x_k = 0.
$$
\end{lem}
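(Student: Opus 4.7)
The plan is to invoke Abel summation (summation by parts) to reduce the claim to a standard Ces\`aro-type averaging argument. First, I would introduce the partial sums $S_0 := 0$, $S_n := \sum_{k=1}^{n} x_k$, and their limit $S := \lim_{n\to\infty} S_n$, which exists and is finite by hypothesis. Writing $x_k = S_k - S_{k-1}$ and applying summation by parts would give the identity
$$\sum_{k=1}^{K} b_k x_k \;=\; b_K S_K \;-\; \sum_{k=1}^{K-1} (b_{k+1} - b_k)\, S_k.$$
Dividing by $b_K$, the statement reduces to verifying that the weighted average
$$A_K \;:=\; \frac{1}{b_K}\sum_{k=1}^{K-1} (b_{k+1} - b_k)\, S_k$$
converges to $S$ as $K \to \infty$, since in that case the cancellation with $S_K \to S$ yields the desired limit $0$.

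To establish $A_K \to S$, I would carry out a direct Ces\`aro-type estimate. Fix $\epsilon > 0$ and choose $N$ large enough that $|S_k - S| < \epsilon$ for all $k \geq N$. Split $A_K$ into the ``head'' $k < N$ and the ``tail'' $k \geq N$. The head $\frac{1}{b_K}\sum_{k=1}^{N-1}(b_{k+1} - b_k)\, S_k$ is a fixed quantity divided by $b_K$, and hence tends to $0$ by the divergence of $\{b_k\}$. For the tail, the telescoping identity $\sum_{k=N}^{K-1}(b_{k+1} - b_k) = b_K - b_N$ combined with nonnegativity of the increments (from monotonicity of $\{b_k\}$) yields
$$\left|\frac{1}{b_K}\sum_{k=N}^{K-1}(b_{k+1}-b_k)(S_k - S)\right| \;\leq\; \epsilon\cdot\frac{b_K - b_N}{b_K} \;\leq\; \epsilon,$$
while $\frac{1}{b_K}\sum_{k=N}^{K-1}(b_{k+1}-b_k)\, S = S \cdot (b_K - b_N)/b_K$ tends to $S$ as $K \to \infty$. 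Letting $\epsilon \downarrow 0$ completes the argument.

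There is essentially no obstacle here: the lemma is a classical deterministic fact (Kronecker's lemma, standard in the derivation of the strong law of large numbers) and the full proof fits within half a page. The only minor care needed is to preserve the correct sign conventions in the Abel summation step and to use monotonicity of $\{b_k\}$ to ensure the weights $b_{k+1} - b_k$ are nonnegative, so that standard absolute-value bounds apply without a superfluous bounded-variation hypothesis.
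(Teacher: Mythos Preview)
Your proof is correct and is the standard Abel-summation argument for Kronecker's lemma. The paper does not actually prove this statement: it is listed under ``Lemmas from other works'' as a classical fact quoted without proof, so there is no paper proof to compare against.
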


\subsubsection{Lemmas proved in this work}
We will use the following two Lemmas on sequences. The proof of the following Lemma may be found in Appendix~\ref{sec:proof:lem:fastsummability}. 
\begin{lem}\label{lem:fastsummability}
Fix $k_0 \in \NN, c > 0$, and $\gamma \in (1/2, 1]$. Suppose that $\{X_k\}, \{Y_k\},$ and $\{Z_k\}$ are nonnegative random variables adapted to a filtration $\{\cF_k\}$. Suppose the relationship holds:
$$
\EE[X_{k+1}\mid\mathcal{F}_k] \leq (1-ck^{-\gamma})X_k - Y_k + Z_k \qquad \text{for all $k \geq k_0$}.
$$
Assume furthermore that $c \geq 6$ if $\gamma = 1$. Define the constants $a_k:=\frac{k^{2\gamma-1}}{\log^2(k+1)}$. Then there exists a random variable $V < \infty$ such that  on the event $\{\sum_{k=1}^\infty a_{k+1}Z_k < + \infty\}$, the following is true:
\begin{enumerate}
\item The limit holds
$$
 a_kX_k\xrightarrow{\text{a.s.}} V.
$$
\item The sum is finite
$$
  \sum_{k=1}^\infty a_{k+1}Y_k <  +\infty.
$$
\end{enumerate}
\end{lem}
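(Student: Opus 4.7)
The plan is to reduce the lemma to a single application of the Robbins--Siegmund supermartingale convergence theorem (Lemma~\ref{lem: Robbins-Siegmund}) applied to the rescaled sequence $W_k := a_k X_k$. Multiplying the hypothesized recursion through by $a_{k+1}$ and substituting $X_k = W_k/a_k$ gives, for all $k \geq k_0$,
\[
\EE[W_{k+1}\mid \cF_k] \;\leq\; r_k\, W_k \;-\; a_{k+1}Y_k \;+\; a_{k+1}Z_k, \qquad r_k \;:=\; \frac{a_{k+1}}{a_k}\bigl(1 - c k^{-\gamma}\bigr).
\]
If I can establish that $r_k \leq 1$ for all sufficiently large $k$, this is precisely a Robbins--Siegmund inequality with $A_k = W_k$, $B_k = 0$, $C_k = a_{k+1} Z_k$, and $D_k = (1-r_k)W_k + a_{k+1} Y_k \geq a_{k+1} Y_k$. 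On the event $\{\sum_k a_{k+1} Z_k < \infty\}$, the theorem then simultaneously delivers both claimed conclusions: $W_k$ converges almost surely to a finite random variable $V$, and $\sum_k a_{k+1}Y_k \leq \sum_k D_k < \infty$ almost surely. The finitely many indices $k < k_0$ for which the recursion has not yet been assumed contribute only $\cF_k$-measurable, almost-surely finite values and do not affect the tail behavior.

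The heart of the argument is therefore the asymptotic bound $r_k \leq 1$. Direct computation yields
\[
\frac{a_{k+1}}{a_k} \;=\; \Bigl(\frac{k+1}{k}\Bigr)^{2\gamma - 1} \cdot \Bigl(\frac{\log(k+1)}{\log(k+2)}\Bigr)^{2} \;\leq\; \Bigl(1+\tfrac{1}{k}\Bigr)^{2\gamma - 1},
\]
where the inequality uses monotonicity of $\log$. For $\gamma \in (1/2, 1)$, a first-order Taylor expansion gives $(1+1/k)^{2\gamma-1} = 1 + (2\gamma-1)/k + O(k^{-2})$, and since $1/k = o(k^{-\gamma})$ the contraction $(1 - ck^{-\gamma})$ strictly dominates; in fact one obtains $r_k \leq 1 - (c/2)k^{-\gamma}$ for all large $k$, giving room to spare. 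In the borderline case $\gamma = 1$, the two $O(1/k)$ effects nearly cancel and the cruder bound $r_k \leq (1+1/k)(1-c/k) = 1 + (1-c)/k - c/k^2$ is the best one can hope for; the hypothesis $c \geq 6$ then comfortably guarantees $r_k \leq 1$ for all $k$ beyond some threshold.

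The only step that requires any real care is the ratio estimate at the endpoint $\gamma = 1$: the logarithmic factor $\log^{2}(k+1)$ appearing in the denominator of $a_k$ is chosen precisely so that $W_k$ sits on the borderline where $a_{k+1}/a_k \approx 1 + 1/k$, and the contraction $(1 - c/k)$ must beat this growth by a definite margin. This is both the sole technical obstacle and the explanation for the restriction on $c$ when $\gamma = 1$. Everything else in the proof is a formal manipulation: derive the rescaled recursion, verify the ratio bound by elementary calculus, check the sign of $D_k$, and invoke Lemma~\ref{lem: Robbins-Siegmund}.
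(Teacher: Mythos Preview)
Your proposal is correct and follows essentially the same route as the paper: rescale by $a_{k+1}$, verify the ratio inequality $a_{k+1}(1-ck^{-\gamma})\le a_k$ for large $k$ by elementary estimates (splitting into $\gamma<1$ and $\gamma=1$), and then invoke Robbins--Siegmund. Your ratio bound is in fact slightly sharper than the paper's (you get coefficient $2\gamma-1$ from concavity where the paper uses $2(2\gamma-1)$), but the structure and key idea are identical.
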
 

The proof of the following Lemma may be found in Appendix~\ref{sec:proof:lem:sequencelemmadistance}.
\begin{lem}\label{lem:sequencelemmadistance}
	Fix $k_0 \in \NN$, $c, C > 0$, and $\gamma \in (1/2, 1]$. Suppose that $\{s_k\}_{k}$ is a nonnegative sequence satisfying 
	\begin{align*}
	s_{k} \le  \frac{c}{12\gamma} \qquad \text{ and } \qquad s_{k+1}^2 \leq s_k^2 - c k^{-\gamma} s_k + Ck^{-2\gamma},  \qquad \text{for all $k \geq k_0$,}
	\end{align*}
Then, there exists a constant $C_{\ub}$ depending only on $c, C, \gamma$ and $k_0$ such that  
	$$
	s_{k} \leq C_{\ub}k^{-\gamma}, \qquad \forall k \ge 1.
	$$
\end{lem}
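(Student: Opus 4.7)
It is convenient to rescale and work with $v_k := k^\gamma s_k$. Multiplying the recursive inequality by $(k+1)^{2\gamma}$ gives, for $k \ge k_0$,
\[
v_{k+1}^2 \;\le\; \rho_k\,\phi(v_k), \qquad \rho_k := (1+1/k)^{2\gamma},\quad \phi(v) := v^2 - cv + C.
\]
Two elementary facts will drive the argument. First, $\rho_k \le 4$ for every $k\ge 1$, and by Bernoulli's inequality there is an absolute constant $\kappa$ with $\rho_k \le 1 + \kappa\gamma/k$ for all sufficiently large $k$. Second, $\phi$ is a parabola with vertex at $c/2$, so it is bounded above by $M := \max(C,\, C^2/c^2)$ on the interval $[0, D]$ with $D := \max(c/2,\, C/c)$, and is strictly increasing on $[D, \infty)$. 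The desired conclusion reduces to producing $\bar C$, depending only on $c,C,\gamma,k_0$, with $v_k \le \bar C$ for all $k \ge 1$, and then setting $C_\ub := \bar C$.

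For indices $k \le k_0$, the hypothesis $s_k \le c/(12\gamma)$ immediately yields $v_k \le c k_0^\gamma/(12\gamma)$, so it suffices to propagate a uniform bound $v_k \le \bar C$ forward by induction on $k \ge k_0$. Assume $v_k \le \bar C$ and split into two regimes. If $v_k \le D$, then $\phi(v_k) \le M$, hence $v_{k+1}^2 \le \rho_k M \le 4M$, and thus $v_{k+1} \le 2\sqrt{M} \le \bar C$ provided $\bar C \ge 2\sqrt{M}$. If $v_k > D$, monotonicity of $\phi$ on $[D,\infty)$ and the inductive hypothesis give $v_{k+1}^2 \le \rho_k\,\phi(\bar C) = \rho_k(\bar C^2 - c\bar C + C)$, so closing the induction requires the scalar inequality
\[
\rho_k\,(\bar C^2 - c\bar C + C) \;\le\; \bar C^2.
\]
Rewriting this as $(\rho_k - 1)\bar C^2 \le \rho_k(c\bar C - C)$ and using $\rho_k - 1 \le \kappa\gamma/k$, a sufficient condition is $\kappa\gamma\,\bar C^2 \le k\,(c\bar C - C)$, which must hold for every $k$ at which the induction is applied.

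The main obstacle is the joint choice of $\bar C$ and the starting index, because the required inequality $\kappa\gamma\,\bar C^2 - cK^*\bar C + K^*C \le 0$, viewed as a quadratic in $\bar C$, admits a real solution exactly when its discriminant is nonnegative, i.e., when $K^* \ge 4\kappa\gamma C/c^2$. I would therefore define
\[
K^* := \max\!\bigl\{k_0,\ \lceil 4\kappa\gamma C/c^2\rceil + 1\bigr\},
\]
chosen large enough that both this discriminant condition and the asymptotic estimate $\rho_k \le 1 + \kappa\gamma/k$ are valid for all $k \ge K^*$. For this $K^*$, the admissible range of $\bar C$ is an interval whose upper end is $\Theta(cK^*/(\kappa\gamma))$, while the remaining constraints $\bar C \ge c(K^*)^\gamma/(12\gamma)$ (to handle $k \le K^*$ via the trivial bound) and $\bar C \ge 2\sqrt{M}$ (the Case~A output) scale respectively as $O((K^*)^\gamma)$ and $O(1)$, and are both dominated by the upper endpoint once $K^*$ is sufficiently large since $\gamma \le 1$. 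Any admissible $\bar C$ therefore meets all requirements; setting $C_\ub := \bar C$ proves the lemma, and since the entire construction depends only on $c, C, \gamma, k_0$, so does $C_\ub$.
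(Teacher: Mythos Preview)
Your argument is correct and takes the same approach as the paper: both rescale $t_k := k^\gamma s_k$, reduce to $t_{k+1}^2 \le (1+1/k)^{2\gamma}(t_k^2 - ct_k + C)$, handle small indices via the a~priori bound $s_k \le c/(12\gamma)$, and then propagate a uniform bound on $t_k$ by a two-case induction (the paper splits at $C'_\ub/2$ and sets $C'_\ub = ck_1^\gamma/(12\gamma)$ explicitly, while you split at $D = \max(c/2,\,C/c)$ and extract $\bar C$ from a quadratic constraint---these are cosmetic differences). One small slip: Bernoulli's inequality goes the wrong way for the exponent $2\gamma > 1$, so justify $\rho_k \le 1 + \kappa\gamma/k$ instead via $(1+1/k)^{2\gamma} \le (1+1/k)^2 \le 1 + 3/k$.
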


The proof of the following Lemma may be found in Appendix~\ref{sec:proof:lem:sequencelemmasquared}.
\begin{lem}\label{lem:sequencelemmasquared}
	Fix $k_0 \in \NN$, $c, C > 0$, and $\gamma \in (1/2, 1]$. Suppose that $\{s_k\}_{k}$ is a nonnegative sequence satisfying 
	\begin{align*}
 \qquad s_{k+1} \leq (1-ck^{-\gamma})s_k  + Ck^{-2\gamma},  \qquad \text{for all $k \geq k_0$,}
	\end{align*}
Assume furthermore that $c \ge 16$ if $\gamma=1$. Then, there exists a constant $C_{\ub}$ depending only on $c, C, \gamma$ and $k_0$ such that  
	$$
	s_{k} \leq C_{\ub}k^{-\gamma}, \qquad \forall k \ge 1.
	$$
\end{lem}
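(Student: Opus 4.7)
The approach will be a direct induction on $k$: I will choose a constant $C_{\ub}$, depending only on $c, C, \gamma$, and $k_0$, such that the bound $s_k \leq C_{\ub} k^{-\gamma}$ is preserved by the recursion once $k$ is sufficiently large, and then enlarge $C_{\ub}$ to absorb the finitely many initial terms. This is a completely standard strategy for recurrences of the form $s_{k+1}\leq(1-ck^{-\gamma})s_k+Ck^{-2\gamma}$, and no probabilistic or variational input is needed.

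For the inductive step, I plug the hypothesis $s_k\leq C_{\ub} k^{-\gamma}$ into the recursion to obtain
$$
s_{k+1}\leq C_{\ub} k^{-\gamma}-\bigl(cC_{\ub}-C\bigr) k^{-2\gamma},
$$
and then compare the right-hand side with the target $C_{\ub}(k+1)^{-\gamma}$. Using the mean value estimate $k^{-\gamma}-(k+1)^{-\gamma}\leq \gamma k^{-\gamma-1}$, the required inequality reduces to
$$
cC_{\ub}-C\;\geq\;C_{\ub}\gamma\, k^{\gamma-1}.
$$
When $\gamma\in(1/2,1)$, the right-hand side tends to zero, so any choice $C_{\ub}>C/c$ makes the inequality valid for all $k$ beyond some threshold $k_1$ depending on $\gamma$ and $c$. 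When $\gamma=1$, the inequality becomes the stationary requirement $(c-1)C_{\ub}\geq C$; the hypothesis $c\geq 16$ guarantees $c>1$ with plenty of slack, so taking $C_{\ub}\geq C/(c-1)$ suffices.

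After the inductive step is secured for $k\geq k_1$, I extend the bound to all $k\geq 1$ by replacing $C_{\ub}$ with $\max\{C_{\ub},\ \max_{1\leq k\leq k_1} k^{\gamma} s_k\}$, which is finite because $s_1,\dots,s_{k_1}$ are finitely many nonnegative numbers. The main bookkeeping subtlety will be in the boundary case $\gamma=1$, where the Taylor correction $k^{-1}-(k+1)^{-1}=1/(k(k+1))$ lives on exactly the same scale $k^{-2}$ as the forcing term; however, since $c\geq 16$, the coefficient $(c-1)C_{\ub}-C$ can be arranged to dominate by a fixed positive margin, so no further refinement is required. The rest of the argument is routine algebra.
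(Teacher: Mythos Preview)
Your proof is correct and follows essentially the same strategy as the paper's: both show by induction that the ansatz $s_k\leq C_{\ub}k^{-\gamma}$ (equivalently $t_k:=k^{\gamma}s_k\leq C_{\ub}$) is preserved by the recursion for $k$ beyond a threshold, then absorb finitely many initial terms into $C_{\ub}$. The only cosmetic difference is that the paper rescales to $t_k$ and splits into the two cases $t_k\in[0,\tfrac14 C_{\ub}]$ and $t_k\in[\tfrac14 C_{\ub},C_{\ub}]$ using the ratio bound $\bigl(\tfrac{k+1}{k}\bigr)^{\gamma}\leq 1+\tfrac{2\gamma}{k}$, whereas you avoid the case split by substituting the inductive bound directly (valid once $1-ck^{-\gamma}\geq 0$) and using the mean-value estimate $k^{-\gamma}-(k+1)^{-\gamma}\leq \gamma k^{-\gamma-1}$; your route is slightly more streamlined but equivalent in content.
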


\subsection{Proof of Lemma~\ref{lem:fastsummability}}\label{sec:proof:lem:fastsummability}
\begin{proof}
For all $k \geq 0$, define $a_k := \frac{k^{2\gamma - 1}}{\log(k+1)^2}$ and observe that 
\begin{align*}
\EE[a_{k+1}X_{k+1}\mid \mathcal{F}_k]  \leq a_{k+1} (1-ck^{-\gamma}) X_k - a_{k+1} Y_k + a_{k+1} Z_k \qquad \text{for all $k \geq k_0$}.
\end{align*}
Thus, the result will follow from Robbins-Siegmund Lemma~\ref{lem: Robbins-Siegmund} if $a_{k+1}(1-ck^{-\gamma}) \leq a_{k}$ for all sufficiently large $k$. To that end, notice that for sufficiently large $k$, we have 
$$
\left(\frac{k+1}{k}\right)^{2\gamma-1} \leq 1 + \frac{2(2\gamma-1)}{k}.
$$
Therefore,
\begin{align*}
\frac{a_{k+1}}{a_k} \leq 1 + \frac{2(2\gamma - 1)}{k} \qquad \text{for all sufficiently large $k$.}
\end{align*}
Now we deal separately with the cases $\gamma < 1$ and $\gamma = 1$. First suppose  that $\gamma < 1$. Then there exists a constant $C' > 0$ such that 
$$
\frac{1}{1-ck^{-\gamma}} \geq 1+ \frac{C'}{k^{\gamma}},\qquad \text{for all sufficiently large $k$.}
$$
Consequently, $a_{k+1}/a_k \leq (1-ck^{-\gamma})^{-1}$ for all sufficiently large $k$, as desired.

Now assume that $\gamma = 1$. Then we compute
$$
\frac{1}{1-ck^{-1}} \geq 1+ \frac{c}{2k},\qquad \text{for all sufficiently large $k$.}
$$
Consequently, $a_{k+1}/a_k \leq (1 - ck^{-1})^{-1}$ for all large $k$, provided that  $c \geq 6$.
\end{proof}

\subsubsection{Proof of Lemma~\ref{lem:sequencelemmadistance}}\label{sec:proof:lem:sequencelemmadistance}

	\begin{proof}
It suffices to exhibit $C_\ub'>0$ such that 
	$$
s_{k} \leq C_{\ub}'k^{-\gamma} \qquad \text{ for all sufficiently large $k \geq k_0$}.
$$
To that end, choose $k_1$ large enough that the following two bounds hold: 
\begin{enumerate}
\item 
$C_\ub' :=\max \left\{  \frac{c k_1^\gamma}{12\gamma}, 2\sqrt{C}, \frac{4C}{c}\right\} =  \frac{c k_1^\gamma}{12\gamma} 
$
\item$\paren{\frac{k_1+1}{k_1}}^{2\gamma}\le \min\left\{2, 1+ \frac{3\gamma}{k_1}\right\}$.
\end{enumerate}
Then by assumption, we have
\begin{align*}
 k^{2\gamma}s_{k+1}^2 &\leq k^{2\gamma}s_k^2 -  c k^{\gamma}s_k + C  \qquad \text{for all $k \geq k_1$.}
\end{align*}
Denoting $t_k :=  k^\gamma s_k$, we obtain the following bound for all $k \geq k_1$:
\begin{align}\label{eq: tkupdate}
	t_{k+1}^2 \le   \paren{\frac{k+1}{k}}^{2\gamma}(t_k^2 - c t_k + C)\le  \paren{\frac{k_1+1}{k_1}}^{2\gamma}(t_k^2 - c t_k + C).
\end{align}
Thus the claim will follow if $t_k \le C_\ub'$ for all $k \geq k_1$. We prove the claim by induction. First the case $k=k_1$ holds by definition of $C_\ub'$. 
Now suppose $t_k \le C_\ub'$ for some $k \geq k_1$ and consider two cases

First suppose $t_k \in [0, \frac{1}{2} C_\ub']$. By~\eqref{eq: tkupdate} and definition of $C_\ub'$, we have 
	\begin{align*}
		t_{k+1}^2 &\le  \paren{\frac{k_1+1}{k_1}}^{2\gamma}(t_k^2 + C)\\
		&\le \paren{\frac{k_1+1}{k_1}}^{2\gamma}\paren{\frac{1}{4}C_\ub'^2 + \frac{1}{4}C_\ub'^2}\\
		&\le C_\ub'^2.
	\end{align*}

Second, suppose $t_k \in [\frac{1}{2} C_\ub', C_\ub']$. By~\eqref{eq: tkupdate} and definition of $C_\ub'$, we have 
	\begin{align*}
	t_{k+1}^2 &\le \paren{\frac{k_1+1}{k_1}}^{2\gamma}(t_k^2 - c t_k + C) \\ 
	&\le \paren{\frac{k_1+1}{k_1}}^{2\gamma}\left(C_{\ub}'^2 -\frac{c C_\ub'}{2}+ C\right)\\
	&\le \paren{\frac{k_1+1}{k_1}}^{2\gamma}\left(C_{\ub}'^2 -\frac{cC_\ub'}{4}\right)\\
	&= C_\ub' \paren{\frac{k_1+1}{k_1}}^{2\gamma}\paren{C_\ub' -\frac{c}{4}}
	\end{align*}
We claim that $\paren{\frac{k_1+1}{k_1}}^{2\gamma}\paren{C_\ub' -\frac{c}{4}}\le C_\ub'$. Indeed, we have 
\begin{align*}
	&\paren{\frac{k_1+1}{k_1}}^{2\gamma}\paren{C_\ub' -\frac{c}{4}}\\
	&\le \paren{1+ \frac{3\gamma}{k_1}}\paren{C_\ub' -\frac{c}{4}}\\
	&\le C_\ub' + \frac{3\gamma C_\ub'}{k_1} - \frac{c}{4}\\
	&\le C_\ub' + \frac{c}{4k_1^{1-\gamma}} - \frac{c}{4}\\	
	&\leq  C_\ub',
\end{align*}
as desired. This completes the induction.
\end{proof}

\subsubsection{Proof of Lemma~\ref{lem:sequencelemmasquared}}\label{sec:proof:lem:sequencelemmasquared}
\begin{proof}
It suffices to exhibit $C_\ub>0$ such that 
	$$
s_{k} \leq C_{\ub}k^{-\gamma} \qquad \text{ for all sufficiently large $k \geq k_0$}.
$$
To that end, choose $k_1$ large enough that the following two bounds hold: 
\begin{enumerate}
	\item  $\left(\frac{k+1}{k}\right)^{\gamma} \le 1+\frac{2\gamma}{k}\le 2$ for all $k \ge k_1$.
	\item $k_1^{1-\gamma} \ge \frac{16 \gamma}{c}$ if $\gamma \in (\frac{1}{2},1)$. 
\end{enumerate} 
Now let $t_k = s_k k^\gamma$, then we rewrite the above inequality as
\begin{align}\label{eq: tkupdateconvergence}
t_{k+1} \le \left(\frac{k+1}{k}\right)^\gamma \left[(1-ck^{-\gamma})t_k + \frac{C}{k^\gamma}\right], \qquad \text{for all $k \ge k_0$}.
\end{align}
Let $ C_{\ub} = \max\{s_{k_1} k_1^\gamma, 4C, \frac{8C}{c} \}$. By definition of $ C_{\ub}$, we know that 
$$
t_{k_1} = s_{k_1}k_1^\gamma \le  C_{\ub}.
$$ 
For the induction step, we consider two cases.\\
First suppose $t_k \in [0, \frac{1}{4}  C_{\ub}]$. By~\eqref{eq: tkupdateconvergence} and definition of $ C_{\ub}$, we have 
\begin{align*}
	t_{k+1} &\le  \paren{\frac{k_0+1}{k_0}}^{\gamma}(t_k + C)\\
	&\le \paren{\frac{k_0+1}{k_0}}^{\gamma}\paren{\frac{1}{4} C_{\ub} + \frac{1}{4} C_{\ub}}\\
	&\le C_{\ub}.
\end{align*}

Second, suppose $t_k \in [\frac{1}{4}  C_{\ub,k_0,\bar x},  C_{\ub,k_0,\bar x}]$. By~\eqref{eq: tkupdateconvergence} and definition of $\tilde C_{\ub,k_0,\bar x}$, we have 
\begin{align*}
	t_{k+1} &\le \paren{\frac{k+1}{k}}^{\gamma}\left(t_k-\frac{ct_k}{k^\gamma} + \frac{C}{k^\gamma}\right) \\ 
	&\le \paren{\frac{k+1}{k}}^{\gamma}\left( C_{\ub} -\frac{cC_{\ub}}{4k^\gamma}+ \frac{C}{k^\gamma}\right)\\
	&\le \paren{\frac{k+1}{k}}^{\gamma}\left( C_{\ub} -\frac{c  C_{\ub}}{8k^\gamma}\right)\\
	&=  C_{\ub}\paren{\frac{k+1}{k}}^{2\gamma}\paren{1 -\frac{c}{8k^\gamma}}
\end{align*}
We claim that $\paren{\frac{k+1}{k}}^{\gamma}\paren{1 -\frac{c}{8k^\gamma}}\le 1$. Indeed, we have 
\begin{align*}
	&\paren{\frac{k+1}{k}}^{\gamma}\paren{1 -\frac{c}{8k^\gamma}}\\
	&\le \paren{1+ \frac{2\gamma}{k}}\paren{1 -\frac{c}{8k^\gamma}}\\
	&\le 1 + \frac{2\gamma }{k} - \frac{c}{8k^\gamma}
\end{align*}
When $\gamma=1$, $1 + \frac{2\gamma }{k} - \frac{c}{8k^\gamma}$ by our assumption on $c$. When $\gamma \in (\frac{1}{2},1)$, $1 + \frac{2\gamma }{k} - \frac{c}{8k^\gamma} \le 1$ by our choice of $k_0$. 
This completes the induction.
\end{proof}
\end{document}